\numberwithin{equation}{section}
\theoremstyle{plain}
\newtheorem{theorem}{Theorem}[section]
\newtheorem{lemma}[theorem]{Lemma}
\newtheorem{proposition}[theorem]{Proposition}
\newtheorem{corollary}[theorem]{Corollary}
\newtheorem{definition/proposition}[theorem]{Definition/Proposition}
\newtheorem{theorem/definition}[theorem]{Theorem/Definition}
\newtheorem*{bigtheorem*}{Main Theorem}
\theoremstyle{definition}
\newtheorem{definition}[theorem]{Definition}
\newtheorem{example}[theorem]{Example}
\newtheorem{example/definition}[theorem]{Example/Definition}
\newtheorem{conjecture}[theorem]{Conjecture}
\newtheorem*{conjecture*}{Conjecture}
\theoremstyle{remark}
\newtheorem{remark}[theorem]{Remark}
\newcommand{\setword}[2]{%
  \phantomsection
  #1\def\@currentlabel{\unexpanded{#1}}\label{#2}%
}
\newcommand{\field}{\mathbf{k}}
\newcommand{\Spec}{\mathrm{Spec}}
\newcommand{\cO}{\mathcal{O}}
\newcommand{\isomorphic}{\cong}
\newcommand{\rank}{\mathrm{rk}}
\newcommand{\fm}{\mathfrak{m}}
\newcommand{\Hilb}{\mathrm{Hilb}}
\newcommand{\Gr}{\mathrm{Gr}}
\newcommand{\an}{\mathrm{an}}
\newcommand{\cP}{\mathcal{P}}
\newcommand{\alg}{\mathrm{alg}}
\newcommand{\fw}{\mathfrak{w}}
\newcommand{\ord}{\mathrm{ord}}
\newcommand{\GR}{\mathbf{Gr}}
\newcommand{\fg}{\mathfrak{g}}
\newcommand{\Spr}{\mathrm{Spr}}
\newcommand{\mH}{\mathrm{H}}
\newcommand{\cH}{\mathcal{H}}
\newcommand{\mot}{\mathrm{mot}}
\newcommand{\pt}{\mathrm{pt}}
\newcommand{\Gaps}{\mathrm{Gaps}}
\newcommand{\cI}{\mathcal{I}}
\DeclarePairedDelimiter\floor{\lfloor}{\rfloor}
\newcommand{\Adm}{\mathcal{A}dm}
\newcommand{\Young}{\mathcal{Y}}
\newcommand{\id}{\mathrm{id}}
\newcommand{\GC}{\mathcal{GC}}
\newcommand{\cG}{\mathcal{G}}
\newcommand{\cR}{\mathcal{R}}
\newcommand{\AJ}{\mathrm{AJ}}
\newcommand{\im}{\mathrm{Im}}
\newcommand{\Hom}{\mathrm{Hom}}
\newcommand{\Gen}{\mathrm{Gen}}
\newcommand{\Poly}{\mathrm{Poly}}
\newcommand{\cJ}{\mathcal{J}}
\newcommand{\SM}{\mathrm{SM}}
\newcommand{\transpose}{\top}
\newcommand{\Ext}{\mathrm{Ext}}
\newcommand{\RHom}{\mathrm{RHom}}
\newcommand{\dinv}{\mathrm{dinv}}
\newcommand{\codinv}{\mathrm{co}\text{-}\mathrm{dinv}}
\newcommand{\basevector}{\mathfrak{v}}
\newcommand{\uformn}{\mathbf{u}}
\newcommand{\vformn}{\mathbf{v}}
\newcommand{\armlength}{\mathfrak{a}}
\newcommand{\leglength}{\mathfrak{l}}
\newcommand{\moveindex}{\mathbf{p}}
\newcommand{\permutation}{\mathbf{s}}
\newcommand{\permutationindex}{\mathbf{i}}
\newcommand{\correctionterm}{\varepsilon}
\newcommand{\Weight}{\mathrm{W}}
\newcommand{\specialrank}{{\mathrm{r}_0}}
\newcommand{\length}{\mathrm{\ell}}
\newcommand{\minval}{\mathrm{e}}
\newcommand{\constant}{\mathbf{c}}
\newcommand{\LHS}{\mathrm{LHS}}
\newcommand{\RHS}{\mathrm{RHS}}
\newcommand{\cD}{\mathcal{D}}
\def\acts{\curvearrowright}
\newcommand{\red}{\mathrm{red}}
\newcommand{\cC}{\mathcal{C}}
\newcommand{\cF}{\mathcal{F}}
\newcommand{\diag}{\mathrm{diag}}
\begin{document}

\title[Purity of generalized affine Springer fibers from generic planar curve singularities]{Purity of generalized affine Springer fibers from generic planar curve singularities}

\author{Taiwang Deng}
\email{dengtaiw@bimsa.cn}
\address{Beijing Institute of Mathematical Sciences and Applications\\
Huairou District, 101408, Beijing, China}

\author{Tao Su}
\email{sutao@bimsa.cn}
\address{Beijing Institute of Mathematical Sciences and Applications\\
Huairou District, 101408, Beijing, China}

\date{}

\subjclass[2020]{Primary: 14H20, 14C05; Secondary: 05E14, 05E10}

\keywords{Curve singularity, Hilbert scheme, generalized affine Springer fiber, affine paving, purity, Dyck path}

\begin{abstract}
We prove the cohomological purity of punctual Hilbert schemes of points on generic irreducible planar curve singularities, by constructing an explicit affine paving. Via their identification with generalized $GL_N$-affine Springer fibers attached to the direct sum of the adjoint and standard representations, this establishes a new case of the purity conjecture for generalized affine Springer fibers. The combinatorics of the paving - cell indices and dimensions - are controlled by $(dn,dm)$-Dyck paths extending results of Gorsky-Mazin-Oblomkov on compactified Jacobians. As a byproduct, we also give a simpler proof of their bijection between admissible $(dn,dm)$-invariant subsets and $(dn,dm)$-Dyck paths.
\end{abstract}

\maketitle

\tableofcontents

\section{Introduction}

Affine Springer fibers were introduced by Kazhdan–Lusztig \cite{KL88}, where the basic properties were studied. Goresky–Kottwitz–MacPherson \cite{GKM04} later used them to study orbital integrals on reductive groups over local fields, relating these integrals to point counts of affine Springer fibers. They also formulated the \textbf{purity conjecture} for affine Springer fibers and, under this assumption, established the function field analogue of the fundamental lemma \cite{LS87} in the unramified case - a combinatorial identity appearing in \cite{Lab79}, \cite{Lan83}, and \cite{LS87} which describes the relations between orbital integrals for the unit of the Hecke algebras on different groups. They further confirmed the conjecture in the equivalued case \cite{GKM06}.
Via the Grothendieck-Lefschetz trace formula, purity ensures that orbital integrals can be expressed in terms of the cohomology of affine Springer fibers.
The general purity conjecture, however, remains open. 
It was precisely this obstacle that Ng\^{o} overcame in his proof of the fundamental lemma \cite{Ngo10}, by passing to the global setting of the Hitchin fibration and applying the decomposition and support theorems.

As a natural extension, Goresky-Kottwitz-MacPherson \cite{GKM06} introduced \emph{generalized affine Springer fibers (GASF)} by replacing the adjoint representation with an arbitrary finite-dimensional representation of a reductive group. 
More recently, these varieties became prominent as natural geometric sources for realizing representations \cite{HKW23,GK23} of (quantized) Coulomb branch algebras \cite{BFN18}.
It is expected that the purity conjecture extends to this broader setting:

\begin{conjecture}\label{conj:purity_conjecture}
Under mild conditions, the (co)homology of a generalized affine Springer fiber is pure.
\end{conjecture}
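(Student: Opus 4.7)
The general conjecture is far too broad to attack head-on; the plan is therefore to target the specific case highlighted in the abstract, where the GASF attached to $GL_N$ and the direct sum of the adjoint and standard representations has connected components canonically identified with punctual Hilbert schemes $\Hilb^n(C,0)$ of a generic irreducible planar curve singularity $(C,0)$. Once an affine paving is produced on each such Hilbert scheme, purity of the cohomology follows from the standard fact that varieties admitting an affine paving have pure, Tate, torsion-free cohomology, with classes freely generated by the closures of the cells, and this transports back to the GASF through the geometric identification.

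To construct the paving, I would extend the Gorsky--Mazin--Oblomkov strategy for the compactified Jacobian from degree zero to arbitrary $n$. The first step is to exploit a $\mathbb{G}_m$-action on $\Hilb^n(C,0)$ coming from rescaling a uniformizer in the normalization $\field[[t]]$, and decompose the space into Bia\l ynicki-Birula attractors of the connected components of the fixed locus. Each fixed ideal corresponds to a monomial ideal in $\field[[t]]$ compatible with the semigroup $\Gamma$ of the singularity; these combinatorial fixed points should be indexed by admissible $(dn,dm)$-invariant subsets of $\mathbb{Z}_{\geq 0}$, where $(n,m)$ encodes the Puiseux type of $(C,0)$ and $d$ reflects the length parameter, with the attractors expected to be affine spaces.

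The second, more delicate step is to give an explicit cell-by-cell combinatorial model. Following the bijection between admissible invariant subsets and $(dn,dm)$-Dyck paths, I would encode each cell via a Dyck path and read off its dimension as an arm-leg statistic on the path, recovering and generalizing the compactified-Jacobian case. This requires reproving, more transparently as the abstract promises, the bijection in a way compatible with the ambient geometry, and then showing that the local coordinates on each attractor really are affine by exhibiting an explicit normal form for the generators of ideals in the cell.

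The main obstacle I expect is precisely this last point: verifying that the attracting pieces are honestly affine spaces of the predicted dimension, rather than merely locally closed subsets of an abstract cell decomposition. Handling it likely requires a careful inductive procedure indexed by $\Gamma$, using elementary reduction moves on ideal generators that are compatible with the $\mathbb{G}_m$-weights and checking that each move either fixes a coordinate uniquely or leaves one free parameter. The combinatorics of Dyck paths -- arms, legs, and the \dinv{}/\codinv{} statistics -- should match the count of free parameters, providing both the dimension formula and, as a byproduct, the promised simpler proof of the Gorsky--Mazin--Oblomkov bijection.
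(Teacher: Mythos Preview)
The statement is a conjecture, not a theorem, and the paper does not prove it in general; it establishes only the special case where the GASF is identified with $\Hilb^{[\bullet]}(C,0)$ for a \emph{generic} irreducible planar curve singularity. You correctly recognize this and target that case, but your proposed mechanism has a genuine gap.

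Your first step relies on a $\mathbb{G}_m$-action obtained by rescaling the uniformizer $t$ in $\field[[t]]$. Such an action exists only when the singularity is quasi-homogeneous, i.e.\ when $d=1$: for $d>1$ the ring $R=\field[[t^{dn},\, t^{dm}+\lambda t^{dm+1}+\cdots]]$ with $\lambda\neq 0$ is \emph{not} stable under $t\mapsto \mu t$, since the two monomials in $y(t)$ scale with different weights. The $d=1$ case is exactly the equivalued situation already covered by Goresky--Kottwitz--MacPherson, so the Bia\l ynicki--Birula route you sketch recovers nothing new. (Incidentally, $d$ is not a length parameter; it is part of the Puiseux data --- the singularity has two Puiseux pairs $(n,m)$ and $(d,dm+1)$.) Gorsky--Mazin--Oblomkov's paving of $\overline{J}(C,0)$ for $d>1$ already avoids torus actions for this reason, working instead with explicit coordinates on the valuation strata.

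The paper's approach is to use the local Abel--Jacobi map $\AJ_\tau:\Hilb^{[\tau]}(C,0)\hookrightarrow \overline{J}(C,0)$, $I\mapsto t^\tau I^{-1}$, which is a closed embedding, and then intersect with the GMO paving $\overline{J}(C,0)=\bigsqcup_\Delta \iota_\Delta(J_\Delta)$. The image of $\AJ_\tau$ inside a cell $J_\Delta\cong\mathbb{A}^{\dim\Delta}$ is characterized by the single module-theoretic condition $t^{\tau-e(\Delta)}\in M$. In the explicit coordinate system GMO use to exhibit $J_\Delta$ as affine space (variables $g_{a;\ell}$, with dependent ones solved by equations of triangular shape $g=\mathrm{Poly}(<g)$), this extra condition unwinds to one further equation per gap of $\tau-e(\Delta)$, and each such equation is shown --- by tracking the leading term through the GMO triangular system --- to again take the form $g^+_{u_\ell;\ell}=\mathrm{Poly}(<g^+_{u_\ell;\ell})$ for a variable not already dependent. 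This is the delicate core of the argument, and it is what replaces the torus-equivariant reasoning you propose. Your combinatorial second step (Dyck paths, arm/leg statistics) is close in spirit to what the paper does afterwards, but it rests on having the cells in hand first.
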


Partial progress has been made in various settings (see~\cite{Che14,GSV24,Che24,GMO25}), but the general case remains wide open. 
In particular, in~\cite{GSV24} the authors study the GASF for $G=GL_n^{\times r}$ and the representation $V = \Hom(\field^n,\field^n)^{\oplus r} \oplus \field^n$. 
They show that in this case the GASF corresponds to the compositional Hilbert scheme of quasi-homogeneous planar curve singularities ($\{y^n-x^m=0\}$)~\cite[Prop.~8.9]{GSV24}, and prove that it admits an affine paving~\cite[Prop.~8.12]{GSV24}.
Finally, we expect the generalized purity conjecture to play a role in the geometry of the relative fundamental lemma, as in the classical case.

\subsection*{Main result}

In this article we prove Conjecture~\ref{conj:purity_conjecture} for a new family of generalized affine Springer fibers arising from \emph{generic irreducible planar curve singularities}
$(C,0)$.
By \cite[Thm.~3.3]{GK23}, these fibers are naturally identified with the (punctual) Hilbert schemes $\Hilb^{[\bullet]}(C,0)$.

Our first main result is:

\begin{theorem}[Cor.~\ref{cor:affine_paving_for_Hilbert_scheme}, Thm.~\ref{thm:affine_paving_for_Hilbert_scheme_over_generic_curve_singularity}]\label{thm:main_result_1}
Let $(C,0)$ be a \textbf{generic} irreducible planar curve singularity: 
\[
x = t^{dn},\quad y = t^{dm} + \lambda t^{dm+1} + \text{ h.o.t.}\footnote{`h.o.t.' stands for `higher $t$-order terms'.},\quad \gcd(n,m)=1,~~\lambda \in \field^{\times}.
\]
Then 
$\Hilb^{[\bullet]}(C,0) \isomorphic \Spr_{\basevector}$ admits an explicit affine paving:
\[
\Hilb^{[\tau]}(C,0) = \sqcup_{\Delta\in\Adm(dn,dm):~\tau-e(\Delta)\in\Delta} H_{\Delta}^{[\tau]},\quad H_{\Delta}^{[\tau]} \isomorphic \mathbb{A}^{\dim\Delta - |\Gaps(\tau - e(\Delta))|}.
\]
In particular, its (co)homology is pure.
\end{theorem}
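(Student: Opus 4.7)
The plan is to combine the identification $\Hilb^{[\bullet]}(C,0) \isomorphic \Spr_\basevector$ with an explicit stratification of $\Hilb^{[\tau]}(C,0)$. Via the normalization $\tilde{C} \to C$, the local ring $\cO_{C,0}$ embeds into $\cO_{\tilde{C},0} = \field[[t]]$ by $x \mapsto t^{dn}$, $y \mapsto t^{dm} + \lambda t^{dm+1} + \cdots$, and any ideal $I \subset \cO_{C,0}$ of colength $\tau$ becomes an $\cO_{C,0}$-submodule of $\field[[t]]$. Associate to $I$ its \emph{initial subset}
\[
\Delta(I) := \{ v_t(f) : f \in I \setminus \{0\} \} \subset \mathbb{Z}_{\geq 0},
\]
which is automatically stable under translation by the value semigroup $\Gamma_C \subset \mathbb{Z}_{\geq 0}$. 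Setting $H_\Delta^{[\tau]} := \{I : \Delta(I) = \Delta\}$ yields a set-theoretic stratification of $\Hilb^{[\tau]}(C,0)$, reducing the theorem to: (i) identifying which $\Delta$ arise, (ii) describing them combinatorially, and (iii) showing each nonempty $H_\Delta^{[\tau]}$ is an affine space of the stated dimension.

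For (i) and (ii), I would define $\Adm(dn,dm)$ as the $\Gamma_C$-modules $\Delta \subset \mathbb{Z}_{\geq 0}$ satisfying the genericity condition forced by $\lambda \neq 0$, and construct the bijection $\Adm(dn,dm) \leftrightarrow \{(dn,dm)\text{-Dyck paths}\}$ by directly encoding $\Delta$ through its characteristic array on $\Gamma_C$-cosets in $\mathbb{Z}_{\geq 0}$, with admissibility translating transparently into the staying-above-the-diagonal condition; this should give the promised simpler derivation of the Gorsky--Mazin--Oblomkov bijection. The supplementary restriction $\tau - e(\Delta) \in \Delta$ encodes the existence of an $I$ of exact colength $\tau$ with $\Delta(I) = \Delta$. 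For (iii), each $I \in H_\Delta^{[\tau]}$ should admit a canonical minimal system of generators $\{g_\delta\}_{\delta \in \Delta_{\min}}$ of the form
\[
g_\delta = t^\delta + \sum_{j \in \Gaps(\Delta),\; j > \delta} c_{\delta,j}\, t^j, \qquad c_{\delta,j} \in \field,
\]
indexed by the minimal $\Gamma_C$-generators $\Delta_{\min}$ of $\Delta$. The coefficients $c_{\delta,j}$ are subject to compatibility constraints ensuring $x \cdot g_\delta$ and $y \cdot g_\delta$ lie in the $\cO_{C,0}$-span of $\{g_{\delta'}\}$, together with the colength constraint $\dim_\field \cO_{C,0}/I = \tau$.

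The crux - and the main obstacle - is to show that these compatibility constraints can be solved uniquely and \emph{affine-linearly}, so that a distinguished subset of the $c_{\delta,j}$'s freely parametrizes $H_\Delta^{[\tau]} \isomorphic \mathbb{A}^{\dim\Delta - |\Gaps(\tau - e(\Delta))|}$. In the quasi-homogeneous case $(\lambda = 0,\; d=1)$ a $\mathbb{G}_m$-scaling action retracts each cell to a monomial ideal and the Białynicki--Birula decomposition delivers affineness for free. Here, the perturbation $\lambda t^{dm+1} + \cdots$ destroys this torus symmetry, and one must verify directly that the a priori nonlinear relations between the $c_{\delta,j}$'s reduce to a linear system. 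The key technical content is that the genericity of $\lambda$ and the admissibility of $\Delta$ cooperate so that the leading nonlinear contributions become nondegenerate, allowing a successive elimination (order by order in $t$) of all bound parameters; the bookkeeping of which $c_{\delta,j}$'s remain free is controlled precisely by the Dyck-path combinatorics introduced above, giving the claimed dimension formula. This is an inductive construction extending the Gorsky--Mazin--Oblomkov analysis from the compactified Jacobian to every colength stratum of the Hilbert scheme. Once the affine paving is established, purity of the (co)homology follows by the standard long exact sequence argument for affine-paved varieties.
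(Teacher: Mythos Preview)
Your stratification is not the one the paper uses, and the mismatch is where the gap lies. You stratify $\Hilb^{[\tau]}(C,0)$ by the valuation semimodule $\Delta(I) = \ord_t(I)$ of the ideal itself. The paper instead passes through the \emph{Abel--Jacobi map}
\[
\AJ_\tau : \Hilb^{[\tau]}(C,0) \hookrightarrow \overline{J}(C,0),\qquad I \mapsto t^\tau I^{-1},
\]
and pulls back the known Gorsky--Mazin--Oblomkov cells $J_\Delta$ of the compactified Jacobian. The $\Delta$ in the statement is thus the valuation semimodule of the \emph{dual} $t^\tau I^{-1}$ (after $0$-normalization), not of $I$. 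This is why your sentence ``the supplementary restriction $\tau - e(\Delta)\in\Delta$ encodes the existence of an $I$ of exact colength $\tau$ with $\Delta(I)=\Delta$'' does not parse: in your direct picture the colength is the fixed number $|\Gamma\setminus\Delta(I)|$, with no further restriction, whereas in the paper's picture the condition $\tau - e(\Delta)\in\Delta$ arises from characterizing $\im(\AJ_\tau)$ inside each GMO cell.

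The reason the Abel--Jacobi route matters is the image description (Corollary~\ref{cor:local_Abel-Jacobian_map}): for $J\in\overline{J}(C,0)$ one has $J\in\im(\AJ_\tau)\iff t^\tau\in J$. Inside a GMO cell $J_\Delta\cong\mathbb{A}^{\dim\Delta}$, where each $M$ has a unique canonical element $g_b = t^b + \sum_{\ell\in\Gaps(b)} g_{b;\ell}\,t^{b+\ell}$ for every $b\in\Delta$, this becomes the single condition $g_{\tau_0}=t^{\tau_0}$ with $\tau_0=\tau-e(\Delta)$, i.e.\ exactly $|\Gaps(\tau_0)|$ extra scalar equations $g_{\tau_0;\ell}=0$. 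The heart of the proof (Step~3 of Theorem~\ref{thm:affine_paving_for_Hilbert_scheme_over_generic_curve_singularity}) shows each such equation can be solved for a new free variable $g_{u_\ell;\ell}^+\notin Y$: when the leading term $g_{u;\ell}^+$ of the $\ell$-th equation is already a dependent variable of the GMO system, one substitutes the corresponding GMO relation \eqref{eqn:summing_equation-ell_at_least_3}--\eqref{eqn:summing_equation-ell_equals_2} to shift the leading term to $g_{u+1;\ell}^+$, and iterates; the process terminates because the residue $d-\ell-1$ is always $(\ell{+}1)$-admissible. This is precisely where the count $\dim\Delta - |\Gaps(\tau_0)|$ comes from.

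Your direct approach would instead need, for each $dn$-generator $a$ of $\Delta(I)$, the condition $g_a\in R$ (not merely $g_a\in\tilde R$), which amounts to $t^{\min\Delta(I)}M\subset R$ on the normalized module $M$. That is a constraint per generator rather than a single membership condition, and it is not clear how your ``successive elimination'' would organize those equations or why the resulting free-variable count should equal $\dim\Delta - |\Gaps(\tau - e(\Delta))|$ in the paper's labeling. The duality step $I\mapsto I^{-1}$ is not a cosmetic relabeling here; it is the mechanism that collapses the ideal condition to one tractable equation on top of the GMO paving.
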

The relevant terminology will be recalled in the next section. 
The construction of our paving extends the approach of \cite{GMO25}, which established affine pavings and purity for (local) compactified Jacobians of $(C,0)$ as above. 
In this setting, the correspondence with equivalued affine Springer fibers in \cite{GKM06} holds precisely when $d = 1$, i.e., when the curve singularity is quasi-homogeneous (see Remark \ref{rem:equivalued_planar_curve_singularity_is_quasi-homogeneous}).

Our second main result gives a new proof of the bijection between $\Adm(dn,dm)$, the set of admissible $(dn,dm)$-invariant subsets, and $\Young(dn,dm)$, the set of $(dn,dm)$-Dyck paths, originally established in \cite{GMO25,GMV20}. See Theorem~\ref{thm:admissible_invariant_subsets_vs_Dyck_paths} for a precise reformulation and a refinement.
Our constructions of the bijection $\Psi_d:\Young(dn,dm) \to \Adm(dn,dm)$ (Section~\ref{subsubsec:Dyck_paths_vs_admissible_invariant_subsets}, Proposition~\ref{prop:from_Dyck_paths_to_admissible_invariant_subsets}) and its inverse $\Phi_d:\Adm(dn,dm) \to \Young(dm,dm)$ (Proposition~\ref{prop:from_admissible_invariant_subsets_to_Dyck_paths}) are much simpler than the originals, and may be of independent interest.
While a priori it is not clear that the two constructions coincide, we prove that $\Psi_d$ is compatible with the sweep map (Proposition~\ref{prop:sweep_map_vs_bijection}), thereby establishing the equivalence (Remark~\ref{rem:compare_with_the_old_bijection}).

Our third result describes the combinatorics of the paving - cell indices and dimensions - in terms of $(dn,dm)$-Dyck paths, generalizing \cite{GMO25} on compactified Jacobians $\overline{J}(C,0)$. See Proposition~\ref{prop:combinatorics_of_affine_paving_for_Hilbert_scheme}.

Finally, our original motivation comes from the Oblomkov-Rasmussen-Shende (ORS) conjecture \cite{OS12,ORS18} and the Cherednik-Danilenko-Philipp (CDP) conjecture \cite{CD16,CP18}, which connect enumerate geometry of planar curve singularities with knot invariants and double affine Hecke algebras (DAHA).
Concretely, let $L(q,t,a)$ be the \textbf{Hilbert $L$-function} of $(C,0)$ (Definition~\ref{def:Hilbert_L-function}), the generating function for the cohomology of $\Hilb^{[\bullet]}(C,0)$. 
Then, the ORS conjecture states the following:
\begin{conjecture}[see {\cite[Conj.~2]{ORS18}} or Conjecture~\ref{conj:ORS_conjecture}]
Up to a suitable normalization, $L(q,t,a)$ is the generating function for the triply graded HOMFLY-PT homology of the singularity link of $(C,0)$.
\end{conjecture}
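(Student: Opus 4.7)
The conjecture is a well-known open problem, so the purpose of this sketch is to indicate how the paving constructed in this paper can be leveraged to attack it, not to claim a full proof.

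The plan proceeds in three steps. First, by Theorem~\ref{thm:main_result_1} each punctual Hilbert scheme $\Hilb^{[\tau]}(C,0)$ admits an affine paving, so its cohomology is pure and concentrated in even degree, and $L(q,t,a)$ becomes a triply-graded sum of monomials, one for each cell $H_{\Delta}^{[\tau]}$. Reindexing via the bijection $\Psi_d:\Young(dn,dm)\to\Adm(dn,dm)$ and applying Proposition~\ref{prop:combinatorics_of_affine_paving_for_Hilbert_scheme}, this rewrites $L(q,t,a)$ as an explicit generating function over $(dn,dm)$-Dyck paths (together with $\tau$), with $q,t,a$-weights determined by $\dim\Delta$, $|\Gaps(\tau-e(\Delta))|$, and the arm/leg and $\dinv/\codinv$ statistics of \cite{GMO25}.

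Second, on the topology side, the singularity link of $(C,0)$ with $x=t^{dn}$, $y=t^{dm}+\lambda t^{dm+1}+\text{h.o.t.}$ is an iterated torus knot determined by the Puiseux expansion: the $(n,m)$-torus knot when $d=1$, and a genuine iterated cable for $d\geq 2$. In the quasi-homogeneous case $d=1$, the conjecture reduces to the torus-knot case of ORS, where Dyck path formulas for the HOMFLY-PT homology of $T(n,m)$ are known or conjectured via work of Gorsky--Negu\c{t}--Rasmussen, Mellit, and others. The strategy would be to first match the two Dyck path generating functions statistic-by-statistic when $d=1$, and then bootstrap to arbitrary $d$ by induction, aligning a cabling recursion on the HOMFLY-PT side with a corresponding recursion for the paving induced by the $d\mapsto d+1$ behavior of admissible $(dn,dm)$-invariant subsets.

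The main obstacle is precisely the cabling step: for $d\geq 2$ the HOMFLY-PT homology of cable knots has no known closed-form combinatorial description, so a direct statistic-by-statistic match is not presently accessible. A more promising alternative is to pass through the CDP conjecture \cite{CD16,CP18}, which predicts that $L(q,t,a)$ arises as a specialization of DAHA/Macdonald-type polynomials; both sides of the ORS conjecture should then become visible intrinsically within the DAHA framework, allowing the comparison to be carried out there and bypassing the need for an explicit cabling formula on the knot-homology side. Either route would further require delicate bookkeeping to fix the normalization hidden in the phrase ``up to a suitable normalization''.
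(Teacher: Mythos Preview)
The statement under consideration is a \emph{conjecture}, and the paper does not prove it; it is stated as an open problem both in the introduction and as Conjecture~\ref{conj:ORS_conjecture}. So there is no ``paper's own proof'' to compare your proposal against, and you correctly open by acknowledging that the conjecture is open and that your sketch is not a full proof.

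That said, your proposal diverges from what the paper actually accomplishes in relation to this conjecture. The paper does not attempt the full $(q,t,a)$ statement at all. What it does is: (i) use the paving to compute $L(q,t,0)$ explicitly for generic $(C,0)$ (Corollary~\ref{cor:perverse_filtration_for_generic_planar_curve_singularities}); (ii) observe that by \cite{CGHM24} the lowest $a$-degree part of $\overline{\cP}(K_{(C,0)})$ equals the Schr\"oder polynomial $S_{\tau_{dm,dn}}(q,t,0)$, which in turn equals $\mH^{\mot}(q,t,0)$; and (iii) conclude that the lowest $a$-degree part of the ORS conjecture for generic $(C,0)$ is equivalent to the purely combinatorial identity of Conjecture~\ref{conj:combinatorial_two-variable_CDP_conjecture_for_generic_curve_singularity}. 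No cabling recursion or DAHA argument is carried out.

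Your Step~1 is essentially what the paper does for $a=0$, but your claim that $L(q,t,a)$ becomes a sum over cells with an $a$-weight is not supported by the paper: the affine paving controls only the $q,t$ part via $\Hilb^{[\tau]}$, while the $a$-variable tracks the special rank $\specialrank = \dim I/\fm_R I$, which is \emph{not} constant on the cells $H_\Delta^{[\tau]}$ and is not addressed by the paving. So the first step already has a gap for $a\neq 0$. Your Steps~2 and~3 (matching Dyck-path statistics to HOMFLY-PT homology directly, then bootstrapping by a cabling induction on $d$) are speculative and go well beyond anything the paper establishes; the paper instead routes the comparison through \cite{CGHM24} and the motivic superpolynomial, and even then only reduces the $a=0$ case to an unproven combinatorial identity.
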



For generic $(C,0)$, our affine paving allows a computation of $L(q,t,0)$.
Moreover, $L(q,t,0)$ encodes the \textbf{perverse filtration} on the cohomology of $\overline{J}(C,0)$ \cite{MS13,MY14}. 
As a consequence, we compute the perverse filtration for generic $(C,0)$ (Cor.~\ref{cor:perverse_filtration_for_generic_planar_curve_singularities}).

On the other hand, let $\mH^{\mot}(q,t,a)$ denote the \textbf{normalized motivic superpolynomial} of $(C,0)$ (Definition~\ref{def:normalized_motivic_superpolynomial}), originating from DAHA representations. 
An abridged version of the Cherednik-Danilenko-Philipp conjecture asserts:

\begin{conjecture}[\cite{CD16,CP18}]
$L(q,t,a) = \mH^{\mot}(q,t,a)$.
\end{conjecture}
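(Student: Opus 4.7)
The plan is to derive explicit combinatorial formulas for both sides of $L(q,t,a) = \mH^{\mot}(q,t,a)$ as generating functions indexed by $(dn,dm)$-Dyck paths, and then match them term by term. On the geometric side, I would combine the affine paving of Theorem~\ref{thm:main_result_1} with the cell-dimension combinatorics of Proposition~\ref{prop:combinatorics_of_affine_paving_for_Hilbert_scheme} to read off the Poincar\'e polynomial of each $\Hilb^{[\tau]}(C,0)$. Summing over $\tau$ with the appropriate $q$- and $a$-weights (where $a$ should organize the contributions coming from the standard representation summand of the GASF, i.e., the flagged/nested directions of the Hilbert tower), and transferring the sum over admissible subsets to $\Young(dn,dm)$ via the bijection $\Psi_d$ of Proposition~\ref{prop:from_Dyck_paths_to_admissible_invariant_subsets}, yields a closed expression $L(q,t,a) = \sum_{\pi \in \Young(dn,dm)} w_{\mathrm{geom}}(\pi)(q,t,a)$ with $w_{\mathrm{geom}}(\pi)$ an explicit monomial.

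On the DAHA side, I would follow the construction of $\mH^{\mot}$ in \cite{CD16,CP18}: expand it in the polynomial representation of the (spherical) rational DAHA at slope $m/n$ (iterated $d$ times to reach the $(dn,dm)$-case). Rational Catalan combinatorics expresses such matrix coefficients as a generating function over the same index set $\Young(dn,dm)$, weighted by the $(\dinv,\codinv)$-statistics together with an $a$-dependence encoding a filtration in the non-spherical polynomial representation. This produces a parallel expression $\mH^{\mot}(q,t,a) = \sum_{\pi \in \Young(dn,dm)} w_{\mathrm{DAHA}}(\pi)(q,t,a)$.

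To match the two expansions, I would exploit Proposition~\ref{prop:sweep_map_vs_bijection}: the compatibility of $\Psi_d$ with the sweep map converts the geometric area-like statistic from Proposition~\ref{prop:combinatorics_of_affine_paving_for_Hilbert_scheme} into the standard $\dinv$-statistic. A useful warm-up is the specialization $a=0$: there, by Cor.~\ref{cor:perverse_filtration_for_generic_planar_curve_singularities} together with \cite{MS13,MY14}, $L(q,t,0)$ reduces to the perverse-filtered Poincar\'e polynomial of $\overline{J}(C,0)$, and the identity collapses to the compactified-Jacobian case essentially treated in \cite{GMO25}.

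The main obstacle is the $a$-dependence. The affine paving delivers $L(q,t,a)$ in terms of a geometric $a$-grading that tracks the extra direction of the Hilbert tower arising from the standard representation factor of the GASF, while on the DAHA side the $a$-grading reflects a rather different filtration of the non-spherical polynomial representation. Reconciling these two graded structures — ensuring that, at every fixed power of $a$, the cellular weights coming from the paving coincide with the corresponding $(q,t)$-Catalan matrix coefficients, rather than just matching after summation — is where I expect the bulk of the technical work (and any genuinely new DAHA-side input) to lie.
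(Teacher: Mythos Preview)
The statement you are trying to prove is a \emph{conjecture}, and the paper does not prove it. There is no proof in the paper to compare your proposal against. The paper's contribution regarding this conjecture is much more modest: for generic $(C,0)$, it uses the affine paving to compute $L(q,t,0)$ explicitly (Corollary~\ref{cor:perverse_filtration_for_generic_planar_curve_singularities}), computes $\mH^{\mot}(q,t,0)$ directly from the cell decomposition of $\overline{J}(C,0)$, and thereby \emph{reduces} the $a=0$ specialization to a purely combinatorial identity (Conjecture~\ref{conj:combinatorial_two-variable_CDP_conjecture_for_generic_curve_singularity}). That combinatorial identity is then only \emph{verified in examples} by computer, not proved.

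Your plan therefore has a genuine gap even at $a=0$. You write that ``the identity collapses to the compactified-Jacobian case essentially treated in \cite{GMO25},'' but this is not correct: \cite{GMO25} supplies the affine paving of $\overline{J}(C,0)$ and the bijection with Dyck paths, which lets one write down $\mH^{\mot}(q,t,0)$ as a sum over $\Young(dn,dm)$; it does \emph{not} establish that this equals $L(q,t,0)$. The two sides are genuinely different-looking sums over Dyck paths (one involves $\hat{\epsilon}_D$ and $\hat{\rank}_D$ summed over boxes $z\in\overline{D}$; the other is a single monomial per $D$), and their equality is precisely what the paper isolates as the open Conjecture~\ref{conj:combinatorial_two-variable_CDP_conjecture_for_generic_curve_singularity}. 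Your proposed use of the sweep map via Proposition~\ref{prop:sweep_map_vs_bijection} does not by itself match these two expressions term by term, since the $L$-side has $|\overline{D}|$-many terms per Dyck path while the $\mH^{\mot}$-side has one. As for the full $a$-dependence, you correctly identify this as the main obstacle, but note that the paper makes no progress on it at all: the affine paving of $\Hilb^{[\bullet]}(C,0)$ does not stratify by the special rank $r_0$, so it does not directly yield the $a$-refined $L(q,t,a)$.
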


In the setting of a generic irreducible planar curve singularity $(C,0)$, our affine paving reduces the $(q,t)$-version of this conjecture to the following purely combinatorial statement:

\begin{conjecture}[Conj.~\ref{conj:combinatorial_two-variable_CDP_conjecture_for_generic_curve_singularity}]
\begin{equation}
 (1-t)\sum_{D\in\Young(dn,dm)}\sum_{z\in \overline{D}} q^{|D| - \dinv(D) + \hat{\epsilon}_D(z)} t^{|D| + \hat{\rank}_D(z)}
 =
 \sum_{D\in\Young(dn,dm)} q^{\codinv(D)} t^{\codinv(D) + \delta - |D|}.
 \end{equation}
\end{conjecture}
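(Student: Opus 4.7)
The plan is to prove this identity in two stages: first by translating both sides into generating functions over $(dn,dm)$-Dyck paths with a common structure, then by establishing equality via a sign-reversing involution or by reduction to the already-known compactified Jacobian case.

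In the first stage, I would unpack each side on $\Young(dn,dm)$. The LHS is the $(q,t)$-part of the Hilbert $L$-function $L(q,t,0)$ computed by the affine paving of Theorem~\ref{thm:main_result_1} and translated via the bijection $\Psi_d$: the inner sum over $z \in \overline{D}$ parametrizes the Hilbert degree $\tau$ (with $\tau - e(\Psi_d(D)) = z$), while the $(1-t)$ prefactor is the natural normalization that makes the generating function polynomial by absorbing the $\tau \gg 0$ stabilization of $\Hilb^{[\tau]}(C,0)$. The RHS is the motivic superpolynomial at $a=0$ expressed directly on Dyck paths, with the statistics $\codinv(D)$ and $\delta - |D|$ arising naturally via the sweep map. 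Both sides are therefore polynomials in $q,t$ indexed by the same combinatorial object, and the task reduces to a purely combinatorial symmetry.

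In the second stage, expand $(1-t) = 1-t$ and construct a sign-reversing involution $\sigma$ on a suitable subset of pairs $(D,z)$ that shifts $\hat{\rank}_D(z)$ by one while preserving $|D| - \dinv(D) + \hat{\epsilon}_D(z)$. The fixed points of $\sigma$ should correspond bijectively to $\Young(dn,dm)$, each contributing $q^{\codinv(D)} t^{\codinv(D) + \delta - |D|}$. Natural candidates for $\sigma$ include moving $z$ to an adjacent cell combined with a compensating modification of $D$ (e.g., swapping an up-step and a horizontal step that alters $\dinv$ by a controlled amount). As consistency checks, the case $d=1$ should recover a known rational $(n,m)$-Catalan identity (cf.~\cite{GMV20}), and the restriction to $|D| = \delta$ should reduce to the main identity of \cite{GMO25} for compactified Jacobians. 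Proposition~\ref{prop:sweep_map_vs_bijection} would serve as the bridge between the $\dinv$-statistic on the LHS and the $\codinv$-statistic on the RHS.

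The principal obstacle is the explicit construction of this involution. The local statistics $\hat{\epsilon}_D, \hat{\rank}_D$ are defined relative to a fixed Dyck path $D$, while $\dinv$ is a global invariant of $D$; coordinating these under a single involution is delicate. In the $d=1$ setting, analogous involutions arise from the Haglund-Loehr zeta map and its rational extensions, but their generalization to $d>1$ is nontrivial because $(dn,dm)$-Dyck paths admit cell patterns absent for $d=1$. An alternative route is to bypass the combinatorial involution and deduce the identity geometrically: identify the perverse filtration on $\overline{J}(C,0)$ (computable from our paving via \cite{MS13,MY14} and Corollary~\ref{cor:perverse_filtration_for_generic_planar_curve_singularities}) with the combinatorial filtration defined by the sweep map, then extend this correspondence from $\overline{J}(C,0)$ to the full Hilbert scheme $\Hilb^{[\bullet]}(C,0)$ using the $\mathbb{G}_m$-equivariant stratification that underlies the $(1-t)$ normalization.
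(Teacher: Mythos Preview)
The statement you are attempting to prove is labeled a \emph{conjecture} in the paper, and the paper does \emph{not} prove it. What the paper does is derive this identity as the purely combinatorial reformulation of the $(q,t)$-part of the Cherednik--Danilenko--Philipp conjecture $L(q,t,0) = \mH^{\mot}(q,t,0)$ for generic $(C,0)$: Corollary~\ref{cor:perverse_filtration_for_generic_planar_curve_singularities} computes the LHS as $L(q,t,0)$ via the affine paving, Theorem~\ref{thm:admissible_invariant_subsets_vs_Dyck_paths} computes the RHS as $\mH^{\mot}(q,t,0)$, and the conjecture is then verified numerically in examples (Remark following Conjecture~\ref{conj:combinatorial_two-variable_CDP_conjecture_for_generic_curve_singularity}). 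There is no proof in the paper to compare against.

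Your proposal is therefore a research plan rather than a proof, and you correctly flag the principal obstacle: no explicit sign-reversing involution is constructed. A few further issues with the plan as written. First, your ``alternative geometric route'' is circular: identifying the perverse filtration on $\overline{J}(C,0)$ with the sweep-map filtration and then extending to $\Hilb^{[\bullet]}(C,0)$ is precisely the content of $L(q,t,0) = \mH^{\mot}(q,t,0)$, which is the CDP conjecture you are trying to prove. Second, the proposed consistency check ``restriction to $|D| = \delta$'' does not isolate a meaningful subcase, since $|D| = \delta$ forces $D = \cR_{dn,dm}^+$ uniquely. Third, for $d=1$ the statement is not obviously a known rational Catalan identity from \cite{GMV20}; even that case appears to be open (it would follow from the ORS/CDP conjecture for torus knots, which is known via other methods, but a direct combinatorial proof of this particular identity is not cited in the paper). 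In short, the conjecture remains open and your proposal does not close it.
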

Finally, in view of \cite{CGHM24}, the lowest $a$-degree part of the ORS conjecture is likewise equivalent to this combinatorial conjecture.
See Section~\ref{subsec:connection_to_ORS_and_CDP} for more details.

\subsection*{Acknowledgements}
\addtocontents{toc}{\protect\setcounter{tocdepth}{1}}

This project was initiated following a talk by Ivan Cherednik at ICBS 2024 (BIMSA).
We are grateful to Eugene Gorsky for pointing out the relevance of \cite{CGHM24} to our work, and to Vivek Shende and Penghui Li for helpful discussions.
The first author gratefully acknowledges the hospitality of the Department of Mathematics at the National University of Singapore during a visit, and especially thanks Professor Lei Zhang for the invitation. He was supported by BJNSF No. 1244042 and NSFC No. 12401013.

\addtocontents{toc}{\protect\setcounter{tocdepth}{2}}

\section{Preliminaries}

\subsection{Planar curve singularities}

Let $\field=\mathbb{C}$ be the base field.
Consider a reduced irreducible planar curve singularity $(C,0)$, which after an analytic coordinate change, admits a parametrization of the form (see \cite[I.Cor.~3.8]{GLS07}):
\begin{equation}\label{eqn:puiseux_parametrization}
x(t) = t^{dn},\quad y(t) = t^{dm} + \text{ h.o.t. },\quad m > n,~~\gcd(n,m) = 1.
\end{equation}
Here, `h.o.t.' stands for `higher order terms'.

The local defining equation of $(C,0)$ can be written as $f(x,y)=0$, with (see \cite[Prop.~I.3.4]{GLS07})
\[
f(x,y):= \prod_{j=0}^{dn-1}(y - y(\zeta_{dn}^jx^{\frac{1}{dn}}))\in \field\{x\}[y],\quad \zeta_{dn}:= e^{\frac{2\pi\sqrt{-1}}{dn}},
\]
which is monic and irreducible in $y$ of degree $dn$.
The relevant reductive group will be $G=GL_{dn}$.

Let $R=\hat{\cO}_{C,0}=\field[[x(t),y(t)]]$ be the completed local ring, with normalization $\tilde{R}=\field[[t]]$ and quotient field $E=\field((t))$.
Set $\cO = \cO_F =\field[[x=t^{dn}]]$ with fraction field $F=\field((x))$.
Then $R$ is a free $\cO$-module of rank $dn$:
\[
R = \oplus_{j=0}^{dn-1}\cO \cdot y^j.
\]

Let $\gamma\in \mathfrak{gl}_{dn}(F)$ denote the matrix of multiplication by $y$ relative to the basis $(1,y,\dots,y^{dn-1})$. Explicitly,
\begin{equation}
\gamma = \left(\begin{array}{cccc}
0 & \hdots & 0 & a_0\\
1 & \ddots & \vdots & a_1\\
 & \ddots & 0 & \vdots\\
  & & 1 & a_{dn-1}
\end{array}\right),\quad f(x,y) = y^{dn} - a_{dn-1}y^{dn-1} - \cdots - a_0,\quad a_k(x) \in x\field[[x]].
\end{equation}

\begin{definition}[{\cite[Thm.~1.1]{GMO25}}]\label{def:generic_planar_curve_singularities}
The planar curve singularity $(C,0)$ is called \textbf{generic}\footnote{See Remark \ref{rem:finite_determinancy_of_planar_curve_singularities} for a justification.} if it admits a parametrization:
\[
x(t) = t^{dn},\quad y(t) = t^{dm} + \lambda t^{dm+1} + \text{ h.o.t. },\quad \lambda\in\field^{\times}.
\]
\end{definition}
\noindent{}\textbf{Note}: For $d = 1$, up to a re-parametrization, this is the quasi-homogeneous singularity $(\{y^n = x^m\},0)$.

\subsection{Hilbert schemes of points}

We briefly recall the notion of generalized affine Springer fibers (GASF).  
As above, let $\cO=\field[[x]]$ and $F=\field((x))$.  
For a reductive group $G$ over $\field$ and a finite-dimensional representation $V$ of $G$, the affine Grassmannian is $\GR_G := G(F)/G(\cO)$.

\begin{definition}
For $\basevector\in V(F)$, the \emph{generalized affine Springer fiber (GASF)} associated with $(\basevector,G,V)$ is the closed sub-ind-scheme of $\GR_G$ defined by
\[
\Spr_\basevector(A) := 
\{g\in G(A((t))) \mid g^{-1}\cdot \basevector \in V(A[[t]])\}/G(A[[t]]),\quad A\in \field\text{-alg}.
\]
\end{definition}

When $V=\fg$ is the adjoint representation, $\Spr_\basevector$ reduces to the usual affine Springer fiber (ASF).

\begin{definition}
The (punctual) Hilbert scheme of $\tau$ points on $(C,0)$ is the moduli space
\[
\Hilb^{[\tau]}(C,0)(A) := \{\text{ideals of colength $\tau$ in } A[[x,y]]/(f)\},\quad A\in\field\text{-alg}.
\]
The full Hilbert scheme is
\[
\Hilb^{[\bullet]}(C,0) := \sqcup_{\tau\geq 0} \Hilb^{[\tau]}(C,0).
\]
\end{definition}

\begin{lemma}[{\cite[Thm.~3.3]{GK23}}]
Let $G=GL_{dn}$ and $V:=\fg\oplus\field^{dn}$, the direct sum of the adjoint and standard representations.  
Set $\basevector:=(\gamma^{\transpose},e_{dn})$, with $e_{dn}=(0,\dots,0,1)^{\transpose}\in\field^{dn}$. Then
\[
\Spr_\basevector \cong \Hilb^{[\bullet]}(C,0).
\]
\end{lemma}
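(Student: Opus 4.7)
The plan is to translate the group-theoretic GASF $\Spr_\basevector$ into a lattice problem, use the cyclic-vector structure of $F^{dn}$ under $\gamma^\transpose$ to reformulate it in terms of fractional $R$-ideals, and then invoke Gorenstein duality for the planar singularity $(C,0)$ to match with ideals of $R$. To start, I would identify $\GR_{GL_{dn}}(A)$ with the $A[[x]]$-lattices $L \subset A((x))^{dn}$, so that a coset $gG(\cO)$ corresponds to $L := g \cdot \cO^{dn}$. Unwinding the defining condition $g^{-1} \cdot \basevector \in V(\cO)$ for $\basevector = (\gamma^\transpose, e_{dn})$, the adjoint factor contributes $\gamma^\transpose L \subset L$ and the standard factor contributes $e_{dn} \in L$; this reformulation is manifestly functorial in the test $\field$-algebra $A$.

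The key technical step is the cyclic-vector identification. Since $\gamma^\transpose$ is the transpose of the companion matrix of the monic polynomial $f(x,y) \in \cO[y]$ of degree $dn$, one computes $\gamma^\transpose e_1 = a_0 e_{dn}$ and $\gamma^\transpose e_j = e_{j-1} + a_{j-1} e_{dn}$ for $j \geq 2$. By induction,
\[
(\gamma^\transpose)^k e_{dn} = e_{dn-k} + (\text{higher $\cO$-linear terms in } e_{dn-k+1},\dots,e_{dn}), \qquad 0 \le k \le dn-1,
\]
so these vectors form an $\cO$-basis of $\cO^{dn}$. Consequently the $F$-linear map $\Phi : F^{dn} \to E$ sending $(\gamma^\transpose)^k e_{dn} \mapsto y^k$ (for $0 \le k \le dn-1$) is an isomorphism intertwining $\gamma^\transpose$ with multiplication by $y$, sending $e_{dn} \mapsto 1$ and carrying $\cO^{dn}$ onto $R$. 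Under $\Phi$, the $\gamma^\transpose$-stable $\cO$-lattices $L \subset F^{dn}$ correspond bijectively to the fractional $R$-ideals $M \subset E$, and the condition $e_{dn} \in L$ translates to $1 \in M$, i.e., $R \subset M$; in particular every such $L$ automatically contains $\cO^{dn}$.

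Finally, to pass from overmodules of $R$ to ideals of $R$ I invoke the Gorenstein property of the planar singularity $(C,0)$: the assignment $M \mapsto R:M := \{x \in E : xM \subset R\}$ restricts to a bijection $\{M : R \subset M\} \xrightarrow{\sim} \{I : I \subset R\}$, with length-matching $\dim_{\field} M/R = \dim_{\field} R/(R:M)$ (since the two modules are Matlis dual). Composing with the previous step produces the desired isomorphism $\Spr_\basevector \cong \Hilb^{[\bullet]}(C,0)$, compatible with the natural grading by $\tau$, and the pointwise bijection promotes to an isomorphism of ind-schemes because every construction above is algebraic and functorial in the test algebra $A$.

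The most subtle point is not the existence of a cyclic vector for $\gamma^\transpose$ per se, but rather the fact that the induced identification $\Phi$ sends $\cO^{dn}$ onto $R$ on the nose; this is precisely what pins down the particular choice $(\gamma^\transpose, e_{dn})$, and any naive variation (e.g.\ using $\gamma$ and $e_1$) would only yield an isomorphism after an auxiliary twist. The length-matching in the Gorenstein step is the other place where the planarity (and hence Gorensteinness) of $(C,0)$ is essential.
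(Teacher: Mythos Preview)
The paper does not prove this lemma; it is cited from \cite{GK23} without argument, so there is no in-paper proof to compare against. Your reconstruction is correct and follows the natural route: translate the GASF condition into $\gamma^\transpose$-stable $\cO$-lattices $L$ containing $e_{dn}$, use the cyclic-vector basis $\{(\gamma^\transpose)^k e_{dn}\}_{k=0}^{dn-1}$ (unitriangular over $\{e_j\}$ with $\cO$-entries, hence an $\cO$-basis of $\cO^{dn}$) to identify these with fractional $R$-ideals $M \supset R$, and then apply Gorenstein duality $M \mapsto M^{-1}$ to reach ideals $I \subset R$ with $\dim_\field R/I = \dim_\field M/R$. The paper itself later invokes exactly this duality as the algebraic involution $\iota$ on $\overline{J}(C,0)$, so your appeal to its functoriality is consistent with the surrounding framework.

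One small correction to your closing parenthetical: the variant $(\gamma, e_1)$ does \emph{not} require an auxiliary twist. Under the tautological map $e_k \mapsto y^{k-1}$ (which already sends $\cO^{dn}$ to $R$ and intertwines $\gamma$ with multiplication by $y$), the conditions $\gamma L \subset L$ and $e_1 \in L$ again cut out precisely the fractional $R$-ideals containing $R$---the same picture, still requiring the Gorenstein step. The choices that genuinely go wrong are the mixed ones such as $(\gamma^\transpose, e_1)$, where $e_1$ fails to be an $\cO$-cyclic vector for $\gamma^\transpose$ (the orbit immediately picks up the non-unit factor $a_0 \in x\cO$).
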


This explains the terminology in Theorem \ref{thm:main_result_1}, except for the affine paving.  
\textbf{From now on}, by abuse of notation, we identify a $\field$-scheme $X$ with its set of $\field$-points $X(\field)$.

\subsection{Semigroup and semimodules of valuations}

Recall that 
\[
\delta = \delta(R):=\dim \tilde{R}/R < \infty \quad \text{(\textbf{$\delta$-invariant})}.
\] 
Since $R$ is Gorenstein,
\[
c(R):= \min\{c\in\mathbb{Z}\mid t^c\tilde{R}\subset R\} = 2\delta \quad \text{(\textbf{conductor})}.
\]

Let $\ord_t:E\to \mathbb{Z}\sqcup\{\infty\}$ be the $t$-adic valuation.  

\begin{definition}
For a nonzero finitely generated $R$-submodule $J\subset E$, set
\[
\ord_t(J):=\{\ord_t(z): z\in J\setminus\{0\}\}\subset\mathbb{Z}.
\]
In particular,
\begin{equation}
\Gamma:=\ord_t(R)\subset\mathbb{N}
\end{equation}
is a \textbf{semigroup} of valuations, and $\ord_t(J)$ is a $\Gamma$-\textbf{semimodule}\footnote{Here a $\Gamma$-semimodule $\Delta\subset\mathbb{Z}$ means: $a\in\Gamma$, $b\in\Delta$ $\Rightarrow$ $a+b\in\Delta$.}.
\end{definition}

For any nonempty $\Gamma$-semimodule $S\subset\mathbb{Z}$, we have $\min S\in\mathbb{Z}$ $\Leftrightarrow$ $S\neq \mathbb{Z}$, since $s\in S$ implies
\[
s+2\delta+\mathbb{N}\subset s+\Gamma\subset S.
\]
In particular, this is the case for $J\subset E$ as above:
Say, $J = Rz_1+\cdots + Rz_r$ with $z_i \in E^{\times}$. Then
\[
\ord_t(z)\geq \min_i\ord_t(z_i) > -\infty,
\]
for any $z=\sum_i a_i z_i\in J$, $a_i\in R$.

\begin{definition}
For a nonempty $\Gamma$-semimodule $\Delta\subsetneq\mathbb{Z}$, define the \textbf{conductor}
\[
c(\Delta):=\min\{c\in\mathbb{Z}\mid c+\mathbb{N}\subset\Delta\}.
\]
Clearly $\min\Delta\leq c(\Delta)\leq \min\Delta+2\delta$.  
If $\Delta\subset\mathbb{N}$, define the \textbf{$\delta$-invariant}
\[
\delta(\Delta):=|\mathbb{N}\setminus\Delta|.
\]
Finally, $\Delta$ is called \textbf{$0$-normalized} if $\min\Delta=0$.
\end{definition}

By definition, for a nonzero finitely generated $R$-submodule $J\subset E$, we have
\[
c(\ord_t(J))=c(J):=\min\{c\in\mathbb{Z}\mid t^c\tilde{R}\subset J\}.
\]
If $J\subset\tilde{R}$, then $\delta(\ord_t(J))=\delta(J):=\dim \tilde{R}/J$.  
In particular $\delta(\Gamma)=\delta$ and $c(\Gamma)=c(R)=2\delta$.

\begin{remark}\label{rem:finite_determinancy_of_planar_curve_singularities}
In the parametrization \eqref{eqn:puiseux_parametrization} for $(C,0)$, by a further formal coordinate change $(x,y)\mapsto(x,y+h(x,y))$ with $h\in \langle x,y\rangle^2\subset\field[[x,y]]$, we may assume
\[
y(t)=t^{dm}+\sum_{\substack{j\geq 1 \\ dm+j\notin \Gamma}} \lambda_j t^{dm+j},\quad \lambda_j\in\field.
\]
In particular, $\lambda_j\neq 0\Rightarrow dm+j\leq 2\delta-1$. This does not affect $R$.

For fixed $\delta$, among parametrizations
\[
x(t)=t^{dn},\quad y(t)=t^{dm}+\sum_{1\leq j\leq 2\delta-1}\lambda_j t^{dm+j},
\]
the \emph{generic} singularities (those with $\lambda=\lambda_1\neq 0$) form a Zariski open dense subset.  
This justifies Definition \ref{def:generic_planar_curve_singularities}.
\end{remark}

\begin{lemma}\label{lem:estimates_for_delta-invariant_and_conductor}
For any nonempty $\Gamma$-semimodule $\Delta\subset\mathbb{N}$:
\[
c(\Delta)\leq \delta+\delta(\Delta).
\]
If $\Delta$ is $0$-normalized then
\[
\delta(\Delta)\leq \delta,
\]
with equality $\delta(\Delta)=\delta$ iff $\Delta=\Gamma$ iff $c(\Delta)=\delta+\delta(\Delta)$.
\end{lemma}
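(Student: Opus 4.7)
The plan is to isolate the easy content first, then run a single counting argument that pins down both the conductor bound and its equality case. The bound $\delta(\Delta) \le \delta$ for $0$-normalized $\Delta$ is essentially free: since $0 \in \Delta$ and $\Delta$ is a $\Gamma$-semimodule, $\Gamma = 0 + \Gamma \subset \Delta$, so $\mathbb{N}\setminus \Delta \subset \mathbb{N}\setminus \Gamma$, and taking cardinalities gives the bound. Moreover $\delta(\Delta) = \delta$ forces $\mathbb{N}\setminus\Delta = \mathbb{N}\setminus\Gamma$, hence $\Delta = \Gamma$; the converse is obvious, as is the implication $\Delta = \Gamma \Rightarrow c(\Delta) = 2\delta = \delta + \delta(\Delta)$, using $c(R) = 2\delta$.

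For the conductor bound in the general (not necessarily $0$-normalized) case, I first reduce to the $0$-normalized setting. Setting $e := \min \Delta$ and $\Delta' := \Delta - e$, the translate $\Delta'$ is a $0$-normalized $\Gamma$-semimodule with $c(\Delta') = c(\Delta) - e$ and $\delta(\Delta') = \delta(\Delta) - e$ (the latter because translation turns the interval $[0, e-1]$ into genuine gaps of $\Delta$). Hence $c(\Delta) - \delta(\Delta)$ is translation-invariant, and the inequality $c(\Delta) \le \delta + \delta(\Delta)$ reduces to its $0$-normalized analogue.

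Now assume $\Delta$ is $0$-normalized with $\Delta \neq \mathbb{N}$, and set $M := c(\Delta) - 1 = \max(\mathbb{N}\setminus\Delta)$. The key observation is: for every $s \in \Delta \cap [0, M-1]$ we have $M - s \notin \Gamma$, since otherwise $M = s + (M-s) \in \Delta + \Gamma \subset \Delta$, contradicting $M \notin \Delta$. This produces an injection
\[
\Delta \cap [0, M-1] \hookrightarrow (\mathbb{N}\setminus\Gamma) \cap [1, M], \qquad s \mapsto M - s.
\]
Since $M$ is the largest gap of $\Delta$, there are exactly $\delta(\Delta) - 1$ gaps of $\Delta$ strictly below $M$, so $|\Delta \cap [0, M-1]| = M - \delta(\Delta) + 1$, while the right-hand side has size at most $|\mathbb{N}\setminus\Gamma| = \delta$. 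The bound $c(\Delta) = M + 1 \le \delta + \delta(\Delta)$ follows.

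For the remaining implication $c(\Delta) = \delta + \delta(\Delta) \Rightarrow \Delta = \Gamma$ (in the $0$-normalized case): equality in the chain above forces $|(\mathbb{N}\setminus\Gamma) \cap [1, M]| = \delta$, so every gap of $\Gamma$ lies in $[1, M]$, and in particular $M \ge 2\delta - 1$. Since $\Delta \supset \Gamma$ also gives $c(\Delta) \le c(\Gamma) = 2\delta$, i.e., $M \le 2\delta - 1$, we conclude $M = 2\delta - 1$, whence $\delta(\Delta) = M + 1 - \delta = \delta$ and $\Delta = \Gamma$ by the first step. I do not anticipate any serious obstacle; the mildly delicate point is simply bookkeeping the counts so that equality in the conductor bound collapses all slack and forces $\Delta = \Gamma$.
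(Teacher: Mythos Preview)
Your proof is correct and uses essentially the same reflection trick as the paper: the paper maps $a\in\Gamma\cap[0,c(\Delta)-1]$ to the $\Delta$-gap $c(\Delta)-1-a$, while you map $s\in\Delta\cap[0,M-1]$ to the $\Gamma$-gap $M-s$, which is the dual formulation of the same count. One minor streamlining: the paper proves the conductor bound directly for arbitrary $\Delta\subset\mathbb{N}$ without your translation reduction to the $0$-normalized case, since the injection $a\mapsto c(\Delta)-1-a$ already lands in $[0,c(\Delta)-1]\setminus\Delta$, which has size exactly $\delta(\Delta)$.
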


\begin{proof}
Since $c(\Delta)-1\notin\Delta$ and $\Delta$ is a $\Gamma$-semimodule, we have
\[
a\in [0,c(\Delta)-1]\cap\Gamma \ \Rightarrow\ c(\Delta)-1-a\notin\Delta.
\]
Hence
\[
\delta(\Delta)=|[0,c(\Delta)-1]\setminus\Delta|\geq |[0,c(\Delta)-1]\cap\Gamma|\geq c(\Delta)-\delta,
\]
which gives $c(\Delta)\leq \delta+\delta(\Delta)$.

If $\Delta$ is $0$-normalized, then $\Gamma\subset\Delta$ and $\delta(\Delta)\leq |\mathbb{N}\setminus\Gamma|=\delta$. Equality holds iff $\Delta=\Gamma$.  
Finally, the condition $c(\Delta)=\delta+\delta(\Delta)$ forces $c(\Delta)=2\delta$ and $\delta(\Delta)=\delta$, hence $\Delta=\Gamma$. The converse is immediate.
\end{proof}

\begin{definition}\label{def:semimodule_for_compactified_Jacobian}
For any $0$-normalized $\Gamma$-semimodule $\Delta\subset\mathbb{N}$, set
\[
e(\Delta):=\delta-\delta(\Delta)\in[0,\delta],\qquad 
\overline{\Delta}:=e(\Delta)+\Delta\subset e(\Delta)+\mathbb{N}.
\]
\end{definition}

In particular, $\min\overline{\Delta} = e(\Delta)$, and
\begin{equation}
\delta(\overline{\Delta}) = |\mathbb{N} - \overline{\Delta}| = e(\Delta) + |(e(\Delta)+\mathbb{N}) - (e(\Delta)+\Delta)| = e(\Delta) + \delta(\Delta) = \delta;\quad
c(\overline{\Delta}) = c(\Delta) + e(\Delta) \leq 2\delta.
\end{equation}

Conversely, any $\Gamma$-semimodule $\overline{\Delta}\subset\mathbb{N}$ with $\delta(\overline{\Delta})=\delta$ arises as $\overline{\Delta}=e(\Delta)+\Delta$ for a unique $0$-normalized $\Delta$, with $e(\Delta)=\min\overline{\Delta}$.

\begin{definition}\label{def:dn-generators}
For any bounded below $dn$-invariant subset $\Delta \subset \mathbb{Z}$, define $A(\Delta)$ as the set of \textbf{$dn$-generators} of $\Delta$. That is,
\[
\Delta = \sqcup_{a\in A(\Delta)} a + dn\mathbb{N}.
\]
\end{definition}

\subsection{(Local) compactified Jacobians}

Let $0\neq J\subset E$ be a finitely generated $R$-submodule. We record two basic properties:

\begin{enumerate}[wide,labelwidth=!,labelindent=0pt,itemindent=!,label=(\roman*)]
\item
$J$ is a \emph{torsion-free coherent $R$-module of (generic) rank $1$}.

Indeed, set $S:=R\setminus\{0\}$ so that $E=S^{-1}R$. Since $0\neq J\subset E$, we have
\[
0\neq J \subset S^{-1}J \subset E.
\]
But $S^{-1}J$ is a nonzero $E$-vector space, hence equal to $E$. 
Done.

\item
\emph{$J$ is a free $\cO_F$-module of rank $dn$}. 

Write $J=t^{-N}J_0$ for some $N\geq 0$ and $J_0\subset\tilde{R}$ a nonzero $R$-submodule.  
Since $\cO_F$ is a PID, $J_0$ is a free $\cO_F$-module of finite rank. Moreover,
\[
\delta(J_0) = \delta(\ord_t(J_0)) = \dim_{\field}\tilde{R}/J_0 = |\mathbb{N}-\ord_t(J_0)|\leq c(J_0) < \infty,
\]
so $\tilde{R}/J_0$ is a finitely generated torsion $\cO_F$-module. 
Tensoring with $F$ gives $F\otimes_{\cO_F}\tilde{R}/J_0=0$. 
Applying the exact functor $F\otimes_{\cO_F}-$ to 
\[
0\to J_0\to \tilde{R}\to \tilde{R}/J_0\to 0
\]
yields $F\otimes_{\cO_F}J_0 \isomorphic F\otimes_{\cO_F}\tilde{R}=\oplus_{j=0}^{dn-1}F\cdot t^j$.  
Hence $J_0$, and therefore $J$, is a free $\cO_F$-module of rank $dn$.
\end{enumerate}

\begin{definition}
The \textbf{(local) compactified Jacobian} of $(C,0)$ is
\[
\overline{J}(C,0):=\{ J\subset E \mid J \text{ an $R$-submodule with } J\subset t^{-N}\tilde{R},~ \dim t^{-N}\tilde{R}/J = \dim t^{-N}\tilde{R}/R,~N\gg 0\}.
\]
\end{definition}

\noindent
The terminology is explained as follows:
\begin{itemize}[wide,labelwidth=!,labelindent=0pt,itemindent=!]
\item
For $J\in\overline{J}(C,0)$ and $J\subset t^{-N}\tilde{R}$, the Noetherian property ensures that $J$ is finitely generated over $R$.

\item
By (i) above, $J$ can be viewed as a torsion-free rank $1$ coherent sheaf on $\Spec R$, of \emph{degree $0$}, where
\[
\deg(J):=\dim t^{-N}\tilde{R}/J - \dim t^{-N}\tilde{R}/R=0,
\]
independently of $N\gg 0$ (see Lemma \ref{lem:degree_of_a_lattice}).
\end{itemize}

\begin{lemma}\label{lem:compactified_Jacobian}
For $J\in\overline{J}(C,0)$,
\[
t^{2\delta}\tilde{R} \subset J \subset \tilde{R},\qquad \dim \tilde{R}/J=\delta.
\]
Equivalently,
\[
\overline{J}(C,0) = \{ J\subset\tilde{R} \mid J \text{ an $R$-submodule with } \dim\tilde{R}/J=\delta\}.
\]
\end{lemma}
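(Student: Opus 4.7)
The plan is to reduce everything to the combinatorics of the $\Gamma$-semimodule $\Delta := \ord_t(J) \subset \mathbb{Z}$ and then apply Lemma~\ref{lem:estimates_for_delta-invariant_and_conductor}. Let $\minval := \min \Delta$ and let $\Delta_0 := \Delta - \minval$ be the associated $0$-normalized $\Gamma$-semimodule in $\mathbb{N}$. Filtering $t^{-N}\tilde{R}/J$ by the images of $t^k \tilde{R}$ for $k \geq -N$, the graded piece at index $k$ is $\field$-one-dimensional when $k \notin \Delta$ and zero when $k \in \Delta$; since $t^{c(\Delta)}\tilde{R} \subset J$ by definition of the conductor, only indices $k \in [-N, c(\Delta)-1]$ can contribute, and a direct count yields the key identity
\[
\dim t^{-N}\tilde{R}/J \;=\; (N + \minval) + \delta(\Delta_0).
\]

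Combining this with the hypothesis $\dim t^{-N}\tilde{R}/J = \dim t^{-N}\tilde{R}/R = N + \delta$ forces $\minval + \delta(\Delta_0) = \delta$. Since Lemma~\ref{lem:estimates_for_delta-invariant_and_conductor} yields $\delta(\Delta_0) \leq \delta$ for the $0$-normalized semimodule $\Delta_0$, I conclude $\minval \geq 0$, i.e., $J \subset \tilde{R}$; specializing the key identity to $N = 0$ then gives $\dim \tilde{R}/J = \minval + \delta(\Delta_0) = \delta$. For the inclusion $t^{2\delta}\tilde{R} \subset J$, Lemma~\ref{lem:estimates_for_delta-invariant_and_conductor} also provides $c(\Delta_0) \leq \delta + \delta(\Delta_0)$, so
\[
c(\Delta) \;=\; c(\Delta_0) + \minval \;\leq\; \delta + \delta(\Delta_0) + \minval \;=\; 2\delta,
\]
whence $t^{2\delta}\tilde{R} \subset t^{c(\Delta)}\tilde{R} \subset J$.

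The ``equivalently'' reformulation then follows easily: the reverse inclusion is trivial, since any $R$-submodule $J \subset \tilde{R}$ with $\dim \tilde{R}/J = \delta$ satisfies the defining conditions of $\overline{J}(C,0)$ upon taking $N = 0$. I do not anticipate any single hard step here. The main piece of work is the graded-piece count equating $\dim t^{-N}\tilde{R}/J$ with the combinatorial quantity $(N + \minval) + \delta(\Delta_0)$, but once one has the conductor inclusion $t^{c(\Delta)}\tilde{R} \subset J$, this is routine bookkeeping; the principal leverage - the bound $\delta(\Delta_0) \leq \delta$ which pins down $\minval \geq 0$ - is supplied directly by Lemma~\ref{lem:estimates_for_delta-invariant_and_conductor}.
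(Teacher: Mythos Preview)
Your proof is correct and follows essentially the same approach as the paper: both reduce to the valuation semimodule, compute $\dim t^{-N}\tilde{R}/J = N + \minval + \delta(\Delta_0)$ via a gap count, invoke Lemma~\ref{lem:estimates_for_delta-invariant_and_conductor} to force $\minval \geq 0$ from $\delta(\Delta_0)\leq\delta$, and then use the conductor bound $c(\Delta_0)\leq\delta+\delta(\Delta_0)$ from the same lemma to obtain $c(\Delta)\leq 2\delta$. The only cosmetic difference is that the paper introduces the auxiliary module $J_0 = z^{-1}J$ rather than working purely with the shifted semimodule $\Delta_0$, but the logic is identical.
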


\begin{proof}
Let $v_m:=\min\ord_t(J)$. Choose $z\in J$ with $\ord_t(z)=v_m$ and set $J_0:=z^{-1}J$. 
Then $\ord_t(J_0)=\ord_t(J)-v_m$ is a $0$-normalized $\Gamma$-semimodule, so $J_0\subset\tilde{R}$.  
By Lemma \ref{lem:estimates_for_delta-invariant_and_conductor}, $\delta(J_0)\leq \delta$.

Suppose $J\subset t^{-N}\tilde{R}$ with $N\gg 0$. Then
\[
\dim(t^{-N}\tilde{R}/J) = |(-N + \mathbb{N}) \setminus (v_m+\ord_t(J_0))| = |(-N-v_m+\mathbb{N})\setminus \ord_t(J_0)| = N+v_m + \delta(J_0) \leq N + v_m + \delta.
\]
But by assumption this dimension equals $\dim(t^{-N}\tilde{R}/R)=N+\delta$, hence $v_m\geq 0$. Thus $J\subset\tilde{R}$.  

Now, $\ord_t(J)\subset\mathbb{N}$ is a nonzero $\Gamma$-semimodule with $\delta(\ord_t(J))=\delta(J)=\delta$. By Lemma \ref{lem:estimates_for_delta-invariant_and_conductor}, $c(J)\leq 2\delta$, i.e. $t^{2\delta}\tilde{R}\subset J$.
\end{proof}

Recall that $E = K(R) = \oplus_{j=0}^{dn-1} F y^j$. 
For $G = GL_{dn}$, the map 
\[
[g] = gG(\cO) \mapsto (1,y,\cdots,y^{dn-1})g\cO^{dn}
\] 
induces a natural identification
\[
\GR_G \isomorphic \{\Lambda: \Lambda \text{ is an } \cO\text{-lattice in $E$, i.e., finitely generated (projective) $\cO$-submodule s.t. $F\otimes_{\cO}\Lambda = E$} \}.
\]
Moreover, the determinant $\det:G=GL_{dn} \rightarrow GL_1 = \mathbb{G}_m$ induces
\begin{equation}
\det:\GR_G \rightarrow \GR_{\mathbb{G}_m} = F^{\times}/\cO^{\times} \isomorphic \mathbb{Z}~~(\text{non-reduced}): [g] \mapsto [\det g] \isomorphic \ord_x\det(g).
\end{equation}
For clarity, \textbf{set} 
\begin{equation}
\deg([g]):= \ord_x\det(g) = \frac{1}{dn}\ord_t\det(g) \in \mathbb{Z}.
\end{equation}

\begin{lemma}\label{lem:degree_of_a_lattice}
Let $[g]\in\GR_G$ with associated lattice $\Lambda=(1,y,\dots,y^{dn-1})g\cO^{dn}$. Then
\[
\deg(\Lambda):=\deg([g])=\dim t^{-N}\tilde{R}/\Lambda - \dim t^{-N}\tilde{R}/R,\qquad N\gg 0.
\]
Moreover, $\deg(t\Lambda)=\deg(\Lambda)+1$.
\end{lemma}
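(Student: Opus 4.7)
The plan is to recast both sides in terms of a relative-index invariant for $\cO$-lattices in $E$ and then reduce to a diagonal computation via the Cartan decomposition.

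First I would check well-definedness. On the LHS, $\ord_x\det$ descends to $\GR_G$ since $\det h\in\cO^\times$ for $h\in G(\cO)$. On the RHS, independence of $N$ (among $N$ with $\Lambda\subset t^{-N}\tilde{R}$) follows from $\dim(t^{-N'}\tilde{R}/t^{-N}\tilde{R})=N'-N$: for $N'\ge N$ this correction appears in both $\dim(t^{-N'}\tilde{R}/\Lambda)$ and $\dim(t^{-N'}\tilde{R}/R)$ and cancels in the difference.

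Next, I would introduce, for any two $\cO$-lattices $\Lambda_1,\Lambda_2\subset E$, the \emph{relative index}
\[
[\Lambda_1:\Lambda_2]:=\dim(\Lambda_1/L)-\dim(\Lambda_2/L)
\]
for any $\cO$-lattice $L\subset\Lambda_1\cap\Lambda_2$ (e.g.\ $L=\Lambda_1\cap\Lambda_2$). A routine check shows this is $L$-independent, additive under transitivity, invariant under $GL_{dn}(F)$-translation, and reduces to the ordinary colength when $\Lambda_2\subset\Lambda_1$. Applied to an $L\subset\Lambda\cap R$ contained in $t^{-N}\tilde{R}$, additivity gives
\[
\dim(t^{-N}\tilde{R}/\Lambda)-\dim(t^{-N}\tilde{R}/R)=[R:\Lambda],
\]
so the first claim reduces to $[R:\Lambda]=\ord_x\det g$. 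By the Cartan decomposition, write $g=k_1\diag(x^{a_1},\dots,x^{a_{dn}})k_2$ with $k_i\in G(\cO)$; since $\Lambda$ depends only on $gG(\cO)$, I may take $g=k_1\diag(x^{a_*})$. The vectors $u_j:=(1,y,\dots,y^{dn-1})k_1 e_j$ then form an $\cO$-basis of $R$, and $\Lambda=\bigoplus_j \cO\,x^{a_j}u_j$; comparison inside the enclosing lattice $x^{-C}R$ with $C\ge\max_j|a_j|$ yields $[R:\Lambda]=\sum_j a_j=\ord_x\det g$.

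For the last claim, multiplication by $t\in E^\times$ gives a $\field$-linear isomorphism $t^{-N}\tilde{R}/\Lambda\xrightarrow{\sim} t^{-N+1}\tilde{R}/t\Lambda$, whence
\[
\dim(t^{-N}\tilde{R}/t\Lambda)=\dim(t^{-N}\tilde{R}/t^{-N+1}\tilde{R})+\dim(t^{-N+1}\tilde{R}/t\Lambda)=1+\dim(t^{-N}\tilde{R}/\Lambda);
\]
subtracting $\dim(t^{-N}\tilde{R}/R)$ and invoking the first part gives $\deg(t\Lambda)=\deg(\Lambda)+1$. I do not expect any real obstacle: once phrased through the relative index, Cartan turns the main identity into a one-line diagonal computation, and the only mild care needed is the standard $L$-independence and additivity of the relative index.
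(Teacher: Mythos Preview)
Your proof is correct and rests on the same underlying fact as the paper's: for $\cO$-lattices over the PID $\cO$, colength equals the $x$-valuation of the determinant of the transition matrix. The paper's execution is a touch more direct than yours: it fixes an $\cO$-basis $(z_i)$ of the common overlattice $t^{-N}\tilde{R}$, writes $(1,y,\dots,y^{dn-1})=(z_i)h$ with $h\in\fg(\cO)$, and then the structure theorem gives $\dim t^{-N}\tilde{R}/R=\ord_x\det h$ and $\dim t^{-N}\tilde{R}/\Lambda=\ord_x\det(hg)$, so the difference is $\ord_x\det g$ in one line---no relative-index formalism and no Cartan decomposition needed. Your route is a bit more conceptual (the relative index makes well-definedness and additivity transparent, and Cartan makes the diagonal computation explicit), at the cost of a little extra setup; the paper's is shorter but invokes the structure theorem as a black box. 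For the second claim both arguments are essentially identical: the paper phrases it via $\ord_t$ counting, you via the $t$-multiplication isomorphism $t^{-N}\tilde{R}/\Lambda\cong t^{-N+1}\tilde{R}/t\Lambda$.
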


\begin{proof}
Choose $N\gg 0$ such that $\Lambda,R\subset t^{-N}\tilde{R}$. 
Write $t^{-N}\tilde{R} = \oplus_{i=0}^{dn-1}\cO\cdot z_i$ and $(1,y,\cdots,y^{dn-1}) = (z_0,z_1,\cdots,z_{dn-1})h$, $h\in \fg(\cO)$.
Then $(1,y,\cdots,y^{dn-1})g = (z_0,z_1,\cdots,z_{dn-1})hg$ is an $\cO$-basis of $\Lambda$.
As $\cO$ is a PID, by the structure theorem for finitely generated $\cO$-modules, we have
\[
\dim t^{-N}\tilde{R}/R = \ord_x\det(h);\quad \dim t^{-N}\tilde{R}/\Lambda = \ord_x\det(hg) = \ord_x\det(h) + \ord_x\det(g).
\]
The first claim follows.  

For the second, note $t\Lambda \subset t^{1-N}\tilde{R} \subset t^{-N}\tilde{R}$, and $\ord_t(t\Lambda) = 1 + \ord_t(\Lambda)$, so
\[
\dim t^{-N}\tilde{R}/t\Lambda = |(-N+\mathbb{N})\setminus \ord_t(t\Lambda)| = |(-N-1 + \mathbb{N})\setminus \ord_t\Lambda| = 1 + |(-N + \mathbb{N})\setminus \ord_t\Lambda| = 1 + \dim t^{-N}\tilde{R}/\Lambda.
\]
Combining with the first claim yields $\deg(t\Lambda)=\deg(\Lambda)+1$.
\end{proof}

\begin{lemma}
Let $\Spr_{\gamma}$ be the affine Springer fiber associated to $(\gamma,G=GL_{dn},\fg)$.
Then,
\begin{equation}
\mathbb{Z}\times \overline{J}(C,0) \isomorphic \Spr_{\gamma}: (v,J)\mapsto t^vJ,
\end{equation}
as (reduced) ind-varieties.
\end{lemma}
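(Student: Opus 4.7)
The plan is to translate both sides into the language of $\cO$-lattices in $E$ and realize the map as multiplication by powers of $t$. First I would identify $\Spr_{\gamma} \subset \GR_G$ with the set of $\cO$-lattices $\Lambda \subset E$ stable under multiplication by $y$ -- equivalently, $R$-submodules of $E$ that happen to be $\cO$-lattices -- using the identification $[g] \mapsto (1,y,\dots,y^{dn-1})g\cO^{dn}$ recalled before Lemma~\ref{lem:degree_of_a_lattice}, together with the fact that $g^{-1}\gamma g \in \fg(\cO)$ translates into $y\Lambda \subset \Lambda$. Via the locally constant degree map $\deg:\Spr_\gamma \to \mathbb{Z}$ of that lemma, this yields a decomposition $\Spr_\gamma = \bigsqcup_{v\in\mathbb{Z}} \Spr_\gamma^{(v)}$, and the statement reduces to constructing, for each $v\in\mathbb{Z}$, an isomorphism $\overline{J}(C,0) \isomorphic \Spr_\gamma^{(v)}$ given by $J \mapsto t^v J$.

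For well-definedness of $J \mapsto t^v J$, I would note that multiplication by $t^v \in E^{\times}$ is an $\cO$-linear automorphism of $E$ that commutes with the $R$-action (since $E$ is commutative), so for $J \in \overline{J}(C,0)$ the subset $t^v J \subset E$ is again an $R$-submodule and an $\cO$-lattice, with $y$-stability automatic. Iterating the second assertion of Lemma~\ref{lem:degree_of_a_lattice} gives $\deg(t^v J) = v + \deg(J) = v$, placing $t^v J$ in $\Spr_\gamma^{(v)}$ (here I use that Lemma~\ref{lem:compactified_Jacobian}, combined with Lemma~\ref{lem:degree_of_a_lattice}, identifies the $\deg = 0$ condition in the definition of $\overline{J}(C,0)$ with the lattice degree).

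The inverse is $\Lambda \mapsto t^{-v}\Lambda$: for $\Lambda \in \Spr_\gamma^{(v)}$ I would set $J := t^{-v}\Lambda$, which is a degree-$0$ $R$-submodule of $E$, and then invoke the opening paragraph of the proof of Lemma~\ref{lem:compactified_Jacobian} -- the inequality $v_m := \min\ord_t(J) \geq 0$ forced by $\dim t^{-N}\tilde{R}/J = \dim t^{-N}\tilde{R}/R$ -- to conclude $J \subset \tilde{R}$ with $\dim \tilde{R}/J = \delta$, so $J \in \overline{J}(C,0)$. Mutual inverseness is formal, and injectivity of the assembled map $\mathbb{Z}\times \overline{J}(C,0) \to \Spr_\gamma$ follows by comparing degrees: if $t^v J = t^{v'}J'$ then $v = \deg(t^vJ) = \deg(t^{v'}J') = v'$, and cancelling $t^v$ yields $J = J'$.

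Finally, multiplication by $t^v$ is an algebraic automorphism of $E$ whose inverse is also algebraic, so the bijection upgrades to an isomorphism of reduced ind-varieties as claimed. There is no serious obstacle here; the main work is organizational -- choosing the correct decomposition $\Spr_\gamma = \sqcup_v \Spr_\gamma^{(v)}$ and confirming that the degree-$0$ hypothesis entering the definition of $\overline{J}(C,0)$ agrees with the lattice-theoretic degree of Lemma~\ref{lem:degree_of_a_lattice} -- so that the entire statement reduces to assembling the two preceding lemmas.
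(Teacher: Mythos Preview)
Your proposal is correct and follows essentially the same approach as the paper: identify $\Spr_\gamma$ with $y$-stable $\cO$-lattices in $E$ (i.e., $R$-module lattices), then use the degree map of Lemma~\ref{lem:degree_of_a_lattice} to decompose $\Spr_\gamma$ by degree and recognize the degree-$0$ piece as $\overline{J}(C,0)$ via Lemma~\ref{lem:compactified_Jacobian}. The paper's version compresses your steps into a single appeal to Lemma~\ref{lem:degree_of_a_lattice}, but the content is the same.
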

\noindent\textbf{Note.} Since $\gamma$ is $G(F)$-conjugate to $\gamma^\transpose$, also $\Spr_\gamma\isomorphic \Spr_{\gamma^\transpose}$.

\begin{proof}
Let $\Lambda\subset E$ be an $\cO$-lattice. Then $\Lambda$ is an $R$-module iff $y\Lambda\subset\Lambda$, i.e.
\[
g^{-1}\cdot\gamma\in\fg(\cO)\quad\text{for } \Lambda=(1,y,\dots,y^{dn-1})g\cO^{dn},
\]
which is precisely the condition $[g]\in\Spr_\gamma$.  
The identification with $\mathbb{Z}\times\overline{J}(C,0)$ now follows from Lemma \ref{lem:degree_of_a_lattice}. In particular,
\[
\overline{J}(C,0)=\{\Lambda\in\Spr_\gamma:\deg(\Lambda)=0\}.
\]
\end{proof}

\begin{remark}\label{rem:equivalued_planar_curve_singularity_is_quasi-homogeneous}
Goresky–Kottwitz–MacPherson \cite{GKM06} proved purity of affine Springer fibers $\Spr_{\basevector}$ for regular integral equivalued semisimple $\basevector\in\fg(F)$.  
For $G=GL_{dn}$, this means $\basevector$ is $G(\overline{F})$-conjugate to 
\[
\diag(a_1x^s,\dots,a_{dn}x^s) + \text{h.o.t.},
\]
with $s\in\mathbb{Q}_{\geq 0}$, $a_i\in\field$ distinct.  
Consequently, our $\gamma\in G(F)$ is regular integral equivalued semisimple iff $d = 1$, i.e. $(C,0)$ is quasi-homogeneous: $x=t^n$, $y=t^m$, $\gcd(n,m)=1$.
\end{remark}

\subsection{The (local) Abel--Jacobi map}

For $\tau \geq 0$, recall that the Hilbert scheme of $\tau$ points on $(C,0)$ is
\[
\Hilb^{[\tau]}(C,0) := \{I\subset R~|~ I \text{ is an ideal with } \dim R/I = \tau\}.
\]
In particular, $\Hilb^{[0]}(C,0) = \{\pt\}$.

\begin{lemma}[{\cite[Thm.~3]{PS92}}]\label{lem:the_closed_embedding_from_Hilbert_scheme_to_local_compactified_Jacobian}
There is a closed embedding
\[
\phi_{\tau}:\Hilb^{[\tau]}(C,0) \hookrightarrow \overline{J}(C,0), 
\quad I \mapsto t^{-\tau}I.
\]
Moreover, for $J \in \overline{J}(C,0)$, one has $I := t^{\tau}J \in \Hilb^{[\tau]}(C,0)$ 
iff $t^{\tau}J \subset R$. In particular, $\phi_{\tau}$ is an isomorphism for $\tau \geq 2\delta$.
\end{lemma}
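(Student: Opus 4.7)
The plan is to recognize $\phi_\tau$ as the restriction of the translation automorphism $J\mapsto t^{-\tau}J$ of $\GR_G$ (which by Lemma~\ref{lem:degree_of_a_lattice} shifts the degree by $-\tau$), and then identify its image as a closed subvariety of $\overline{J}(C,0)$.

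First, I would verify that $\phi_\tau(I)\in\overline{J}(C,0)$. Given $I\subset R$ with $\dim R/I=\tau$, combining the short exact sequence $0\to I\to R\to R/I\to 0$ with $\dim\tilde{R}/R=\delta$ gives $\dim\tilde{R}/I=\tau+\delta$; equivalently, by Lemma~\ref{lem:degree_of_a_lattice}, $\deg(I)=\tau$. Setting $J:=t^{-\tau}I\subset t^{-\tau}\tilde{R}$, this yields $\deg(J)=0$, and a direct count (for $N\ge\tau$, $\dim t^{-N}\tilde{R}/J=(N-\tau)+\dim\tilde{R}/I=N+\delta=\dim t^{-N}\tilde{R}/R$) confirms $J\in\overline{J}(C,0)$.

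Next, multiplication by $t^{-\tau}\in E^\times$ is an automorphism of the ind-scheme $\GR_G$ via the $\cO$-module structure on $E$, so $\phi_\tau$ factors as this automorphism followed by the inclusion of its image into $\overline{J}(C,0)$. Conversely, given $J\in\overline{J}(C,0)$, $I:=t^\tau J$ is automatically an $R$-submodule of $E$, and the condition $I\subset R$ is exactly $t^\tau J\subset R$; when it holds, $\deg(I)=\deg(J)+\tau=\tau$ forces $\dim R/I=\tau$ (again by Lemma~\ref{lem:degree_of_a_lattice}). Thus the image of $\phi_\tau$ is the locus $\{J\in\overline{J}(C,0):t^\tau J\subset R\}$, a Schubert-type closed subvariety cut out by a lattice-containment condition inside a fixed $t^{-N}\tilde{R}$.

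Finally, when $\tau\ge 2\delta$, Lemma~\ref{lem:compactified_Jacobian} gives $J\subset\tilde{R}$ for every $J\in\overline{J}(C,0)$, so $t^\tau J\subset t^\tau\tilde{R}\subset t^{2\delta}\tilde{R}\subset R$; the closed condition becomes vacuous and $\phi_\tau$ is surjective, hence an isomorphism. The main subtle point I expect is the scheme-theoretic closedness of the image; I would verify it by realizing the containment $t^\tau J\subset R$ as a standard closed condition of the form $\Lambda'\subset\Lambda$ between $\cO$-lattices in a fixed ambient $t^{-N}\tilde{R}$, after multiplying through by a sufficiently high power of $t$ to clear denominators uniformly on a given bounded component of $\overline{J}(C,0)$.
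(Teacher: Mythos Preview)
The paper does not supply its own proof of this lemma; it simply cites \cite[Thm.~3]{PS92} and moves on. Your argument is a correct self-contained verification using the degree formalism from Lemma~\ref{lem:degree_of_a_lattice} and the containment $t^{2\delta}\tilde{R}\subset J\subset\tilde{R}$ from Lemma~\ref{lem:compactified_Jacobian}, so there is nothing to compare against in the paper itself.

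One small point: your closedness sketch is fine on $\field$-points, and in this specific situation it can be made precise very cleanly. By Lemma~\ref{lem:compactified_Jacobian}, $\overline{J}(C,0)$ already sits as a closed subscheme of the finite-dimensional Grassmannian $\GR(\delta,\tilde{R}/t^{2\delta}\tilde{R})$, so the ambient is a genuine projective variety rather than an ind-scheme; the condition $t^\tau J\subset R$ then amounts (after intersecting with $t^{2\delta}\tilde{R}\subset J$) to requiring that the image of $J$ in $\tilde{R}/t^{2\delta}\tilde{R}$ land in the fixed subspace $(t^{-\tau}R\cap\tilde{R})/t^{2\delta}\tilde{R}$, a manifestly closed Schubert-type condition. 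This avoids the need to pass to bounded components or clear denominators.
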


\begin{definition}
For a nonzero finitely generated $R$-submodule $J\subset E$, define the \textbf{dual $R$-module}
\begin{equation}
J^{-1}:=\{z\in E:~zJ\subset R\} \subset E.
\end{equation}
\end{definition}


It is immediate that $J^{-1}$ is again a finitely generated $R$-module. 
Some basic properties are:
\begin{enumerate}[wide,labelwidth=!,labelindent=0pt,itemindent=!,label=(\alph*)]
\item
As a $R$-module, $J^{-1} \xrightarrow[]{\isomorphic} \Hom_R(J,R): z\mapsto z$.
The inverse $\Hom_R(J,R)\xrightarrow[]{\isomorphic} J^{-1}:h\mapsto r_h$ is defined as follows: $\forall h\in\Hom_R(J,R)$, we get a commutative diagram of $R$-modules
\[
\begin{tikzcd}[row sep = 1pc, column sep = 2pc]
J\arrow[r,"h"]\arrow[d,hookrightarrow] & R\arrow[d,hookrightarrow]\\
E=S^{-1}J\arrow[r,"{S^{-1}h}"] & S^{-1}R = E
\end{tikzcd}.
\]
Take any nonzero $z\in J\subset E$, then by $E$-linearity, $(S^{-1}h)(az) = a h(z) = az\cdot (z^{-1}h(z))$, $\forall a\in E$.
Moreover, for any other nonzero $w\in J\subset E$, we have $w = az$ for some $a\in E^{\times}$. It follows that $w^{-1}h(w) = z^{-1}a^{-1}ah(z) = z^{-1}h(z)$.
So, $r_h:=z^{-1}h(z)$ is independent of $z\in J\setminus \{0\}$, and $r_hw = h(w) \in R$, $\forall w\in J\setminus\{0\}$. In other words, $r_h\in J^{-1}$.

\item
$R^{-1} = R$. 


\item
Since $R$ is Gorenstein, $(J^{-1})^{-1} = J$ \cite[Lem.~1.1]{Har86}. 

\item
If $J_1 \subset J_2$, then $J_2^{-1} \subset J_1^{-1}$.  
\end{enumerate}

\begin{lemma}
For any $J \in \overline{J}(C,0)$:
\begin{enumerate}[wide,labelwidth=!,labelindent=0pt,itemindent=!]
\item $t^{2\delta}\tilde{R} \subset J^{-1} \subset \tilde{R}$ and $\dim \tilde{R}/J^{-1} = \delta$.  
In particular, we get an involution
\[
\iota: \overline{J}(C,0) \xrightarrow{\;\cong\;} \overline{J}(C,0): J \mapsto J^{-1}.
\]

\item For $a \in [0,2\delta-1]$,
\[
a \in \ord_t(J) 
\;\;\Longleftrightarrow\;\; 
2\delta-1-a \notin \ord_t(J^{-1}).
\]
In particular,
\[
\ord_t(J^{-1}) = [0,2\delta-1] \setminus (2\delta-1-\ord_t(J)) \;\sqcup\; [2\delta,\infty).
\]
\end{enumerate}
\end{lemma}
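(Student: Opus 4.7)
The plan is to prove (1) and (2) in an interlocked fashion: first establish the containments of (1), then the ``only if'' direction of (2), and finally derive both the dimension equality in (1) and the full combinatorial statement of (2) from a duality plus counting argument. To begin, I would compute $\tilde{R}^{-1} = t^{2\delta}\tilde{R}$ directly: the containment $\supset$ is immediate from the definition of the conductor, while for $\subset$, any $z \in \tilde{R}^{-1}$ satisfies $z = z\cdot 1 \in R$ and $z\tilde{R} \subset R$, hence lies in the conductor ideal $t^{2\delta}\tilde{R}$. Combined with the involutivity $(J^{-1})^{-1} = J$ from property (c) and the inclusion-reversing nature of $J \mapsto J^{-1}$ from property (d), the chain $t^{2\delta}\tilde{R} \subset J \subset \tilde{R}$ dualizes to $t^{2\delta}\tilde{R} = \tilde{R}^{-1} \subset J^{-1} \subset (t^{2\delta}\tilde{R})^{-1} = \tilde{R}$. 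Next, for the ``only if'' direction of (2), suppose $a \in \ord_t(J)$ and choose $u \in J$ with $\ord_t(u) = a$; if some $v \in J^{-1}$ also had $\ord_t(v) = 2\delta-1-a$, then $uv \in R$ would have $t$-order $2\delta-1 \notin \Gamma$, contradicting $c(R) = 2\delta$.

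The key step is the dimension count in (1), for which I would apply $\Hom_R(-,R)$ to $0 \to J \to \tilde{R} \to \tilde{R}/J \to 0$. Since $\tilde{R}/J$ has finite length and $R$ has depth $1$, $\Hom_R(\tilde{R}/J, R) = 0$; since $\tilde{R}$ is a maximal Cohen--Macaulay $R$-module over the one-dimensional Gorenstein ring $R$, $\Ext^1_R(\tilde{R}, R) = 0$. Using property (a) to identify $\Hom_R(-,R)$ with $(-)^{-1}$, the long exact sequence collapses to
\[
0 \to t^{2\delta}\tilde{R} \to J^{-1} \to \Ext^1_R(\tilde{R}/J, R) \to 0.
\]
Matlis/local duality for one-dimensional Gorenstein local rings gives $\dim \Ext^1_R(N, R) = \dim N$ for any finite-length $R$-module $N$, hence $\dim J^{-1}/t^{2\delta}\tilde{R} = \delta$ and thus $\dim \tilde{R}/J^{-1} = 2\delta - \delta = \delta$. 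The involution $\iota$ then follows from property (c).

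Finally, (2) is obtained by counting. A triangular basis argument, by successive elimination of leading $t$-terms, shows that any $R$-submodule $N$ of $\tilde{R}$ containing $t^{2\delta}\tilde{R}$ satisfies $\dim N/t^{2\delta}\tilde{R} = |\ord_t(N) \cap [0,2\delta-1]|$. Applied to $N = J$ and $N = J^{-1}$, both cardinalities equal $\delta$ by the hypothesis and the previous step respectively. Combined with the disjointness already established, the two $\delta$-element subsets $\ord_t(J^{-1}) \cap [0,2\delta-1]$ and $2\delta-1-(\ord_t(J) \cap [0,2\delta-1])$ of $[0,2\delta-1]$ must be complementary, yielding the stated equivalence; extending by $[2\delta,\infty) \subset \ord_t(J^{-1})$ (from $t^{2\delta}\tilde{R} \subset J^{-1}$) gives the explicit formula. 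The main obstacle throughout is the appeal to Matlis duality, which provides the only non-formal input; everything else is elementary manipulation of valuations and conductors.
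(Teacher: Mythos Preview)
Your proposal is correct and follows essentially the same approach as the paper: both compute $\tilde{R}^{-1}=t^{2\delta}\tilde{R}$ directly, dualize the containments $t^{2\delta}\tilde{R}\subset J\subset\tilde{R}$, apply $\Hom_R(-,R)$ to the short exact sequence $0\to J\to\tilde{R}\to\tilde{R}/J\to 0$ and use Gorenstein duality on finite-length modules to get $\dim J^{-1}/t^{2\delta}\tilde{R}=\delta$, and then combine the ``only if'' direction of (2) (via $\ord_t(J)+\ord_t(J^{-1})\subset\Gamma$ and $2\delta-1\notin\Gamma$) with the cardinality count $|\ord_t(J)\cap[0,2\delta-1]|=|\ord_t(J^{-1})\cap[0,2\delta-1]|=\delta$ to upgrade to the full equivalence. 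Your interleaved organization and explicit mention of the triangular-basis justification are cosmetic differences only.
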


\begin{proof}
\noindent{}$(1)$. We first claim that $\tilde{R}^{-1} = t^{2\delta}\tilde{R}$. Indeed $z \in \tilde{R}^{-1}$ iff $z\tilde{R} \subset R$, forcing $\ord_t(z) \geq 2\delta$. Conversely, $t^{2\delta}\tilde{R}\subset R$. 

Now by Lemma \ref{lem:compactified_Jacobian}, we have $t^{2\delta}\tilde{R}\subset J \subset \tilde{R}$.  
Taking duals gives
\[
\tilde{R}^{-1} = t^{2\delta}\tilde{R} \subset J^{-1} \subset (t^{2\delta}\tilde{R})^{-1} = \tilde{R}.
\]

To compute $\dim \tilde{R}/J^{-1}$, apply $\RHom_R(-,R)$ to the short exact sequence
\[
0 \to J \to \tilde{R} \to \tilde{R}/J \to 0.
\]
Since $R$ is Gorenstein of dimension $1$, we have \cite[Prop.~4.9, Prop.~4.13]{Har67}:
\begin{enumerate}[wide,labelwidth=!,labelindent=0pt,itemindent=!,label=(\Alph*)]
\item
$\Ext^1_R(-,R)$ is a duality on finite-length $R$-modules, preserving length.

\item
$\Ext^i_R(M,R)=0$ for $i\neq 1$ if $M$ has finite length.

\item
$\Ext^i_R(M,R)=0$ for all $i>0$ if $M$ is finitely generated torsion-free \cite[Lem.~1.1]{Har86}.

\item
If $R$ is a local $\field$-algebra with residue field $\field$, then $\ell_R(M) = \dim_{\field} M$.
\end{enumerate}
Using these, we obtain an exact sequence
\[
\Hom_R(\tilde{R}/J,R) = 0 \rightarrow \Hom_R(\tilde{R},R) \rightarrow \Hom_R(J,R) \rightarrow \Ext_R^1(\tilde{R}/J,R) \rightarrow \Ext_R^1(\tilde{R},R) = 0,
\]
where $\Hom_R(\tilde{R},R) = \tilde{R}^{-1} = t^{2\delta}\tilde{R}$ and $\Hom_R(J,R) = J^{-1}$.  
Hence
\[
J^{-1}/t^{2\delta}\tilde{R} \;\cong\; \Ext^1_R(\tilde{R}/J,R).
\]
By (A), the latter has length $\dim_\field \tilde{R}/J = \delta$.  
Thus $\dim \tilde{R}/J^{-1} = 2\delta - \dim J^{-1}/t^{2\delta}\tilde{R} = \delta$.

\smallskip
\noindent (2)  
By (1), $t^{2\delta}\tilde{R}\subset J^{-1}\subset \tilde{R}$.  
For $a \in [0,2\delta-1]$, observe that if $z\in J$ and $w\in J^{-1}$ then $zw \in R$, so 
\[
\ord_t(J)+\ord_t(J^{-1}) \subset \Gamma.
\]
Since $2\delta-1 \notin \Gamma$, we deduce
\[
a \in \ord_t(J) \;\Rightarrow\; 2\delta-1-a \notin \ord_t(J^{-1}).
\]
But both sides contain exactly $\delta$ elements in $[0,2\delta-1]$, so the implication is an equivalence.  
The alternative description of $\ord_t(J^{-1})$ follows.  
\end{proof}

\begin{definition}
The composition
\begin{equation}
\AJ_{\tau}:= \iota \circ \phi_{\tau}: 
\Hilb^{[\tau]}(C,0) \hookrightarrow \overline{J}(C,0), 
\quad I \mapsto t^{\tau}I^{-1},
\end{equation}
is called the ($\tau$-th) \textbf{local Abel--Jacobi map}.
\end{definition}

\begin{corollary}\label{cor:local_Abel-Jacobian_map}
For $J\in \overline{J}(C,0)$,
\begin{equation}\label{eqn:image_of_local_Abel-Jacobian_map}
J\in \im(\AJ_{\tau}) 
\quad \Longleftrightarrow \quad 
t^{\tau}\in J.
\end{equation}
In particular, $t^{\tau}\in J$ automatically if $\tau \geq c(J)$.
\end{corollary}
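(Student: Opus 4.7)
The plan is to unwind the definition of $\AJ_\tau$, reduce $J\in \im(\AJ_\tau)$ to an inclusion of $R$-modules, and then pass to duals using the Gorenstein involution $(-)^{-1}$.

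First, I will set up the equivalence at the level of ideals. By definition $\AJ_\tau = \iota\circ \phi_\tau$ sends $I\mapsto t^\tau I^{-1}$, so $J\in\im(\AJ_\tau)$ iff there exists $I\in \Hilb^{[\tau]}(C,0)$ with $J = t^\tau I^{-1}$. Using the involutivity $(I^{-1})^{-1}=I$ (property (c) of duality, which is available since $R$ is Gorenstein) and $(aM)^{-1}=a^{-1}M^{-1}$ for $a\in E^\times$, this is equivalent to $I = t^\tau J^{-1}$. Since $\iota$ restricts $\overline{J}(C,0)$ to itself, $J^{-1}\in \overline{J}(C,0)$, and Lemma~\ref{lem:the_closed_embedding_from_Hilbert_scheme_to_local_compactified_Jacobian} (applied to $J^{-1}$) then gives: such an $I$ exists in $\Hilb^{[\tau]}(C,0)$ iff $t^\tau J^{-1}\subset R$.

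Second, I will translate $t^\tau J^{-1}\subset R$ into the claimed condition $t^\tau\in J$. Apply $(-)^{-1}$, which reverses inclusion; using $R^{-1}=R$ and $(t^\tau J^{-1})^{-1} = t^{-\tau}(J^{-1})^{-1} = t^{-\tau}J$, the inclusion $t^\tau J^{-1}\subset R$ becomes $R\subset t^{-\tau}J$, i.e. $t^\tau R\subset J$. Since $J$ is an $R$-submodule of $E$ and $1\in R$, this is equivalent to $t^\tau\in J$. Combining with the previous paragraph yields \eqref{eqn:image_of_local_Abel-Jacobian_map}.

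For the final "in particular" assertion, if $\tau\geq c(J)$ then by the definition of the conductor we have $t^\tau\tilde R\subset J$, hence a fortiori $t^\tau = t^\tau\cdot 1\in J$.

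The argument is essentially formal; the only nontrivial inputs are the reflexivity $(J^{-1})^{-1}=J$ for $R$-modules in $E$ (Gorenstein duality) and the characterization of the image of $\phi_\tau$ from Lemma~\ref{lem:the_closed_embedding_from_Hilbert_scheme_to_local_compactified_Jacobian}. The only step that needs a moment of care is verifying $(t^\tau J^{-1})^{-1} = t^{-\tau}J$, but this follows directly from the definition of $(-)^{-1}$ and multiplication by the unit $t^\tau\in E^\times$.
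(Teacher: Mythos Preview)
Your proof is correct and follows essentially the same route as the paper: reduce $J\in\im(\AJ_\tau)$ to $t^\tau J^{-1}\subset R$ via Lemma~\ref{lem:the_closed_embedding_from_Hilbert_scheme_to_local_compactified_Jacobian}, then use Gorenstein reflexivity to identify this with $t^\tau\in J$. The paper compresses your second step into a single equivalence $t^\tau J^{-1}\subset R \Leftrightarrow t^\tau\in (J^{-1})^{-1}=J$, but the content is identical.
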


\begin{proof}
By Lemma~\ref{lem:the_closed_embedding_from_Hilbert_scheme_to_local_compactified_Jacobian} and property (d) above,
\[
J\in \im(\AJ_{\tau}) 
\;\;\Longleftrightarrow\;\; 
J^{-1}\in \im(\phi_{\tau}) 
\;\;\Longleftrightarrow\;\;
t^{\tau}J^{-1}\subset R
\;\;\Longleftrightarrow\;\;
t^{\tau}\in J.
\]
The last claim is immediate since $c(J)$ is the conductor of $\ord_t(J)$.  
\end{proof}

\section{Affine paving and purity}

Let $(C,0)$ be a \textbf{generic} irreducible planar curve singularity as in Definition \ref{def:generic_planar_curve_singularities}.
The smooth case is trivial, so we \textbf{assume} $(C,0)$ is singular.  
By Remark \ref{rem:finite_determinancy_of_planar_curve_singularities}, we may take $(C,0)$ to be parametrized by
\begin{equation}
x(t) = t^{dn}, \quad 
y(t) = t^{dm} + \lambda t^{dm+1} + \sum_{\substack{j\geq 2 \\ dm+j \notin \Gamma}} \lambda_j t^{dm+j},
\quad (n,m)=1, \; m>n>1, \; \lambda \in \field^\times, \; \lambda_j \in \field.
\end{equation}
Thus $(C,0)$ has two \textbf{Puiseux pairs}, $(n,m)$ and $(d,dm+1)$, and its \textbf{singularity knot} $K_{(C,0)}$ is the $(d,dmn+1)$-cable of the $(n,m)$-torus knot.  
The semigroup is generated by $dn, dm, dmn+1$, i.e.
\begin{equation}
\Gamma = \bigsqcup_{0\leq i\leq d-1,\;0\leq j\leq n-1} \bigl(\hat{a}_{i,j} + dn\mathbb{N}\bigr), 
\quad \hat{a}_{i,j} := jdm + (dmn+1)i.
\end{equation}

Recall that a $\field$-variety $X$ admits an \textbf{affine paving} if there exists an increasing finite closed filtration
\[
\emptyset = X_0 \subset X_1 \subset \cdots \subset X_r = X
\]
such that each difference $X_{i+1}\setminus X_i$ is isomorphic to an affine space $\mathbb{A}^k$.

\subsection{The affine paving for the local compactified Jacobian}

We now recall Gorsky–Mazin–Oblomkov's affine paving for the local compactified Jacobian $\overline{J}(C,0)$ associated to $(C,0)$.

\subsubsection{Cell decomposition via semimodules of valuations}

\begin{definition}
A \textbf{standard module} for $(C,0)$ is an $R$-submodule $M \subset \tilde{R}$ with $\min \ord_t(M) = 0$. 
In particular, $\ord_t(M)\subset \mathbb{N}$ is a $0$-normalized $\Gamma$-semimodule.
\end{definition}

\begin{definition}
For a $0$-normalized $\Gamma$-semimodule $\Delta \subset \mathbb{N}$, define
\[
J_{\Delta} := \{\,M \subset \tilde{R} \;\mid\; M \text{ is a standard $R$-submodule with } \ord_t(M) = \Delta \,\}.\quad (\text{possibly empty})
\]
\end{definition}

\begin{lemma}
There is a canonical locally closed embedding
\[
\iota_{\Delta}: J_{\Delta} \hookrightarrow \overline{J}(C,0), \quad M \mapsto t^{e(\Delta)}M.
\]
Moreover,
\[
\overline{J}(C,0) = \bigsqcup_{\Delta} \iota_{\Delta}(J_{\Delta}).
\]
\end{lemma}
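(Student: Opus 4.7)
The lemma has three parts: the map $\iota_\Delta$ lands in $\overline{J}(C,0)$, the strata cover $\overline{J}(C,0)$ disjointly set-theoretically, and each $\iota_\Delta$ is a locally closed embedding. I would treat them in order.

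For well-definedness, take $M \in J_\Delta$. Since $M \subset \tilde{R}$ and $e(\Delta) \geq 0$, we have $t^{e(\Delta)}M \subset \tilde{R}$, with $\ord_t(t^{e(\Delta)}M) = e(\Delta) + \Delta = \overline{\Delta}$. The identity $\delta(\overline{\Delta}) = \delta$ recorded after Definition~\ref{def:semimodule_for_compactified_Jacobian}, combined with Lemma~\ref{lem:compactified_Jacobian}, then places $t^{e(\Delta)}M$ in $\overline{J}(C,0)$.

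For the set-theoretic decomposition, take any $J \in \overline{J}(C,0)$. By Lemma~\ref{lem:compactified_Jacobian}, $\ord_t(J) \subset \mathbb{N}$, so $v := \min \ord_t(J) \geq 0$, and $M := t^{-v}J \subset \tilde{R}$ is a standard module with $\Delta := \ord_t(M) = \ord_t(J) - v$ a $0$-normalized $\Gamma$-semimodule. From the calculation $\delta = \dim_\field \tilde{R}/J = |\mathbb{N} \setminus (v + \Delta)| = v + \delta(\Delta)$ we read off $v = \delta - \delta(\Delta) = e(\Delta)$, so $J = \iota_\Delta(M)$. Uniqueness of the pair $(\Delta, M)$ is immediate since the recipe $\Delta = \ord_t(J) - \min \ord_t(J)$ depends only on $J$.

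For the locally closed embedding, Lemma~\ref{lem:compactified_Jacobian} realizes $\overline{J}(C,0)$ as a closed subvariety of the finite-dimensional Grassmannian of codimension-$\delta$ subspaces of $\tilde{R}/t^{2\delta}\tilde{R}$. The condition $\ord_t(J) = \overline{\Delta}$ is equivalent to the finite collection of equalities $\dim_\field (J \cap t^i\tilde{R})/(J \cap t^{i+1}\tilde{R}) = \mathbf{1}_{\overline{\Delta}}(i)$ for $0 \leq i < 2\delta$ (the range $i \geq 2\delta$ is automatic since $t^{2\delta}\tilde{R} \subset J$); these are upper-semicontinuous rank conditions in $J$, so $\iota_\Delta(J_\Delta) = \{J : \ord_t(J) = \overline{\Delta}\}$ is locally closed in $\overline{J}(C,0)$. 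Endowing $J_\Delta$ with the analogous locally closed structure in its own ambient Grassmannian, the map $\iota_\Delta$ becomes multiplication by the unit $t^{e(\Delta)} \in F^\times$, hence a scheme isomorphism onto its image. The main technical point is precisely this translation of the combinatorial valuation condition into finitely many algebraic rank conditions; once that bookkeeping is set up, the embedding and disjointness claims follow formally from the first two steps.
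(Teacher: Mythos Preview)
Your proof is correct and follows the same approach as the paper, which simply says ``Immediate from Lemma~\ref{lem:compactified_Jacobian} and Definition~\ref{def:semimodule_for_compactified_Jacobian}.'' You have unpacked exactly those two ingredients for the first two parts; for the locally closed claim, your rank-condition argument is the content of the paper's subsequent ``Alternative description via Schubert cells'' subsection, where it is recorded as $\iota_\Delta(J_\Delta) \cong \overline{J}(C,0) \cap W_{a_1,\ldots,a_\delta}$ for an appropriate Schubert cell.
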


\begin{proof}
Immediate from Lemma \ref{lem:compactified_Jacobian} and Definition \ref{def:semimodule_for_compactified_Jacobian}.
\end{proof}

\subsubsection{Alternative description via Schubert cells}

By Lemma \ref{lem:compactified_Jacobian}, there is a natural identification (as reduced schemes):
\[
\overline{J}(C,0) \cong 
\{\Lambda \in \GR(\delta,\tilde{R}/t^{2\delta}\tilde{R}) : x\Lambda \subset \Lambda,\, y\Lambda \subset \Lambda\}
\subset \GR(\delta,\tilde{R}/t^{2\delta}\tilde{R}),
\]
where $\GR(\delta,\tilde{R}/t^{2\delta}\tilde{R})$ is the Grassmannian of $\delta$-dimensional subspaces of 
$\tilde{R}/t^{2\delta}\tilde{R} \cong \bigoplus_{j=0}^{2\delta-1}\field\cdot t^j$.

Define the canonical flag
\[
0 = V_0 \subset V_1 \subset \cdots \subset V_{2\delta} = \tilde{R}/t^{2\delta}\tilde{R},\quad V_i:= t^{2\delta-i}\tilde{R}/t^{2\delta}\tilde{R} \isomorphic \oplus_{j=2\delta-i}^{2\delta-1}\field\cdot t^j.
\]

For integers $\delta \geq a_1\geq \cdots \geq a_{\delta} \geq 0$, the \textbf{Schubert cell}
\begin{equation}
W_{a_1,\cdots,a_{\delta}}:= \{\Lambda\in \GR(\delta,\tilde{R}/t^{2\delta}\tilde{R}):~\dim(\Lambda\cap V_j) = i \text{ for }\delta+i-a_i \leq j < \delta+i+1 - a_{i+1}\}m
\end{equation}
gives the Schubert cell decomposition \cite[p.106]{PS92}
\[
\GR(\delta,\tilde{R}/t^{2\delta}\tilde{R}) = \sqcup_{\delta \geq a_1\geq \cdots \geq a_{\delta} \geq 0} W_{a_1,\cdots,a_{\delta}},\quad W_{a_1,\cdots,a_{\delta}} \isomorphic \mathbb{A}^{\delta^2 - \sum_{i=1}^{\delta} a_i}.
\]
Moreover, $W_{b_1,\dots,b_\delta} \subset \overline{W}_{a_1,\dots,a_\delta}$ iff $b_i \geq a_i$ for all $i$, which defines an affine paving of the Grassmannian.

In valuation terms:
\[
W_{a_1,\dots,a_\delta} = 
\{\Lambda \in \GR(\delta,\tilde{R}/t^{2\delta}\tilde{R}) : \ord_t(\Lambda)\cap [0,2\delta-1] = \{\delta+a_i-i : 1\leq i \leq \delta\}\}.
\]
Hence, for any $0$-normalized $\Gamma$-semimodule $\Delta \subset \mathbb{N}$,
\[
\iota_{\Delta}(J_{\Delta}) \cong \overline{J}(C,0)\cap W_{a_1,\dots,a_\delta},
\]
with $a_i$ determined by
\[
\{\delta + a_1 - 1 > \cdots > \delta + a_\delta - \delta\} 
= \overline{\Delta}\cap [0,2\delta-1], \quad \overline{\Delta} := e(\Delta) + \Delta.
\]

\subsubsection{Admissibility and affine paving}

It remains to determine $J_{\Delta}$, in particular, when $J_{\Delta}$ is nonempty.

\begin{definition}\label{def:admissibility}
Fix any $(dn,dm)$-invariant subset $\Delta\subset \mathbb{N}$. 
\begin{enumerate}[wide,labelwidth=!,labelindent=0pt,itemindent=!]
\item 
For $\ell \geq 1$ and $0\leq i \leq d-1$, say $i$ is \textbf{$\ell$-admissible}
\footnote{In \cite[Def.3.7]{GMO25}, this corresponds to $\ell$ not being $i$-suspicious.} 
if there exists $a\in A(\Delta)$ with $a \equiv i \pmod d$ and $a+dm+\ell \in \Delta$.  
We write ``admissible'' for $\ell=1$.

\item
$\Delta$ is \textbf{admissible} if every $i \in \{0,\dots,d-1\}$ is admissible.
\end{enumerate}
Finally, \textbf{denote} by $\Adm(dn,dm)$ the set of all admissible $0$-normalized $\Gamma$-semimodules.
\end{definition}

For a more concrete description of $\Adm(dn,dm)$, see Section \ref{subsec:cell_indices_and_dimensions_via_Dyck_paths}.

\begin{definition}
For $\Delta \in \Adm(dn,dm)$ and $a \in \Delta$, set
\[
\Gaps(a) = \Gaps_{\Delta}(a) := [a,\infty)\setminus \Delta \subset \mathbb{N}.
\]
Define
\begin{equation*}
\dim\Delta := \sum_{a\in A(\Delta)} |\Gaps(a)| - \sum_{a\in A(\Delta)} |\Gaps(a+dm)|.
\end{equation*}
\end{definition}

\begin{lemma}[{\cite[Prop.3.23]{GMO25}}]\label{lem:affine_paving_for_local_compactified_Jacobian_over_generic_curve_singularity}
For any $0$-normalized $\Gamma$-semimodule $\Delta \subset \mathbb{N}$,
\[
J_\Delta \neq \emptyset \quad \Longleftrightarrow \quad \Delta \in \Adm(dn,dm).
\]
In this case,
\[
J_\Delta \cong \mathbb{A}^{\dim \Delta}.
\]
\end{lemma}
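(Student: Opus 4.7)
The plan is to parametrize $J_\Delta$ by explicit normal forms for its $R$-generators, extract the polynomial equations encoding the condition $yM\subset M$, and solve them triangularly, with the pivots and consistency controlled by the generic coefficient $\lambda$ and the admissibility of $\Delta$.

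First, I would set up the normal forms. Any $M\in J_\Delta$ admits a unique $\cO$-basis $\{m_a\}_{a\in A(\Delta)}$ of $t$-orders exactly $a$, in the echelon form
\[
m_a \;=\; t^a + \sum_{j\in \Gaps_\Delta(a)} c_{a,j}\, t^j,
\]
obtained by using the other generators to eliminate coefficients at positions $j\in \Delta\setminus(a+dn\mathbb{N})$ and rescaling the leading term. This identifies the moduli of $\cO$-lattices in $\tilde R$ with $\ord_t=\Delta$ with $\mathbb{A}^{\sum_{a\in A(\Delta)}|\Gaps(a)|}$, and realizes $J_\Delta$ as the closed sublocus cut out by $y\cdot m_a\in M$ for all $a\in A(\Delta)$.

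Second, I would extract the defining equations via the reduction algorithm against $\{m_{a'}\}$: at each step the leading residual term must lie in $\Delta$ (and is killed by subtracting a suitable $x^k m_{a'}$), else its coefficient must vanish. For each $a\in A(\Delta)$ and each $j\in \Gaps(a+dm)$, this produces one equation $E_{a,j}=0$, yielding $\sum_a |\Gaps(a+dm)|$ equations in total. Since $\Delta$ is a $\Gamma$-semimodule with $dm\in \Gamma$, the shift $j\mapsto j-dm$ restricts to a bijection $\Gaps(a+dm)\to\{j'\in \Gaps(a): j'+dm\notin \Delta\}$, so the ``dependent'' variables $\{c_{a,j-dm}\}$ are in bijection with the equations, while the ``free'' variables $\{c_{a,j'}: j'+dm\in \Delta\}$ number exactly $\sum_a|\Gaps(a)|-\sum_a|\Gaps(a+dm)|=\dim \Delta$.

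Third, I would exhibit the triangular structure. Writing $y=t^{dm}(1+\lambda t+\mathrm{h.o.t.})$, a direct expansion places each $E_{a,j}$ in the form
\[
c_{a,j-dm}\;+\;\lambda\, c_{a,j-dm-1}\cdot [j-dm-1\in \Gaps(a)]\;+\;(\text{reduction corrections})\;=\;0,
\]
where the corrections are polynomial in variables $c_{a',j''}$ with $j''<j-dm$ coming from the subtracted $x^k m_{a'}$, and with an extra constant $\lambda$ appearing in the boundary case $j=a+dm+1$ from the $\lambda t^{a+dm+1}$ term of $yt^a$. Ordering the equations lexicographically by $(j-dm,a)$ makes the pivot $c_{a,j-dm}$ of coefficient $1$ appear first, so iterated substitution reduces the system to a finite collection of purely polynomial conditions on the free variables.

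Finally --- the main conceptual obstacle --- I would pin down how $\Delta\in \Adm(dn,dm)$ is exactly the consistency condition. Since $\Delta\supset[2\delta,\infty)$, one has $|A(\Delta)|=dn$, and the permutation $\sigma:A(\Delta)\to A(\Delta)$ defined by $a+dm\equiv\sigma(a)\pmod{dn}$ decomposes into exactly $d$ orbits --- one per residue class modulo $d$, each of length $n$ (using $\gcd(n,m)=1$). Cyclic summation of the equations $E_{a,a+dm+1}=0$ along the orbit in residue class $i$ causes the dependent $c$-variables to telescope, leaving a sum proportional to $\lambda$ times the number of $a$'s in the orbit with $a+dm+1\notin \Delta$. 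Admissibility of $i$ asserts that at least one $a$ in the orbit has $a+dm+1\in \Delta$, removing that equation from the cyclic sum and breaking the obstruction; when every $i$ is admissible the remaining system is triangularly solvable and one obtains $J_\Delta\isomorphic \mathbb{A}^{\dim \Delta}$, while any non-admissible class combined with $\lambda\neq 0$ forces an inconsistency and $J_\Delta=\emptyset$.
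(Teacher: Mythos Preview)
Your overall architecture (normal forms, extract one equation per element of $\Gaps(a+dm)$, solve triangularly, count) matches the approach in \cite{GMO25} that the paper recalls in Step~2 of the proof of Theorem~\ref{thm:affine_paving_for_Hilbert_scheme_over_generic_curve_singularity}. But your Step~3 misidentifies the pivots, and this breaks the triangular solve in the ``if'' direction.

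Concretely: after the first reduction $y m_a - x^{\alpha} m_{\sigma(a)}$, the coefficient at position $a+dm+\ell$ is not $c_{a,a+\ell}$ alone but the difference $c_{a,a+\ell}-c_{\sigma(a),\sigma(a)+\ell}$ (this is $g^-_{i,j;\ell}$ in the paper's notation). Whenever $a+dm+\ell\notin\Delta$, the $dm$-invariance of $\Delta$ forces \emph{both} $a+\ell$ and $\sigma(a)+\ell$ to be gaps, so both terms are live variables at the same offset~$\ell$. No lexicographic order on $(j-dm,a)$ separates them; the system is not triangular in the original variables. The actual argument changes basis within each residue class $i\bmod d$ and each offset $\ell$ into the sum $g^{+}_{i;\ell}$ and the differences $g^{-}_{i,j;\ell}$: an individual equation solves for a $g^{-}$, while when $i$ is not $\ell$-admissible (all $n$ equations present), their sum telescopes the $g^{-}$'s and has leading term $\lambda\, g^{+}_{i;\ell-1}$, which solves for that $g^{+}$ at the previous level.

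Your cyclic-sum analysis at $\ell=1$ is exactly right and proves the ``only if'': if some $i$ is not $1$-admissible, the sum collapses to $n\lambda=0$, a contradiction. But for the ``if'' direction you still need this mechanism at every $\ell\ge 2$ where some $i$ fails $\ell$-admissibility, together with the auxiliary facts (\cite[Cor.~3.12, Lem.~3.14]{GMO25}) guaranteeing that the target variable $g^{+}_{i;\ell-1}$ exists and has not already been used as a pivot. Without the $g^{\pm}$ change of basis and this higher-$\ell$ bookkeeping, the claim that the remaining system is ``triangularly solvable'' is unjustified.
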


\begin{corollary}
For a generic irreducible planar curve singularity $(C,0)$, the compactified Jacobian $\overline{J}(C,0)$ admits an affine paving
\[
\overline{J}(C,0) = \bigsqcup_{\Delta \in \Adm(dn,dm)} \iota_\Delta(J_\Delta),
\]
with the filtration induced from the Schubert cell paving of $\GR(\delta,\tilde{R}/t^{2\delta}\tilde{R})$.
\end{corollary}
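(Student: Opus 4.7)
The plan is to upgrade the set-theoretic decomposition $\overline{J}(C,0) = \sqcup_{\Delta}\iota_{\Delta}(J_{\Delta})$ (ranging over $0$-normalized $\Gamma$-semimodules $\Delta\subset\mathbb{N}$) into an affine paving by restricting the Schubert cell filtration on the ambient Grassmannian $\GR(\delta,\tilde{R}/t^{2\delta}\tilde{R})$, and then invoking Lemma~\ref{lem:affine_paving_for_local_compactified_Jacobian_over_generic_curve_singularity} to identify the nonempty strata and their affine structure.

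First, I would record that $\overline{J}(C,0)$ is a \emph{closed} subvariety of $\GR(\delta,\tilde{R}/t^{2\delta}\tilde{R})$, since the conditions $x\Lambda\subset\Lambda$ and $y\Lambda\subset\Lambda$ cut out a closed subscheme. Under the identification $\iota_{\Delta}(J_{\Delta}) \isomorphic \overline{J}(C,0)\cap W_{a_1,\dots,a_\delta}$ established in the preceding subsection (with $(a_i)$ read off from $\overline{\Delta}\cap[0,2\delta-1]$), Lemma~\ref{lem:affine_paving_for_local_compactified_Jacobian_over_generic_curve_singularity} says precisely that this intersection is empty unless $\Delta\in\Adm(dn,dm)$, and otherwise is isomorphic to $\mathbb{A}^{\dim\Delta}$. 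So the set-theoretic stratification is refined to
\[
\overline{J}(C,0) = \bigsqcup_{\Delta\in\Adm(dn,dm)} \iota_{\Delta}(J_{\Delta}),\qquad \iota_{\Delta}(J_{\Delta})\isomorphic \mathbb{A}^{\dim\Delta}.
\]

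Next, I would produce the filtration by transporting the one on the Grassmannian. Choose any linear extension of the closure order $W_{b_\bullet}\subset\overline{W}_{a_\bullet}\Leftrightarrow b_i\geq a_i$, giving a numbering $W^{(1)},W^{(2)},\dots,W^{(N)}$ such that every prefix $Y_j := \bigsqcup_{k\leq j} W^{(k)}$ is closed in $\GR(\delta,\tilde{R}/t^{2\delta}\tilde{R})$. Intersecting with the closed subvariety $\overline{J}(C,0)$ produces an increasing closed filtration $\emptyset = X_0\subset X_1\subset\cdots\subset X_N = \overline{J}(C,0)$ with $X_j := \overline{J}(C,0)\cap Y_j$, whose successive differences $X_{j}\setminus X_{j-1}$ are exactly the strata $\overline{J}(C,0)\cap W^{(j)}$. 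The empty strata (those indexed by $\Delta\notin\Adm(dn,dm)$) simply give $X_j = X_{j-1}$ and may be removed; what remains is an affine paving of $\overline{J}(C,0)$ indexed by $\Adm(dn,dm)$, whose cells are the $\iota_{\Delta}(J_{\Delta})\isomorphic \mathbb{A}^{\dim\Delta}$, with filtration induced from the Schubert paving, as required.

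I do not expect a serious obstacle: the essential combinatorial and geometric work has already been absorbed into Lemma~\ref{lem:affine_paving_for_local_compactified_Jacobian_over_generic_curve_singularity}. The only point requiring care is the verification that ``intersect with $\overline{J}(C,0)$ then collapse repetitions'' genuinely yields the claimed affine paving; but this reduces to the general principle that if $Z\subset Y$ is closed and $Y$ admits an affine paving such that each stratum $W$ meets $Z$ either trivially or in an affine space, then $Z$ inherits an affine paving by restriction -- a principle applied here with $Y = \GR(\delta,\tilde{R}/t^{2\delta}\tilde{R})$, $Z = \overline{J}(C,0)$, and the stratum-by-stratum affineness supplied by Lemma~\ref{lem:affine_paving_for_local_compactified_Jacobian_over_generic_curve_singularity}.
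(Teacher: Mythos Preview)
Your proposal is correct and follows essentially the same approach as the paper, which presents this corollary without proof as an immediate consequence of Lemma~\ref{lem:affine_paving_for_local_compactified_Jacobian_over_generic_curve_singularity} together with the Schubert-cell description $\iota_{\Delta}(J_{\Delta}) \cong \overline{J}(C,0)\cap W_{a_1,\dots,a_\delta}$ established just before. You have simply spelled out the details (closedness of $\overline{J}(C,0)$ in the Grassmannian, linearly extending the closure order, collapsing empty steps) that the paper leaves implicit in the phrase ``with the filtration induced from the Schubert cell paving.''
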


\subsection{The affine paving for the Hilbert scheme of points}

We now arrive at our main goal: the affine paving of the Hilbert scheme 
$\Hilb^{[\tau]}(C,0)$, when $(C,0)$ is a \textbf{generic} irreducible planar curve singularity.

\begin{definition}
For any $0$-normalized $\Gamma$-semimodule $\Delta\subset \mathbb{N}$, define
\begin{eqnarray}
H_{\Delta}^{[\tau]} 
&:=& \{\, I\in \Hilb^{[\tau]}(C,0) \;\big|\; 
\AJ_{\tau}(I)\in \iota_{\Delta}(J_{\Delta}), \text{ i.e. } 
\ord_t(t^{\tau-e(\Delta)}I^{-1}) = \Delta \,\}\nonumber\\
&\isomorphic& \{\, M\in J_{\Delta} \;\big|\; t^{\tau-e(\Delta)}\in M \,\}.
\end{eqnarray}
The second description follows from Corollary~\ref{cor:local_Abel-Jacobian_map} with 
$J=\AJ_{\tau}(I) = t^{e(\Delta)}M$. 
Equivalently, via the Abel--Jacobian map $\AJ_{\tau}:\Hilb^{[\tau]}(C,0) \hookrightarrow \overline{J}(C,0)$, 
\[
H_{\Delta}^{[\tau]} = \Hilb^{[\tau]}(C,0)\cap \iota_{\Delta}(J_{\Delta}).
\]
\end{definition}

This gives a decomposition into locally closed subvarieties:
\begin{equation}
\Hilb^{[\tau]}(C,0) = \bigsqcup_{\Delta} H_{\Delta}^{[\tau]}.
\end{equation}

\begin{remark}
For any $M\in J_{\Delta}$, the condition $t^{\tau - e(\Delta)} \in M$ is automatic whenever 
$\tau - e(\Delta) \geq c(M) = c(\Delta)$. Thus,
\begin{equation}\label{eqn:image_of_local_Abel-Jacobian_map-2}
H_{\Delta}^{[\tau]} \isomorphic J_{\Delta}, 
\quad \forall \tau \geq c(\Delta) + e(\Delta) = c(\overline{\Delta}).
\end{equation}
\end{remark}

\begin{theorem}\label{thm:affine_paving_for_Hilbert_scheme_over_generic_curve_singularity}
We have $H_{\Delta}^{[\tau]}\neq \emptyset$ if and only if 
$\Delta\in \Adm(dn,dm)$ and $\tau_0 := \tau - e(\Delta)\in \Delta$. 
In this case,
\begin{equation}
H_{\Delta}^{[\tau]} \isomorphic \mathbb{A}^{\dim\Delta - |\Gaps(\tau_0)|}.
\end{equation}
\end{theorem}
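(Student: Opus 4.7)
The plan is to combine the identification $H_{\Delta}^{[\tau]} \cong \{M \in J_{\Delta} : t^{\tau_0} \in M\}$ (with $\tau_0 := \tau - e(\Delta)$) coming from the definition with the existing affine parametrization of $J_\Delta$ furnished by Lemma \ref{lem:affine_paving_for_local_compactified_Jacobian_over_generic_curve_singularity}, and then argue that the extra constraint $t^{\tau_0} \in M$ cuts out an affine subspace of codimension exactly $|\Gaps(\tau_0)|$. The necessity of both hypotheses is immediate: if $H_\Delta^{[\tau]}$ is non-empty, then so is $J_\Delta$, so $\Delta \in \Adm(dn,dm)$ by the cited lemma; and for any $M \in J_\Delta$ containing $t^{\tau_0}$ we have $\tau_0 = \ord_t(t^{\tau_0}) \in \ord_t(M) = \Delta$.

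For sufficiency, I would work with GMO's parametrization: each $M \in J_\Delta$ admits a canonical $\cO$-basis $\{m_a\}_{a \in A(\Delta)}$ in standard form
\[
m_a = t^a + \sum_{b \in \Gaps(a)} c_{a,b}\, t^b,
\]
and the $y$-compatibility conditions $y m_a \equiv x^{k_a} m_{a^*} \pmod{M_{>a+dm}}$ cut the free parameters $\{c_{a,b}\}$ down to exactly $\dim\Delta$ in number. Since $\tau_0 \in \Delta$ and $\Delta = \sqcup_{a \in A(\Delta)}(a + dn \mathbb{N})$, write $\tau_0 = a_0 + dn k_0$ for the unique $a_0 \in A(\Delta)$ in the appropriate residue class. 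Then
\[
x^{k_0} m_{a_0} = t^{\tau_0} + \sum_{b \in \Gaps(a_0)} c_{a_0, b}\, t^{b + dn k_0} \in M
\]
is the canonical element of $M$ with valuation $\tau_0$, and $t^{\tau_0} \in M$ iff the error term $\sum_b c_{a_0, b} t^{b + dn k_0}$ already lies in $M$. Iteratively reducing this error modulo $M$---at each position $p \in (\tau_0,\infty) \cap \Delta$, subtract an appropriate $\cO$-multiple of the canonical basis element $x^{k'}m_{a'}$ whose leading term is $t^p$---yields a representative supported on $\Gaps(\tau_0)$, and the condition becomes the simultaneous vanishing of the coefficient at each $b' \in \Gaps(\tau_0)$, producing $|\Gaps(\tau_0)|$ equations in the parameters.

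The main obstacle is to show these equations actually cut out an affine subspace of the expected codimension. A priori, the successive substitutions during reduction introduce products of the form $c_{a_0, b}\cdot c_{a', b''}$, so the equations are polynomial (potentially quadratic or higher) in the original coordinates $\{c_{a,b}\}$. My approach would be to exhibit a triangular structure: order the gap positions $b' \in \Gaps(\tau_0)$ from highest to lowest $t$-valuation and show, inductively, that each equation has a distinct pivot parameter appearing linearly, with any nonlinear corrections involving only parameters already determined by earlier equations. This triangular structure should mirror (and build upon) the one used to solve the $y$-compatibility system in the proof of Lemma \ref{lem:affine_paving_for_local_compactified_Jacobian_over_generic_curve_singularity}, and is expected to follow from the admissibility of $\Delta$ together with the combinatorics of $dn$-generators (Definition \ref{def:dn-generators}). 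After a suitable linearization change of coordinates, $H_\Delta^{[\tau]}$ becomes the zero locus of $|\Gaps(\tau_0)|$ independent affine functions on $J_\Delta \cong \mathbb{A}^{\dim\Delta}$, yielding $H_\Delta^{[\tau]} \cong \mathbb{A}^{\dim\Delta - |\Gaps(\tau_0)|}$.
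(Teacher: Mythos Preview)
Your plan follows the same route as the paper: start from the GMO parametrization $J_\Delta \cong \mathbb{A}^{\dim\Delta}$, impose the extra conditions $g_{\tau_0;\ell} = 0$ for each $\ell$ with $\tau_0 + \ell \in \Gaps(\tau_0)$, and argue by triangularity that these cut out an affine subspace of codimension $|\Gaps(\tau_0)|$. The necessity argument is exactly right.

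However, you have not identified the central difficulty, and it is more delicate than ``find distinct linear pivots.'' In the coordinates the paper (following \cite{GMO25}) actually uses---a basis of the parameter space organized as $g_{i;\ell}^+$, $g_{i,j;\ell}^-$, $h_{i,j;\ell}^-$, graded by $\deg_+ = \ell$---the equation $g_{\tau_0;\ell} = 0$ has leading term proportional to $g_{u;\ell}^+$, where $u \equiv \tau_0 \pmod{d}$. The problem is that $g_{u;\ell}^+$ may \emph{already be} one of the dependent variables in the GMO description of $J_\Delta$; this happens precisely when $u$ is not $(\ell+1)$-admissible. In that case your ``pivot'' is not a free coordinate on $J_\Delta$ at all, and the new equation as written is redundant modulo the GMO relations. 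The paper resolves this by substituting the GMO relation $g_{u;\ell}^+ = C\, g_{u+1;\ell}^+ + (\text{lower order})$ into the new equation, which promotes the leading term to $g_{u+1;\ell}^+$, and then iterates along $u, u+1, u+2, \ldots \in \mathbb{Z}/d$ until reaching some $u_\ell$ with $g_{u_\ell;\ell}^+$ genuinely free. Termination of this chain relies on a combinatorial input from \cite{GMO25}: the residue $d-\ell-1$ is always $(\ell+1)$-admissible, so the walk cannot wrap all the way around $\mathbb{Z}/d$. This propagation step is the real content beyond \cite{GMO25}, and your proposal would stall exactly at the colliding pivots.

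A smaller point: the triangularity runs the other way. You should induct on $\ell$ \emph{increasing} (lowest-valuation gap first), since the equation at level $\ell$ expresses $g_{u_\ell;\ell}^+$ in terms of variables of strictly smaller $\deg_+$.
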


\begin{proof}
The ``only if'' part is clear. 
It remains to show that for any $\Delta\in \Adm(dn,dm)$ with $\tau_0=\tau-e(\Delta)\in \Delta$, 
\[
H_{\Delta}^{[\tau]} \isomorphic \mathbb{A}^{\dim\Delta - |\Gaps(\tau_0)|}.
\]
See Example~\ref{ex:affine_paving_for_Hilbert_scheme_of_a_generic_planar_curve_singularity} for an illustration.

\medskip\noindent
\textbf{Step 1. Preparations.} 
Fix $\Delta\in \Adm(dn,dm)$. 
Let $A(\Delta) = \{\hat{b}_{i,j}\mid 0\leq i\leq d-1,\,0\leq j\leq n-1\}$ be the set of $dn$-generators of $\Delta$ 
with $\hat{b}_{i,j}\equiv jdm+i \pmod{dn}$. 
For any standard $R$-module $M\subset \tilde{R}$ with $\ord_t(M)=\Delta$, we may write uniquely
\begin{equation}
M = \bigoplus_{a\in A(\Delta)}\cO_F\cdot g_a 
= \bigoplus_{0\leq i\leq d-1,\,0\leq j\leq n-1}\cO_F\cdot \beta_{i,j}, 
\quad \beta_{i,j}=g_{\hat{b}_{i,j}},
\end{equation}
where for each $a\in\Delta$ there exists a unique element
\begin{equation}
g_a = t^a + \sum_{\ell\in [a,\infty)\setminus \Delta} g_{a;\ell-a}t^{\ell}, 
\qquad g_{a;\ell-a}\in \field.
\end{equation}
For convenience, set
\[
g_{a;0}:=1, 
\qquad g_{a;\ell-a}:=0 \quad \text{for all }\ell\in (a,\infty)\cap\Delta,\; a\in\Delta.
\]

\medskip
\noindent
Conversely, given elements $g_a$ for $a\in A(\Delta)$ as above, define
\[
M := \bigoplus_{a\in A(\Delta)} \cO_F\cdot g_a.
\]
Then $M$ is a standard $R$-module if and only if $yM\subset M$.  
Concretely, for each $a\in A(\Delta)$, viewing the coefficients $g_{a;j-a}$ with $j\in \Gaps(a)$ as \textbf{variables}, there exists a unique element
\begin{equation}
s_{a+dm} = \sum_{\ell\in \Gaps(a+dm)} s_{a+dm;\,\ell-a-dm}\, t^{\ell}
\end{equation}
such that:
\begin{itemize}
\item 
each $s_{a+dm;\,\ell-a-dm}$ is a polynomial in the variables $g_{a';\ell'}$, $a'\in A(\Delta), \ell'>0$;

\item 
$yg_a - s_{a+dm}\in \oplus_{a\in A(\Delta)} \cO_F g_a$.
\end{itemize}
Then,
\begin{equation}
M\in J_{\Delta}
\;\;\Longleftrightarrow\;\;
yM\subset M
\;\;\Longleftrightarrow\;\;
s_{a+dm;\,\ell-a-dm}=0,\quad 
\forall\,\ell\in \Gaps(a+dm),\; a\in A(\Delta).
\end{equation}
In this case, by Corollary \ref{cor:local_Abel-Jacobian_map}, 
\[
\iota_{\Delta}(M)\in H_{\Delta}^{[\tau]}
\quad\Longleftrightarrow\quad
g_{\tau_0}=t^{\tau_0}.
\]

\medskip
\noindent
To prepare for explicit calculations, we now determine the formulas for $s_{a+dm}$ ($a\in A(\Delta)$) and for $g_b$ ($b\in\Delta$).  
Since
\[
\hat{b}_{i,j}+dm \in \Delta, 
\qquad \hat{b}_{i,j}+dm \equiv \hat{b}_{i,j+1}\pmod{dn}, 
\]
for all $j\in\mathbb{Z}/n$, we may write
\[
\hat{b}_{i,j}+dm = \hat{b}_{i,j+1} + dn\alpha_{i,j},\qquad \alpha_{i,j}\geq 0.
\]
For $\ell\geq 1$, set
\begin{equation}
g_{i,j;\ell}^- := g_{\hat{b}_{i,j};\ell} - g_{\hat{b}_{i,j+1};\ell}, 
\qquad
R_{i,j;\ell} := \lambda_2 g_{\hat{b}_{i,j};\ell-2} + \cdots + \lambda_{\ell}g_{\hat{b}_{i,j};0},
\end{equation}
with the convention $R_{i,j;1}=0$.
Then, 
\begin{equation}
yg_{\hat{b}_{i,j}} - x^{\alpha_{i,j}}g_{\hat{b}_{i,j+1}} = \sum_{\ell\geq 1} F_{i,j;\ell} t^{\hat{b}_{i,j}+dm+\ell},\quad 
F_{i,j;\ell} := g_{i,j;\ell}^- + \lambda g_{\hat{b}_{i,j};\ell-1} + R_{i,j;\ell}.
\end{equation}

It follows that
\begin{eqnarray*}
s_{\hat{b}_{i,j}+dm} &=&  \sum_{\ell\geq 1} F_{i,j;\ell} t^{\hat{b}_{i,j}+dm+\ell} - \sum_{\ell_0\geq 1:~\hat{b}_{i,j}+dm+\ell_0\in\Delta}F_{i,j;\ell_0}g_{\hat{b}_{i,j}+dm+\ell_0} \\
&=& \sum_{\ell\geq 1:~\hat{b}_{i,j}+dm+\ell\notin\Delta} (F_{i,j;\ell} - \sum_{1\leq \ell_0 < \ell:~\hat{b}_{i,j}+dm+\ell_0\in\Delta}F_{i,j;\ell_0}g_{\hat{b}_{i,j}+dm+\ell_0;\ell-\ell_0}) t^{\hat{b}_{i,j}+dm+\ell}.
\end{eqnarray*}
Hence,
\begin{equation}\label{eqn:s-equation}
s_{\hat{b}_{i,j}+dm;\ell} = F_{i,j;\ell} - \sum_{1\leq \ell_0 < \ell:~\hat{b}_{i,j}+dm+\ell_0\in\Delta}F_{i,j;\ell_0}g_{\hat{b}_{i,j}+dm+\ell_0;\ell-\ell_0}.
\end{equation}

\medskip
\noindent
Finally, the coefficients $g_{b;\ell}$ ($b\in \Delta$, $b+\ell\in \Gaps(b)$) are polynomials in the basic variables 
$g_{a;j}$ ($a\in A(\Delta)$, $a+j\in\Gaps(a)$), determined inductively:
\begin{itemize}
\item 
If $b\geq c(\Delta)$, then $g_b=t^b$ and hence $g_{b;\ell}=0$.

\item 
Otherwise, suppose $g_{b'}$ is known for all $b'>b$ in $\Delta$.  
Write $b=a+\alpha dn$ with $a\in A(\Delta)$, $\alpha\geq 0$.  
Since $b=\ord_t(x^\alpha g_a)$, we have
\begin{eqnarray}\label{eqn:inductive_g-formula}
&&x^{\alpha}g_a = t^b + \sum_{j\geq 1:~a+j\notin \Delta} g_{a;j}t^{b+j}. ~~\Rightarrow~~ g_{a+\alpha dn} = x^{\alpha}g_a - \sum_{j\geq 1:~a+j\notin\Delta, a+\alpha dn+j\in\Delta} g_{a;j}g_{a+\alpha dn+j}. ~~\Rightarrow \nonumber\\
&&g_{a+\alpha dn;\ell} = g_{a;\ell} -  \sum_{1\leq j < \ell:~a+j\notin\Delta, a+\alpha dn+j\in\Delta} g_{a;j}g_{a+\alpha dn+j;\ell-j},~~(a+\alpha dn + \ell \notin \Delta).
\end{eqnarray}
\end{itemize}
By induction, this determines all $g_b$.

\medskip\noindent
\textbf{Step~2.}
We recall some details from the proof of Lemma~\ref{lem:affine_paving_for_local_compactified_Jacobian_over_generic_curve_singularity}, cf.~\cite[Prop.~3.23]{GMO25}.

\medskip\noindent
Define
\[
\deg_+ g_{a;\ell}:= \ell \quad (a\in A(\Delta)), 
\qquad \deg_+(\lambda_{\ell}):= \ell.
\]
It follows that $g_{b;\ell}$ ($b\in \Delta$) and $s_{a+dm;\ell}$ are homogeneous of degree $\deg_+ = \ell$.  
Set
\begin{equation}
\Gen_{\ell} := \oplus_{a\in A(\Delta):~a + \ell\in\Gaps(a)}\field\cdot g_{a;\ell}.
\end{equation}

\medskip\noindent
For $i\in \mathbb{Z}/d$, $\ell\geq 1$, \textbf{define}
\begin{equation}\label{eqn:index_set_hat-i-ell}
\hat{I}(i;\ell):= \{j\in\mathbb{Z}/n:~\hat{b}_{i,j} + \ell \notin \Delta\};\quad N_{i;\ell}:= |\hat{I}(i;\ell)|; \quad \Gen_{i;\ell}:= \oplus_{j\in\hat{I}(i;\ell)} \field\cdot g_{\hat{b}_{i,j};\ell}.
\end{equation}
Then $\Gen_{\ell} = \oplus_{i\in\mathbb{Z}/d}\Gen_{i;\ell}$.
If $\hat{I}(i;\ell)\neq \emptyset$, \textbf{set}
\begin{equation}\label{eqn:basis_vector_g_i,ell-plus}
g_{i;\ell}^+ := \sum_{j\in\hat{I}(i;\ell)} g_{\hat{b}_{i,j};\ell};\quad \Gen_{i;\ell}^-:= \{\sum_{j\in\hat{I}(i;\ell)} *_\ell g_{\hat{b}_{i,j};\ell}\in \Gen_{i;\ell}:~\sum *_{\ell} = 0\}.
\end{equation}
Thus, $\Gen_{i;\ell} = \Gen_{i;\ell}^-\oplus \field\cdot g_{i;\ell}^+$.

\medskip\noindent
\emph{New basis for $\Gen_{i;\ell}^-$.}
Note that if $\hat{b}_{i,j}+dm+\ell\notin\Delta$, then also $\hat{b}_{i,j}+\ell,\hat{b}_{i,j+1}+\ell\notin\Delta$, i.e.~$j,j+1\in\hat{I}(i;\ell)$.  
For $i\in \mathbb{Z}/d = \{0,1,\cdots,d-1\}$, $\ell > 0$:
\begin{enumerate}[wide,labelwidth=!,labelindent=0pt,itemindent=!,label=(\roman*)]
\item
If $i$ is $\ell$-admissible (Definition \ref{def:admissibility}), define 
\begin{equation}\label{eqn:index_set_I-i,ell_1}
I(i;\ell):= \{j\in\mathbb{Z}/n:~\hat{b}_{i,j} + dm + \ell \notin \Delta\}\subsetneq \mathbb{Z}/n.
\end{equation}
Then $\{g_{i,j;\ell}^- : j\in I(i;\ell)\}$ is linearly independent, and $I(i;\ell)\subsetneq \hat{I}(i;\ell)$.

\item
Otherwise, $\hat{b}_{i,j}+dm+\ell\notin\Delta$ for all $j\in\mathbb{Z}/n$. So, $\hat{I}(i;\ell)=\mathbb{Z}/n$.  
In this case, set
\begin{equation}\label{eqn:index_set_I-i,ell_2}
I(i;\ell):=\{0,\cdots,n-2\} \subsetneq \hat{I}(i;\ell).
\end{equation}
\end{enumerate}

In either case, we may complete $\{g_{i,j;\ell}^- : j\in I(i;\ell)\}$ to a basis of $\Gen_{i;\ell}^-$ by adding elements of the form $g_{\hat{b}_{i,j};\ell}-g_{\hat{b}_{i,k};\ell}$ with $j,k\in\hat{I}(i;\ell)$.  
For example, fix $j_0\in\hat{I}(i;\ell)\setminus I(i;\ell)$, and let $\overline{I}(i;\ell):=\hat{I}(i;\ell)\setminus(I(i;\ell)\cup\{j_0\})$.  
Then
\[
\Gen_{i;\ell}^- 
= \bigoplus_{j\in I(i;\ell)} \field\cdot g_{i,j;\ell}^- 
  \;\oplus\;
  \bigoplus_{j\in \overline{I}(i;\ell)} \field\cdot h_{i,j;\ell}^-, 
\quad 
h_{i,j;\ell}^- := g_{\hat{b}_{i,j};\ell}-g_{\hat{b}_{i,j_0};\ell}.
\]

\medskip\noindent
Next, set
\[
\Lambda:= \field[\lambda^{\pm 1}, \lambda_j, j\geq 2, dm+j\notin\Gamma].
\]
Define a \textbf{partial order} on 
\begin{equation}\label{eqn:new_variables_for_equations-affine_paving_for_Hilbert_scheme}
X:= \{g_{i;\ell}^+; g_{i,j;\ell}^-, j\in I(i;\ell); h_{i,j;\ell}^-, j\in \overline{I}(i;\ell)\}
\end{equation} 
by
\begin{equation}
g_{\bullet,\bullet;\ell}^-, h_{\bullet,\bullet;\ell}^- < g_{\bullet;\ell}^+ < g_{\bullet,\bullet;\ell+1}^-, h_{\bullet,\bullet;\ell+1}^-;\quad g_{d-\ell;\ell}^+ > g_{d-\ell+1;\ell}^+ > \cdots > g_{d-\ell-1;\ell}^+.
\end{equation}

\medskip\noindent
Recall that
\[
J_{\Delta}=\{s_{a+dm;\ell}=0 : a\in A(\Delta),\, a+dm+\ell\notin\Delta\}.
\]
If $i$ is not $\ell$-admissible (so necessarily $\ell\geq 2$), then we replace the equation $s_{\hat{b}_{i,n-1}+dm;\ell}=0$ by
\[
\sum_{j\in\mathbb{Z}/n} s_{\hat{b}_{i,j}+dm;\ell}=0.
\]

\medskip\noindent
For brevity, for $g\in X$ write $\Poly(<g)$ for any polynomial in $\Lambda[g'\in X : g'<g]$.  
Similarly, for $k\geq 1$, write $\Poly(\leq k)$ for any polynomial in $\Lambda[g\in X : \deg_+\leq k]$.  
Then, by definition and induction,
\begin{equation}\label{eqn:asymptotic_g-formula}
F_{i,j;\ell} = g_{i,j;\ell}^- + \lambda g_{\hat{b}_{i,j};\ell-1} + \Poly(\leq \ell-2), 
\qquad
g_{a+\alpha dn;\ell} = g_{a;\ell} + \Poly(\leq \ell-1).
\end{equation}

\medskip
Now, $J_{\Delta}\subset \mathbb{A}^{G(\Delta)} = \{(g)_{g\in X}\}$ is defined by 
$E(\Delta) = \sum_{a\in A(\Delta)} |\Gaps(a+dm)|$ new equations as follows. 
We proceed by induction on $\ell$, starting with the initial case $\ell=1$.
\begin{enumerate}[wide,labelwidth=!,labelindent=0pt,itemindent=!]
\item
If $\hat{b}_{i,j}+dm+\ell\notin \Delta$ such that either $j\neq n-1$ or $i$ is $\ell$-admissible 
(Definition \ref{def:admissibility}), then $j\in I(i;\ell)$. 
By (\ref{eqn:s-equation}), the corresponding equation is
\begin{equation}
0 = s_{\hat{b}_{i,j}+dm;\ell} = g_{i,j;\ell}^- + \Poly(\leq \ell-1).
\quad\Longleftrightarrow\quad g_{i,j;\ell}^- = \Poly(\leq \ell-1).
\end{equation}

\item
If $\hat{b}_{i,n-1}+dm+\ell\notin \Delta$ and $i$ is not $\ell$-admissible, then necessarily $\ell\geq 2$ 
since $\Delta$ is admissible. 
By \cite[Lem.~3.14]{GMO25}, $\hat{I}(i;\ell-1)\neq \emptyset$.
By \cite[Cor.~3.12]{GMO25}, $i\neq d-\ell$. 
Hence, $g_{i;\ell-1}^+ > g_{i+1;\ell-1}^+$.

\begin{enumerate}[wide,labelwidth=!,labelindent=0pt,itemindent=!,label=(2.\arabic*)]
\item
If $\ell\geq 3$, then by (\ref{eqn:s-equation}), the new equation 
$0 = \sum_{j\in\mathbb{Z}/n} s_{\hat{b}_{i,j}+dm;\ell}$ becomes
\begin{eqnarray*}
0 &=& \lambda g_{i;\ell-1}^+ - \sum_{j:~\hat{b}_{i,j}+dm+1\in\Delta}F_{i,j;1}\,g_{\hat{b}_{i,j}+dm+1;\ell-1} \\
&&- \sum_{j:~\hat{b}_{i,j}+dm+\ell-1\in\Delta}F_{i,j;\ell-1}\,g_{\hat{b}_{i,j}+dm+\ell-1;1} + \Poly(\leq \ell-2).
\end{eqnarray*}
Using $F_{i,j;\ell-1}=g_{i,j;\ell-1}^-+\lambda g_{\hat{b}_{i,j};\ell-2}+ \Poly(\leq \ell-3)$, this simplifies to
\begin{eqnarray*}
0 &=& \lambda g_{i;\ell-1}^+ - \sum_{j:~\hat{b}_{i,j}+dm+1\in\Delta}F_{i,j;1}\,g_{\hat{b}_{i,j}+dm+1;\ell-1}\\
&&- \sum_{j:~\hat{b}_{i,j}+dm+\ell-1\in\Delta}g_{i,j;\ell-1}^-\,g_{\hat{b}_{i,j}+dm+\ell-1;1} + \Poly(\leq \ell-2).
\end{eqnarray*}

\begin{enumerate}[wide,labelwidth=!,labelindent=0pt,itemindent=!,label=(2.1.\arabic*)]
\item
We simplify further the expression $\sum_{j:~\hat{b}_{i,j}+dm+1\in\Delta}F_{i,j;1}\,g_{\hat{b}_{i,j}+dm+1;\ell-1}$.

\noindent
Since $\Delta$ is admissible, $I(i;1) \subsetneq \mathbb{Z}/n$, hence $I(i;1)^c\neq \emptyset$. 
\begin{itemize}[wide,labelwidth=!,labelindent=0pt,itemindent=!]
\item
For $j\in I(i;1)^c$, write
\[
\hat{b}_{i,j}+dm+1 = \hat{b}_{i+1,j+1+j_1} + \gamma_{i,j}dn,\quad \gamma_{i,j}\geq 0,
\]
where $j_1$ depends only on $i$\footnote{Precisely, $j_1=0$ if $0\leq i\leq d-2$; if $i=d-1$, 
$j_1$ is uniquely determined by $j_1m\equiv 1 \mod n$.}. 
Then by (\ref{eqn:inductive_g-formula}),
\[
g_{\hat{b}_{i,j}+dm+1;\ell-1} = g_{\hat{b}_{i+1,j+1+j_1};\ell-1} + \Poly(\leq \ell-2).
\]
Since $\hat{b}_{i,j}+dm+\ell \notin \Delta$, we conclude $j+1+j_1\in \hat{I}(i+1;\ell-1)$, 
so $g_{i+1;\ell-1}^+\in X$.

\item
For $j\in I(i;1)$, we use (1): $0=s_{\hat{b}_{i,j}+dm;1}=F_{i,j;1}=g_{i,j;1}^-+\lambda$, 
with $g_{i,j;1}^-\in X$. Thus
\[
\sum_{j:~\hat{b}_{i,j}+dm+1\in\Delta}F_{i,j;1} = \sum_{j\in\mathbb{Z}/n}F_{i,j;1} = n\lambda.
\]
\end{itemize}
As a consequence, we obtain
\begin{eqnarray*}
&&\sum_{j:~\hat{b}_{i,j}+dm+1\in\Delta}F_{i,j;1}g_{\hat{b}_{i,j}+dm+1;\ell-1} = \sum_{j:~\hat{b}_{i,j}+dm+1\in\Delta}F_{i,j;1}g_{\hat{b}_{i+1,j+1+j_1};\ell-1} + \Poly(\leq \ell-2)\\
&=&\sum_{j:~\hat{b}_{i,j}+dm+1\in\Delta}F_{i,j;1}(g_{\hat{b}_{i+1,j+1+j_1};\ell-1} - \frac{g_{i+1;\ell-1}^+}{N_{i+1;\ell-1}}) 
+ \frac{n\lambda}{N_{i+1;\ell-1}}g_{i+1;\ell-1}^+ + \Poly(\leq \ell-2)\\
&=& \frac{n\lambda}{N_{i+1;\ell-1}}g_{i+1;\ell-1}^+ + \Poly(<g_{d-\ell;\ell-1}^+).\quad (g_{\hat{b}_{i+1,j+1+j_1};\ell-1} - \frac{g_{i+1;\ell-1}^+}{N_{i+1;\ell-1}}\in \Gen_{i+1;\ell-1}^-)
\end{eqnarray*}

\item
We simplify further the expression $\sum_{j:~\hat{b}_{i,j}+dm+\ell-1\in\Delta}g_{i,j;\ell-1}^-\,g_{\hat{b}_{i,j}+dm+\ell-1;1}$.

\noindent
If $\exists j$ s.t. $\hat{b}_{i,j}+dm+\ell-1\in \Delta$, then $i$ is ($\ell-1$)-admissible, and $I(i;\ell-1)^c = \{j\in\mathbb{Z}/n:~\hat{b}_{i,j}+dm+\ell-1\in \Delta\}\neq \emptyset$. In this case:
\begin{itemize}[wide,labelwidth=!,labelindent=0pt,itemindent=!]
\item
$\forall j\in I(i;\ell-1)^c$, we may write $\hat{b}_{i,j}+dm+\ell-1 = \hat{b}_{i+\ell-1,j+1+j_{\ell-1}} + \gamma_{i,j}^{(\ell-1)}dn$, $\gamma_{i,j}^{(\ell-1)}\geq 0$, where $j_{\ell-1}\in\mathbb{Z}/n$ depends only on $i$\footnote{Precisely, $j_{\ell-1}\in \mathbb{Z}/n$ is uniquely characterized by $j_{\ell-1}m  \equiv \floor*{\frac{i + \ell-1}{d}} \mod n$.}. Then by (\ref{eqn:inductive_g-formula}),
$g_{\hat{b}_{i,j}+dm+\ell-1;1} = g_{\hat{b}_{i+\ell-1,j+1+j_{\ell-1}};1}$.

\item
$\forall j\in I(i;\ell-1)$, i.e., $\hat{b}_{i,j}+dm+\ell-1 \notin \Delta$, as $i$ is ($\ell-1$)-admissible,
by (1), we have $g_{i,j;\ell-1}^- = \Poly(\leq \ell-2)$.

\item
$\forall j\in \hat{I}(i;\ell-1)^c$, i.e., $\hat{b}_{i,j}+\ell -1 \in \Delta$, we may write $\hat{b}_{i,j}+\ell-1 = \hat{b}_{i+\ell-1,j+j_{\ell-1}} + \tilde{\gamma}_{i,j}^{(\ell-1)}dn$, $ \tilde{\gamma}_{i,j}^{(\ell-1)}\geq 0$. 
It follows that $\hat{b}_{i,j}+dm+\ell =  \hat{b}_{i+\ell-1,j+j_{\ell-1}} + \tilde{\gamma}_{i,j}^{(\ell-1)}dn + dm + 1\notin \Delta$, $\Rightarrow$ $\hat{b}_{i+\ell-1,j+j_{\ell-1}} + dm + 1\notin \Delta$. Then $0 = s_{\hat{b}_{i+\ell-1,j+j_{\ell-1}}+dm;1} = F_{i+\ell-1,j+j_{\ell-1};1} = g_{i+\ell-1,j+j_{\ell-1};1}^- + \lambda$. 
Thus, 
\[
\sum_{j\in\hat{I}(i;\ell-1)} g_{i+\ell-1,j+j_{\ell-1};1}^- = - \sum_{j\in \hat{I}(i;\ell-1)^c}g_{i+\ell-1,j+j_{\ell-1};1}^- = (n - N_{i;\ell-1})\lambda.
\]
\end{itemize}
As a consequence, we obtain
\begin{eqnarray*}
&& \sum_{j:~\hat{b}_{i,j}+dm+\ell-1\in\Delta}g_{i,j;\ell-1}^-g_{\hat{b}_{i,j}+dm+\ell-1;1} = \sum_{j\in\mathbb{Z}/n}g_{i,j;\ell-1}^-g_{\hat{b}_{i+\ell-1,j+1+j_{\ell-1}};1} + \Poly(\leq \ell-2)\\
&=& -\sum_{j\in\hat{I}(i;\ell-1)} g_{i+\ell-1,j+j_{\ell-1};1}^- g_{\hat{b}_{i,j};\ell-1} + \Poly(\leq \ell-2)\\
&=& -\sum_{j\in\hat{I}(i;\ell-1)} g_{i+\ell-1,j+j_{\ell-1};1}^- (g_{\hat{b}_{i,j};\ell-1} - \frac{g_{i;\ell-1}^+}{N_{i;\ell-1}}) - \frac{n-N_{i;\ell-1}}{N_{i;\ell-1}}\lambda g_{i;\ell-1}^+  + \Poly(\leq \ell-2)\\
&=& - \frac{n-N_{i;\ell-1}}{N_{i;\ell-1}}\lambda g_{i;\ell-1}^+ + \Poly(<g_{d-\ell;\ell-1}^+).\quad (g_{\hat{b}_{i,j};\ell-1} - \frac{g_{i;\ell-1}^+}{N_{i;\ell-1}} \in \Gen_{i;\ell-1}^-)
\end{eqnarray*}
\end{enumerate}

\medskip\noindent
Altogether, the new equation $0 = \sum_{j\in\mathbb{Z}/n}s_{\hat{b}_{i,j}+dm;\ell}$ becomes:
\begin{eqnarray}\label{eqn:summing_equation-ell_at_least_3}
0 &=& \sum_{j\in\mathbb{Z}/n}s_{\hat{b}_{i,j}+dm;\ell} = \lambda g_{i;\ell-1}^+ - \frac{n\lambda}{N_{i+1;\ell-1}}g_{i+1;\ell-1}^+ + \frac{n-N_{i;\ell-1}}{N_{i;\ell-1}}\lambda g_{i;\ell-1}^+ +  \Poly(<g_{d-\ell;\ell-1}^+)\nonumber\\
&=& \frac{n\lambda}{N_{i;\ell-1}}g_{i;\ell-1}^+ - \frac{n\lambda}{N_{i+1;\ell-1}}g_{i+1;\ell-1}^+ + \Poly(<g_{d-\ell;\ell-1}^+).
\end{eqnarray}
In particular, it takes the form 
\[
g_{i;\ell-1}^+ = \Poly(<g_{i;\ell-1}^+).
\]

\item
If $\ell = 2$, then by (\ref{eqn:s-equation}),
\[
0 =  \sum_{j\in\mathbb{Z}/n} s_{\hat{b}_{i,j}+dm;2} = \lambda g_{i;1}^+ - \sum_{j:~\hat{b}_{i,j}+dm+1\in\Delta}F_{i,j;1}g_{\hat{b}_{i,j}+dm+1;1} + \Poly(\leq 0).
\]
By the same reasoning as for $\ell\geq 3$, we have
\[
\sum_{j:~\hat{b}_{i,j}+dm+1\in\Delta}F_{i,j;1}g_{\hat{b}_{i,j}+dm+1;1} = \frac{n\lambda}{N_{i+1;1}}g_{i+1;1}^+ + \Poly(<g_{d-2;1}^+).
\]
Altogether, the new equation $0 = \sum_{j\in\mathbb{Z}/n}s_{\hat{b}_{i,j}+dm;\ell}$ becomes:
\begin{equation}\label{eqn:summing_equation-ell_equals_2}
0 =  \sum_{j\in\mathbb{Z}/n} s_{\hat{b}_{i,j}+dm;2} = \lambda g_{i;1}^+ - \frac{n\lambda}{N_{i+1;1}}g_{i+1;1}^+ + \Poly(<g_{d-2;1}^+).
\end{equation}
In particular, it takes the form 
\[
g_{i;1}^+ = \Poly(<g_{i;1}^+).
\]
\end{enumerate}
\end{enumerate}

Finally, \textbf{define} 
\begin{eqnarray}\label{eqn:dependent_variables_for_equations-affine_paving_for_Hilbert_scheme}
Y &:=& \{g_{i,j;\ell}^-:~\hat{b}_{i,j}+dm+\ell \notin\Delta \text{ s.t. $j\neq n-1$ or $i$ is $\ell$-admissible}\}\nonumber\\
&&\sqcup \{g_{i;\ell}^+:~\text{$i$ is not ($\ell+1$)-admissible}\}.
\end{eqnarray}
Then $Y\subset X$, with $|Y| = E(\Delta)$. By above, 
\[
J_{\Delta} \cong \mathbb{A}_{X\setminus Y}^{\dim\Delta} \subset \mathbb{A}_X^{G(\Delta)},
\]
where the dependent variables $Y$ are determined inductively in the form
\[
g = \Poly(<g), \quad g\in Y.
\]

\medskip\noindent
\textbf{Step~3.}
We show that $H_{\Delta}^{[\tau]} \isomorphic \mathbb{A}^{\dim\Delta - \Gaps(\tau_0)}$.

Since $\tau_0\in\Delta$, we may write $\tau_0 = \hat{b}_{u,v} + \alpha dn$ with $\alpha\geq 0$.
By \textbf{Steps} $1$ and $2$, we have 
\[
H_{\Delta}^{[\tau]} \isomorphic \{g_{\tau_0} = t^{\tau_0}\} \subset J_{\Delta}\isomorphic \mathbb{A}_{X\setminus Y}^{\dim\Delta} \subset \mathbb{A}_X^{G(\Delta)}.
\]
That is, the extra defining equations for $H_{\Delta}^{[\tau]}$ are: 
$g_{\tau_0;\ell} = 0$ for all $\ell \in \Gaps(\tau_0) - \tau_0$.

Fix any $\ell\geq 1$ s.t. $\tau_0 + \ell =   \hat{b}_{u,v} + \alpha dn + \ell \notin \Delta$. 
It follows that $\hat{b}_{u,v} + \ell \notin \Delta$, i.e., $v\in \hat{I}(u;\ell)$. Hence, $g_{\hat{b}_{u,v};\ell}^+\in X$.
By (\ref{eqn:inductive_g-formula}) and (\ref{eqn:asymptotic_g-formula}), the equation $0 = g_{\tau_0;\ell} = g_{\hat{b}_{u,v}+\alpha dn;\ell}$ becomes
\begin{equation}\label{eqn:extra_equation_for_cell_decomposition_of_Hilbert_scheme}
0 = g_{\hat{b}_{u,v}+\alpha dn;\ell} = g_{\hat{b}_{u,v};\ell} + \Poly(\leq \ell-1) = \frac{g_{u;\ell}^+}{N_{u;\ell}} + \Poly(< g_{d-\ell-1;\ell}^+).\quad (g_{\hat{b}_{u,v};\ell} -  \frac{g_{u;\ell}^+}{N_{u;\ell}} \in \Gen_{u;\ell}^-)
\end{equation}
In particular, it takes the form 
\[
g_{u;\ell}^+ = \Poly(< g_{d-\ell-1;\ell}^+).
\]

Inductively on $\ell$, we replace these equations by equivalent ones as follows:
\begin{itemize}[wide,labelwidth=!,labelindent=0pt,itemindent=!]
\item
If $g_{u;\ell}^+ \notin Y$ (i.e., $u$ is $(\ell+1)$-admissible), then we keep the equation $g_{u;\ell}^+ = P(< g_{d-\ell-1;\ell}^+)$ unchanged and define $u_{\ell} := u$.

\item
If $g_{u;\ell}^+\in Y$ (i.e., $u$ is not ($\ell+1$)-admissible), then by \cite[Cor.3.12]{GMO25}, $u\neq d-\ell-1\in \mathbb{Z}/d$. Moreover, by \textbf{Step} $2$, in the description of $J_{\Delta}\isomorphic \mathbb{A}_{X\setminus Y}^{\dim \Delta}\subset \mathbb{A}_X^{G(\Delta)}$, we already have an equation of the form:
\[
g_{u;\ell}^+ = C g_{u+1;\ell}^+ + \Poly(<g_{d-\ell-1;\ell}^+),
\]
where $C$ is a nonzero constant depending only on $u$ and $\ell$.
Hence, equation (\ref{eqn:extra_equation_for_cell_decomposition_of_Hilbert_scheme}) is equivalent to a new equation of the form
\[
g_{u+1;\ell}^+ =  \Poly(<g_{d-\ell-1;\ell}^+).
\]
If $g_{u+1;\ell}^+ \notin Y$, then we stop and define $u_{\ell}:= u+1$.
Otherwise, $u+1$ is not $(\ell+1)$-admissible. Then by the same argument, $u+1\neq d-\ell-1$, and the equation $g_{u+1;\ell}^+ =  \Poly(<g_{d-\ell-1;\ell}^+)$ is further equivalent to a new equation of the form
\[
g_{u+2;\ell}^+ =  \Poly(<g_{d-\ell-1;\ell}^+).
\] 
Repeating this procedure finitely many times, we eventually obtain some $u_{\ell} \in \mathbb{Z}/d$ such that (\ref{eqn:extra_equation_for_cell_decomposition_of_Hilbert_scheme}) is equivalent to
\[
g_{u_{\ell};\ell}^+ = \Poly(< g_{d-\ell-1;\ell}^+),
\] 
with $g_{u_{\ell};\ell}^+\notin Y$. Now, we terminate.
\end{itemize}

Finally, define 
\[
Y_{\tau_0}:= Y \sqcup \{g_{u_{\ell};\ell}^+:~\tau_0+\ell\in \Gaps(\tau_0)\} \subset X.
\]
Then $|Y_{\tau_0}| = E(\Delta) + |\Gaps(\tau_0)|$.
Moreover, the above shows that 
\[
H_{\Delta}^{[\tau]} \isomorphic \mathbb{A}_{X\setminus Y_{\tau_0}}^{\dim\Delta - |\Gaps(\tau_0)|} \subset J_{\Delta}\isomorphic \mathbb{A}_{X\setminus Y}^{\dim\Delta} \subset \mathbb{A}_X^{G(\Delta)},
\] 
with the extra dependent variables $Y_{\tau_0}\setminus Y$ are determined by inductive formulas
\[
g_{u_{\ell};\ell}^+ = \Poly(< g_{d-\ell-1;\ell}^+).
\]
This completes the proof of Theorem \ref{thm:affine_paving_for_Hilbert_scheme_over_generic_curve_singularity}.
\end{proof}

In conclusion, we obtain:
\begin{corollary}\label{cor:affine_paving_for_Hilbert_scheme}
Let $(C,0)$ is a generic irreducible planar curve singularity parametrized as in Definition \ref{def:generic_planar_curve_singularities}. Then for all $\tau\geq 0$, the Hilbert scheme $\Hilb^{[\tau]}(C,0)$ admits an affine paving:
\[
\Hilb^{[\tau]}(C,0) = \bigsqcup_{\substack{\Delta \in \Adm(dn,dm) \\ \tau - e(\Delta) \in \Delta}} H_{\Delta}^{[\tau]}, 
\quad \text{with} \quad 
H_{\Delta}^{[\tau]} \cong \mathbb{A}^{\dim\Delta - |\Gaps(\tau - e(\Delta))|}.
\]
In particular, the (co)homology of $\Hilb^{[\tau]}(C,0)$ is pure.
\end{corollary}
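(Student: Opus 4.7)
The ``only if'' direction is immediate: by Lemma~\ref{lem:affine_paving_for_local_compactified_Jacobian_over_generic_curve_singularity}, $J_\Delta \neq \emptyset$ forces $\Delta \in \Adm(dn,dm)$, and since $H_\Delta^{[\tau]} \cong \{M \in J_\Delta : t^{\tau_0} \in M\}$, the element $t^{\tau_0}$ belonging to some $M$ with $\ord_t(M) = \Delta$ forces $\tau_0 \in \Delta$. The heart of the argument is the ``if'' direction, and the plan is to realize $H_\Delta^{[\tau]}$ as an explicit affine subspace sitting inside the GMO affine cell $J_\Delta \cong \mathbb{A}_{X \setminus Y}^{\dim\Delta}$ by cutting down with $|\Gaps(\tau_0)|$ further triangular relations.

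First I would set up canonical coordinates on standard modules: each $M$ with $\ord_t(M)=\Delta$ is determined by its $dn$-generators $g_a = t^a + \sum_{a+\ell \in \Gaps(a)} g_{a;\ell}\, t^{a+\ell}$, $a \in A(\Delta)$, and $yM \subset M$ translates into the system $s_{a+dm;\ell}=0$ indexed by gaps above $a+dm$. Expanding $yg_{\hat{b}_{i,j}} - x^{\alpha_{i,j}}g_{\hat{b}_{i,j+1}}$ yields the leading-order expression $F_{i,j;\ell} = g^-_{i,j;\ell} + \lambda g_{\hat{b}_{i,j};\ell-1} + \Poly(\leq \ell-2)$ with respect to the grading $\deg_+ g_{a;j}:=j$, while an inductive relation of the form $g_{a+\alpha dn;\ell} = g_{a;\ell} + \Poly(\leq \ell-1)$ transports this leading-term information to all of $\Delta$. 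I would then recall the GMO change of basis $\Gen_{i;\ell} = \field \cdot g^+_{i;\ell} \oplus \Gen^-_{i;\ell}$ together with a lexicographic-style order on a variable set $X$ (antisymmetric before symmetric at each level, lower levels first), which triangularizes the defining equations of $J_\Delta$ and selects a dependent set $Y$ with $|Y| = \sum_{a \in A(\Delta)} |\Gaps(a+dm)|$. A key by-product to keep at hand is the summed equation $g^+_{i;\ell} = C_i\, g^+_{i+1;\ell} + \Poly(< g^+_{d-\ell-1;\ell})$ with $C_i \neq 0$, which is the elimination rule produced whenever $i$ is not $(\ell+1)$-admissible.

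Next I would impose the extra cut $g_{\tau_0} = t^{\tau_0}$, amounting to $|\Gaps(\tau_0)|$ new equations $g_{\tau_0;\ell}=0$, one per $\ell$ with $\tau_0 + \ell \in \Gaps(\tau_0)$. Writing $\tau_0 = \hat{b}_{u,v} + \alpha dn$, the inductive formula combined with the decomposition of $g_{\hat{b}_{u,v};\ell}$ along $g^+_{u;\ell} \oplus \Gen^-_{u;\ell}$ puts this equation in the triangular form $\frac{1}{N_{u;\ell}} g^+_{u;\ell} + \Poly(< g^+_{d-\ell-1;\ell}) = 0$. If $g^+_{u;\ell} \notin Y$ I simply adjoin it to the dependent set. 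Otherwise, I substitute the already recorded GMO relation $g^+_{u;\ell} = C_u g^+_{u+1;\ell} + \cdots$ to advance $u \mapsto u+1$ in $\mathbb{Z}/d$, and iterate until landing on some $u_\ell$ with $g^+_{u_\ell;\ell} \notin Y$.

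The main obstacle is verifying that this substitution chain terminates on a genuinely new dependent variable rather than cycling around $\mathbb{Z}/d$ or collapsing to a trivial identity. Termination rests on \cite[Cor.~3.12]{GMO25}, which forces $u \neq d-\ell-1$ whenever $u$ is not $(\ell+1)$-admissible: the advancing chain must therefore halt strictly before returning to its starting point, necessarily at some $(\ell+1)$-admissible index. Once this is secured, the enlarged set $Y_{\tau_0} := Y \sqcup \{g^+_{u_\ell;\ell}\}_{\ell}$ has exactly $|Y| + |\Gaps(\tau_0)|$ elements, the combined system stays triangular, and one concludes $H_\Delta^{[\tau]} \cong \mathbb{A}_{X \setminus Y_{\tau_0}}^{\dim\Delta - |\Gaps(\tau_0)|}$, as desired.
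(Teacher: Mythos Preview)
Your proposal is correct and follows essentially the same route as the paper: it reproduces the proof of Theorem~\ref{thm:affine_paving_for_Hilbert_scheme_over_generic_curve_singularity} (the GMO coordinates $g_{a;\ell}$, the triangularizing basis $X$ with dependent set $Y$, and the substitution chain $u \mapsto u+1$ terminated via \cite[Cor.~3.12]{GMO25}), which is the entire substance behind the corollary. The only point you leave implicit is why the resulting cell decomposition is an affine \emph{paving} rather than merely a partition into affine spaces; the paper handles this in one line by noting that the closed filtration is inherited from the Schubert paving of $\overline{J}(C,0)$ via the closed embedding $\AJ_\tau$.
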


\begin{proof}
Via the Abel--Jacobian map $\AJ_{\tau}:\Hilb^{[\tau]}(C,0) \hookrightarrow \overline{J}(C,0)$, the increasing finite closed filtration is induced from that of $\overline{J}(C,0)$.
The result now follows immidiately from Theorem \ref{thm:affine_paving_for_Hilbert_scheme_over_generic_curve_singularity}.
\end{proof}

\begin{example}\label{ex:affine_paving_for_Hilbert_scheme_of_a_generic_planar_curve_singularity}
Let $(C,0)$ be the generic planar curve singularity parametrized by
\[
x = t^6,\quad y = t^9 + \lambda t^{10},\quad \lambda = \lambda_1 \in \field^{\times}.
\]
So, $(n,m,d) = (2,3,3)$, $\delta = 21$, and
\[
(\hat{a}_{i,j})_{0\leq i\leq d-1,0\leq j\leq n-1} = 
\left(\begin{array}{cc} 
0 & 9\\
19 & 28\\
38 & 47 
\end{array}\right),\quad 
(a_{i,j})_{0\leq i\leq d-1,0\leq j\leq n-1} = 
\left(\begin{array}{cc} 
0 & 1\\
3 & 4\\
6 & 7
\end{array}\right).
\]
Consider $\Delta\isomorphic (c_{i,j})_{0\leq i\leq d-1,0\leq j\leq n-1}  \in \Adm(dn=4,dm=6)$ with
\[
(c_{i,j})_{0\leq i\leq d-1,0\leq j\leq n-1} = 
\left(\begin{array}{cc} 
0 & 1\\
0 & 3\\
3 & 5
\end{array}\right).~~\Rightarrow~~
A(\Delta) = (\hat{b}_{i,j} = \hat{a}_{i,j} - dnc_{i,j})_{0\leq i\leq d-1,0\leq j\leq n-1} = 
\left(\begin{array}{cc} 
0 & 3\\
19 & 10\\
20 & 17
\end{array}\right)
\]
Then $\mathbb{N}\setminus\Delta = \{1,2,4,5,7,8,11,13,14\}$, $\max A(\Delta) = 20$, $c(\Delta) = 15$, and $e(\Delta) = 12$.

Take $\tau_0 = 12 \in \Delta$, and $\tau:= \tau_0 + e(\Delta) = 24$. We will compute $H_{\Delta}^{[\tau]}$.

\medskip\noindent
\textbf{Step~0.} We recall the general setup.

\begin{enumerate}[wide,labelwidth=!,labelindent=0pt,itemindent=!]
\item
$J_{\Delta}$ parameterizes $R$-submodules $M\subset \tilde{R}$ such that $\ord_t(M) = \Delta$.
Equivalently, it parameterizes 
\[
M:=\oplus_{a\in A(\Delta)}\cO_F\cdot g_a \subset \tilde{R},
\] 
such that $yM \subset M$, where 
\[
g_a = t^a + \sum_{\ell\geq 1: a+\ell\notin\Delta}g_{a;\ell}t^{a+\ell},
\] 
with $g_{a;\ell} \in \field$ indeterminate.
Concretely,
\begin{eqnarray*}
g_0 &=& 1 + g_{0;1}t + g_{0;2}t^2 + g_{0;4}t^4 + g_{0;5}t^5 + g_{0;7}t^7 + g_{0;8}t^8 + g_{0;11}t^{11} + g_{0;13}t^{13} + g_{0;14}t^{14};\\
g_3 &=& t^3 + g_{3;1}t^4 + g_{3;2}t^5 + g_{3;4}t^7 + g_{3;5}t^8 + g_{3;8}t^{11} + g_{3;10}t^{13} + g_{3;11}t^{14};\\
g_{19} &=& t^{19};\\
g_{10} &=& t^{10} + g_{10;1}t^{11} + g_{10;3}t^{13} + g_{10;4}t^{14};\\
g_{20} &=& t^{20};\\
g_{17} &=& t^{17}.
\end{eqnarray*}
There are $19$ variables in total.

\item
For any $b\in\Delta$, there exists a unique $g_b \in M$ of the form
\[
g_b = t^b + \sum_{\ell\geq 1: b+\ell\notin\Delta} g_{b;\ell}t^{b+\ell},\quad g_{b;\ell} \in \field[g_{a;\ell}: a\in A(\Delta), a+\ell \notin \Delta].
\]
By above, it remains to describe $g_b \mod t^{c(\Delta)}$ for $b\in [0,c(\Delta)-1]\cap \Delta$, $b\notin A(\Delta)$.
They are as follows:
\begin{eqnarray*}
g_{12} \mod t^{15} &\equiv& x^2g_0 \equiv  t^{12} + g_{0;1}t^{13} + g_{0;2}t^{14};\quad (13,14\notin\Delta)\\
xg_3 \mod t^{15} &\equiv& t^9 + g_{3;1}t^{10} + g_{3;2}t^{11} + g_{3;4}t^{13} + g_{3;5}t^{14},~~\Rightarrow~~\\
g_9 \mod t^{15} &\equiv& xg_3 - g_{3;1}g_{10} \equiv t^9 + (g_{3;2} - g_{3;1}g_{10;1})t^{11} + (g_{3;4} - g_{3;1}g_{10;3})t^{13} + (g_{3;5} - g_{3;1}g_{10;4})t^{14};\\
xg_0 \mod t^{15} &\equiv& t^6 + g_{0;1}t^7 + g_{0;2}t^8 + g_{0;4}t^{10} + g_{0;5}t^{11} + g_{0;7}t^{13} + g_{0;8}t^{14},~~\Rightarrow~~\\
g_6 \mod t^{15} &\equiv& xg_0 - g_{0;4}g_{10}\\
&\equiv& t^6 + g_{0;1}t^7 + g_{0;2}t^8 + (g_{0;5} - g_{0;4}g_{10;1})t^{11} + (g_{0;7} - g_{0;4}g_{10;3})t^{13} + (g_{0;8} - g_{0;4}g_{10;4})t^{14}.
\end{eqnarray*}

\item
The condition $yM\subset M$ is equivalent to $s_{a+dm} = 0$ for all $a\in A(\Delta)$, where
\[
s_{a+dm} = \sum_{\ell\in\Gaps(a+dm) - (a+dm)}s_{a+dm;\ell}t^{a+dm+\ell},\quad s_{a+dm;\ell} \in \field[g_{a;\ell}: a\in A(\Delta), a+\ell \notin \Delta],
\] 
is defined by 
\[
yg_a - s_{a+dm} \in \oplus_{a\in A(\Delta)}\cO_F\cdot g_a  \mod t^{c(\Delta) = 15}.
\]
Thus,
\[
J_{\Delta} \isomorphic \{(g_{a;\ell})_{a\in A(\Delta),a+\ell\notin\Delta}:~s_{a+dm;\ell} = 0, a\in A(\Delta), a+dm+\ell\notin \Delta\}.
\]

\item
Finally,
\[
H_{\Delta}^{[\tau]}  \isomorphic \{M\in J_{\Delta}:~t^{\tau_0}\in M\} \isomorphic \{(g_{a;\ell})_{a\in A(\Delta),a+\ell\notin\Delta}:~s_{a+dm;\ell} = 0, a\in A(\Delta), a+dm+\ell\notin \Delta;~g_{\tau_0} = t^{\tau_0}\}.
\]
\end{enumerate}

\medskip\noindent
\textbf{Step~1.}
We compute $s_{a+dm}$ for $a\in A(\Delta)$. 

By definition, $s_{a+dm;\ell} = 0$ when $a + dm + \ell \geq c(\Delta) = 15$.
For $a \in \{19, 10, 20, 17\}$, we have $yg_a \in t^{15}\tilde{R} \subset M$ automatically, so $s_{a+dm} = 0$.
It remains to compute $s_{a+dm}$ for $a \in \{0, 3\}$.

First, compute $yg_0 - xg_3 \mod t^{15}$:
\begin{eqnarray*}
&&yg_0 - xg_3 \mod t^{15}\\
&\equiv& (t^9+\lambda t^{10})(1 + g_{0;1}t + g_{0;2}t^2 + g_{0;4}t^4 + g_{0;5}t^5) - t^6(t^3 + g_{3;1}t^4 + g_{3;2}t^5 + g_{3;4}t^7 + g_{3;5}t^8)\\
 &\equiv& (\lambda + g_{0;1} - g_{3;1})t^{10} + (\lambda g_{0;1} + g_{0;2} - g_{3;2})t^{11} + \lambda g_{0;2}t^{12} + (g_{0;4} - g_{3;4})t^{13} + (\lambda g_{0;4} + g_{0;5} - g_{3;5})t^{14}.
 \end{eqnarray*}
 Then,
 \begin{eqnarray*}
s_9 &=& yg_0 - xg_3 - (\lambda + g_{0;1} - g_{3;1})g_{10} - \lambda g_{0;2}g_{12} \mod t^{15}\\
&=& [\lambda g_{0;1} + g_{0;2} - g_{3;2} - (\lambda + g_{0;1} - g_{3;1})g_{10;1}]t^{11} + [g_{0;4} - g_{3;4} - (\lambda + g_{0;1} - g_{3;1})g_{10;3} - \lambda g_{0;2}g_{0;1}]t^{13}\\
&& + [\lambda g_{0;4} + g_{0;5} - g_{3;5} - (\lambda + g_{0;1} - g_{3;1})g_{10;4} - \lambda g_{0;2}^2]t^{14}.
\end{eqnarray*}
In other words, 
\begin{eqnarray*}
&&s_{9;2} = \lambda g_{0;1} + g_{0;2} - g_{3;2} - (\lambda + g_{0;1} - g_{3;1})g_{10;1};\\
&&s_{9;4} = g_{0;4} - g_{3;4} - (\lambda + g_{0;1} - g_{3;1})g_{10;3} - \lambda g_{0;2}g_{0;1};\\
&&s_{9;5} = \lambda g_{0;4} + g_{0;5} - g_{3;5} - (\lambda + g_{0;1} - g_{3;1})g_{10;4} - \lambda g_{0;2}^2.
\end{eqnarray*}

Next, compute $yg_3 - x^2g_0 \mod t^{15}$:
\begin{eqnarray*}
&&yg_3 - x^2g_0 \mod t^{15} \equiv (t^9 + \lambda t^{10})(t^3 + g_{3;1}t^4 + g_{3;2}t^5) - t^{12}(1 + g_{0;1}t + g_{0;2}t^2)\\
&\equiv& (\lambda + g_{3;1} - g_{0;1})t^{13} + (\lambda g_{3;1} + g_{3;2} - g_{0;2})t^{14}.
\end{eqnarray*}
Then,
\[
s_{12} = yg_3 - x^2g_0 \mod t^{15} = (\lambda + g_{3;1} - g_{0;1})t^{13} + (\lambda g_{3;1} + g_{3;2} - g_{0;2})t^{14}.
\]
In other words, 
\begin{eqnarray*}
&&s_{12;1} = \lambda + g_{3;1} - g_{0;1};\\
&&s_{12;2} = \lambda g_{3;1} + g_{3;2} - g_{0;2}.
\end{eqnarray*}

\medskip\noindent
\textbf{Step~2.} We compute $J_{\Delta}$.

\begin{enumerate}[wide,labelwidth=!,labelindent=0pt,itemindent=!]
\item
By definition (see (\ref{eqn:index_set_hat-i-ell})), the nonempty $\hat{I}(i;\ell)$ for $i\in\mathbb{Z}/d\isomorphic \{0,1,\cdots,d-1\}$, $\ell\geq 1$, are:
\begin{eqnarray*}
&&\hat{I}(0;\ell) = \{0,1\} = \mathbb{Z}/n,~\forall \ell\in\{1,2,4,5,8,11\};\quad \hat{I}(0;\ell) = \{0\},~\forall \ell\in\{7,13,14\};\quad \hat{I}(0;10) = \{1\};\\
&&\hat{I}(1;\ell) = \{1\},~\forall \ell \in \{1,3,4\}.
\end{eqnarray*}
Since $N_{i;\ell}:= |\hat{I}(i;\ell)|$, the nonzero $N(i;\ell)$'s are:
\[
N(0;\ell) = 2,~\forall \ell\in\{1,2,4,5,8,11\};\quad N(0;\ell) = 1,~\forall \ell\in\{7,10,13,14\};\quad N(1;\ell) = 1,~\forall \ell \in \{1,3,4\}.
\]
By definition (see (\ref{eqn:basis_vector_g_i,ell-plus})), the nonzero $g_{i;\ell}^+$ are:
\begin{eqnarray*}
&&g_{0;\ell}^+ = g_{0;\ell} + g_{3;\ell}, ~\ell \in \{1,2,4,5,8,11\};\quad g_{0;\ell}^+ = g_{0;\ell}, ~\ell \in \{7,13,14\};\quad g_{0;10}^+ = g_{3;10};\\
&&g_{1;\ell}^+ = g_{10;\ell}, ~\ell \in \{1,3,4\}.
\end{eqnarray*}
The nonzero $\Gen_{i;\ell}^-$ spaces are:
\[
\Gen_{0;\ell}^- = \field\cdot (g_{0;\ell} - g_{3;\ell}),\quad \ell \in\{1,2,4,5,8,11\}.
\]

Following the proof of Theorem \ref{thm:affine_paving_for_Hilbert_scheme_over_generic_curve_singularity}, we construct a new basis for each nonzero $\Gen_{0;\ell}^-$:

Recall by \eqref{eqn:index_set_I-i,ell_1}--\eqref{eqn:index_set_I-i,ell_2} that for each $i\in \mathbb{Z}/d$ and $\ell\geq 1$: 
\begin{itemize}
\item
If $i$ is $\ell$-admissible, then $I(i;\ell):= \{j\in\mathbb{Z}/n:~\hat{b}_{i,j} + dm + \ell \notin \Delta\} \subsetneq \hat{I}(i;\ell)$. 

\item
Else, $I(i;\ell):= \{0,1,\cdots,n-2\} \subsetneq \mathbb{Z}/n$. 
\end{itemize}
Note that $0$ is not $\ell$-admissible if and only if $\ell = 2$. By a direct computation:
\[
I(0;1) = \{1\};\quad I(0;\ell) = \{0\}, ~\ell\in\{2,4,5\};\quad I(0;\ell) = \emptyset, ~\ell\in\{8,11\}.
\]
Recall that 
\[
g_{i,j;\ell}^-:= g_{\hat{b}_{i,j};\ell} - g_{\hat{b}_{i,j+1};\ell},\quad \forall j\in I(i;\ell).
\]
For $\ell\in \{8,11\}$, fix $j_0 = 1 \in \hat{I}(0;\ell)\setminus I(0;\ell) = \{0,1\}$. Then,
$\overline{I}(0;\ell):= \hat{I}(0;\ell)\setminus (I(0;\ell)\sqcup \{j_0\}) = \{0\}$, and
\[
h_{0,0;\ell}^-:= g_{\hat{b}_{0,0};\ell} - g_{\hat{b}_{0,j_0};\ell} = g_{0;\ell} - g_{3;\ell} \quad (= g_{0,0;\ell}^-).
\]
In all other cases, $\overline{I}(0;\ell):= \emptyset$.

Now,
\[
\Gen_{0;\ell}^- = \oplus_{j\in I(0;\ell)}\field\cdot g_{0,j;\ell}^- \oplus \oplus_{j\in\overline{I}(0;\ell)}\field\cdot h_{0,j;\ell}^-.
\]
Altogether, as in (\ref{eqn:new_variables_for_equations-affine_paving_for_Hilbert_scheme}), we obtain a new basis for $\Gen = \oplus_{a\in\Delta,a+\ell\notin\Delta}\field\cdot g_{a;\ell}$ given by
\begin{eqnarray*}
X &=& \{g_{0;\ell}^+: \ell \in\{1,2,4,5,7,8,10,11,13,14\};~~g_{1;\ell}^+: \ell \in \{1,3,4\};\\
&&g_{0,1;1}^-;~~g_{0,0;\ell}^-: \ell \in \{2,4,5\};~~h_{0,0;\ell}^-: \ell \in \{8,11\}\}.
\end{eqnarray*}

We take $X$ as the new set of variables. It remains to solve the $5$ equations $s_{a+dm;\ell} = 0$ for $a\in A(\Delta)$, $a+\ell\notin\Delta$.

\item
We review some conventions.
Recall that $\deg_+ g_{a;\ell} = \ell$ for all $a\in A(\Delta)$.
It follows that $\deg_+ g_{i;\ell}^+ = \ell$, $\deg_+ g_{i,j;\ell}^- = \ell$, and $\deg_+ h_{i,j;\ell}^- = \ell$.
The \textbf{partial order} on $X$ is defined by:
\[
g_{\bullet,\bullet;\ell}^-, h_{\bullet,\bullet;\ell}^- < g_{\bullet;\ell}^+ < g_{\bullet,\bullet;\ell+1}^-, h_{\bullet,\bullet;\ell+1}^-;\quad g_{d-\ell;\ell}^+ > g_{d-\ell+1;\ell}^+ > \cdots > g_{d-\ell-1;\ell}^+.
\]
In particular,
\[
g_{0;1}^+ > g_{1;1}^+,\quad g_{0;4}^+ > g_{1;4}^+.
\]
For any $g\in X$, $\Poly(<g)$ denotes any polynomial in $g'\in X$ with $g' < g$. For any $k\geq 1$, $\Poly(\leq k)$ denotes any polynomial in $g'\in X$ with $\deg_+g' \leq k$.

\item
We list the equations $s_{a+dm;\ell} = 0$ in increasing order of $\ell$.
Note that $i$ is not $\ell$-admissible if and only if $i = 0$ and $\ell = 2$.
    
\begin{enumerate}[wide,labelwidth=!,labelindent=0pt,itemindent=!,label=(3.\arabic*)]
\item
For $\ell = 1$, we get the single equation $({\color{blue}\mathrm{I}})$:
\[
0 = s_{12;1} = \lambda + g_{3;1} - g_{0;1} = \lambda + g_{0,1;1}^- \quad\Leftrightarrow\quad g_{0,1;1}^- = -\lambda = \Poly(<g_{0,1;1}^-).
\]

\item
For $\ell = 2$, since $i = 0$ is not 2-admissible, we replace $0 = s_{\hat{b}_{0,n-1}+dm;2} = s_{12;2}$ with the new equation:
\[
0 = \sum_{j\in\mathbb{Z}/n} s_{\hat{b}_{0,j}+dm;2} = s_{9;2} + s_{12;2} = \lambda g_{0;1}^+ - (\lambda - g_{0,1;1}^-)g_{1;1}^+.
\]
Using $({\color{blue}\mathrm{I}})$: $g_{0,1;1}^- = -\lambda$, this becomes $({\color{blue}\mathrm{II}})$:
\[
0 = \lambda g_{0;1}^+ - 2\lambda g_{1;1}^+ =  \lambda g_{i;1}^+ - \frac{n\lambda}{N_{i+1;1}}g_{i+1;1}^+ + \Poly(<g_{d-2;1}^+)~~(i = 0)~~\Leftrightarrow~~ g_{0;1}^+ = 2g_{1;1}^+ = \Poly( < g_{0;1}^+).
\]

For $\ell = 2$, there is an additional equation $({\color{blue}\mathrm{III}})$:
\begin{eqnarray*}
0 &=& s_{9;2} = \lambda g_{0;1} + g_{0;2} - g_{3;2} - (\lambda + g_{0;1} - g_{3;1})g_{10;1} 
= \frac{\lambda}{2}(g_{0;1}^+ - g_{0,1;1}^-) + g_{0,0;2}^- - (\lambda - g_{0,1;1}^-)g_{1;1}^+.\\
&\Leftrightarrow& g_{0,0;2}^- = \frac{\lambda}{2}(g_{0,1;1}^- - g_{0;1}^+) + (\lambda - g_{0,1;1}^-)g_{1;1}^+ = -\frac{\lambda^2}{2} + \lambda g_{1;1}^+ = \Poly( < g_{0,0;2}^-).
\end{eqnarray*}

\item
For $\ell = 4$, we get the fourth equation $({\color{blue}\mathrm{IV}})$:
\begin{eqnarray*}
0 &=& s_{9;4} = g_{0;4} - g_{3;4} - (\lambda + g_{0;1} - g_{3;1})g_{10;3} - \lambda g_{0;2}g_{0;1}\\
&=& g_{0,0;4}^- - (\lambda - g_{0,1;1}^-)g_{1;3}^+ - \frac{\lambda}{4}(g_{0;2}^+ + g_{0,0;2}^-)(g_{0;1}^+ - g_{0,1;1}^-),\\
&\Leftrightarrow& g_{0,0;4}^- = (\lambda - g_{0,1;1}^-)g_{1;3}^+ + \frac{\lambda}{4}(g_{0;2}^+ + g_{0,0;2}^-)(g_{0;1}^+ - g_{0,1;1}^-) = \Poly( < g_{0,0;4}^-).
\end{eqnarray*}

\item
For $\ell = 5$, we get the fifth equation $({\color{blue}\mathrm{V}})$:
\begin{eqnarray*}
0 &=& s_{9;5} = \lambda g_{0;4} + g_{0;5} - g_{3;5} - (\lambda + g_{0;1} - g_{3;1})g_{10;4} - \lambda g_{0;2}^2\\
&=& \frac{\lambda}{2}(g_{0;4}^+ + g_{0,0;4}^-) + g_{0,0;5}^- - (\lambda - g_{0,1;1}^-)g_{1;4}^+ - \frac{\lambda}{4}(g_{0;2}^+ + g_{0,0;2}^-)^2,\\
&\Leftrightarrow& g_{0,0;5}^- = -\frac{\lambda}{2}(g_{0;4}^+ + g_{0,0;4}^-) + (\lambda - g_{0,1;1}^-)g_{1;4}^+ + \frac{\lambda}{4}(g_{0;2}^+ + g_{0,0;2}^-)^2 = \Poly( < g_{0,0;5}^-).
\end{eqnarray*}
\end{enumerate}

Altogether, by induction, we have solved the $5$ equations. The set of $5$ dependent variables is
\[
Y = \{g_{0,1;1}^-;~~g_{0,0;\ell}^-: \ell \in \{2,4,5\};~~g_{0;1}^+\} \subset X,
\]
with solutions of the form $g = \Poly( < g)$ for $g\in Y$. This matches the definition in (\ref{eqn:dependent_variables_for_equations-affine_paving_for_Hilbert_scheme}). 
Therefore, 
\[
J_{\Delta}\isomorphic \{g\in X:~s_{a+dm;\ell} = 0, a\in \Delta, a+\ell\notin \Delta\} \isomorphic \mathbb{A}^{|X\setminus Y|} = \mathbb{A}^{14}.
\]
\end{enumerate}

\medskip\noindent
\textbf{Step~3.} 
We compute $H_{\Delta}^{[\tau=24]}$.

Recall from \textbf{Step~0} that 
\[
H_{\Delta}^{[\tau]} = \{(g)_{g\in X} \in J_{\Delta}:~g_{\tau_0} = t^{\tau_0}\}, 
\]
and
\[
g_{12} = t^{12} + g_{0;1}t^{13} + g_{0;2}t^{14}.
\]
Thus, in addition to the $5$ equations defining $J_{\Delta}$, there are $2$ extra defining equations for $H_{\Delta}^{[\tau]}$:
\begin{enumerate}[wide,labelwidth=!,labelindent=0pt,itemindent=!,label=(4.\arabic*)]
\item
For $\ell = 1$, we get the single equation $g_{\tau_0;1} = 0$:
\[
0 = g_{12;\ell=1} = g_{0;1} = \frac{1}{2}(g_{0;1}^+ - g_{0,1;1}^-) \quad\Leftrightarrow\quad g_{0;1}^+ = g_{0,1;1}^- = \Poly( < g_{d-2;1}^+ = g_{1;1}^+).
\]
Recall from \textbf{Step~2} that $g_{0;1}^+ \in Y$ is already solved by equation $({\color{blue}\mathrm{II}})$:
\[
0 = \lambda g_{0;1}^+ - 2\lambda g_{1;1}^+ =  \lambda g_{i;1}^+ - \frac{n\lambda}{N_{i+1;1}}g_{i+1;1}^+ + \Poly(<g_{d-2;1}^+)~~(i = 0) \quad\Leftrightarrow\quad g_{0;1}^+ = 2g_{1;1}^+.
\]
Thus, we may replace the equation $g_{\tau_0;1} = 0$ by $({\color{blue}\mathrm{VI}})$:
\[
0 = g_{0,1;1}^- - 2 g_{1;1}^+ = - \frac{n}{N_{i+1;1}}g_{i+1;1}^+ + \Poly(<g_{d-2;1}^+) \quad\Leftrightarrow\quad g_{1,1}^+ = \frac{1}{2}g_{0,1;1}^- = \Poly(<g_{1;1}^+).
\]
Now, $g_{1;1}^+\in X\setminus Y$, so $u_{\ell} = u_1 := 1 \in \mathbb{Z}/d \isomorphic \{0,1,\cdots,d-1\}$.

\item
For $\ell = 2$, we get the single equation $g_{\tau_0;2} = 0$, labelled by $({\color{blue}\mathrm{VII}})$:
\begin{eqnarray*}
0 &=& g_{12;\ell=2} = g_{0;2} = \frac{1}{2}(g_{0;2}^+ + g_{0,0;2}^-) \quad\Leftrightarrow\quad g_{0;2}^+ = - g_{0,0;2}^- = \Poly( < g_{d-\ell-1;\ell}^- = g_{0;2}^+).
\end{eqnarray*}
We have $g_{0;2}^+\notin Y$. So $u_{\ell} = u_2 := 0 \in \mathbb{Z}/d$.
\end{enumerate}

Altogether, by induction, we have solved the $7$ equations. The set of dependent variables is
\[
Y_{\tau_0} = Y_{12}:= Y\sqcup\{g_{u_{\ell};\ell}^+:~\tau_0 + \ell\notin\Delta\} = \{g_{0,1;1}^-;~~g_{0,0;\ell}^-: \ell \in \{2,4,5\};~~g_{0;1}^+;~~g_{1;1}^+;~~g_{0;2}^+\} \subset X,
\]
with solutions of the form $g = \Poly( < g)$ for $g\in Y_{\tau_0}$.

Therefore, 
\[
H_{\Delta}^{[\tau]} = H_{\Delta}^{[24]} \isomorphic \{g\in X:~s_{a+dm;\ell} = 0, a\in \Delta, a+\ell\notin \Delta;~g_{\tau_0;\ell} =0, \tau_0 + \ell\neq 0\} \isomorphic \mathbb{A}^{|X\setminus Y_{\tau_0}|} = \mathbb{A}^{12}.
\]
Indeed, by a direct verification, $\dim\Delta = 14$ and $|\Gaps(\tau_0)| = 2$, so 
\[
\dim\Delta - |\Gaps(\tau_0)| = 14 - 2 = 12.
\]
\end{example}

\subsection{Cell indices and dimensions via $(dn,dm)$-Dyck paths}\label{subsec:cell_indices_and_dimensions_via_Dyck_paths}

Let $(C,0)$ be the generic irreducible planar curve singularity as before.
In this subsection, we describe the combinatorics of the affine paving of $\Hilb^{[\tau]}(C,0)$ in terms of \textbf{$(dn,dm)$-Dyck paths}, extending the description of $\overline{J}(C,0)$ given in \cite{GMO25}.

\subsubsection{Admissible $(dn,dm)$-invariant subsets revisited}

To begin with, we fix some notations.
\begin{enumerate}[wide,labelwidth=!,labelindent=0pt,itemindent=!]
\item
Since $\gcd(n,m) = 1$, there exist unique integers $0<\uformn<n, 0<\vformn<m$ such that
\begin{equation}
\uformn m - \vformn n = 1.
\end{equation}

\item
For all $i,j\in\mathbb{Z}$, recall that $\hat{a}_{i,j}:= jdm + (dmn+1)i$. \textbf{Define}
\begin{equation}
a_{i,j} := \floor*{\frac{\hat{a}_{i,j}}{dn}} = mi + \floor*{\frac{jdm+i}{dn}} = mi + \floor*{\frac{jm}{n}},\quad \forall 0\leq i\leq d-1.
\end{equation}
In particular, $a_{0,0}=0$. 
By definition, 
\[
\delta=|\mathbb{N}-\Gamma| = \sum_{0\leq i\leq d-1,0\leq j\leq n-1} a_{i,j}.
\]
Moreover, for each $0\leq i\leq d-1$, $0\leq j\leq n-1$, \textbf{set} $a_{in+j}:= a_{i,j}$. Then
\[
a_0\leq a_1\leq \cdots\leq a_{dn-1}
\]
form a Young diagram of size $\delta$.

\item
Let $\Delta\subset \mathbb{N}$ be any fixed $dn$-invariant subset with $\Delta\supset \Gamma$.
We can write the set of $dn$-generators as
\begin{equation}
A(\Delta) = \{\hat{b}_{i,j} = \hat{a}_{i,j} - dnc_{i,j}, 0\leq i\leq d-1, 0\leq j\leq n-1\},\quad 0\leq c_{i,j}\in \mathbb{Z} \leq a_{i,j}.
\end{equation}
For later use, extend $c_{i,j}$ to all $j\in\mathbb{Z}$ by defining $c_{i,j+n} := c_{i,j}+m$. In particular, $\hat{b}_{i,j+n} = \hat{b}_{i,j}$.
Set $b_{i,j}:= \floor*{\frac{\hat{b}_{i,j}}{dn}} = a_{i,j}-c_{i,j}$. Then
\[
\delta(\Delta) = \sum_{0\leq i\leq d-1,0\leq j\leq n-1} b_{i,j},\quad e(\Delta) := \delta - \delta(\Delta) =  \sum_{0\leq i\leq d-1,0\leq j\leq n-1} c_{i,j}.
\]

\item
For $x,y\in\mathbb{Z}$, define
\begin{equation}
\rank(x,y):= mx - ny.
\end{equation}
Given $\Delta$ as above, define
$\vec{\rank}_{\Delta} = (\rank_{\Delta,0},\rank_{\Delta,1},\cdots,\rank_{\Delta,dn-1})$ by
\begin{equation}
\rank_{\Delta,in+j} = \rank_{\Delta,i,j} := \rank(in+j,c_{i,j}) = m(in+j) - nc_{i,j},\quad \forall 0\leq i\leq d-1, 0\leq j\leq n-1.
\end{equation}
Note that
\begin{equation}
0 \leq c_{i,j} \leq a_{i,j} = \floor{\frac{m(in+j)}{n}} \quad\Leftrightarrow\quad m(in+j)\geq \rank_{\Delta,i,j} \geq 0.
\end{equation}

\item
For later purpose, for $j\in\mathbb{Z}$, \textbf{define} $\underline{j} = \underline{j}_n \in\{0,1,\cdots,n-1\} \isomorphic \mathbb{Z}/n$ by $\underline{j} \equiv j \mod n$.
\end{enumerate}

\begin{lemma}\cite[Lem.~3.15]{GMO25}\label{lem:admissibility_implies_dmn+1-invariant}
Any $0$-normalized admissible $(dn,dm)$-invariant subset $\Delta\subset\mathbb{N}$ is also $(dmn+1)$-invariant.
\end{lemma}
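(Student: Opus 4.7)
The plan is to reduce $(dmn+1)$-invariance to a pair of explicit inequalities on the numerical data $c_{i,j}$ encoding the $dn$-generators of $\Delta$, and then to derive one from the general size bounds and the other from admissibility. By $dn$-invariance it suffices to show $\hat{b}_{i,j}+dmn+1\in\Delta$ for every $\hat{b}_{i,j}\in A(\Delta)$. Examining the residue of $\hat{b}_{i,j}+dmn+1$ modulo $dn$ and invoking the identity $\uformn m=\vformn n+1$, the matching $(dn,dm)$-generator is $\hat{b}_{i+1,j}$ when $i<d-1$, and $\hat{b}_{0,j+\uformn}$ (with $j+\uformn$ read under the extended convention $c_{i,j+n}:=c_{i,j}+m$) when $i=d-1$. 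A direct computation then rewrites the desired containment as
\[
\text{(a)}\quad c_{i+1,j}\geq c_{i,j}\ (i<d-1),\qquad \text{(b)}\quad c_{d-1,j}\leq md-\vformn+c_{0,j+\uformn}\ (i=d-1),
\]
to be checked for all $j\in\mathbb{Z}/n$.

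Condition $(b)$ is automatic: combining $c_{d-1,j}\leq a_{d-1,j}=m(d-1)+\floor*{jm/n}$ with the lower bounds $c_{0,j+\uformn}\geq 0$ (if $j+\uformn<n$) or $c_{0,j+\uformn}\geq m$ (if $j+\uformn\geq n$) yields $(b)$ by a short case analysis that again uses $\uformn m=\vformn n+1$. For $(a)$, the $(dn,dm)$-invariance of $\Delta$ forces the $j$-monotonicity $c_{i,0}\leq c_{i,1}\leq\cdots\leq c_{i,n-1}\leq c_{i,0}+m$, which extends $n$-periodically to all $j\in\mathbb{Z}$. Admissibility at each $i<d-1$ produces some $j_0=j_0(i)$ with $\hat{b}_{i,j_0}+dm+1\in\Delta$, and the same residue-matching computation converts this into the single inequality $c_{i+1,j_0+1}\geq c_{i,j_0+n}$. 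For $j\geq j_0+1$ the upward monotonicity of $c_{i+1,\cdot}$ gives $c_{i+1,j}\geq c_{i+1,j_0+1}\geq c_{i,j_0+n}\geq c_{i,j}$; for $j\leq j_0$ the periodic shift yields $c_{i+1,j}+m=c_{i+1,j+n}\geq c_{i+1,j_0+1}\geq c_{i,j_0}+m$, hence $c_{i+1,j}\geq c_{i,j_0}\geq c_{i,j}$, establishing $(a)$ for all $j$.

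The main obstacle is the careful bookkeeping around the shift $\uformn$ and the periodic convention $c_{i,j+n}=c_{i,j}+m$: both the identification of the matching generator in the case $i=d-1$ and the propagation of the one-point admissibility inequality to all $j$ depend on consistently using this extended framework together with $\uformn m-\vformn n=1$. A pleasant byproduct is that admissibility for $i=d-1$ is never actually invoked, since $(b)$ is automatic; only admissibility for $i<d-1$ enters the proof.
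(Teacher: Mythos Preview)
Your argument is correct. The paper does not supply its own proof, deferring to \cite[Lem.~3.15]{GMO25}, so your write-up is a valid self-contained substitute that uses exactly the $c_{i,j}$-characterizations of $dm$-, $(dmn+1)$-invariance and admissibility recorded in the paper's Lemma~\ref{lem:semimodule_vs_c_ij}. One small point worth making explicit: your proof of $(b)$ uses $c_{0,j+\uformn}\geq 0$, which is not part of the hypotheses per se but follows because $c_{0,0}=0$ (from $0$-normalization) together with row-monotonicity forces $c_{0,j}\geq 0$ for all $j$; it would be cleaner to say this rather than appeal to a bound that, as stated elsewhere in the paper, is tied to $\Gamma\subset\Delta$. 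Your closing remark that admissibility at $i=d-1$ is never used dovetails with the paper's own observation in Lemma~\ref{lem:semimodule_vs_c_ij}(3) that $d-1$ is always admissible.
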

Thus $\Adm(dn,dm)$ (see Definition \ref{def:admissibility}) can be identified with the set of $0$-normalized admissible $(dn,dm)$-invariant subsets.

The next lemma characterizes $\Delta\in\Adm(dn,dm)$ in terms of the the matrix $(c_{i,j})_{0\leq i\leq d-1,0\leq j\leq n-1}$.
\begin{lemma}\label{lem:semimodule_vs_c_ij}
Let $\Delta\subset \mathbb{N}$ be a $dn$-invariant subset with $\Delta\supset \Gamma$. Then:
\begin{enumerate}[wide,labelwidth=!,itemindent=!,labelindent=0pt]
\item
$\Delta$ is $dm$-invariant if and only if $(c_{i,j})$ is \textbf{row-monotone}:
\begin{equation*}
c_{i,j}\leq c_{i,j+1} \quad\Leftrightarrow\quad \rank_{\Delta,i,\underline{j+1}} \leq \rank_{\Delta,i,\underline{j}} + m),\quad \forall 0\leq i\leq d-1, 0\leq j\leq n-1.
\end{equation*}
In particular, for $j=n-1$, this requires $c_{i,n-1}\leq c_{i,0} + m$.

\item 
$\Delta$ is $(dmn+1)$-invariant if and only if $(c_{i,j})$ is \textbf{column-monotone}:
\[
c_{i,j}\leq c_{i+1,j},\quad \forall 0\leq i\leq d-2, 0\leq j\leq n-1;\quad c_{d-1,j} \leq c_{0,j+\uformn} + dm -\vformn, \quad \forall 0\leq j\leq n-1.
\]

\item 
For $0\leq i\leq d-2$, the index $i$ is admissible (Definition \ref{def:admissibility}) if and only if:
\begin{equation*}
\exists 0\leq j\leq n-1 \text{ such that } c_{i+1,j+1}\geq c_{i,j}+m \quad\Leftrightarrow\quad \rank_{\Delta,i+1,\underline{j+1}} \leq \rank_{\Delta,i,\underline{j}} + m.
\end{equation*}
In particular, for $j=n-1$, this requires $c_{i+1,0} \geq c_{i,n-1}$.
Moreover, $d-1$ is always admissible.
\end{enumerate}
\end{lemma}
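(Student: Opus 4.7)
The plan is to reduce all three characterizations to a single uniform modular arithmetic bookkeeping. The key preliminary observation is that the map
\[
\{0,\dots,d-1\}\times\{0,\dots,n-1\}\to\mathbb{Z}/dn,\qquad (i,j)\mapsto \hat{a}_{i,j}\bmod dn
\]
is a bijection: reducing modulo $d$ recovers $i$ (since $jdm+i\equiv i\pmod d$), and then reducing modulo $n$ together with $\gcd(m,n)=1$ recovers $j$. Because $\Delta$ is $dn$-invariant and each residue class is an arithmetic progression with minimum $\hat{b}_{i,j}=\hat{a}_{i,j}-dnc_{i,j}$, for any shift $s\in\mathbb{Z}_{\geq 1}$ the condition $\hat{b}_{i,j}+s\in\Delta$ is equivalent to identifying the unique $(i',j')$ with $\hat{b}_{i,j}+s\equiv \hat{a}_{i',j'}\pmod{dn}$ and then verifying $\hat{b}_{i,j}+s\geq \hat{b}_{i',j'}$, which in terms of the $c$'s is a linear inequality. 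The extension $c_{i,j+n}=c_{i,j}+m$ is chosen precisely so that the boundary/wrap-around cases merge with the generic ones.

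For part (1), apply this to $s=dm$: since $ndm\equiv 0\pmod{dn}$, the residue of $\hat{a}_{i,j}+dm$ is that of $\hat{a}_{i,j+1}$ when $j<n-1$, and of $\hat{a}_{i,0}$ when $j=n-1$. The resulting inequalities are $c_{i,j+1}\geq c_{i,j}$ and $c_{i,n-1}\leq c_{i,0}+m$, which collapse into $c_{i,j}\leq c_{i,j+1}$ for all $0\leq j\leq n-1$ via the extension. The rank reformulation is a one-line consequence of $\rank_{\Delta,i,j+1}-\rank_{\Delta,i,j}=m-n(c_{i,j+1}-c_{i,j})$ for $j<n-1$, and of the analogous wrap-around identity for $j=n-1$. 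For part (2), apply the same machinery to $s=dmn+1$: since $dmn\equiv 0\pmod{dn}$, the residue of $\hat{a}_{i,j}+(dmn+1)$ equals that of $\hat{a}_{i+1,j}$ when $i<d-1$, yielding $c_{i+1,j}\geq c_{i,j}$. When $i=d-1$, the congruence $j'dm+i'\equiv d(jm+1)\pmod{dn}$ forces $i'=0$ (mod $d$) and, using $\uformn m\equiv 1\pmod n$, $j'=\underline{j+\uformn}$; substituting $\uformn m=\vformn n+1$ turns the inequality into exactly $c_{d-1,j}\leq c_{0,j+\uformn}+dm-\vformn$, with the extension absorbing the case $j+\uformn\geq n$.

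For part (3), I apply the method to $s=dm+1$: for $i<d-1$, the residue of $\hat{b}_{i,j}+dm+1$ lies in the class of $\hat{a}_{i+1,\underline{j+1}}$, and the inequality is $c_{i+1,j+1}\geq c_{i,j}+m$ (using the extension for $j=n-1$); the rank version follows from the same identity as in (1). To show that $d-1$ is always admissible, I choose the specific index $j_0=n-1-\uformn\in\{0,\dots,n-2\}$: using $-\uformn m+1\equiv 0\pmod n$, the residue of $\hat{b}_{d-1,j_0}+dm+1$ lies in the class of $\hat{a}_{0,0}=0$, and the required inequality $\hat{b}_{d-1,j_0}+dm+1\geq \hat{b}_{0,0}=0$ is automatic since $\hat{b}_{d-1,j_0}\in\Delta\subset\mathbb{N}$.

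The main obstacle is purely bookkeeping: tracking the boundary/wrap-around cases ($j=n-1$ in (1) and (3); $i=d-1$ in (2) and (3)) and checking that the extension convention $c_{i,j+n}=c_{i,j}+m$ together with the Bezout identity $\uformn m-\vformn n=1$ make these cases fit the uniform statements. No conceptual difficulty beyond this is anticipated.
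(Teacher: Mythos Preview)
Your proposal is correct and follows essentially the same approach as the paper: both reduce each invariance/admissibility condition to identifying the residue class of $\hat{b}_{i,j}+s$ modulo $dn$ and comparing with the corresponding generator $\hat{b}_{i',j'}$, handling the wrap-around cases via the extension $c_{i,j+n}=c_{i,j}+m$ and the Bezout relation $\uformn m-\vformn n=1$. In particular, your choice $j_0=n-1-\uformn$ for showing $d-1$ is always admissible matches the paper's argument exactly (the paper phrases it as the unique $j$ with $j+1+\uformn\equiv 0\pmod n$).
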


\begin{proof}
\noindent{}$(1)$. 
By definition, $\Delta$ is $dm$-invariant iff $\hat{b}_{i,j}+dm\in\Delta$, $\forall 0\leq i\leq d-1, 0\leq j\leq n-1$, i.e., 
\[
\hat{b}_{i,j}+dm = \hat{a}_{i,j+1} - dnc_{i,j} \geq \hat{b}_{i,j+1} = \hat{a}_{i,j+1} - dnc_{i,j+1},
~~\Leftrightarrow~~ c_{i,j} \leq c_{i,j+1}.
\]
This in turn is equivalent to $\rank_{\Delta,i,\underline{j+1}}-\rank_{\Delta,i,\underline{j}}\leq m$, as can be checked separately for $j\leq n-2$ and $j=n-1$.

\noindent{}$(2)$. $\Delta$ is $(dmn+1)$-invariant iff $\hat{b}_{i,j}+dmn+1\in \Delta$, $\forall 0\leq i\leq d-1, 0\leq j\leq n-1$.
For $i\leq d-2$, this is equivalent to $c_{i,j}\leq c_{i+1,j}$.   
For $i=d-1$, the computation reduces to
\[
c_{d-1,j} \leq c_{0,j+\uformn} + dm - \vformn.
\]

\noindent{}$(3)$. 
By definition, $i$ is admissible iff $\hat{b}_{i,j}+dm+1\in\Delta$ for some $j$.  
For $i\leq d-2$, this condition reduces to $c_{i+1,j+1}\geq c_{i,j}+m$.  
Equivalently, $\rank_{\Delta,i+1,\underline{j+1}} \leq \rank_{\Delta,i,\underline{j}} + m$, as can be checked separately for $j\leq n-2$ and $j=n-1$.
For $i=d-1$, using $\hat{b}_{0,0} = 0\in \Delta$, one checks that admissibility holds automatically.
\end{proof}

\begin{definition}
Given $(c_{i,j})_{0\leq i\leq d-1,\,0\leq j\leq n-1}\in M_{d\times n}(\mathbb{Z})$ with $0\leq c_{i,j}\leq a_{i,j}$, we say that $(c_{i,j})$ is \textbf{admissible} if  
for each $0\leq i\leq d-2$ either $c_{i+1,0}\geq c_{i,n-1}$, or there exists $0\leq j\leq n-2$ such that $c_{i+1,j+1}\geq c_{i,j}+m$.

Given a rank matrix $(\rank_{i,j})_{0\leq i\leq d-1,\,0\leq j\leq n-1}\in M_{d\times n}(\mathbb{Z})$ with $0\leq \rank_{i,j}\in mj+n\mathbb{Z}\leq m(in+j)$, we say that $(\rank_{i,j})$ is \textbf{admissible} if:
\begin{enumerate}[wide,labelwidth=!,labelindent=0pt,itemindent=!,label=(\alph*)]
\item
(\emph{$(c_{\bullet,\bullet})$-row-monotone}) 
$\rank_{i,\underline{j+1}} \leq \rank_{i,\underline{j}}+m$ for all $i,j$;

\item
(\emph{$(c_{\bullet,\bullet})$-admissible}) for each $0\leq i\leq d-2$, there exists $0\leq j\leq n-1$ such that $\rank_{i+1,\underline{j+1}} \leq \rank_{i,\underline{j}}+m$.
\end{enumerate}
\end{definition}

\begin{corollary}\label{cor:admissible_invariant_subset_via_rank}
We have natural identifications
\begin{eqnarray*}
\Adm(dn,dm)
&\cong& \{(c_{i,j}):\,0\leq c_{i,j}\leq a_{i,j},\ (c_{i,j}) \text{ is row-monotone and admissible}\} \\
&\cong& \{(\rank_{i,j}):\,0\leq \rank_{i,j}\in mj+n\mathbb{Z}\leq m(in+j),\ (\rank_{i,j}) \text{ is admissible}\}.
\end{eqnarray*}
Here, the first identification comes from $A(\Delta) = \{\hat{a}_{i,j}-dnc_{i,j}\}$, and the second from $\rank_{i,j} = m(in+j)-nc_{i,j}$.
\end{corollary}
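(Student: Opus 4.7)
The plan is to assemble the first identification directly from Lemma~\ref{lem:semimodule_vs_c_ij} and Lemma~\ref{lem:admissibility_implies_dmn+1-invariant}, and then transport the result to rank matrices by the change of variables $\rank_{i,j} = m(in+j) - nc_{i,j}$.

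For the forward direction, I would start with $\Delta \in \Adm(dn,dm)$. Writing $A(\Delta) = \{\hat{a}_{i,j} - dnc_{i,j}\}$ defines the matrix $(c_{i,j})$, and the bounds $0 \leq c_{i,j} \leq a_{i,j}$ come from $0 \leq \hat{b}_{i,j} \leq \hat{a}_{i,j}$ (the upper bound using $\Delta \supset \Gamma$, which holds since $\Delta$ is a $0$-normalized $\Gamma$-semimodule). Row-monotonicity then follows from the $dm$-invariance of $\Delta$ via Lemma~\ref{lem:semimodule_vs_c_ij}(1). Matrix-admissibility for each $0\leq i\leq d-2$ is precisely the content of Lemma~\ref{lem:semimodule_vs_c_ij}(3) applied to the admissibility of $i$ in Definition~\ref{def:admissibility}; the case $i=d-1$ is automatic by the same lemma.

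For the reverse direction, given $(c_{i,j})$ row-monotone and admissible with $0 \leq c_{i,j} \leq a_{i,j}$, I set $A := \{\hat{a}_{i,j} - dnc_{i,j}\}$ and $\Delta := \sqcup_{a\in A}(a + dn\mathbb{N})$. The key observation is that $a_{0,0} = 0$, so the bound forces $c_{0,0} = 0$, hence $0 \in \Delta$ and $\Delta$ is $0$-normalized. Row-monotonicity gives $dm$-invariance (Lemma~\ref{lem:semimodule_vs_c_ij}(1)); matrix-admissibility gives admissibility in the sense of Definition~\ref{def:admissibility} (Lemma~\ref{lem:semimodule_vs_c_ij}(3), with $i=d-1$ handled automatically). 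At this point I would invoke Lemma~\ref{lem:admissibility_implies_dmn+1-invariant} to obtain $(dmn+1)$-invariance \emph{for free}; since $\Gamma$ is generated by $dn$, $dm$, and $dmn+1$, the subset $\Delta$ is a $\Gamma$-semimodule, i.e., $\Delta \in \Adm(dn,dm)$. The two constructions are visibly mutually inverse.

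For the second identification, the map $(c_{i,j}) \mapsto (\rank_{i,j})$ with $\rank_{i,j} = m(in+j)-nc_{i,j}$ is clearly a bijection onto the set of integers satisfying $\rank_{i,j} \equiv mj \pmod n$; the bounds $0 \leq c_{i,j} \leq a_{i,j}$ become $0 \leq \rank_{i,j} \leq m(in+j)$. The row-monotonicity $c_{i,j} \leq c_{i,j+1}$ (for $j<n-1$) and the boundary $c_{i,n-1} \leq c_{i,0}+m$ together translate uniformly to $\rank_{i,\underline{j+1}} \leq \rank_{i,\underline{j}}+m$ for all $j$, as already noted in Lemma~\ref{lem:semimodule_vs_c_ij}(1); the matrix admissibility similarly transforms (uniformly over the boundary case) into the rank-admissibility condition via Lemma~\ref{lem:semimodule_vs_c_ij}(3).

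The proof is essentially a bookkeeping exercise, so there is no serious obstacle. The only subtle point worth emphasizing is that \emph{column}-monotonicity is \textbf{not} required among the hypotheses of the corollary; this reflects the nontrivial content of Lemma~\ref{lem:admissibility_implies_dmn+1-invariant}, which extracts $(dmn+1)$-invariance from admissibility together with the $(dn,dm)$-invariance already guaranteed by the matrix data.
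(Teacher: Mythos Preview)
Your proposal is correct and follows exactly the paper's approach, which is simply to cite Lemma~\ref{lem:admissibility_implies_dmn+1-invariant} and Lemma~\ref{lem:semimodule_vs_c_ij}; you have merely (and correctly) unpacked the one-line proof. One cosmetic slip: in the forward direction, $\Delta \supset \Gamma$ gives $\hat{b}_{i,j} \leq \hat{a}_{i,j}$ and hence the \emph{lower} bound $c_{i,j} \geq 0$, while the upper bound $c_{i,j} \leq a_{i,j}$ comes from $\Delta \subset \mathbb{N}$ (i.e., $\hat{b}_{i,j} \geq 0$).
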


\begin{proof}
This follows immediately from Lemma \ref{lem:admissibility_implies_dmn+1-invariant} and Lemma \ref{lem:semimodule_vs_c_ij}.
\end{proof}

\textbf{From now on}, we will use the identification $\Delta\isomorphic (c_{i,j})\in\Adm(dn,dm)$, and write
\begin{equation}
\dim((c_{ij})) := \dim\Delta,\quad |(c_{i,j})|:= \sum_{0\leq i\leq d-1, 0\leq j \leq n-1}c_{i,j}.
\end{equation}

\subsubsection{The $(dn,dm)$-Dyck paths}

\begin{definition}
We introduce the following notions:
\begin{enumerate}[wide,labelwidth=!,labelindent=0pt,itemindent=!]
\item
A \textbf{box} is a unit square in $\mathbb{R}^2$ with vertices in the lattice $\mathbb{Z}^2$.  
\textbf{Convention:} we identify a box with its top-left vertex.  
For a box $z$ with top-left vertex $(x,y)$, set
\[
\rank(z) = \rank(x,y) := mx - ny.
\]

\item
Let $\cR_{dn,dm}$ be the set of boxes in the rectangle with vertices $(0,0)$, $(dn,0)$, $(0,dm)$, $(dn,dm)$.  
Under the above convention,
\[
\cR_{dn,dm} \cong \{(x,y)\in \mathbb{Z}^2 : 0\leq x\leq dn-1,\ 1\leq y\leq dm\}.
\]

\item
Let $\cR_{dn,dm}^+$ be the set of boxes contained in the triangle with vertices $(0,0),(dn,0),(dn,dm)$.  
Equivalently,
\[
\cR_{dn,dm}^+ \cong \{(x,y)\in \mathbb{Z}^2 : 0\leq x\leq dn-1,\ 1\leq y\leq \tfrac{m}{n}x\}.
\]

\item
Let $\cR_{dn,dm}^{y\leq 0}$ be the set of boxes given by
\[
\cR_{dn,dm}^{y\leq 0} \cong \{(x,y)\in \mathbb{Z}^2 : 0\leq x\leq dn-1,\ y\leq 0\}.
\]
Define
\[
\overline{\cR}_{dn,dm}^+ := \cR_{dn,dm}^+ \sqcup \cR_{dn,dm}^{y\leq 0}
\cong \{(x,y)\in \mathbb{Z}^2 : 0\leq x\leq dn-1,\ y\leq \tfrac{m}{n}x\}.
\]
\end{enumerate}
\end{definition}

\begin{definition}
For $0\leq i\leq d-1$, $0\leq j\leq n-1$, let $a_{in+j}:=a_{i,j}$.  
Define
\[
\Young(dn,dm) := \Bigl\{(y_{in+j}=y_{i,j})_{0\leq i\leq d-1,\,0\leq j\leq n-1} :
0\leq y_x \leq a_x,\ y_x \leq y_{x+1}\Bigr\}.
\]
Equivalently, $(y_{in+j})\in \Young(dn,dm)$ if and only if
\[
y_0 \leq y_1 \leq \cdots \leq y_{dn-1}
\]
forms a Young subdiagram of $a_0\leq a_1\leq \cdots \leq a_{dn-1}$.  
Any such $(y_{i,j})$ will be called a \textbf{$(dn,dm)$-type Young diagram}, or equivalently, a \textbf{$(dn,dm)$-Dyck path} (see \textbf{convention} below).
\end{definition}

\noindent\textbf{Convention.}  
We represent $(y_{i,j})\in \Young(dn,dm)$ by a Young diagram $D\subset \cR_{dn,dm}^+$, where $y_{i,j}$ is the number of boxes in the $(in+j+1)$-th column.  
The condition $y_{i,j}\leq a_{i,j}$ ensures that the boundary of $D$ lies weakly below the diagonal $y=\tfrac{m}{n}x$.  
Thus $(y_{i,j})$ corresponds naturally to a $(dn,dm)$-Dyck path (see Figure~\ref{fig:Dyck_paths}).

\begin{figure}[!htbp]
\begin{center}
\begin{tikzpicture}[scale=0.8]

\draw[thin] (0,0) rectangle (4,6);

\foreach \i in {1,2,3} {
    \draw[thin] (\i,0) -- (\i,6);
}

\foreach \j in {1,2,3,4,5} {
    \draw[thin] (0,\j) -- (4,\j);
}

\draw[thin] (0,0) -- (4,6);

\draw[red,line width=0.5mm,->] (0,0) -- (2,0) -- (2,3) -- (3,3) -- (3,4) -- (4,4) -- (4,6);

\fill[blue!20, opacity=0.5] (0,0) -- (2,0) -- (2,3) -- (3,3) -- (3,4) -- (4,4) -- (4,6) -- (4,0) -- (0,0);

\node[below left] at (0,0) {(0,0)};
\node[above right] at (4,6) {(4,6)};

\draw (4,1) node[right,blue] {$(y_{i,j}) = \left(\begin{array}{cc} 0 & 0\\ 3 & 4 \end{array}\right) \in \Young(4,6)$};

\draw (4,4.5) node[right,red] {The corresponding $(4,6)$-Dyck path};

\draw (3.3,1.5) node[right] {$z$};
\draw[thick,<->] (2.1,1.4) -- (2.9,1.4);
\draw (2.5,1.1) node[below] {{\tiny$\armlength(z) = 1$}};

\draw[thick,<->] (3.6,2.1) -- (3.6,3.9);
\draw (3.9,3) node[right] {{\tiny$\leglength(z) = 2$}};

\draw[fill=black] (3,2) circle (0.1);

\end{tikzpicture}
\end{center}
\caption{The correspondence between $(y_{i,j})\in \Young(dn,dm)$ and a $(dn,dm)$-Dyck path.  
Here $(n,m,d)=(2,3,2)$. The box $z$ has top-left vertex $(3,2)$, with $(\armlength,\leglength)(z)=(1,2)$ and $\rank(z)=5$.}
\label{fig:Dyck_paths}
\end{figure}

\begin{definition}
For a Young diagram $D = (y_{i,j})\in \Young(dn,dm)$, define
\[
\overline{D}:= D\sqcup \cR_{dn,dm}^{y\leq 0} \subset \overline{\cR}_{dn,dm}^+;\quad |D| := \sum_{0\leq i\leq d-1,0\leq j\leq n-1} y_{i,j}\quad(\textbf{size}).
\]
For any box $z\in D$, define its \textbf{arm length} $\armlength(z)$ and \textbf{leg length} $\leglength(z)$ as in Figure \ref{fig:Dyck_paths}.
Define
\begin{equation}
\dinv(D):=  \#\{z\in D:~\frac{\leglength(z)}{\armlength(z)+1} \leq \frac{m}{n} < \frac{\leglength(z) + 1}{\armlength(z)}\};\quad  \codinv(D):= \delta - \dinv(D).
\end{equation}
\end{definition}

Now, \textbf{fix} a Young diagram (equivalently, $(dn,dm)$-Dyck path) $D = (y_{i,j}) \in \Young(dn,dm)$, and
denote
\[
y_{in+j}:= y_{i,j},\quad 0\leq i\leq d-1, 0\leq j\leq n-1.
\]

\begin{definition}
Define the \textbf{rank vector} $\vec{\rank}_D = (\rank_{D,0},\rank_{D,1},\cdots,\rank_{D,dn-1})$ by
\begin{equation}
\rank_{D,in+j} = \rank_{D,i,j} := \rank(in+j,y_{in+j}) = m(in+j) - ny_{in+j}.
\end{equation}
\end{definition}

\noindent\textbf{Observations.}
\begin{enumerate}[wide,labelwidth=!,labelindent=0pt,itemindent=!,label=(\alph*)]
\item
For $0\leq x<dn-1$,
\[
y_x\leq y_{x+1}\quad\Leftrightarrow\quad \rank_{D,x+1}\leq \rank_{D,x}+m.
\]

\item
For $0\leq x\leq dn-1$,
\[
0\leq y_x\leq \tfrac{m}{n}x \quad\Leftrightarrow\quad 0\leq \rank_{D,x}\leq mx.
\]
In particular, $\rank_{D,0}=0$.

\item
For $0\leq x\leq (d-1)n$,
\[
y_{x+n-1}\leq y_x+m \quad\Leftrightarrow\quad \rank_{D,x}\leq \rank_{D,x+n-1}+m.
\]

\item
For $0\leq x\leq (d-1)n-1$,
\[
y_{x+n}\geq y_x+m \quad\Leftrightarrow\quad \rank_{D,x}\geq \rank_{D,x+n}.
\]
\end{enumerate}

\begin{corollary}\label{cor:Dyck_path_via_rank}
Via $\rank_x = mx - y_x$, we get a natural identification:
\begin{equation}
\Young(dn,dm) \isomorphic \{(\rank_{in+j} = \rank_{i,j})_{0\leq i\leq d-1,0\leq j\leq n-1}:~0\leq \rank_x \in mx + n\mathbb{Z} \leq mx, \rank_x \leq \rank_{x+1} + m\}.
\end{equation}
\end{corollary}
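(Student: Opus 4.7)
The plan is to observe that the stated corollary is essentially a dictionary translation of the definition of $\Young(dn,dm)$ through the invertible affine change of coordinates $y_x \mapsto \rank_x := mx - ny_x$, and to verify the equivalence of the defining inequalities using the observations (a)--(d) recorded just before the statement.

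First, I would set up the bijection on the level of integer-valued sequences. Since $n,m \in \mathbb{Z}$ are fixed with $\gcd(n,m)=1$, the map $\Phi:(y_0,\dots,y_{dn-1}) \mapsto (\rank_0,\dots,\rank_{dn-1})$ with $\rank_x = mx - ny_x$ is a $\mathbb{Z}$-affine bijection from $\mathbb{Z}^{dn}$ onto the sequences with $\rank_x \equiv mx \pmod n$, with inverse $y_x = (mx - \rank_x)/n$. This accounts for the integrality/congruence condition $\rank_x \in mx + n\mathbb{Z}$ on the right-hand side.

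Next, I would translate the two defining constraints of $\Young(dn,dm)$:
\begin{enumerate}[wide,labelwidth=!,labelindent=0pt,itemindent=!,label=(\roman*)]
\item The size constraint $0 \leq y_x \leq a_x$. Since $y_x \in \mathbb{Z}$ and $a_x = \floor*{mx/n}$, we have $y_x \leq a_x \Leftrightarrow ny_x \leq mx \Leftrightarrow \rank_x \geq 0$, and $y_x \geq 0 \Leftrightarrow \rank_x \leq mx$. This is precisely observation (b), and yields $0 \leq \rank_x \leq mx$.
\item The monotonicity $y_x \leq y_{x+1}$ for $0 \leq x \leq dn-2$, which by observation (a) is equivalent to $\rank_{x+1} \leq \rank_x + m$.
\end{enumerate}

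Combining these two equivalences with the bijection $\Phi$ yields the claimed identification. There is no serious obstacle: the corollary is a direct bookkeeping consequence of observations (a) and (b), and neither (c) nor (d) is needed here (they will be used later to encode further structure of Dyck paths). The only minor point worth flagging is the implicit parsing of the set on the right, where $0 \leq \rank_x \in mx + n\mathbb{Z} \leq mx$ should be read as the conjunction $\rank_x \in mx + n\mathbb{Z}$ together with $0 \leq \rank_x \leq mx$, which matches exactly what (i) produces.
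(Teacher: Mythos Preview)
Your proposal is correct and follows exactly the paper's approach: the paper's proof reads ``Immediate from $(a)$--$(b)$ above,'' and you have simply unpacked what that means. Your observation that (c) and (d) are not needed here is also accurate.
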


\begin{proof}
Immediate from $(a)$-$(b)$ above.
\end{proof}

\subsubsection{$(dn,dm)$-Dyck paths vs.\ admissible $(dn,dm)$-invariant subsets}\label{subsubsec:Dyck_paths_vs_admissible_invariant_subsets}

Fix $D = (y_{i,j})_{0\leq i\leq d-1,\,0\leq j\leq n-1} \in \Young(dn,dm)$.  
We introduce the following constructions (see Example~\ref{ex:enhanced_rank_associated_to_a_Dyck_path} for an illustration):

\begin{enumerate}[wide,labelwidth=!,labelindent=0pt,itemindent=!]
\item
\textbf{Rank vectors.}  
Set $\vec{\rank}^{[0]} := \vec{\rank}_D$.  
Inductively, for each $d-1\geq u \geq 1$, define the {\color{blue}move index} $\moveindex_u$ and the {\color{blue}rank vector} 
\[
\vec{\rank}^{[d-u]} = (\rank_0^{[d-u]},\dots,\rank_{un-1}^{[d-u]})
\] 
from $\vec{\rank}^{[d-u-1]} = (\rank_0^{[d-u-1]},\dots,\rank_{(u+1)n-1}^{[d-u-1]})$ by
\begin{equation}
\moveindex_u := \max\{\,0\leq x\leq un : \rank_x^{[d-u-1]} \leq \rank_{x+n-1}^{[d-u-1]} + m\,\}.
\end{equation}
Then set
\begin{equation}
\rank_x^{[d-u]} :=
\begin{cases}
\rank_x^{[d-u-1]}, & 0\leq x < \moveindex_u, \\
\rank_{x+n}^{[d-u-1]}, & \moveindex_u \leq x \leq un-1.
\end{cases}
\end{equation}
That is, $\vec{\rank}^{[d-u]}$ is obtained from $\vec{\rank}^{[d-u-1]}$ by deleting 
$\rank_{\moveindex_u}^{[d-u-1]},\dots,\rank_{\moveindex_u+n-1}^{[d-u-1]}$.

\item
\textbf{Permutations.}  
For each $0\leq j\leq n-1$ and $1\leq u\leq d-1$, define a {\color{blue}permutation} $\permutation_{j,u}$ of $[0,d-1]$\footnote{$[0,k] := \{0,1,\cdots,k\}$.} by
\begin{equation}
\permutationindex_{j,u}:=\min\{0\leq i\leq u:~in+j\geq \moveindex_u\}~~(\text{\color{blue}permutation index});\quad 
\permutation_{j,u} := (\permutationindex_{j,u}\,\cdots\,u).
\end{equation}
We also set $\permutationindex_{n,u}:=\permutationindex_{0,u}$ and $\permutation_{n,u}:=\permutation_{0,u}$.
From the definition one checks\footnote{Indeed, $\permutationindex_{j,u}n+j+1\geq \moveindex_u$ $\Rightarrow$ $\permutationindex_{j,u} + \floor{\frac{j+1}{n}} \geq \permutationindex_{j+1,u}$. Similarly, $\permutationindex_{j+1,u}n + \underline{j+1} + n - 1 \geq \moveindex_u$, where $j+1 \mod n \equiv \underline{j+1}\in \{0,1,\cdots,n-1\}$, $\Rightarrow$ $\permutationindex_{j+1,u}n + \underline{j+1} + n - 1 \geq \permutationindex_{j,u}n + j$, $\Rightarrow$ $\permutationindex_{j+1,u} \geq \permutationindex_{j,u} + \floor{\frac{j+1}{n}} - 1$.} that
\begin{equation}
\permutationindex_{j,u} + \floor{\frac{j+1}{n}} \geq \permutationindex_{j+1,u} \geq \permutationindex_{j,u} + \floor{\frac{j+1}{n}} - 1.
\end{equation}
Finally, define the compositions
\begin{equation}
\permutation_j := \permutation_{j,d-1}\circ\permutation_{j,d-2}\circ\cdots\circ\permutation_{j,1};\quad \permutation_n := \permutation_0.
\end{equation}

\item
\textbf{Modified $y$-matrices.}  
For $0\leq u\leq d-1$, $0\leq i\leq d-1$, $0\leq j\leq n-1$, set
\begin{equation}
y_{i,j}^{[u]} := y_{\permutation_{j,d-1}\circ\cdots\circ \permutation_{j,d-u}(i),\,j} 
 - \big(\permutation_{j,d-1}\circ\cdots\circ \permutation_{j,d-u}(i) - i\big)m.
\end{equation}
In particular, define
\begin{equation}
\Psi_d\big((y_{i,j})_{0\leq i<d,\,0\leq j<n}\big) := 
\big(y_{i,j}^{[d-1]}\big)_{0\leq i<d,\,0\leq j<n}.
\end{equation}

\item
\textbf{Enhanced ranks.}  
For each box $(x,y)$ (top-left vertex) with $0\leq x\leq dn-1$,
we may write $x = in+j$, $0\leq i\leq d-1$, $0\leq j\leq n-1$. For $0\leq u\leq d-1$, define
\begin{equation}
\hat{\rank}_D^{[u]}(x,y) := d\cdot \rank(x,y) + \permutation_{j,1}^{-1}\circ\cdots\circ \permutation_{j,d-u-1}^{-1}(i).
\end{equation}
In particular, the {\color{blue}enhanced rank} of $D$ is
\begin{equation}
\hat{\rank}_D(x,y) := \hat{\rank}_D^{[0]}(x,y) = d\cdot \rank(x,y) + \permutation_j^{-1}(i).
\end{equation}
Clearly, $\hat{\rank}_D^{[u]}(x,y+1) = \hat{\rank}_D^{[u]}(x,y) - dn$.
\end{enumerate}

The next result reformulates and refines \cite[Thm.~4.13]{GMO25} (see also \cite{GMV20}), where part~(2) was proved in different language.
\begin{theorem}\label{thm:admissible_invariant_subsets_vs_Dyck_paths}
For any $D=(y_{i,j}) \in \Young(dn,dm)$, we have:
\begin{enumerate}[wide,labelwidth=!,labelindent=0pt,itemindent=!]
\item The indices $\moveindex_u$ $(1\leq u\leq d-1)$ are well-defined and satisfy
\[
0 < \moveindex_1 < \moveindex_2 < \cdots < \moveindex_{d-1} \leq (d-1)n.
\]

\item The map $\Psi_d$ induces a bijection
\[
\Psi_d: \Young(dn,dm) \;\xrightarrow{\;\sim\;}\; \Adm(dn,dm),
\qquad (y_{i,j}) \mapsto (c_{i,j}) := \Psi_d((y_{i,j})),
\]
such that, writing $\Delta \cong (c_{i,j}) = \Psi_d(D)$,
\[
e(\Delta) = |(c_{i,j})| = |D|, 
\qquad \dim\Delta = \dim(c_{i,j}) = \codinv(D).
\]

\item The map $\hat{\rank}_D$ defines a bijection
\[
\hat{\rank}_D: \overline{\cR}_{dn,dm}^+ \;\xrightarrow{\;\sim\;}\; \mathbb{N},
\]
satisfying $\lfloor \hat{\rank}_D/d \rfloor = \rank$ and 
\[
\hat{\rank}_D(\cR_{dn,dm}^+\setminus D) = \mathbb{N}\setminus \Delta, 
\qquad \Delta=\Psi_d(D)\in \Adm(dn,dm).
\]
\end{enumerate}
\end{theorem}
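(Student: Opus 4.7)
The plan is to proceed by induction on $d$. The base case $d=1$ is essentially tautological: there are no nontrivial move indices or permutations, $\Psi_1$ is the identity on rank matrices (via Corollaries~\ref{cor:Dyck_path_via_rank} and \ref{cor:admissible_invariant_subset_via_rank}), every $0$-normalized $(n,m)$-invariant subset is automatically admissible when $d=1$, and $\hat{\rank}_D=\rank$ is bijective onto $\mathbb{N}$ by the Chinese remainder theorem (as $\gcd(n,m)=1$). For the inductive step, I would first verify part (1), then use the move indices to peel off the last row of $(c_{i,j})=\Psi_d(D)$, reducing the remaining assertions to a smaller-$d$ instance, with the admissibility conditions of Lemma~\ref{lem:semimodule_vs_c_ij} translating stage-by-stage into the splicing inequalities forced by the definition of each $\moveindex_u$.

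For part (1), the set $S_u:=\{0\leq x\leq un : \rank_x^{[d-u-1]}\leq \rank_{x+n-1}^{[d-u-1]}+m\}$ is nonempty because $x=0$ lies in it: indeed $\rank_0^{[d-u-1]}=0$ is preserved by the deletion procedure from the Dyck path boundary $\rank_{D,0}=0$, while $\rank_{n-1}^{[d-u-1]}\geq 0$ by Corollary~\ref{cor:Dyck_path_via_rank}. Strict monotonicity $\moveindex_u<\moveindex_{u+1}$ follows because the deletion at positions $[\moveindex_u,\moveindex_u+n-1]$ shifts the admissible window forward: the maximality of $\moveindex_u$ forces $\rank_{\moveindex_u+1}^{[d-u-1]}> \rank_{\moveindex_u+n}^{[d-u-1]}+m$ whenever $\moveindex_u+1\leq un$, so the corresponding inequality in the spliced vector $\vec{\rank}^{[d-u]}$ still fails at position $\moveindex_u$, forcing the next valid index to lie strictly beyond.

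For part (2), the key observation is that each substitution $y_{i,j}^{[u-1]}\mapsto y_{i,j}^{[u]}$ corresponds, in the rank picture, to shifting row $i$ through the cycle $\permutation_{j,d-u}$ while correcting by an integer multiple of $m$. Under Corollary~\ref{cor:admissible_invariant_subset_via_rank}, the $(c_{\bullet,\bullet})$-row-monotone condition $\rank_{\Delta,i,\underline{j+1}}\leq \rank_{\Delta,i,\underline{j}}+m$ is inherited directly from the Dyck path inequality $\rank_{D,x+1}\leq \rank_{D,x}+m$ (via the almost-consecutive property $\permutationindex_{j,u}+\lfloor (j+1)/n\rfloor\geq \permutationindex_{j+1,u}\geq \permutationindex_{j,u}+\lfloor (j+1)/n\rfloor-1$), and the $(c_{\bullet,\bullet})$-admissibility inequality at some $j$ is precisely the inequality defining $\moveindex_u$. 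The equality $|(c_{i,j})|=|D|$ is immediate since each $\permutation_{j,u}$ permutes $[0,d-1]$ and the $m$-shift corrections in the definition of $y_{i,j}^{[u]}$ telescope column by column. The inverse map (realized as $\Phi_d$ in Proposition~\ref{prop:from_admissible_invariant_subsets_to_Dyck_paths}) is built by reading off the move indices inductively from the rank matrix of $\Delta$ via the largest column where the admissibility condition first holds in each row, and inserting back the corresponding block of $n$ entries.

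For part (3), bijectivity of $\hat{\rank}_D:\overline{\cR}_{dn,dm}^+\to\mathbb{N}$ reduces to checking that each level set $\rank^{-1}(r)\cap \overline{\cR}_{dn,dm}^+$ contains exactly $d$ boxes (one per residue class of $x$ modulo $n$ in $[0,dn-1]$, with the matching $y$ uniquely determined by $\gcd(m,n)=1$ and the $y\leq 0$ extension) and that the correction $(x,y)\mapsto \permutation_j^{-1}(i)$ bijects these onto $\{0,\ldots,d-1\}$, which is automatic since each $\permutation_j$ permutes $[0,d-1]$. The identification $\hat{\rank}_D(\cR_{dn,dm}^+\setminus D)=\mathbb{N}\setminus\Delta$ is then obtained by tracking through the inductive construction that a box $z=(x,y)\in \cR_{dn,dm}^+$ lies outside $D$ precisely when $\hat{\rank}_D(z)\notin\Delta$. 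The main obstacle will be the dimension identity $\dim\Delta=\codinv(D)$ in part (2): rewriting $\dim\Delta=\sum_{a\in A(\Delta)}|[a,a+dm)\setminus\Delta|$ and $\codinv(D)=\delta-\dinv(D)=|\{z\in D\}|-|\{z\in D:\leglength(z)/(\armlength(z)+1)\leq m/n<(\leglength(z)+1)/\armlength(z)\}|$, I would use the enhanced rank to set up a canonical bijection between the gap-pairs $(a,x)$ on the left and boxes of $D$ violating the dinv inequality on the right, matching arm/leg ratios to gap contributions via the move index combinatorics; this bookkeeping is the only genuinely nontrivial step.
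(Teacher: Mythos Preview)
Your high-level strategy (induction on $d$, peeling off one row via the move index, then invoking the smaller instance) matches the paper's architecture in Section~\ref{sec:a_new_take_on_the_combinatorics}. However, there are two genuine gaps.

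\textbf{Strict positivity of $\moveindex_u$.} Showing $0\in S_u$ only yields $\moveindex_u\geq 0$, not $\moveindex_u>0$. The strict inequality matters: it is what forces $\permutation_{0,u}(0)=0$, hence $\rank_0^{[d-1]}=0$, which you need for the image to be $0$-normalized. The paper (proof of Proposition~\ref{prop:from_Dyck_paths_to_admissible_invariant_subsets}, step~(1)) establishes the sharper bound $\moveindex_{d-1}\geq d-1$ by a pigeonhole argument: if every $x\in[x_0,(d-1)n]$ fails the defining inequality, then iterating along steps of size $n-1$ gives $\rank_{D,x}\geq \lfloor (dn-1-x)/(n-1)\rfloor(m+1)$, which at $x=d-1$ contradicts $\rank_{D,x}\leq mx$. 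Your argument does not supply this.

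\textbf{The dimension identity $\dim\Delta=\codinv(D)$.} You correctly flag this as the crux, but your proposed ``canonical bijection between gap-pairs and boxes violating dinv'' is not the route the paper takes and is not fleshed out enough to assess. The paper instead proves that the sweep-map statistic $\zeta$ (Definition~\ref{def:sweep_map}) satisfies $\zeta=\cG\circ\Psi_d$ (Proposition~\ref{prop:sweep_map_vs_bijection}), where $|\cG(\Delta)|=\dim\Delta$ via the generator--cogenerator count (Lemma~\ref{lem:cell_dimesion_via_generator-cogenerators}). The proof tracks two auxiliary functions $\epsilon$ and $\eta$ through the move operations: $\epsilon_j$ is invariant under all moves (Corollary~\ref{cor:invariance_of_epsilon_j}, a simple permutation argument), while $\eta$ requires a delicate case analysis at each stage (Lemma~\ref{lem:invariance_of_eta}), with a separate comparison to the $\Adm$-side functions $\tilde\epsilon,\tilde\eta$ (Lemmas~\ref{lem:sweep_map-compare_two_epsilon_functions_for_admissible_invariant_subsets}, \ref{lem:compare_two_eta_functions}). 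Only then is $|\zeta(D)|=\codinv(D)$ read off from the arm/leg picture (Figure~\ref{fig:box_counting_for_co-dinv}). If you want to pursue a direct bijection instead, you would need to specify exactly how the enhanced rank matches an interval $[a,a+dm)\setminus\Delta$ to a box of $D$ with prescribed arm/leg ratio, and the move-index combinatorics make this correspondence nonlocal in a way your sketch does not address.

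Two smaller remarks: your argument for $\moveindex_u<\moveindex_{u+1}$ has the right idea but the paper's version (step~(5) of the same proof) is cleaner---it shows directly that all $x\geq\moveindex_{d-1}$ fail the $S_{d-2}$ condition in the spliced vector. For part~(3), your bijectivity argument is fine, but the identification $\hat{\rank}_D(\overline{D})=\Delta$ is proved in the paper (Corollary~\ref{cor:enhanced_rank_is_a_bijection}) by a one-line computation $\hat{\rank}_D(x,y)\in\Delta\iff y\leq y_x$, rather than by induction.
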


\begin{remark}
Our construction of the bijection $\Psi_d$ is more direct than that in \cite{GMO25}, although it is not a priori evident that they coincide.  
In Section~\ref{sec:a_new_take_on_the_combinatorics} we will provide a new, simpler proof of Theorem~\ref{thm:admissible_invariant_subsets_vs_Dyck_paths} (see Propositions~\ref{prop:from_admissible_invariant_subsets_to_Dyck_paths}, \ref{prop:from_Dyck_paths_to_admissible_invariant_subsets}, \ref{prop:bijection_between_admissible_invariant_subsets_and_Dyck_paths}, Corollary~\ref{cor:enhanced_rank_is_a_bijection}, and Proposition~\ref{prop:sweep_map_vs_bijection}), which also shows that the two bijections agree (Remark~\ref{rem:compare_with_the_old_bijection}).
\end{remark}

\subsubsection{The combinatorics of the affine paving for the Hilbert schemes}

Recall from Corollary \ref{cor:affine_paving_for_Hilbert_scheme} that we obtained an affine paving for the Hilbert scheme of points on a generic planar curve singularity.
We now describe the combinatorics of this paving - cell indices and dimensions - in terms of the $(dn,dm)$-type Young diagrams (equivalently, $(dn,dm)$-Dyck paths).
Using Theorem \ref{thm:admissible_invariant_subsets_vs_Dyck_paths}, this is achieved as follows:

\begin{proposition}\label{prop:combinatorics_of_affine_paving_for_Hilbert_scheme}
Let $D = (y_{i,j})_{0\leq i\leq d-1,\,0\leq j\leq n-1} \in \Young(dn,dm)$ with $e = |D|$.
\begin{enumerate}[wide,labelwidth=!,labelindent=0pt,itemindent=!]

\item For any $\tau_0 \in \mathbb{N}$ written uniquely as
\[
\tau_0 = dr + k = d(m\ell - nh) + k, \qquad 0\leq k\leq d-1,\; 0\leq \ell \leq n-1,\; h \leq \Big\lfloor \tfrac{m\ell}{n}\Big\rfloor,
\]
we have
\[
\tau_0 \in \Delta := \Psi_d(D) 
\;\;\Longleftrightarrow\;\; h \leq y_{\permutation_{\ell}(k),\ell} - \permutation_{\ell}(k) m
\;\;\Longleftrightarrow\;\; r \geq \rank_{D,\permutation_{\ell}(k),\ell}.
\]

\item With $\Delta := \Psi_d(D)\in\Adm(dn,dm)$ and $\tau := \tau_0 + e$, the cell dimension is
\begin{equation}
\dim H_{\Delta}^{[\tau]} \;=\;
\Bigl|\{\,z\in D : \tfrac{\leglength(z)}{\armlength(z)+1} > \tfrac{m}{n} 
\text{ or } \tfrac{\leglength(z)+1}{\armlength(z)} \leq \tfrac{m}{n}\}\Bigr|
\;+\; \hat{\correctionterm}_D(\tau_0).
\end{equation}
Here,
\begin{align}
\correctionterm_D(\tau_0) 
&:= |\{\,z \in \cR_{dn,dm}^+\setminus D : \rank(z) = r,\; \hat{\rank}_D(z) < \tau_0\,\}| \nonumber\\
&= |\{\,0\leq i\leq k-1 : \rank_{D,\permutation_{\ell}(i),\ell} > r\,\}|,\\[0.5em]
\hat{\correctionterm}_D(\tau_0) 
&:= |\{\,z\in \cR_{dn,dm}^+\setminus D : \hat{\rank}_D(z) < \tau_0\,\}| \nonumber\\
&= |\{\,z\in \cR_{dn,dm}^+\setminus D : \rank(z) < r\,\}| + \correctionterm_D(\tau_0).
\end{align}
Moreover, $\correctionterm_D(\tau_0)=0$ for all $\tau_0\in \Gamma$.
\end{enumerate}
\end{proposition}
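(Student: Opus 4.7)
The plan is to reduce every claim to the bijection $\hat{\rank}_D \colon \overline{\cR}_{dn,dm}^+ \xrightarrow{\sim} \mathbb{N}$ provided by Theorem~\ref{thm:admissible_invariant_subsets_vs_Dyck_paths}(3), which restricts to bijections $\overline{D} \xrightarrow{\sim} \Delta$ and $\cR_{dn,dm}^+ \setminus D \xrightarrow{\sim} \mathbb{N}\setminus \Delta$, combined with the affine-paving formula $\dim H_{\Delta}^{[\tau]} = \dim\Delta - |\Gaps(\tau_0)|$ from Corollary~\ref{cor:affine_paving_for_Hilbert_scheme}.

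For part (1), I would identify the unique box $z \in \overline{\cR}_{dn,dm}^+$ with $\hat{\rank}_D(z) = \tau_0$: since $\lfloor \hat{\rank}_D/d\rfloor = \rank$ we get $\rank(z) = r$, and writing $z = (in+\ell, y)$ the formula $\hat{\rank}_D(x,y) = d\cdot\rank(x,y) + \permutation_\ell^{-1}(i)$ forces $i = \permutation_\ell(k)$ and then $y = \permutation_\ell(k)m + h$. The condition $\tau_0 \in \Delta$ translates directly to $z \in \overline{D}$, i.e.\ $\permutation_\ell(k)m + h \leq y_{\permutation_\ell(k),\ell}$, which rearranges to the two equivalent inequalities in the statement.

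For part (2), using $\delta = |\cR_{dn,dm}^+|$ and Theorem~\ref{thm:admissible_invariant_subsets_vs_Dyck_paths}(2), I would decompose
\[
\dim\Delta = \codinv(D) = |\cR_{dn,dm}^+ \setminus D| + \bigl(|D| - \dinv(D)\bigr),
\]
recognising the second summand as the ``dinv-violation count'' $A$ appearing first in the proposition. The bijection yields $|\Gaps(\tau_0)| = |\{z \in \cR_{dn,dm}^+ \setminus D : \hat{\rank}_D(z) \geq \tau_0\}|$, and subtraction gives $\dim H_{\Delta}^{[\tau]} = A + \hat{\correctionterm}_D(\tau_0)$. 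To split $\hat{\correctionterm}_D$ by rank, I would use that $\hat{\rank}_D(z) = d\cdot\rank(z) + (\text{integer in }[0,d-1])$: the inequality $\hat{\rank}_D(z) < dr+k$ is automatic when $\rank(z) < r$, impossible when $\rank(z) > r$, and for $\rank(z) = r$ reduces to $\permutation_\ell^{-1}(i) < k$. Reindexing via $i = \permutation_\ell(i')$ and checking that $(\permutation_\ell(i')n+\ell,\,\permutation_\ell(i')m+h) \in \cR_{dn,dm}^+ \setminus D$ is equivalent to $\rank_{D,\permutation_\ell(i'),\ell} > r$ then gives the advertised formula for $\correctionterm_D(\tau_0)$.

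For the final claim $\correctionterm_D(\tau_0) = 0$ on $\Gamma$, I rely on a short arithmetic identity. Writing $\tau_0 = \hat{a}_{i_0,j_0} + dnc$ with $i_0 \in [0,d-1]$, reduction modulo $d$ forces $i_0 = k$, and one then verifies directly that for every $i \in [0,k-1]$,
\[
dr + i \;=\; \tau_0 - (k-i) \;=\; \hat{a}_{i,j_0} + dn\bigl(c + m(k-i)\bigr) \;\in\; \Gamma \subseteq \Delta.
\]
Hence each preimage $\hat{\rank}_D^{-1}(dr+i)$ lies in $\overline{D}$ and cannot belong to $\cR_{dn,dm}^+ \setminus D$, so the set defining $\correctionterm_D(\tau_0)$ is empty. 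The main bookkeeping hurdle is the reconciliation in part (2) between the box-theoretic and rank-theoretic descriptions of $\correctionterm_D(\tau_0)$: one must isolate the boundary case $\permutation_\ell(i')m + h \leq 0$ (where the candidate box lies in $\cR_{dn,dm}^{y\leq 0}$ rather than in $\cR_{dn,dm}^+$) and observe, using $y_{\permutation_\ell(i'),\ell} \geq 0$, that such $i'$ contribute to neither count. Everything else is a direct translation through the bijection $\hat{\rank}_D$.
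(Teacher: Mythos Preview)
Your proposal is correct and follows the same route as the paper: both reduce everything to Theorem~\ref{thm:admissible_invariant_subsets_vs_Dyck_paths} and Corollary~\ref{cor:affine_paving_for_Hilbert_scheme}, decompose $\codinv(D)$ as $|\cR_{dn,dm}^+\setminus D| + (|D|-\dinv(D))$, and handle the $\Gamma$-vanishing via the identity $dr+i = \hat a_{i,\ell} - dn(h+mi)$. The only difference is cosmetic: for the alternate description of $\correctionterm_D(\tau_0)$ the paper avoids your explicit box-coordinate check (and its boundary case $\permutation_\ell(i')m+h\leq 0$) by simply observing $\correctionterm_D(\tau_0) = |\{0\leq i\leq k-1 : dr+i\notin\Delta\}|$ via the bijection and then invoking part~(1) as a black box to convert ``$dr+i\notin\Delta$'' into ``$\rank_{D,\permutation_\ell(i),\ell}>r$''.
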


\begin{remark}
If $d = 1$, then $\correctionterm_D(\tau_0)=0$ identically.  
For $d > 1$, however, $\correctionterm_D(\tau_0)$ may be positive.  

For instance, let $(n,m,d)=(2,3,2)$ and $D=(y_{i,j})=\bigl(\begin{smallmatrix}0&0\\3&4\end{smallmatrix}\bigr)$ as in Figure~\ref{fig:Dyck_paths}.  
Then $\moveindex_1=2=(d-1)n$, $\permutation_{0,1}=\permutation_{1,1}=\id$, so $\Delta\cong (c_{i,j}):=\Psi_d((y_{i,j}))=(y_{i,j})$.  
Thus $\Delta\in\Adm(4,6)$ is
\[
\Delta = \{\hat{a}_{i,j}-4c_{i,j}\} + 4\mathbb{N} = \{0,6,1,3\}+4\mathbb{N}.
\]
Moreover,
\[
\hat{\rank}_D(x,y) = 2\cdot\rank(x,y)+\Big\lfloor \tfrac{x}{2}\Big\rfloor = 4x - 6y + \Big\lfloor \tfrac{x}{2}\Big\rfloor.
\]
Taking $\tau_0=3\in \Delta$, we have $r=\lfloor \tau_0/d\rfloor=1$, $k=\tau_0-dr=1$.  
Then $\{(x,y)\in\cR_{dn,dm}^+\setminus D : \rank(z)=r,\;\hat{\rank}_D(z)<\tau_0\}=\{(1,1)\}$, hence $\correctionterm_D(\tau_0)=1>0$.
\end{remark}

\begin{proof}
\noindent (1). 
By Theorem~\ref{thm:admissible_invariant_subsets_vs_Dyck_paths}, we have 
$c_{i,j}=y_{\permutation_j(i),j}-(\permutation_j(i)-i)m$, so $\Delta\cong(c_{i,j})\in\Adm(dn,dm)$.  
By definition,
\[
\tau_0=\hat{a}_{k,\ell}-dn(h+km)\in\Delta
\;\;\Longleftrightarrow\;\;
h+km \leq c_{k,\ell} = y_{\permutation_\ell(k),\ell}-(\permutation_\ell(k)-k)m
\]
\[
\Longleftrightarrow\;\;
h \leq y_{\permutation_\ell(k),\ell}-\permutation_\ell(k)m
\;\;\Longleftrightarrow\;\;
\rank_{D,\permutation_\ell(k),\ell} \leq r.
\]

\noindent (2). 
Since $\delta = |\cR_{dn,dm}^+|$, Theorem \ref{thm:admissible_invariant_subsets_vs_Dyck_paths}.$(2)$ gives
\[
\dim\Delta = \codinv(D) = |\cR_{dn,dm}^+\setminus D| + |\{z\in D:~\frac{\leglength(z)}{\armlength(z)+1} > \frac{m}{n}, \text{ or } \frac{\leglength(z)+1}{\armlength(z)} \leq \frac{m}{n}\}|.
\]
By Theorem \ref{thm:admissible_invariant_subsets_vs_Dyck_paths}.$(3)$, we have
\begin{eqnarray*}
|\cR_{dn,dm}^+\setminus D| - |\Gaps_{\Delta}(\tau_0)| &=& |(\mathbb{N}\setminus \Delta)\cap(0,\tau_0)| = |\{z\in\cR_{dn,dm}^+\setminus D:~\hat{\rank}_D(z) < \tau_0 = dr+k\}|\\
&=& |\{z\in \cR_{dn,dm}^+\setminus D:~\rank(z)< r\}| + \correctionterm_D(\tau_0).
\end{eqnarray*}
Thus, by Corollary~\ref{cor:affine_paving_for_Hilbert_scheme}, we obtain the desired formula for $\dim H_\Delta^{[\tau]}$.  

It remains to verify the properties of $\correctionterm_D(\tau_0)$. Indeed, by definition, Theorem \ref{thm:admissible_invariant_subsets_vs_Dyck_paths}.$(3)$, and $(1)$,
\[
\correctionterm_D(\tau_0) = |\{0\leq i\leq k-1:~dr+i\notin \Delta| = |\{0\leq i\leq k-1:~\rank_{D,\permutation_{\ell}(i),\ell} > r\}|.
\]
Finally, if $\tau_0 = dr + k = \hat{a}_{k,\ell} - dn(h+mk) \in\Gamma$, i.e., $h+mk\leq 0$, then for all $0\leq i\leq k$, we also have
\[
dr + i = \hat{a}_{i,\ell} - dn(h+mi) \in \Gamma \subset \Delta.
\]
Thus, $\correctionterm_D(\tau_0) = |\{0\leq i\leq k-1:~dr+i\notin \Delta|  = 0$.
\end{proof}

\begin{remark}\label{rem:cell_dimension_via_dn-generators}
Given $\Delta\in\Adm(dn,dm)$ and $\tau_0 = \tau - e(\Delta) \in \Delta$, one can also express $\dim H_{\Delta}^{[\tau]}$ directly in terms of the set of $dn$-generators $A(\Delta)$:
\begin{equation}
\dim H_{\Delta}^{[\tau]} = \sum_{a,b\in A(\Delta):\,a>b} \Big\lfloor \tfrac{a-b}{dn}\Big\rfloor 
- \sum_{a,b\in A(\Delta):\,a>b+dm} \Big\lfloor \tfrac{a-b-dm}{dn}\Big\rfloor 
- \sum_{a\in A(\Delta):\,a>\tau_0}\Big\lfloor \tfrac{a-\tau_0}{dn}\Big\rfloor.
\end{equation}
Indeed, for $b\in \Delta$, $a\in A(\Delta)$, and $\ell\in\mathbb{Z}$, one has $a - dn\ell \in \Gaps_{\Delta}(b)$ iff $b \leq a - dn\ell < a$. That is, $a > b$ and $1\leq \ell \leq \floor{\frac{a-b}{dn}}$. Thus, 
\[
|\{\,x\in\Gaps_\Delta(b): x\equiv a \pmod{dn}\}| = 
\begin{cases}
\lfloor \tfrac{a-b}{dn}\rfloor, & a>b,\\
0, & \text{otherwise}.
\end{cases}
\]
Hence,
\[
|\Gaps_\Delta(b)|=\sum_{a\in A(\Delta):\,a>b}\lfloor\tfrac{a-b}{dn}\rfloor,
\] 
and the stated formula follows from Corollary~\ref{cor:affine_paving_for_Hilbert_scheme}.
\end{remark}

\begin{example}\label{ex:enhanced_rank_associated_to_a_Dyck_path}
Let $(n,m,d) = (2,3,3)$. Recall $\hat{a}_{i,j} = jdm + (dmn+1)i$ and $a_{i,j} = \floor{\frac{a_{i,j}}{dn}} = mi + \floor{\frac{jm}{n}}$. So,

\[
(\hat{a}_{i,j})_{0\leq i\leq d-1,\,0\leq j\leq n-1} =
\begin{pmatrix}
0 & 9\\
19 & 28\\
38 & 47
\end{pmatrix},\qquad
(a_{i,j})_{0\leq i\leq d-1,\,0\leq j\leq n-1} =
\begin{pmatrix}
0 & 1\\
3 & 4\\
6 & 7
\end{pmatrix},
\]
and $\delta = \sum_{0\leq i\leq d-1,0\leq j\leq n-1}a_{i,j} = \frac{d(dmn - m - n + 1)}{2} = 21$.
Consider the $(dn,dm)=(4,6)$-type Young diagram
\[
D=(y_{i,j})=
\begin{pmatrix}
0 & 0\\
0 & 1\\
2 & 7
\end{pmatrix},\qquad e:=|D|=10.
\]

\begin{enumerate}[wide,labelwidth=!,labelindent=0pt,itemindent=!,label=(\Roman*)]
\item
As above, we make the following constructions:
\begin{enumerate}[wide,labelwidth=!,labelindent=0pt,itemindent=!,label=(\arabic*)]
\item
\textbf{Ranks.}
For $(i,j)$, we set $\rank_{D,i,j} := \rank(in+j,y_{i,j}) = m(in+j)-ny_{i,j}$.
Hence
\[
\vec{\rank}_D^{[0]} = (\rank_{D,0},\ldots,\rank_{D,5}) = (0,3,6,7,8,1).
\]
Removing blocks according to the definition of $\moveindex_u$, we find
\[
\moveindex_{2}=3,\qquad \rank_D^{[1]}=(0,3,6,1);\qquad
\moveindex_{1}=1,\qquad \rank_D^{[2]}=(0,1).
\]
Thus $0<\moveindex_1<\moveindex_2\leq (d-1)n$ as expected.

\item
\textbf{Permutations.}
For $j=0,1$ and $u=1,2$, we obtain
\[
\permutationindex_{0,1} = 1,~~\permutation_{0,1} = \id;\quad \permutationindex_{0,2} = 2,~~\permutation_{0,2} = \id;\quad
\permutationindex_{1,1} = 0,~~\permutation_{1,1} = (0~1);\quad \permutationindex_{1,2} = 1,~~\permutation_{1,2} = (1~2).
\]
Thus,
\[
\permutation_0 = \id;\quad \permutation_1 = (1~2)\circ(0~1) = (2~1~0).
\]

\item
\textbf{The set $\Delta$.}
Applying $\Psi_d$ gives
\[
(c_{i,j}) = \Psi_3((y_{i,j})) = (y_{i,j}^{[2]})_{0\leq i\leq 2,0\leq j\leq 1} = \begin{pmatrix}
y_{\permutation_0(0),0} - 3\permutation_0(0) & y_{\permutation_1(0),1} - 3\permutation_1(0)\\
y_{\permutation_0(1),0} - 3(\permutation_0(1) - 1) & y_{\permutation_1(1),1} - 3(\permutation_1(1) - 1)\\
y_{\permutation_0(2),0} - 3(\permutation_0(2) - 2) & y_{\permutation_1(2),1} - 3(\permutation_1(2) - 2)
\end{pmatrix} = 
\begin{pmatrix}
0 & 1\\
0 & 3\\
2 & 4
\end{pmatrix}.
\]
Hence,
\[
A(\Delta)=\{\hat{a}_{i,j}-dn\,c_{i,j}\}=\{0,3,19,10,26,23\},\qquad 
\Delta=\{0,3,19,10,26,23\}+6\mathbb{N}.
\]

\item
\textbf{Enhanced rank.}
For a box with top-left vertex $(in+j,y)$,
\[
\hat{\rank}_D(in+j,y)=d(m(in+j)-ny)+\permutation_j^{-1}(i).
\]
Explicitly,
\[
\begin{array}{ll}
\hat{\rank}_D(0,y)=-6y,& \hat{\rank}_D(1,y)=10-6y,\\
\hat{\rank}_D(2,y)=19-6y,& \hat{\rank}_D(3,y)=29-6y,\\
\hat{\rank}_D(4,y)=38-6y,& \hat{\rank}_D(5,y)=45-6y.
\end{array}
\]
Thus $\hat{\rank}_D(x,y+1)=\hat{\rank}_D(x,y)-6$.  
Figure~\ref{fig:enhanced_rank} illustrates $\hat{\rank}_D$: blue numbers correspond to $\Delta=\Psi_d(D)\in\Adm(dn,dm)$.
\end{enumerate}

\begin{figure}[!htbp]
\vspace{-0.1in}
\begin{center}
\begin{tikzpicture}[scale=0.7]

\draw[thin] (0,0) rectangle (6,9);

\foreach \i in {1,2,3,4,5} {
    \draw[thin] (\i,0) -- (\i,9);
}

\foreach \j in {1,2,3,4,5,6,7,8} {
    \draw[thin] (0,\j) -- (6,\j);
}

\draw[thin] (0,0) -- (6,9);

\draw[red, line width=0.5mm,->] (0,0) -- (3,0) -- (3,1) -- (4,1) -- (4,2) -- (5,2) -- (5,7) -- (6,7) -- (6,9);

\fill[blue!10, opacity=0.5] (3,0) -- (3,1) -- (4,1) -- (4,2) -- (5,2) -- (5,7) -- (6,7) -- (6,0) -- (3,0);

\node[blue] at (0.5,-0.5) {$0$};

\node[blue] at (1.5,-0.5) {$10$};
\node at (1.5,0.5) {$4$};

\node[blue] at (2.5,-0.5) {$19$};
\node at (2.5,0.5) {$13$};
\node at (2.5,1.5) {$7$};
\node at (2.5,2.5) {$1$};

\node[blue] at (3.5,-0.5) {$29$};
\node[blue] at (3.5,0.5) {$23$};
\node at (3.5,1.5) {$17$};
\node at (3.5,2.5) {$11$};
\node at (3.5,3.5) {$5$};

\node[blue] at (4.5,-0.5) {$38$};
\node[blue] at (4.5,0.5) {$32$};
\node[blue] at (4.5,1.5) {$26$};
\node at (4.5,2.5) {$20$};
\node at (4.5,3.5) {$14$};
\node at (4.5,4.5) {$8$};
\node at (4.5,5.5) {$2$};

\node[blue] at (5.5,-0.5) {$45$};
\node[blue] at (5.5,0.5) {$39$};
\node[blue] at (5.5,1.5) {$33$};
\node[blue] at (5.5,2.5) {$27$};
\node[blue] at (5.5,3.5) {$21$};
\node[blue] at (5.5,4.5) {$15$};
\node[blue] at (5.5,5.5) {$9$};
\node[blue] at (5.5,6.5) {$3$};

\node[blue] at (3,-1.2) {$\vdots$};

\node[below left] at (0,0) {$(0,0)$};
\node[above right] at (6,9) {$(6,9)$};


\end{tikzpicture}
\end{center}
\vspace{-0.1in}
\caption{The $(dn,dm)$-type Dyck path $D = (y_{in+j} = y_{i,j})$ and the associated \textbf{enhanced rank} $\hat{\rank}_D$ for Example \ref{ex:enhanced_rank_associated_to_a_Dyck_path}: In the figure, $(n,m,d) = (2,3,3)$, and $(y_0,y_1,\cdots,y_5) = (0,0,0,1,2,7)$. Only the positive values of $\hat{\rank}_D$ are indicated.}
\label{fig:enhanced_rank}
\end{figure}
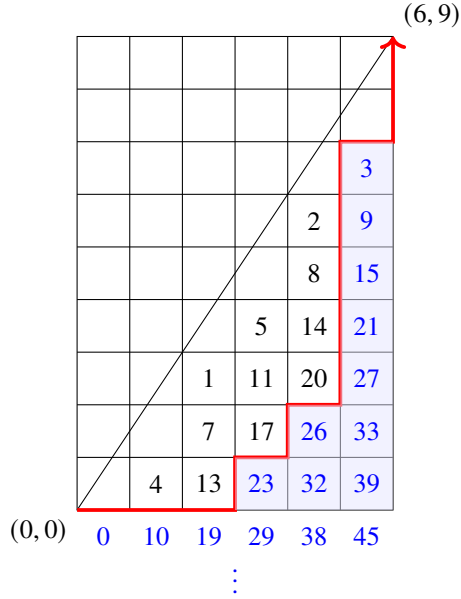

\item
\textbf{Dimension computation.}
We compute $\dim H_\Delta^{[\tau]}$ for $\tau_0=10\in\Delta$, hence $\tau=\tau_0+e=20$.
We \textbf{identify} a box $z\in \overline{\cR}_{dn,dm}^+$ with $\hat{\rank}_D(z)\in\mathbb{N}$ via the bijection $\hat{\rank}_D:\overline{\cR}_{dn,dm}^+ \xrightarrow[]{\simeq} \mathbb{N}$.

\emph{Method 1 (via Remark~\ref{rem:cell_dimension_via_dn-generators}).}  
With $A(\Delta)=\{0,3,10,19,23,26\}$,
\[
\dim H_\Delta^{[20]}=
\sum_{a>b}\Big\lfloor \tfrac{a-b}{dn}\Big\rfloor
-\sum_{a>b+dm}\Big\lfloor \tfrac{a-b-dm}{dn}\Big\rfloor
-\sum_{a>\tau_0}\Big\lfloor \tfrac{a-\tau_0}{dn}\Big\rfloor
=26-10-5=11.
\]

\emph{Method 2 (via Proposition~\ref{prop:combinatorics_of_affine_paving_for_Hilbert_scheme}).}  
One checks
\[
|\{z\in D:\tfrac{\leglength(z)}{\armlength(z)+1}>\tfrac{m}{n}\}|=5,\qquad
|\{z\in D:\tfrac{\leglength(z)+1}{\armlength(z)}\leq \tfrac{m}{n}\}|=0.
\]
Since $r=\lfloor \tau_0/d\rfloor=3$ and $k=1$, we get
\[
\hat{\correctionterm}_D(\tau_0)=|\{z\in\cR_{dn,dm}^+\setminus D:~\hat{\rank}_D(z)<\tau_0\}|=6.
\]
Therefore
\[
\dim H_\Delta^{[\tau]}=(5+0)+6=11,
\]
matching the previous result. Moreover,
\[
|\{z\in\cR_{dn,dm}^+\setminus D:\rank(z)<r\}|=6,\qquad
\correctionterm_D(\tau_0)=0,
\]
which agrees with
\[
\hat{\correctionterm}_D(\tau_0)=|\{z\in \cR_{dn,dm}^+\setminus D:\rank(z)<r\}|+\correctionterm_D(\tau_0).
\]
\end{enumerate}
\end{example}

\subsection{Connection to the ORS and Cherednik-Danilenko-Philipp conjectures}\label{subsec:connection_to_ORS_and_CDP}

\subsubsection{The weight polynomial}

We begin with some \textbf{terminology}.  
An \textbf{ind-variety} over $\field$ is a set $X$ with a filtration 
\[
X_{-1} = \emptyset \subset X_0 \subset X_1 \subset \cdots,\qquad X = \bigcup_{j\geq 0} X_j,
\] 
such that each $X_j$ is a $\field$-variety and the inclusions $i_j:X_j\hookrightarrow X_{j+1}$ are closed embeddings.

\begin{definition}
Let $X=\bigcup_{j\geq 0}X_j$ be an ind-variety over $\field$, and let $A$ be a commutative ring.
\begin{enumerate}[wide,labelwidth=!,labelindent=0pt,itemindent=!]
\item A map $\varphi:X\to A$ is a \textbf{constructible function} if each $X_j$ admits a decomposition $X_j=\bigsqcup_{\alpha\in I_j}X_{j,\alpha}$ into finitely many locally closed subvarieties on which $\varphi$ is constant.

\item Suppose $\mathbb{Z}[[\sqrt{q}]]\subset A$. For a constructible function $\varphi:X\to A$, define the \textbf{weight polynomial} of $(X,\varphi)$ by
\begin{equation}
\Weight(X,\varphi) = \int_X \varphi := \sum_{a\in A}\Weight(\varphi^{-1}(a);q)\cdot a,
\end{equation}
where $\Weight(-;q)$ is the \textbf{Deligne weight polynomial}: for any ind-variety $Y=\bigcup_{i\geq 0}Y_i$, 
\[
\Weight(Y;q):= \sum_{i,j,k}\dim\Gr_k^W H_c^j((Y_i\setminus Y_{i-1})^{\an};\mathbb{C})\,q^{k/2}(-1)^j.
\]
\end{enumerate}
\end{definition}

\begin{remark}\label{rem:Katz's_theorem}
By our convention, $\Weight(Y,q)=\mathfrak{W}(Y;t=-\sqrt{q})$ in \cite{ORS18}, thus the specialization to \cite{OS12} is $q=1$.
In addition, by N. Katz's theorem (see \cite[Appendix]{HRV08} or \cite[Thm.A.6]{DL12}), if a $\field$-variety $Y$ (defined over some number field) has a \emph{strongly polynomial-count}, i.e. there exists $P\in\mathbb{Z}[T]$ such that $|Y(\mathbb{F}_q)|=P(q)$ for all but finitely many characteristics, then 
\begin{equation}
\Weight(Y;q) = P(q).
\end{equation}
\end{remark}

\subsubsection{Hilbert $L$-function and the ORS conjecture}

Let $(C,0)$ be an irreducible planar curve singularity, and let $K_{(C,0)}$ be the associated singularity knot.
As before, $\delta = \delta(R) = \dim_{\field}\tilde{R}/R$, and assume $(C,0)$ is parametrized by
\[
x(t) = t^{dn},\quad y(t) = t^{dm} + \text{ h.o.t.},\quad \gcd(n,m) = 1,~~m > n.
\]

\begin{definition}\label{def:Hilbert_L-function}
Define the \textbf{length} and \textbf{special rank} on $\Hilb^{[\bullet]}(C,0)$ by
\begin{equation*}
(\length,\specialrank):\Hilb^{[\bullet]}(C,0) \rightarrow \mathbb{N}\times [1,dn]\footnote{Note that $1\leq \dim_{\field} I/\fm_R I \leq \dim_{\field}I/xI = dn$, for all $I\in \Hilb^{[\bullet]}(C,0)$.}: [I] \mapsto (\dim_{\field}R/I,\dim_{\field}I/\fm_RI),
\end{equation*}
where $\fm_R:=(x,y)R \subset R$ is the maximal ideal.
The \textbf{Hilbert $L$-function} of $(C,0)$ is
\begin{equation*}
L(q,t,a) = L((C,0);q,t,a):= (1-t)\int_{\Hilb^{[\bullet]}(C,0)} t^{\length}\prod_{j=1}^{\specialrank - 1}(1+q^ja).
\end{equation*}
\end{definition}

\noindent{}\textbf{Note}: In our case, the Milnor number is $\mu = 2\delta + 1 - b = 2\delta$, and the generating function $\overline{\cP}_{\alg}(a,q,t)$ in \cite[Conj.2]{ORS18} is related to $L(q,t,a)$ by:
\[
\overline{\cP}_{\alg}:= (\frac{a}{q})^{\mu-1} \text{\tiny$\sum_{\ell,m}$} q^{2\ell}a^{2m}t^{m^2}\fw(\Hilb^{[\ell\leq\ell+m]}(C,0);t) = (\frac{a}{q})^{2\delta - 1}\frac{1+a^2t}{1-q^2}L(t^2,q^2,a^2t).
\]
Here, $\Hilb^{[\ell\leq\ell+m]}(C,0)$ denotes the \textbf{$(\ell,\ell+m)$-nested Hilbert scheme}:
\[
\{(I_0,I_1)\in \Hilb^{[\ell]}(C,0)\times \Hilb^{[\ell+m]}(C,0):\ I_0\supset I_1\supset \fm_R I_0\}.
\]

Now, the Oblomkov-Rasmussen-Shende conjecture connects $L(q,t,a)$ to knot homologies:
\begin{conjecture}[{\cite[Conj.~2]{ORS18}}]\label{conj:ORS_conjecture} 
\[
\overline{\cP}_{\alg}(a,q,t)=\overline{\cP}(K_{(C,0)}),
\]
where $\overline{\cP}(-)$ is the generating function for triply graded HOMFLY--PT homology of knots/links.
\end{conjecture}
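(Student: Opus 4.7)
The plan is to use the affine paving of Corollary~\ref{cor:affine_paving_for_Hilbert_scheme}, extended to the nested Hilbert schemes $\Hilb^{[\ell\leq \ell+m]}(C,0)$, to compute the left-hand side $\overline{\cP}_{\alg}(a,q,t)$ explicitly for a generic irreducible planar curve singularity $(C,0)$, and to match it with a combinatorial model for the right-hand side, which in our setting is the triply graded HOMFLY--PT homology of the $(d,dmn+1)$-cable of the $(n,m)$-torus knot. Since $(C,0)$ here has a strongly polynomial count over $\mathbb{F}_q$ via the affine paving, Remark~\ref{rem:Katz's_theorem} would reduce the LHS to a point count, to be repackaged as a three-variable Dyck-path generating function.

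First, I would extend the cell decomposition to $\Hilb^{[\ell\leq \ell+m]}(C,0)$. Working over a fixed cell $H_{\Delta}^{[\tau]}$ of the first factor, the fiber parametrizes subspaces of $I_0/\fm_R I_0$ of a prescribed dimension, where $\specialrank(I_0)$ is read off from the $dn$-generators of the associated semimodule. Stratifying the variables $g_{a;\ell}$ used in the proof of Theorem~\ref{thm:affine_paving_for_Hilbert_scheme_over_generic_curve_singularity} by the vanishing pattern of the quotient $I_1/\fm_R I_0$, I expect to produce an affine paving of $\Hilb^{[\ell\leq \ell+m]}(C,0)$ indexed by $(dn,dm)$-Dyck paths $D$ equipped with an admissible marking of columns. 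The integration in Definition~\ref{def:Hilbert_L-function} then collapses to a sum over such decorated Dyck paths, with weights built from $\dinv(D)$, $\codinv(D)$, $\hat{\rank}_D$, and $|\Gaps|$ as in Proposition~\ref{prop:combinatorics_of_affine_paving_for_Hilbert_scheme}. A consistency check is that the specialization $a=0$ must recover Corollary~\ref{cor:perverse_filtration_for_generic_planar_curve_singularities}, and the lowest $a$-degree must match the formula of \cite{CGHM24}.

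Second, I would route the comparison with $\overline{\cP}(K_{(C,0)})$ through the CDP conjecture rather than attempt a direct identification. Concretely, the plan is to (i) prove the $(q,t)$-version by establishing Conjecture~\ref{conj:combinatorial_two-variable_CDP_conjecture_for_generic_curve_singularity} as a purely combinatorial identity between Dyck-path sums; (ii) upgrade to the full $a$-refined equality $L(q,t,a) = \mH^{\mot}(q,t,a)$ via the DAHA polynomial representation, exploiting the two-Puiseux-pair structure of $(C,0)$ to reduce to a cabling operation on the Cherednik-Danilenko polynomials of torus knots; and (iii) invoke the sheaf-theoretic or matrix-factorization models for triply graded homology (Gorsky--Negu\c{t}--Rasmussen, Oblomkov--Rozansky) that identify $\mH^{\mot}$ with $\overline{\cP}$ on the cabled side. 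Step (i) should be tractable by the sweep-map compatibility of $\Psi_d$ (Proposition~\ref{prop:sweep_map_vs_bijection}), which provides the bridge between the two Dyck-statistic packagings on the two sides.

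The main obstacle is step (iii). Even for torus knots ($d=1$, where $(C,0)$ is quasi-homogeneous), the homological ORS conjecture is only fully proven in special cases, and the cabling operation in triply graded HOMFLY--PT homology is not yet understood with enough precision to transport a combinatorial model from the torus case to an arbitrary $(d,dmn+1)$-cable. My expectation is that the $a=0$ and lowest $a$-degree parts of Conjecture~\ref{conj:ORS_conjecture} are within reach using the methods above combined with \cite{CGHM24}, while the full statement requires genuinely new categorical input on cabling; consequently, a complete proof along this route would almost certainly have to proceed by first settling (i) and (ii), leaving (iii) as a conditional reduction to a cleanly stated conjecture about categorified cabling of Soergel bimodules.
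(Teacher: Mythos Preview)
The statement you are addressing is a \emph{conjecture}, not a theorem: the paper does not prove it and does not claim to. It is quoted verbatim from \cite{ORS18} as an open problem, and the paper's contribution is only to reduce the lowest $a$-degree part, for generic $(C,0)$, to the purely combinatorial Conjecture~\ref{conj:combinatorial_two-variable_CDP_conjecture_for_generic_curve_singularity} (via Corollary~\ref{cor:perverse_filtration_for_generic_planar_curve_singularities} and \cite{CGHM24}). There is therefore no ``paper's own proof'' to compare your proposal against.

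Your plan is a reasonable research outline, and you are honest about its limitations: step~(iii) is the genuine obstacle, and you correctly note that even for $d=1$ the homological ORS conjecture is not fully established. But two points deserve flagging. First, step~(ii) assumes the full Cherednik--Danilenko--Philipp conjecture $L(q,t,a)=\mH^{\mot}(q,t,a)$ can be upgraded from the $(q,t)$-version via DAHA cabling; this is itself open and not obviously easier than ORS. Second, extending the affine paving to the nested Hilbert schemes $\Hilb^{[\ell\le\ell+m]}(C,0)$ is plausible but not automatic: the fiber over a fixed $I_0$ is a Grassmannian of subspaces of $I_0/\fm_R I_0$, and while $\specialrank(I_0)$ is constant on each cell $H_\Delta^{[\tau]}$, you would need to check that the torus-fixed structure or the inductive variable elimination of Theorem~\ref{thm:affine_paving_for_Hilbert_scheme_over_generic_curve_singularity} interacts well with this Grassmannian bundle. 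In short, your proposal is not a proof but a roadmap whose hardest steps are themselves open conjectures.
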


For generic singularities, our affine paving of $\Hilb^{[\bullet]}(C,0)$ yields an explicit computation of the lowest $a$-degree coefficient of $\overline{\cP}_{\alg}$, i.e. $L(q,t,0)$ (see Corollary \ref{cor:perverse_filtration_for_generic_planar_curve_singularities}).

\begin{lemma}[{\cite[Prop.~3]{ORS18}}]
The Hilbert $L$-function satisfies:
\[
L(q,t,a)\in\mathbb{Z}[\sqrt{q},t,a]~~(\text{\textbf{rationality}}); \qquad q^\delta t^{2\delta}L(q,1/(qt),a)=L(q,t,a)~~(\text{\textbf{functional equation}}).
\]
\end{lemma}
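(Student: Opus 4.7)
The plan is to treat rationality by a stabilization argument for $\Hilb^{[\tau]}(C,0)$ at large $\tau$, and the functional equation by a Gorenstein-duality symmetry on $\overline{J}(C,0)$ encoded by the involution $\iota$.

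\textbf{Rationality.} Set $g_{\tau}(q,a):=\int_{\Hilb^{[\tau]}(C,0)}\prod_{j=1}^{\specialrank-1}(1+q^{j}a)$. By Lemma \ref{lem:the_closed_embedding_from_Hilbert_scheme_to_local_compactified_Jacobian}, the map $\phi_{\tau}:\Hilb^{[\tau]}(C,0)\xrightarrow{\sim}\overline{J}(C,0)$, $I\mapsto t^{-\tau}I$, is an isomorphism for all $\tau\geq 2\delta$. Its inverse $J\mapsto t^{\tau}J$ is $R$-linear (multiplication by the non-zero-divisor $t^{\tau}\in E$), hence preserves the minimal number of generators, so $\specialrank(t^{\tau}J)=\specialrank(J)$. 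Consequently $\{g_{\tau}\}_{\tau\geq 2\delta}$ is constant; splitting the defining series accordingly yields
\[
L(q,t,a)\;=\;(1-t)\sum_{\tau=0}^{2\delta-1}g_{\tau}(q,a)\,t^{\tau}\;+\;g_{2\delta}(q,a)\,t^{2\delta},
\]
a polynomial in $t$ of degree at most $2\delta$. Since each $\Hilb^{[\tau]}(C,0)$ is projective (a closed subscheme of a lattice Grassmannian) and $\specialrank$ is a constructible function, each $g_{\tau}(q,a)\in\mathbb{Z}[\sqrt{q},a]$ by the general properties of the Deligne weight polynomial. Hence $L(q,t,a)\in\mathbb{Z}[\sqrt{q},t,a]$.

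\textbf{Functional equation.} Writing $L=\sum_{\tau=0}^{2\delta}c_{\tau}(q,a)\,t^{\tau}$ with $c_{\tau}=g_{\tau}-g_{\tau-1}$ (and $g_{-1}:=0$), a direct substitution shows that the identity $q^{\delta}t^{2\delta}L(q,1/(qt),a)=L(q,t,a)$ is equivalent to the palindromic symmetry
\[
c_{\tau}(q,a)\;=\;q^{\,\tau-\delta}\,c_{2\delta-\tau}(q,a),\qquad 0\leq\tau\leq 2\delta.
\]
The strategy is to construct a stratumwise bijection between $\Hilb^{[\tau]}(C,0)$ and $\Hilb^{[2\delta-\tau]}(C,0)$ via the Gorenstein involution $\iota:\overline{J}(C,0)\xrightarrow{\sim}\overline{J}(C,0)$, $J\mapsto J^{-1}$. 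Using the identity $\AJ_{\tau}=\iota\circ\phi_{\tau}$, $\iota$ interchanges the loci $\im(\AJ_{\tau})=\{J:t^{\tau}\in J\}$ and $\im(\phi_{\tau})=\{J:t^{\tau}J\subset R\}$; combined with Corollary~\ref{cor:local_Abel-Jacobian_map} and the conductor estimate $c(J)\leq 2\delta$, this pairs each $I\in\Hilb^{[\tau]}$ with a corresponding $I'\in\Hilb^{[2\delta-\tau]}$. The shift $q^{\,\tau-\delta}$ then arises from the difference in the dimensions of matched affine cells, together with the compatible transformation of $\specialrank$ under $\iota$, which is exactly what is measured by the product $\prod_{j=1}^{\specialrank-1}(1+q^{j}a)$.

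\textbf{Main obstacle.} The delicate point is controlling how $\specialrank(I)=\dim_{\field}I/\fm_{R}I$ transforms under $\iota$: this invariant is sensitive to the ambient embedding $I\subset R$ rather than only to the abstract $R$-module structure of $I$. I expect the key identity to be a relation of the form $\specialrank(I)+\specialrank(I^{-1})=\mathrm{const}$ (depending only on the local ring $R$), derivable from Matlis duality applied to $R/\fm_{R}$ together with the Gorenstein self-duality $\omega_{R}\cong R$. Once this transformation law is pinned down, matching the products $\prod_{j=1}^{\specialrank-1}(1+q^{j}a)$ on each matched pair of strata produces precisely the weight $q^{\,\tau-\delta}$, and the functional equation follows.
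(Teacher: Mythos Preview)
The paper does not prove this lemma; it is quoted verbatim from \cite[Prop.~3]{ORS18}. So your proposal is not being compared to an argument in the present paper but to the original source.

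Your rationality argument is correct and is essentially the standard one: stabilization of $\phi_\tau$ for $\tau\geq 2\delta$ forces $g_\tau$ to be eventually constant, and the finitely many remaining $g_\tau$ lie in $\mathbb{Z}[\sqrt{q},a]$ by general properties of the weight polynomial.

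The functional equation argument has a genuine gap. Your proposed key identity $\specialrank(I)+\specialrank(I^{-1})=\mathrm{const}$ is false. Already for the cusp $R=\field[[t^2,t^3]]$ (so $\delta=1$): taking $J=R$ gives $\specialrank(R)=1$ and $R^{-1}=R$, so the sum is $2$; taking $J=t\tilde{R}$ gives $\mathfrak m_R\cdot t\tilde{R}=t^3\tilde{R}$, hence $\specialrank(t\tilde{R})=2$, and one checks $(t\tilde{R})^{-1}=t\tilde{R}$, so the sum is $4$. Even replacing your conjecture by the (more plausible) invariance $\specialrank(J)=\specialrank(J^{-1})$, you have not explained how matching the products $\prod_{j=1}^{\specialrank-1}(1+q^j a)$ produces the factor $q^{\tau-\delta}$; with equal $\specialrank$ on both sides the $a$-dependence matches trivially and no power of $q$ appears. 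A further obstruction is that the condition $t^\tau\in J$ defining $\im(\AJ_\tau)$ is not purely a condition on $\ord_t(J)$, so your proposed stratumwise bijection between $\Hilb^{[\tau]}$ and $\Hilb^{[2\delta-\tau]}$ does not follow from the combinatorial symmetry of valuations alone.

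The proof in \cite{ORS18} avoids these issues by first rewriting $L(q,t,a)$ as a generating series over the \emph{nested} Hilbert schemes $\Hilb^{[\ell\leq \ell+m]}(C,0)$ (as recorded just above the lemma in the present paper), and then invoking the duality $(I_0,I_1)\mapsto (I_1^{-1},I_0^{-1})$, which gives an isomorphism $\Hilb^{[\ell\leq\ell+m]}\cong\Hilb^{[2\delta-\ell-m\,\leq\,2\delta-\ell]}$. In that formulation the $a$-grading is carried by the integer $m$ rather than by $\specialrank$, and the duality fixes $m$; the functional equation then drops out directly. If you want to push your approach through, the cleanest route is to pass to this nested reformulation first.
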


\begin{remark}
For $r\geq 1$, \textbf{denote} 
\[
\Hilb_r^{[\ell]}(C,0):=\{I\in\Hilb^{[\ell]}(C,0):~\dim_{\field}I/\fm_RI = r\} \hookrightarrow \Hilb^{[\ell]}(C,0),
\] 
i.e., the locally closed subscheme of ideals whose minimal number of generators is $r$. Then,
\[
L(q,t,a) = (1-t) \sum_{\ell\geq 0,r\geq 1}\Weight(\Hilb_r^{[\ell]}(C,0);q)t^{\ell}\prod_{j=1}^{r-1}(1+q^ja).
\]
Examples suggest that $\Hilb_r^{[\ell]}(C,0)$ always has a strongly polynomial-count. By N.Katz's theorem, this would imply a \emph{stronger rationality}: $L(q,t,a)\in\mathbb{Z}[q,t,a]$.
\end{remark}

\subsubsection{Hilbert $L$-function vs.\ perverse filtration}

Using $\overline{\{f(x,y) = 0\}} \hookrightarrow \mathbb{P}^2$, after blowing-ups, the planar curve singularity $(C,0)$ can be realized as the unique singularity of a \textbf{rational} locally planar curve $C$. In particular, the arithmetic genus of $C$ is $g(C) = \delta$.
By \cite[Prop.3.7]{Bea99}, we have 
\[
(\overline{J}(C,0))^{\red} \xrightarrow[]{\simeq} (\overline{J}C)^{\red},
\] 
where $\overline{J}C$ denotes the compactified Jacobian, i.e., the moduli space of rank $1$ torsion-free coherent sheaves of degree $0$ on $C$.
By \cite{FGS99}, there exists a deformation $\pi:\cC \rightarrow B$ of $C$ such that $B$ and the relative compactified Jacobian $\cJ = \overline{J}(\cC/B)$ are smooth.
By the BBDG decomposition theorem \cite{BBDG18},
\[
R\pi_*\underline{\mathbb{Q}}_{\cJ}[\delta + \dim B] = \oplus_{i=-\delta}^{\delta}\cF_i[-i],
\]
where each $\cF_i$ is a perverse sheaf on $B$. Say, $0 \in B$ and $C = \pi^{-1}(0)$. Set $b:= \dim B$.

\begin{definition}
The \textbf{perverse filtration} on $H^*(\overline{J}(C,0))$ is defined by
\begin{equation*}
P_kH^*(\overline{J}(C,0)) := \im[({}^p\tau_{\leq k+b}R\pi_*\underline{\mathbb{Q}}_{\cJ})|_{0} \rightarrow (R\pi_*\underline{\mathbb{Q}}_{\cJ})|_{0} = H^*(\overline{J}(C,0))]
= \oplus_{j=0}^k\cF_{j-\delta,0} [-j-b].
\end{equation*}
In particular, $\Gr_k^PH^*(\overline{J}(C,0)) = \cF_{k-\delta,0}[-k-b] \neq 0$ $\Rightarrow$ $0\leq k\leq 2\delta$.
\end{definition}

\begin{lemma}[{\cite{MS13,MY14}}]\label{lem:Hilbert_L-function_vs_perverse_filtration}
For any irreducible planar curve singularity $(C,0)$,
\[
L(q,t,0) = \sum_{i=0}^{2\delta}t^i\Weight(\Gr_i^PH^*(\overline{J}(C,0));q).
\]
\end{lemma}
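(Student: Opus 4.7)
The plan is to reduce the identity to the Migliorini--Shende/Maulik--Yun ``perverse $=$ Hilbert'' correspondence via a telescoping/Abel--Jacobi argument. First, I would reformulate the left-hand side. Setting $a=0$ gives
\[
L(q,t,0) \;=\; (1-t)\sum_{\ell\geq 0}\Weight(\Hilb^{[\ell]}(C,0);q)\,t^{\ell}.
\]
By Corollary~\ref{cor:local_Abel-Jacobian_map}, the local Abel--Jacobi map $\AJ_{\ell}$ identifies $\Hilb^{[\ell]}(C,0)$ with the locally closed stratum $U_{\ell}:=\{J\in\overline{J}(C,0): t^{\ell}\in J\}$; moreover these form an ascending chain $U_{0}\subset U_{1}\subset\cdots$ which stabilizes at $U_{2\delta}=\overline{J}(C,0)$ because $\tau\geq c(J)\leq 2\delta$ forces $t^{\tau}\in J$. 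Hence $L(q,t,0)$ is a polynomial in $t$ of degree $\leq 2\delta$, and the additivity of $\Weight(-;q)$ for locally closed decompositions yields the telescoping identity
\[
L(q,t,0) \;=\; \sum_{\ell=0}^{2\delta}\Weight(U_{\ell}\setminus U_{\ell-1};q)\,t^{\ell}, \qquad U_{-1}:=\emptyset.
\]

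Next, I would globalize in order to access the decomposition theorem. Following the setup above the statement, embed $(C,0)$ as the unique singular point of a rational, locally planar projective curve of arithmetic genus $\delta$, and choose a versal deformation $\pi:\cC\to B$ with $B$ smooth and $\cJ=\overline{J}(\cC/B)$ smooth, as provided by \cite{FGS99}. Applying BBDG \cite{BBDG18} to $\pi$ produces the perverse sheaves $\cF_{i}$ on $B$ whose stalks at the central point recover the associated graded $\Gr_{i}^{P}H^{*}(\overline{J}(C,0))$ (up to the prescribed shift).

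The third and central step is to invoke the perverse$=$Hilbert theorem of Maulik--Yun \cite{MY14} (cf.\ Migliorini--Shende \cite{MS13}) in the version that relates the \emph{local} Abel--Jacobi stratification to the perverse filtration. Concretely, their comparison yields, at the stalk over $0\in B$,
\[
\Weight(U_{\ell}\setminus U_{\ell-1};q) \;=\; \Weight(\Gr_{\ell}^{P}H^{*}(\overline{J}(C,0));q), \qquad 0\leq \ell\leq 2\delta.
\]
Combining this with the telescoping formula of Step~1 immediately gives the desired identity.

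The hard part is the third step. It requires the smoothness of the total space of the relative compactified Jacobian, the support theorem applied to $\pi$, and a delicate matching between the stratification by $U_{\bullet}$ (equivalently by the cohomological invariant $\ell$ on Hilbert schemes) and the perverse truncation on $R\pi_{*}\underline{\mathbb{Q}}_{\cJ}$. All of this is established in \cite{MS13,MY14}, so the cleanest presentation is to cite those theorems and to record the above telescoping reduction explicitly.
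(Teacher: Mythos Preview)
The paper does not supply its own proof of this lemma; it is quoted as a known result from \cite{MS13,MY14}. So there is nothing in the paper to compare against beyond the citation.

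Your reduction contains a genuine error in Step~1. The images $U_\ell=\im(\AJ_\ell)=\{J\in\overline{J}(C,0):t^\ell\in J\}$ are closed (each $\AJ_\ell$ is a closed embedding), but they do \emph{not} form an ascending chain. Since $J$ is only an $R$-module and $t\notin R$, the implication $t^\ell\in J\Rightarrow t^{\ell+1}\in J$ fails. Concretely, take the cusp $R=\field[[t^2,t^3]]$, so $\delta=1$ and $\overline{J}(C,0)\cong\mathbb{P}^1$. Then $J=R$ has $1\in J$ but $t\notin J$, so $R\in U_0\setminus U_1$. In fact $U_0=\{R\}$ and $U_1=\AJ_1(\{\fm_R\})=\{t\tilde{R}\}$ are two \emph{disjoint} points, while $U_2=\overline{J}(C,0)$. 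Hence the step $\Weight(U_\ell)-\Weight(U_{\ell-1})=\Weight(U_\ell\setminus U_{\ell-1})$ is unjustified, and your geometric interpretation of the telescoped coefficients collapses.

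The purely formal telescoping
\[
(1-t)\sum_{\ell\geq 0}\Weight(\Hilb^{[\ell]}(C,0);q)\,t^\ell=\sum_{\ell\geq 0}\bigl(\Weight(\Hilb^{[\ell]})-\Weight(\Hilb^{[\ell-1]})\bigr)t^\ell
\]
is of course valid. What is actually needed is the identification of $\Weight(\Hilb^{[\ell]}(C,0);q)$ with $\sum_{i\leq\ell}\Weight(\Gr_i^PH^*(\overline{J}(C,0));q)$ for each $\ell$, with no stratification of $\overline{J}(C,0)$ involved. This is precisely what \cite{MS13,MY14} establish: working in the relative setting of your Step~2, the support theorem forces the perverse decomposition of the pushforward from the relative Hilbert scheme to match, up to the appropriate truncation, that from the relative compactified Jacobian; restricting to the central fibre gives the cohomological identity. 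Your Steps~2--3 are aimed at the right machinery; it is only the attempt to realise the difference $\Weight(\Hilb^{[\ell]})-\Weight(\Hilb^{[\ell-1]})$ as the class of a locally closed piece $U_\ell\setminus U_{\ell-1}$ that is wrong and should be dropped.
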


As an application, we compute the perverse filtration on $H^*(\overline{J}(C,0))$ for generic singularities.
\begin{corollary}\label{cor:perverse_filtration_for_generic_planar_curve_singularities}
For any generic $(C,0)$ as in Definition \ref{def:generic_planar_curve_singularities},
\[
\sum_{i=0}^{2\delta}t^i\Weight(\Gr_i^PH^*(\overline{J}(C,0));q) = L(q,t,0) = (1-t)\sum_{D\in\Young(dn,dm)}\sum_{z\in \overline{D}} q^{|D| - \dinv(D) + \hat{\epsilon}_D(z)} t^{|D| + \hat{\rank}_D(z)}.
\]
\end{corollary}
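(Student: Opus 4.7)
The first equality is simply Lemma~\ref{lem:Hilbert_L-function_vs_perverse_filtration}, so the substance lies in computing $L(q,t,0)$ directly from the affine paving. The plan is to unwind $L(q,t,0)$ via Corollary~\ref{cor:affine_paving_for_Hilbert_scheme}, then translate the combinatorics from admissible semimodules to $(dn,dm)$-Dyck paths using Theorem~\ref{thm:admissible_invariant_subsets_vs_Dyck_paths} and Proposition~\ref{prop:combinatorics_of_affine_paving_for_Hilbert_scheme}.

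First, setting $a=0$ in Definition~\ref{def:Hilbert_L-function} collapses the product $\prod_{j=1}^{\specialrank-1}(1+q^j a)$ to $1$, so
\[
L(q,t,0) = (1-t)\sum_{\tau\geq 0}\Weight(\Hilb^{[\tau]}(C,0);q)\, t^{\tau}.
\]
By Corollary~\ref{cor:affine_paving_for_Hilbert_scheme}, each $\Hilb^{[\tau]}(C,0)$ admits an affine paving, so its weight polynomial is
\[
\Weight(\Hilb^{[\tau]}(C,0);q) = \sum_{\substack{\Delta\in\Adm(dn,dm) \\ \tau-e(\Delta)\in\Delta}} q^{\dim\Delta - |\Gaps(\tau-e(\Delta))|}.
\]
Substituting $\tau_0 := \tau - e(\Delta)\in\Delta$ yields
\[
L(q,t,0) = (1-t)\sum_{\Delta\in\Adm(dn,dm)} t^{e(\Delta)}\sum_{\tau_0\in\Delta} q^{\dim\Delta - |\Gaps(\tau_0)|}\, t^{\tau_0}.
\]

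Next I would translate to Dyck paths. By Theorem~\ref{thm:admissible_invariant_subsets_vs_Dyck_paths}, the map $\Psi_d:\Young(dn,dm)\to\Adm(dn,dm)$ is a bijection with $e(\Delta)=|D|$ and $\dim\Delta=\codinv(D)=\delta-\dinv(D)$, and the enhanced rank $\hat{\rank}_D:\overline{\cR}_{dn,dm}^+\xrightarrow{\sim}\mathbb{N}$ restricts to a bijection $\overline{D}=D\sqcup \cR_{dn,dm}^{y\leq 0} \xrightarrow{\sim} \Delta$ and $\cR_{dn,dm}^+\setminus D\xrightarrow{\sim} \mathbb{N}\setminus\Delta$. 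Under these bijections, summing over $\tau_0\in\Delta$ becomes summing over $z\in\overline{D}$ with $\tau_0=\hat{\rank}_D(z)$, so $t^{e(\Delta)+\tau_0} = t^{|D|+\hat{\rank}_D(z)}$.

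Finally, I must identify the $q$-exponent $\dim\Delta - |\Gaps(\tau_0)|$ with $|D|-\dinv(D)+\hat{\epsilon}_D(z)$. Since $|\cR_{dn,dm}^+|=\delta$ and $\hat{\rank}_D$ sends $\cR_{dn,dm}^+\setminus D$ bijectively onto $\mathbb{N}\setminus\Delta$, one has
\[
|\Gaps(\tau_0)| = |\{z'\in\cR_{dn,dm}^+\setminus D : \hat{\rank}_D(z')\geq \tau_0\}| = (\delta-|D|) - |\{z'\in\cR_{dn,dm}^+\setminus D : \hat{\rank}_D(z')<\tau_0\}|.
\]
Recognizing the last count as $\hat{\correctionterm}_D(\tau_0)=\hat{\epsilon}_D(z)$ (Proposition~\ref{prop:combinatorics_of_affine_paving_for_Hilbert_scheme}), we get
\[
\dim\Delta - |\Gaps(\tau_0)| = (\delta-\dinv(D)) - (\delta-|D|-\hat{\epsilon}_D(z)) = |D|-\dinv(D)+\hat{\epsilon}_D(z).
\]
Combining these identifications gives the claimed formula for $L(q,t,0)$.

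No step here is conceptually difficult; the work has already been done in the earlier sections. The only mild subtlety is bookkeeping: one must carefully match the bijection $\hat{\rank}_D:\overline{\cR}_{dn,dm}^+\to\mathbb{N}$ with the gaps-count, and note that boxes in $\cR_{dn,dm}^{y\leq 0}\subset\overline{D}$ contribute negative $\hat{\rank}_D$-values that correctly parameterize the tail $\tau_0\geq c(\Delta)$, while boxes in $D$ cover the remaining elements of $\Delta\cap[0,c(\Delta))$. Once this dictionary is in place, the two expressions for the $q$- and $t$-exponents align tautologically.
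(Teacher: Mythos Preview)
Your proof is correct and follows exactly the route the paper indicates (the paper's proof is a two-line citation of Lemma~\ref{lem:Hilbert_L-function_vs_perverse_filtration}, Corollary~\ref{cor:affine_paving_for_Hilbert_scheme}, Theorem~\ref{thm:admissible_invariant_subsets_vs_Dyck_paths} and Proposition~\ref{prop:combinatorics_of_affine_paving_for_Hilbert_scheme}; you have simply unpacked these). One small slip in your closing remark: boxes in $\cR_{dn,dm}^{y\leq 0}$ do \emph{not} have negative $\hat{\rank}_D$-values (Theorem~\ref{thm:admissible_invariant_subsets_vs_Dyck_paths}(3) says the image of $\overline{\cR}_{dn,dm}^+$ is exactly $\mathbb{N}$), nor do they exclusively cover the tail $\tau_0\geq c(\Delta)$ --- e.g.\ $(0,0)\in\cR_{dn,dm}^{y\leq 0}$ has $\hat{\rank}_D(0,0)=0$ --- but this remark plays no role in your actual argument.
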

See Remark \ref{rem:a_simplification_of_the_combinatorial_two-variable_CDP_conjecture} for a further simplification.

\begin{proof}
By Lemma \ref{lem:Hilbert_L-function_vs_perverse_filtration}, it suffices to compute $L(q,t,0)$.
The stated formula now follows from Corollary \ref{cor:affine_paving_for_Hilbert_scheme}, Theorem \ref{thm:admissible_invariant_subsets_vs_Dyck_paths} and Proposition  \ref{prop:combinatorics_of_affine_paving_for_Hilbert_scheme}.
\end{proof}

\subsubsection{Motivic superpolynomial and the Cherednik-Danilenko-Philipp conjecture}

\begin{definition}\label{def:normalized_motivic_superpolynomial}
Define the \textbf{minimal valuation} and \textbf{special rank} on $\overline{J}(C,0)$ as
\begin{equation*}
(\minval,\specialrank):\overline{J}(C,0) \rightarrow [0,\delta]\times [1,dn]: J\mapsto (\min\ord_t(J),\dim_{\field}J/\fm_RJ).
\end{equation*}
The \textbf{normalized motivic superpolynomial} of $(C,0)$ is then
\begin{equation*}
\mH^{\mot}(q,t,a):=\cH^{\mot}(qt,t,a);\quad \cH^{\mot}(q,t,a) := \int_{\overline{J}(C,0)} t^{\delta - \minval}\prod_{j=1}^{\specialrank - 1}(1+q^ja).
\end{equation*}
\end{definition}

The following is an abridged version of the Cherednik-Danilenko-Philipp (CDP) conjecture:
\begin{conjecture}[\cite{CD16,CP18}]\label{conj:abridged_CDP_conjecture}
$L(q,t,a) = \mH^{\mot}(q,t,a)$.
\end{conjecture}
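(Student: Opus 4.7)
The plan is to compute both sides of the claimed identity as explicit sums indexed by the same combinatorial data, then compare them. On the LHS, Corollary~\ref{cor:affine_paving_for_Hilbert_scheme} gives an affine paving $\Hilb^{[\tau]}(C,0)=\sqcup_{\Delta,\tau_0} H_\Delta^{[\tau]}$, so each cell contributes a monomial $q^{\dim H_\Delta^{[\tau]}}$ to the weight polynomial, with $\dim H_\Delta^{[\tau]}$ controlled by Proposition~\ref{prop:combinatorics_of_affine_paving_for_Hilbert_scheme} via $(dn,dm)$-Dyck paths. On the RHS, apply the Gorsky--Mazin--Oblomkov paving $\overline{J}(C,0)=\sqcup_\Delta \iota_\Delta(J_\Delta)$ from Lemma~\ref{lem:affine_paving_for_local_compactified_Jacobian_over_generic_curve_singularity}, whose cells have dimension $\dim\Delta=\codinv(D)$ under the bijection $\Psi_d$ of Theorem~\ref{thm:admissible_invariant_subsets_vs_Dyck_paths}.

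The first step is the $a=0$ specialization. Here the factor $\prod_{j=1}^{\specialrank-1}(1+q^j a)$ collapses to $1$, so we only need the length $\length$ (respectively, the minimal valuation $\minval$), both of which are constant on each cell. A direct computation using Corollary~\ref{cor:perverse_filtration_for_generic_planar_curve_singularities} rewrites $L(q,t,0)$ as
\[
(1-t)\sum_{D\in\Young(dn,dm)}\sum_{z\in\overline{D}} q^{|D|-\dinv(D)+\hat{\epsilon}_D(z)}\,t^{|D|+\hat{\rank}_D(z)},
\]
while the analogous computation for $\cH^{\mot}(q,t,0)$ gives $\sum_{D} q^{\codinv(D)} t^{\delta-|D|}$, which after the substitution $q\mapsto qt$ yields $\sum_{D} q^{\codinv(D)} t^{\codinv(D)+\delta-|D|}$. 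Thus the $(q,t)$-version of the CDP conjecture reduces to the combinatorial identity in Conjecture~\ref{conj:combinatorial_two-variable_CDP_conjecture_for_generic_curve_singularity}. Attacking this identity is a purely combinatorial problem: one would try to produce a bijection between pairs $(D,z)$ on the left and a refined statistic on $\Young(dn,dm)$ on the right, compatible with the $\dinv/\codinv$ pairing and the $(dn,dm)$-rank function $\hat{\rank}_D$.

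For the full $(q,t,a)$ statement, the key additional ingredient is the special rank $\specialrank$, which is neither a priori constant on the cells $H_\Delta^{[\tau]}$ nor on $\iota_\Delta(J_\Delta)$. The natural approach is to refine the paving by further stratifying each cell according to the value of $\dim_\field I/\fm_R I$ (or $\dim_\field J/\fm_R J$). Using the explicit parametrization by the variables $\{g_{a;\ell}\}$ developed in the proof of Theorem~\ref{thm:affine_paving_for_Hilbert_scheme_over_generic_curve_singularity}, the condition that a given subset of $dn$-generators is \emph{minimal} modulo $\fm_R I$ is encoded by non-vanishing of certain polynomial expressions in those variables, which should cut each cell into a disjoint union of locally closed affine pieces. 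If this refinement preserves the affine-paving property, then both sides reduce to generating functions over $(dn,dm)$-Dyck paths enhanced with a \emph{special-rank statistic}.

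The main obstacle will be controlling $\specialrank$. On the Hilbert-scheme side, the minimum number of generators of $I\subset R$ is governed by the intersection of $\ord_t(I)$ with particular shifts of $\Gamma$, but translating this into an affine stratification of $H_\Delta^{[\tau]}$ requires an intricate analysis of the ideals $\fm_R I$ via the explicit formulas~(\ref{eqn:inductive_g-formula}) and~(\ref{eqn:s-equation}). On the compactified Jacobian side, the analogous analysis for $\fm_R J$ should interact with the involution $\iota:J\mapsto J^{-1}$ in a controlled way. The cleanest route is probably to identify a canonical statistic on $\Young(dn,dm)$ (generalizing, for each $z\in\overline{D}$, the data $\hat{\rank}_D(z)$, $\hat{\epsilon}_D(z)$) that simultaneously encodes $\specialrank$ on both sides, reducing the full CDP conjecture to a single, $a$-enriched Dyck-path identity; matching this identity with known DAHA formulas from~\cite{CD16,CP18} would complete the proof.
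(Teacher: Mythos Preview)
This statement is a \emph{conjecture}, not a theorem: the paper does not prove it, and there is no ``paper's own proof'' to compare against. What the paper actually does is exactly the first step of your plan: it specializes to $a=0$, computes both sides using the affine pavings, and reduces the $(q,t)$-version (for generic $(C,0)$) to the combinatorial identity of Conjecture~\ref{conj:combinatorial_two-variable_CDP_conjecture_for_generic_curve_singularity}. That identity is then \emph{left as an open conjecture}; the paper verifies it only in small examples (e.g.\ $(n,m,d)=(2,3,2)$) and by computer. So your first paragraph correctly reconstructs the paper's contribution, but your sentence ``Attacking this identity is a purely combinatorial problem: one would try to produce a bijection\dots'' is where the paper stops too --- no such bijection is supplied.

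Your outline for the full $(q,t,a)$ case goes well beyond anything in the paper. The paper makes no attempt to stratify the cells $H_\Delta^{[\tau]}$ or $\iota_\Delta(J_\Delta)$ by the special rank $\specialrank$, and there is no claim that such a refinement would preserve the affine-paving property. In particular, your proposed route --- cutting each cell by non-vanishing conditions on polynomials in the $g_{a;\ell}$ and hoping the pieces remain affine --- is speculative and not supported by any result in the text. You have correctly identified the main obstacle (controlling $\specialrank$), but what you have written is a research plan, not a proof; the conjecture remains open even at $a=0$.
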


From now on, assume that $(C,0)$ is \textbf{generic}. 
For any $D\in\Young(dn,dm)$, $\forall z\in \overline{D} = D \sqcup \overline{\cR}_{dn,dm}$, \textbf{denote}
\[
\correctionterm_D(z):= \correctionterm_D(\hat{\rank}_D(z));\quad \hat{\correctionterm}_D(z):= \hat{\correctionterm}_D(\hat{\rank}_D(z)).\quad (\text{see Proposition \ref{prop:combinatorics_of_affine_paving_for_Hilbert_scheme}.$(2)$})
\]
Recall that
\[
|D| - \dinv(D) = |\{w\in D:~\frac{\leglength(w)}{\armlength(w)+1} > \frac{m}{n} \text{ or } \frac{\leglength(w)+1}{\armlength(w)} \leq \frac{m}{n}\}|.
\]
From Corollary~\ref{cor:perverse_filtration_for_generic_planar_curve_singularities}, we know $L(q,t,0)$ explicitly.  
On the other hand, by Theorem \ref{thm:admissible_invariant_subsets_vs_Dyck_paths}, we have 
\[
\mH^{\mot}(q,t,0) = \sum_{D\in\Young(dn,dm)}(qt)^{\codinv(D)}t^{\delta - |D|} =  \sum_{D\in\Young(dn,dm)} q^{\codinv(D)} t^{\codinv(D) + \delta - |D|}.
\]
Thus, for generic $(C,0)$, the $(q,t)$-version of Conjecture \ref{conj:abridged_CDP_conjecture} reduces to:
\begin{conjecture}\label{conj:combinatorial_two-variable_CDP_conjecture_for_generic_curve_singularity}
\begin{equation*}
 (1-t)\sum_{D\in\Young(dn,dm)}\sum_{z\in \overline{D}} q^{|D| - \dinv(D) + \hat{\epsilon}_D(z)} t^{|D| + \hat{\rank}_D(z)}
 =
 \sum_{D\in\Young(dn,dm)} q^{\codinv(D)} t^{\codinv(D) + \delta - |D|}.
 \end{equation*}
\end{conjecture}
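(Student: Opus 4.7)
The plan is to prove the conjecture as the $a=0$ specialization of the $(q,t)$-Cherednik-Danilenko-Philipp conjecture, exploiting the geometric meaning of both sides. By Corollary \ref{cor:affine_paving_for_Hilbert_scheme} and the affine paving of $\overline{J}(C,0)$ from \cite{GMO25}, the LHS equals $L(q,t,0)=(1-t)\sum_{\tau}\Weight(\Hilb^{[\tau]}(C,0);q)\,t^{\tau}$, and the RHS equals $\mH^{\mot}(q,t,0)=\sum_{\Delta\in\Adm(dn,dm)} q^{\dim\Delta}\,t^{\dim\Delta+\delta-e(\Delta)}$. The conjecture thus asserts an equality of two explicit $q,t$-polynomials, both naturally indexed by $\Young(dn,dm)$ via the bijection $\Psi_d$ of Theorem \ref{thm:admissible_invariant_subsets_vs_Dyck_paths}.

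The first step would be to dispose of the easy checks and the base case. At $q=1$, both sides reduce to polynomial counts of cells in the respective affine pavings and agree immediately. For $d=1$ (quasi-homogeneous singularities), the correction term $\correctionterm_D(\tau_0)$ vanishes identically, and the identity reduces to a statement about $(n,m)$-Dyck paths equivalent to the torus-knot specialization of the rational shuffle theorem (Mellit), or equivalently obtainable from the DAHA polynomial-representation computations of Cherednik-Danilenko \cite{CD16}. This provides both a base case and a combinatorial template for the general argument.

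For $d>1$, I would attempt an induction on the number of Puiseux pairs, using that $(C,0)$ has exactly two: $(n,m)$ and $(d,dmn+1)$, so that its singularity link is a $(d,dmn+1)$-cable of the $(n,m)$-torus knot. Concretely, stratify $D\in\Young(dn,dm)$ by the sequence $0<\moveindex_1<\cdots<\moveindex_{d-1}\leq(d-1)n$ of move indices from Theorem \ref{thm:admissible_invariant_subsets_vs_Dyck_paths}(1), which records how $\Psi_d$ reduces to $\Psi_{d-1}$. For each stratum, decompose the summand on either side into the contribution of a smaller $(d-1)$-type Dyck path together with a cabling factor tracking the permutations $\permutation_{j,d-1}$. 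If both sides turn out to be cable-compatible in this sense, induction on $d$ reduces everything to the base case.

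The hard part will be controlling the correction term $\correctionterm_D(\tau_0)$ throughout the induction: it has no analog in the quasi-homogeneous case and encodes precisely the interaction between admissibility and the permutations $\permutation_{j,u}$. A plausible technical tool is a sign-reversing involution on $\mathbb{N}\setminus\Delta$, induced by adjacent transpositions inside the permutations $\permutation_{j,u}$, designed to cancel the excess $q$-degrees contributed by $\correctionterm_D(\tau_0)$ against corresponding contributions from $|\Gaps_\Delta(\tau_0)|$ on the LHS. Constructing such an involution with the correct sign structure is, I expect, the principal unresolved combinatorial issue; absent it, the cabling induction will need to be supplemented by a finer analysis of the enhanced rank bijection $\hat{\rank}_D:\overline{\cR}_{dn,dm}^+\to\mathbb{N}$ arising from Theorem \ref{thm:admissible_invariant_subsets_vs_Dyck_paths}(3).
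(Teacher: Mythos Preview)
The statement you are addressing is a \emph{conjecture} that the paper explicitly leaves open. There is no proof in the paper to compare against: the authors derive the identity as the $a=0$ specialization of the Cherednik--Danilenko--Philipp conjecture for generic singularities (your first paragraph correctly reconstructs this derivation, which is Corollary~\ref{cor:perverse_filtration_for_generic_planar_curve_singularities} together with the computation of $\mH^{\mot}(q,t,0)$ preceding the conjecture), verify it by hand in the worked example $(n,m,d)=(2,3,2)$, and remark that further cases have been checked by computer. They also record, via \cite{CGHM24}, that the conjecture is equivalent to the lowest $a$-degree part of the ORS conjecture for these cable knots---so a proof would settle a recognized open problem.

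Your proposal is therefore a research outline, not a proof, and you say as much in your final paragraph. As an outline it is sensible: the $d=1$ base case is the torus-knot case and is indeed known, and stratifying $\Young(dn,dm)$ by the move indices $\moveindex_u$ is a natural way to attempt an induction on $d$ compatible with the recursive structure of $\Psi_d$. But the gap you name is real and unresolved: the correction term $\correctionterm_D(\tau_0)$ is genuinely new for $d>1$, no cabling compatibility for it is established anywhere in the paper, and the sign-reversing involution you suggest is purely hypothetical. Until that step (or a substitute) is carried out, what you have written is a plausible strategy for an open problem, not a proof.
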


\begin{remark}\label{rem:a_simplification_of_the_combinatorial_two-variable_CDP_conjecture}
For each $D = (y_{in+j} = y_{i,j})_{0\leq i\leq d-1,0\leq j\leq n-1} \in \Young(dn,dm)$, \textbf{denote}
\begin{eqnarray*}
&&L_D(q,t):= (1-t)\sum_{z\in \overline{D}} q^{|D| - \dinv(D) + \hat{\epsilon}_D(z)} t^{|D| + \hat{\rank}_D(z)};\\
&&\mH_D^{\mot}(q,t):=  q^{\codinv(D)} t^{\codinv(D) + \delta - |D|}.
\end{eqnarray*}
Then, 
\[
L(q,t,0) = \sum_{D\in\Young(dn,dm)} L_D(q,t), \quad 
\mH^{\mot}(q,t,0) = \sum_{D\in\Young(dn,dm)} \mH_D^{\mot}(q,t).
\]
By Theorem \ref{thm:admissible_invariant_subsets_vs_Dyck_paths}, we can \textbf{identify} $\overline{D}$ with $\hat{\rank}_D(\overline{D})$ via $\hat{\rank}_{D}$. 
So,
\[
c(D) = \max(\cR_{dn,dm}^+\setminus D) + 1 = \max\{\hat{\rank}_D(x,y_x):~0\leq x\leq dn-1\} - dn + 1.
\] 
For any $z\geq c(D)$, we have $z + \mathbb{N} \subset \overline{D}$ and $\hat{\epsilon}_D(z) = |\cR_{dn,dm}^+\setminus D|$. Hence,
$|D| - \dinv(D) + \hat{\epsilon}_D(z) = \delta - \dinv(D) = \codinv(D)$.
Thus, for any fixed $z_0\geq c(D)$, we may rewrite $L_D(q,t)$ as a \textbf{finite sum}:
\begin{equation}
L_D(q,t) = q^{\codinv(D)}t^{|D| + z_0} + (1-t)\sum_{z\in \overline{D}\cap [0,z_0-1]}q^{|D| - \dinv(D) + \hat{\epsilon}_D(z)} t^{|D| + z}.
\end{equation}
\end{remark}

Above all, following a comment by Eugene Gorsky on the relevance of \cite{CGHM24} to our work, we are able to relate 
Conjecture~\ref{conj:combinatorial_two-variable_CDP_conjecture_for_generic_curve_singularity} 
back to the ORS conjecture~\ref{conj:ORS_conjecture}. 

In \cite{CGHM24}, the authors introduce a family of generalized Schröder polynomials 
$S_{\tau}(q,t,a)$ indexed by triangular partitions $\tau$. 
For the generic planar curve singularity $(C,0)$ considered here, the Young diagram 
\[
(a_{in+j} = a_{i,j} = mi + \big\lfloor \tfrac{jm}{n} \big\rfloor)_{0 \le i \le d-1,\; 0 \le j \le n-1}
\]
corresponds to $\tau = \tau_{md,nd}$ in their terminology. 
By definition,
\begin{equation}
S_{\tau_{dm,dn}}(q,t,0) = t^{\delta}\, \cH^{\mot}(t^{-1}, q, 0).
\end{equation}

Moreover, their results imply the following as a special case.

\begin{lemma}[{\cite[Thm.~C]{CGHM24}}]
For generic $(C,0)$ as above, up to a suitable normalization, we have
\[
S_{\tau_{dm,dn}}(q,t,a) = \overline{\cP}(K_{(C,0)}).
\]
\end{lemma}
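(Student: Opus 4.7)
The plan is to derive this statement as an immediate specialization of the main theorem of \cite{CGHM24}, so the work consists in matching combinatorial and geometric data rather than producing a genuinely new argument.

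First, I would identify the triangular partition corresponding to our singularity. Since $\gcd(n,m)=1$, the Young diagram $(a_{i,j} = mi + \lfloor jm/n\rfloor)_{0\le i\le d-1,\, 0\le j\le n-1}$ records, column by column, the number of lattice cells strictly below the line $y = \tfrac{m}{n}x$ inside the rectangle $[0,dn]\times[0,dm]$. By definition this is the triangular partition $\tau_{dm,dn}$ of \cite{CGHM24}, and in particular $|\tau_{dm,dn}| = \sum_{i,j} a_{i,j} = \delta$.

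Next, I would identify the associated link. By Definition~\ref{def:generic_planar_curve_singularities} and the subsequent discussion of Puiseux pairs, $K_{(C,0)}$ is the $(d,\,dmn+1)$-cable of the $(n,m)$-torus knot, with Puiseux pairs $(n,m)$ and $(d,dmn+1)$. This is exactly the class of iterated torus knots to which \cite[Thm.~C]{CGHM24} attaches the triangular partition $\tau_{dm,dn}$, so their geometric input matches ours on the nose.

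Finally, I would apply \cite[Thm.~C]{CGHM24} verbatim to the pair $(\tau_{dm,dn},\,K_{(C,0)})$: that theorem asserts the equality of the generalized Schr\"oder polynomial $S_\tau(q,t,a)$ with the $\overline{\cP}$-series of the associated iterated torus link, up to a monomial prefactor determined by $|\tau|$ and certain boundary data of $\tau$. The main obstacle is the normalization bookkeeping: our $\overline{\cP}_{\alg}$ in Conjecture~\ref{conj:ORS_conjecture} carries the prefactor $(a/q)^{2\delta-1}(1+a^2 t)/(1-q^2)$ and is related to the usual $\overline{\cP}$ through the substitutions $q \leftrightarrow q^2$, $t \leftrightarrow t^2$, $a \leftrightarrow a^2 t$, whereas \cite{CGHM24} works directly with $\overline{\cP}$ and a slightly different $a$-grading convention. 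Verifying that, after these substitutions and using $|\tau_{dm,dn}|=\delta$ together with the semigroup data $\Gamma = \langle dn, dm, dmn+1\rangle$, the prefactor of \cite[Thm.~C]{CGHM24} collapses to the form claimed here is elementary but must be checked explicitly; no further combinatorial or geometric input is needed.
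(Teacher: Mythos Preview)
Your approach is correct and matches the paper's: the lemma is stated there without proof, simply as a specialization of \cite[Thm.~C]{CGHM24} after identifying the triangular partition $\tau_{dm,dn}$ with the Young diagram $(a_{i,j})$ and the link $K_{(C,0)}$ with the relevant iterated torus knot. One small remark: your third paragraph brings in $\overline{\cP}_{\alg}$ and its prefactors, but these are irrelevant to the lemma itself, which concerns only $S_\tau$ and $\overline{\cP}$; the connection to $\overline{\cP}_{\alg}$ enters only later, in the corollary about the ORS conjecture.
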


Recall that the right-hand side is the generating function for the triply graded 
HOMFLY–PT (or Khovanov–Rozansky) homology of the knot $K_{(C,0)}$. 
As a consequence, we obtain the following.

\begin{corollary}
For generic $(C,0)$, the lowest $a$–degree part of the ORS conjecture 
\ref{conj:ORS_conjecture} is equivalent to 
Conjecture~\ref{conj:combinatorial_two-variable_CDP_conjecture_for_generic_curve_singularity}.
\end{corollary}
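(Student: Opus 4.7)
The plan is to extract the lowest $a$-degree coefficient from both sides of the ORS identity $\overline{\cP}_{\alg}(a,q,t)=\overline{\cP}(K_{(C,0)})$ and show that, after the variable substitutions dictated by the definitions, the resulting identity of two-variable polynomials is precisely Conjecture~\ref{conj:combinatorial_two-variable_CDP_conjecture_for_generic_curve_singularity}. On the algebraic side, the expansion
\[
\overline{\cP}_{\alg}(a,q,t)=\Bigl(\tfrac{a}{q}\Bigr)^{2\delta-1}\frac{1+a^2t}{1-q^2}\,L(t^2,q^2,a^2t)
\]
shows that the lowest nonvanishing power of $a$ is $a^{2\delta-1}$, with coefficient a simple monomial multiple of $L(t^2,q^2,0)/(1-q^2)$. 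Corollary~\ref{cor:perverse_filtration_for_generic_planar_curve_singularities} then supplies the closed form
\[
L(q,t,0)=(1-t)\sum_{D\in\Young(dn,dm)}\sum_{z\in \overline{D}} q^{|D|-\dinv(D)+\hat{\epsilon}_D(z)}\,t^{|D|+\hat{\rank}_D(z)},
\]
which is exactly the left-hand side of Conjecture~\ref{conj:combinatorial_two-variable_CDP_conjecture_for_generic_curve_singularity}.

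On the topological side, the quoted lemma from \cite{CGHM24} identifies $\overline{\cP}(K_{(C,0)})$ with the Schr\"oder polynomial $S_{\tau_{dm,dn}}(q,t,a)$ up to the specified normalization. Setting $a=0$ gives $S_{\tau_{dm,dn}}(q,t,0)=t^{\delta}\,\cH^{\mot}(t^{-1},q,0)$, and Theorem~\ref{thm:admissible_invariant_subsets_vs_Dyck_paths}.$(2)$ together with the definition of $\cH^{\mot}$ yields
\[
\cH^{\mot}(q,t,0)=\sum_{D\in\Young(dn,dm)} q^{\codinv(D)}\,t^{\delta-|D|}.
\]
Substituting $(q,t)\mapsto(t^{-1},q)$ and multiplying by $t^{\delta}$, one recovers (after clearing negative powers) the monomial sum $\sum_{D} q^{\codinv(D)}t^{\codinv(D)+\delta-|D|}$ appearing as the right-hand side of Conjecture~\ref{conj:combinatorial_two-variable_CDP_conjecture_for_generic_curve_singularity}.

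It remains to match the two closed forms. The two substitutions, namely $(q,t,a)\mapsto(q^2,t^2,a^2t)$ inside $L$ and $(q,t)\mapsto(t^{-1},q)$ inside $\cH^{\mot}$, are compatible under a single dictionary between the ORS variables and the CGHM24 variables, and the prefactors $a^{2\delta-1}q^{1-2\delta}(1-q^2)^{-1}$ arising on the algebraic side cancel against the monomial normalization used in the CGHM24 identification $S_{\tau_{dm,dn}}=\overline{\cP}(K_{(C,0)})$. Once these factors balance, the ORS identity at lowest $a$-degree is a bare equality of two two-variable polynomials which, by the previous two paragraphs, is exactly Conjecture~\ref{conj:combinatorial_two-variable_CDP_conjecture_for_generic_curve_singularity}. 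The main obstacle here is purely bookkeeping: tracking the several $q$-shifts, $t$-shifts, and the factor $(1-t)$ on the LHS against the prefactor $(1+a^2t)/(1-q^2)$ on the algebraic side to confirm that the two identities are interchanged by the dictionary. Every other ingredient is already in place from Corollary~\ref{cor:perverse_filtration_for_generic_planar_curve_singularities}, Theorem~\ref{thm:admissible_invariant_subsets_vs_Dyck_paths}.$(2)$, and the cited lemma.
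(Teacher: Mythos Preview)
Your outline follows the same route the paper (implicitly) takes: assemble Corollary~\ref{cor:perverse_filtration_for_generic_planar_curve_singularities} for $L(q,t,0)$, the CGHM24 identification of $\overline{\cP}(K_{(C,0)})$ with $S_{\tau_{dm,dn}}$, and the formula $S_{\tau_{dm,dn}}(q,t,0)=t^{\delta}\cH^{\mot}(t^{-1},q,0)$, then match. The paper states the corollary without proof, so there is nothing more detailed to compare against.

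That said, there is a genuine slip in your second paragraph. From $\cH^{\mot}(q,t,0)=\sum_{D}q^{\codinv(D)}t^{\delta-|D|}$ one gets
\[
t^{\delta}\,\cH^{\mot}(t^{-1},q,0)=\sum_{D\in\Young(dn,dm)} t^{\delta-\codinv(D)}\,q^{\delta-|D|},
\]
which is \emph{not} the right-hand side $\sum_{D} q^{\codinv(D)}t^{\codinv(D)+\delta-|D|}=\mH^{\mot}(q,t,0)$ of Conjecture~\ref{conj:combinatorial_two-variable_CDP_conjecture_for_generic_curve_singularity}. (Already the term for $D=\emptyset$ gives $q^{\delta}$ versus $q^{\delta}t^{2\delta}$.) The two expressions live in different variable conventions: $S_{\tau}$ in the CGHM variables, $\mH^{\mot}$ in the $L$-variables, and $\overline{\cP}_{\alg}$ in the ORS variables via $L(t^2,q^2,a^2t)$. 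Converting $t^{\delta}\cH^{\mot}(t^{-1},q,0)$ into $\mH^{\mot}(q,t,0)$ is exactly the step hidden behind ``up to a suitable normalization'' in the cited CGHM24 lemma together with the ORS change of variables, and it is where the equivalence is actually established. So the ``pure bookkeeping'' you defer is not a formality: it is the content of the corollary. Your proposal would be complete once you write down the explicit dictionary between the three variable sets and verify that the monomial prefactors $(a/q)^{2\delta-1}(1+a^2t)/(1-q^2)$ on the algebraic side cancel against those in the CGHM24 normalization; until then the argument has the same status as the paper's unproved assertion.
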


Finally, we compute an example for Conjecture \ref{conj:combinatorial_two-variable_CDP_conjecture_for_generic_curve_singularity}.
\begin{example}
Let $(n,m,d) = (2,3,2)$, then $\delta = \frac{d(dmn - m - n + 1)}{2} = 8$, and
\[
(\hat{a}_{in+j} = \hat{a}_{i,j})_{0\leq i\leq d-1,0\leq j\leq n-1} = (0, 6, 13, 19);\quad (a_{in+j} = a_{i,j})_{0\leq i\leq d-1,0\leq j\leq n-1} = (0, 1, 3, 4).
\]
Thus, $\Young(dn=4,dm=6)$ consists of $23$ Young diagrams $D = (y_{in+j} = y_{i,j})_{0\leq i\leq d-1,0\leq j\leq n-1}$ as follows:
For simplicity, we represent a $d$-by-$n$ matrix $(y_{in+j} = y_{i,j})_{0\leq i\leq d-1,0\leq j\leq n-1}$ as the vector $(y_0,y_1,\cdots,y_{dn-1})$.
{\small\[
\left.\begin{array}{lllll}
D_1 = (0, 0, 0, 0); & D_2 = (0, 0, 0, 1); & D_3 = (0, 0, 0 , 2); & D_4 = (0 , 0, 0 , 3); & D_5 = (0 , 0, 0 , 4);\\
D_6 = (0 , 0, 1 , 1); & D_7 = (0 , 0, 1 , 2); & D_8 = (0 , 0, 1 , 3); & D_9 = (0 , 0, 1 , 4); & D_{10} = (0 , 0, 2 , 2);\\
D_{11} = (0 , 0, 2 , 3); & D_{12} = (0 , 0, 2 , 4); & D_{13} = (0 , 0, 3 , 3); & D_{14} = (0 , 0, 3 , 4); & D_{15} = (0 , 1, 1 , 1);\\
D_{16} = (0 , 1, 1 , 2); & D_{17} = (0 , 1, 1 , 3); & D_{18} = (0 , 1, 1 , 4); & D_{19} = (0 , 1, 2 , 2); & D_{20} = (0 , 1, 2 , 3);\\
D_{21} = (0 , 1, 2 , 4); & D_{22} = (0 , 1, 3 , 3); & D_{23} = (0 , 1, 3 , 4). & &
\end{array}\right.
\]}
For $k\neq 5$, $\Delta_k:= \Psi_2(D_k) = D_k\in \Adm(4,6)$ and 
\[
\hat{\rank}_{D_k}(x,y) = d\cdot \rank(x,y) + \floor{\frac{x}{n}} = 6x - 4y +  \floor{\frac{x}{2}}.
\]
For $k = 5$, we have: $\moveindex_1 = 1$; $\permutationindex_{0,1} = 1$, $\permutation_0 = \permutation_{0,1} = \id$; $\permutationindex_{1,1} = 0$, $\permutation_1 = \permutation_{1,1} = (0~1)$. It follows that
\begin{eqnarray*}
&&\hat{\rank}_{D_5}(x=2i+0,y) = d\cdot \rank(x,y) + \permutation_0^{-1}(i) = 6x - 4y + i,\quad \forall i\in\{0,1\};\\ 
&&\hat{\rank}_{D_5}(x=2i+1,y) = d\cdot \rank(x,y) + \permutation_1^{-1}(i) = 6x - 4y + 1 - i, \quad \forall i\in\{0,1\}.
\end{eqnarray*}
See Figure \ref{fig:enhanced_ranks_for_4,6-Dyck_paths} for an illustration.

\begin{figure}[!htbp]
\vspace{-0.2in}
\begin{center}
\begin{tikzcd}[column sep=2pc,ampersand replacement=\&]

\begin{tikzpicture}[scale=0.5]

\draw[thin] (0,0) rectangle (4,6);

\foreach \i in {1,2,3} {
    \draw[thin] (\i,0) -- (\i,6);
}

\foreach \j in {1,2,3,4,5} {
    \draw[thin] (0,\j) -- (4,\j);
}

\draw[thin] (0,0) -- (4,6);



\node at (0.5,-0.6) {$0$};

\node at (1.5,-0.6) {$6$};
\node at (1.5,0.4) {$2$};

\node at (2.5,-0.6) {$13$};
\node at (2.5,0.4) {$9$};
\node at (2.5,1.4) {$5$};
\node at (2.5,2.4) {$1$};

\node at (3.5,-0.6) {$19$};
\node at (3.5,0.4) {$15$};
\node at (3.5,1.4) {$11$};
\node at (3.5,2.4) {$7$};
\node at (3.5,3.4) {$3$};

\node[left] at (0,0) {$(0,0)$};
\node[right] at (4,6) {$(4,6)$};


\end{tikzpicture}

\&

\begin{tikzpicture}[scale=0.5]

\draw[thin] (0,0) rectangle (4,6);

\foreach \i in {1,2,3} {
    \draw[thin] (\i,0) -- (\i,6);
}

\foreach \j in {1,2,3,4,5} {
    \draw[thin] (0,\j) -- (4,\j);
}

\draw[thin] (0,0) -- (4,6);

\draw[red, line width=0.5mm,->] (0,0) -- (3,0) -- (3,4) -- (4,4) -- (4,6);

\fill[blue!10, opacity=0.5] (3,0) -- (3,4) -- (4,4) -- (4,0) -- (3,0);

\node at (6.5,2) {{\color{blue}$D_5 = \left(\begin{array}{cc} 0 & 0\\ 0 & 4 \end{array}\right)$}};

\node[blue] at (0.5,-0.6) {$0$};

\node[blue] at (1.5,-0.6) {$7$};
\node at (1.5,0.4) {$3$};

\node[blue] at (2.5,-0.6) {$13$};
\node at (2.5,0.4) {$9$};
\node at (2.5,1.4) {$5$};
\node at (2.5,2.4) {$1$};

\node[blue] at (3.5,-0.6) {$18$};
\node[blue] at (3.5,0.4) {$14$};
\node[blue] at (3.5,1.4) {$10$};
\node[blue] at (3.5,2.4) {$6$};
\node[blue] at (3.5,3.4) {$2$};

\node[left] at (0,0) {$(0,0)$};
\node[right] at (4,6) {$(4,6)$};


\end{tikzpicture}

\end{tikzcd}
\end{center}
\vspace{-0.2in}
\caption{The enhanced rank for $D_k$, $1\leq k\leq 23$, with $(n,m,d) = (2,3,2)$. Left figure: $k\neq 5$; Right figure: $k=5$.}
\label{fig:enhanced_ranks_for_4,6-Dyck_paths}
\end{figure}
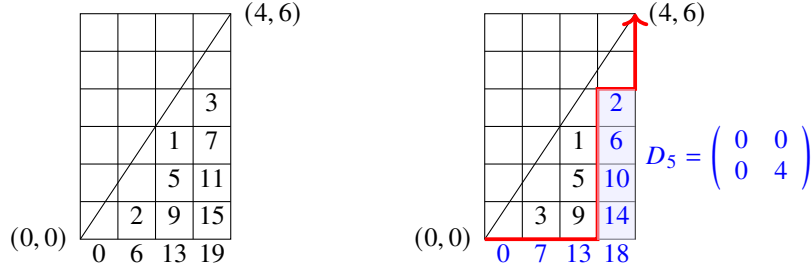

For each $D =  (y_{in+j} = y_{i,j})_{0\leq i\leq d-1,0\leq j\leq n-1}\in \Young(dn,dm)$, we use Remark \ref{rem:a_simplification_of_the_combinatorial_two-variable_CDP_conjecture}. In particular, we identify $\overline{D}$ with $\hat{\rank}_D(\overline{D})$ via $\hat{\rank}_{D}$. 
By a direct computation, we obtain ($L_{D_i} = L_{D_i}(q,t)$):

{\tiny
\begin{minipage}[t]{0.5\textwidth}
\centering
\begin{align*}
L_{D_1} =&~1 - t + q^3t^4(1-t) + q^4t^6(1-t) + q^5t^8(1-t)\\
                &~+ q^6t^{10}(1-t) + q^7t^{12}(1-t^3) + q^8t^{16};\\
L_{D_2} =&~1 - t^2 + q^3t^5(1-t) + q^4t^7(1-t)\\
&~+ q^5t^9(1-t) + q^6t^{11}(1-t) + q^7t^{13};\\
L_{D_3} =&~ t^2(1-t) + q^3t^6(1-t) + q^4t^8(1-t)\\
                       &~+ q^5t^{10}(1-t) + q^6t^{12};\\
L_{D_4} =&~qt^3(1-t) + q^4t^7(1-t) + q^5t^9(1-t^3) + q^6t^{13};\\ 
L_{D_5} =&~ q^2t^4(1-t) + q^3t^6(1-t) + q^4t^8(1-t)\\
&~+ q^5t^{10}(1-t^3) + q^6t^{14};\\
L_{D_6} =&~qt^2(1-t) + q^4t^6(1-t) + q^5t^8(1-t)\\
&~+ q^6t^{10}(1-t^3) + q^7t^{14};\\
L_{D_7} =&~t^3(1-t) + q^3t^7(1-t) + q^4t^9(1-t) + q^5t^{11};\\
L_{D_8} =&~t^4(1-t) + q^3t^8(1-t) + q^4t^{10};\\
L_{D_9} =&~qt^5(1-t) + q^3t^8(1-t^2) + q^4t^{11};\\
L_{D_{10}} =&~qt^4(1-t) + q^4t^8(1-t^3) + q^5t^{12};\\
L_{D_{11}} =&~t^5(1-t) + q^3t^9;\\
\end{align*}
\end{minipage}%
\vrule depth 80ex
\begin{minipage}[t]{0.5\textwidth}
\centering
\begin{align*}
L_{D_{12}} =&~ t^6(1-t) + q^2t^9;\\
L_{D_{13}} =&~q^2t^6(1-t^2) + q^4t^{10};\\
L_{D_{14}} =&~qt^7(1-t^2) + q^2t^{10};\\
L_{D_{15}} =&~q^2t^3(1-t) + q^3t^5(1-t) + q^4t^7(1-t)\\
&~+ q^5t^9(1-t) + q^6t^{11}(1-t^3) + q^7t^{15};\\
L_{D_{16}} =&~q^2t^4(1-t) + q^3t^6(1-t) + q^4t^8(1-t)\\
&~+ q^5t^{10}(1-t) + q^6t^{12};\\
L_{D_{17}} =&~q^2t^5(1-t) + q^3t^7(1-t) + q^4t^9(1-t)\\
&~+ q^5t^{11};\\
L_{D_{18}} =&~q^2t^6(1-t) + q^3t^8(1-t^3) + q^4t^{12};\\
L_{D_{19}} =&~q^2t^5(1-t) + q^3t^7(1-t) + q^4t^9(1-t^3)\\
&~+ q^5t^{13};\\
L_{D_{20}} =&~qt^6(1-t) + q^2t^8(1-t) + q^3t^{10};\\
L_{D_{21}} =&~t^7(1-t) + qt^9;\\
L_{D_{22}} =&~q^2t^7(1-t^3) + q^3t^{11};\\
L_{D_{23}} =&~t^8.\\
\end{align*}
\end{minipage}
}

On the other hand, we have ($\mH_{D_i}^{\mot} = \mH_{D_i}^{\mot}(q,t)$):
{\small\[
\left.\begin{array}{llllll}
\mH_{D_1}^{\mot} = q^8t^{16}; & \mH_{D_2}^{\mot} = q^7t^{14}; & \mH_{D_3}^{\mot} = q^6t^{12}; & \mH_{D_4}^{\mot} = q^6t^{11}; & \mH_{D_5}^{\mot} = q^6t^{10}; & \mH_{D_6}^{\mot} = q^7t^{13};\\
\mH_{D_7}^{\mot} = q^5t^{10}; & \mH_{D_8}^{\mot} = q^4t^8; & \mH_{D_9}^{\mot} = q^4t^7; & \mH_{D_{10}}^{\mot} = q^5t^9; & \mH_{D_{11}}^{\mot} = q^3t^6; & \mH_{D_{12}}^{\mot} = q^2t^4;\\
\mH_{D_{13}}^{\mot} = q^4t^6; & \mH_{D_{14}}^{\mot} = q^2t^3; &  \mH_{D_{15}}^{\mot} = q^7t^{12}; & \mH_{D_{16}}^{\mot} = q^6t^{10}; & \mH_{D_{17}}^{\mot} = q^5t^8; & \mH_{D_{18}}^{\mot} = q^4t^6;\\
\mH_{D_{19}}^{\mot} = q^5t^8; & \mH_{D_{20}}^{\mot} = q^3t^5; & \mH_{D_{21}}^{\mot} = qt^2; & \mH_{D_{22}}^{\mot} = q^3t^4; & \mH_{D_{23}}^{\mot} = 1. &
\end{array}\right.
\]}
Summing up, we obtain
\begin{eqnarray*}
L(q,t) = \mH^{\mot}(q,t) &=& 1 + qt^2 + q^2t^3(1+t) + q^3t^4(1+t+t^2) + q^4t^6(2+t+t^2)\\
&&+ q^5t^8(2+t+t^2) + q^6t^{10}(2+t+t^2) + q^7t^{12}(1+t+t^2) + q^8t^{16}.
\end{eqnarray*} 
This verifies Conjecture \ref{conj:combinatorial_two-variable_CDP_conjecture_for_generic_curve_singularity} for this example.
\end{example}

\begin{remark}
With the help of a computer program (e.g., SageMath), one can verify Conjecture \ref{conj:combinatorial_two-variable_CDP_conjecture_for_generic_curve_singularity} for many examples. 
\end{remark}

\section{A new take on the combinatorics}\label{sec:a_new_take_on_the_combinatorics}

In this section we establish Theorem~\ref{thm:admissible_invariant_subsets_vs_Dyck_paths}, which in particular yields a new proof of part~(2) of that theorem, originally obtained in \cite[Thm.~4.13]{GMO25} and \cite{GMV20}. 

As in Remark~\ref{rem:finite_determinancy_of_planar_curve_singularities}, we may assume that $(C,0)$ is a \emph{generic} irreducible planar curve singularity parametrized by
\[
x(t) = t^{dn}, 
\qquad 
y(t) = t^{dm} + \lambda t^{dm+1} + \sum_{\substack{j \geq 2 \\ dm+j \notin \Gamma}} \lambda_j t^{dm+j},
\]
where $(n,m) = 1$, $m > n$, $\lambda \in \field^{\times}$, and $\lambda_j \in \field$.

\subsection{From admissible $(dn,dm)$-invariant subsets to $(dn,dm)$-Dyck paths}

\begin{definition}
Let $(c_{in+j})_{0 \leq i \leq d-1,\, 0 \leq j \leq n-1}$ be a $d \times n$ matrix of nonnegative integers such that each row $(c_{i,j})_{0 \leq j \leq n-1}$ is non-decreasing. We identify $\mathbb{Z}/n$ with $\{0,1,\dots,n-1\}$ whenever convenient. For $0 \leq i \leq d-1$ and $j \in \mathbb{Z}/n$, set
\[
\rank_{i,j}((c_{\bullet,\bullet})) =  \rank_{in+j}((c_{\bullet,\bullet})) := \rank(in+j,c_{i,j}) = m(in+j) - nc_{i,j}.
\]
\end{definition}

\begin{remark}
Fix $0 \leq u < v \leq d-1$. Then for $0 \leq j \leq n-2$,
\[
\rank_{v,j+1}((c_{\bullet,\bullet})) \leq \rank_{u,j}((c_{\bullet,\bullet})) + m 
\quad \Longleftrightarrow \quad 
c_{v,j+1} \geq c_{u,j} + (v-u)m.
\]
For $j = n-1$,
\[
\rank_{v,0}((c_{\bullet,\bullet})) \leq \rank_{u,n-1}((c_{\bullet,\bullet})) + m 
\quad \Longleftrightarrow \quad 
c_{v,0} \geq c_{u,n-1} + (v-u-1)m.
\]
\end{remark}

Now fix $\Delta = (c_{i,j}) \in \Adm(dn,dm)$. Define
\[
\rank_{\Delta,in+j} = \rank_{\Delta,i,j} := m(in+j) - nc_{i,j},\qquad 0\leq i\leq d-1, 0 \leq j \leq n-1. 
\]
\textbf{Note.} Under the identification $\mathbb{Z}/n \isomorphic \{0,1,\cdots,n-1\}$, we have $ \rank_{\Delta,i,n} = \rank_{\Delta,i,0}$ but $\rank_{\Delta,in+n} = \rank_{\Delta,i+1,0}$.
For brevity, we write
\begin{equation}
(i,j) := in+j,\qquad 0\leq i\leq d-1, 0\leq j \leq n-1,
\end{equation}
so that in particular $(i,n) = (i+1,0)$ for $i<d-1$.

We now introduce the following \textbf{constructions}.

\begin{enumerate}[wide,labelwidth=!,labelindent=0pt,itemindent=!]
\item
\emph{Move indices and rank vectors.}  
Set $\rank_{\Delta,in+j}^{[0]} = \rank_{\Delta,i,j}^{[0]} := \rank_{\Delta,i,j}$.
Inductively, for $1\leq u \leq d-1$, define the {\color{blue}move index} $\tilde{\moveindex}_u$ and {\color{blue}rank vector}
$(\rank_{\Delta,in+j}^{[u]} = \rank_{\Delta,i,j}^{[u]})_{0\leq i\leq d-1,0\leq j\leq n-1}$ from $(\rank_{\Delta,in+j}^{[u-1]} = \rank_{\Delta,i,j}^{[u-1]})_{0\leq i\leq d-1,0\leq j\leq n-1}$ by:
\footnote{Let's emphasize again that $\rank_{\Delta,u,n}^{[u-1]} := \rank_{\Delta,u,0}^{[u-1]}$.}
\begin{equation}
\tilde{\moveindex}_u := \max\{in+j+1:~0\leq i\leq u-1,0\leq j\leq n-1, \rank_{\Delta,u,j+1}^{[u-1]} \leq \rank_{\Delta,i,j}^{[u-1]} + m\},
\end{equation}
which depends only on $(\rank_{\Delta,i,j}^{[u-1]})_{0\leq i\leq u,0\leq j\leq n-1})$.
Then, 
\begin{equation}
\rank_{\Delta,x=in+j}^{[u]}:= \left\{\begin{array}{ll}
\rank_{\Delta,x}^{[u-1]} = \rank_{\Delta,i,j}^{[u-1]} & 0\leq x < \tilde{\moveindex}_u \text{ or } (u+1)n \leq x < dn;\\
\rank_{\Delta,x-n}^{[u-1]} = \rank_{\Delta,i-1,j}^{[u-1]} & \tilde{\moveindex}_u + n \leq x < (u+1)n;\\
\rank_{\Delta,un+j}^{[u-1]} = \rank_{\Delta,u,j}^{[u-1]} & \tilde{\moveindex}_u \leq x < \tilde{\moveindex}_u + n.
\end{array}\right. 
\end{equation}
Equivalently, say, $\tilde{\moveindex}_u = kn+\ell$, $0\leq k\leq u$, $0\leq \ell \leq n-1$, then
\begin{eqnarray*}
(\rank_{\Delta,0}^{[u]},\text{\tiny$\cdots$},\rank_{\Delta,dn-1}^{[u]})
&:=& (\rank_{\Delta,0}^{[u-1]},\text{\tiny$\cdots$},\rank_{\Delta,\tilde{\moveindex}_u-1}^{[u-1]},\rank_{\Delta,un+\ell}^{[u-1]},\text{\tiny$\cdots$},\rank_{\Delta,un+n-1}^{[u-1]},\\
&&\rank_{\Delta,un}^{[u-1]},\text{\tiny$\cdots$},\rank_{\Delta,un+\ell-1}^{[u-1]},\rank_{\Delta,\tilde{\moveindex}_u}^{[u-1]},\text{\tiny$\cdots$},\rank_{\Delta,un-1}^{[u-1]},\rank_{\Delta,(u+1)n}^{[u-1]},\text{\tiny$\cdots$},\rank_{\Delta,dn-1}^{[u-1]}).
\end{eqnarray*}

\item
\emph{Permutations.}  
For $0\leq j\leq n-1$ and $1\leq u\leq d-1$, define a {\color{blue}permutation} $\tilde{\permutation}_{j,u}$ of $[0,d-1]$\footnote{$[0,k] := \{0,1,\cdots,k\}$.} by
\begin{equation}
\tilde{\permutationindex}_{j,u} \coloneqq \min\{0\leq i\leq u:~in+j\geq \tilde{\moveindex}_u\}~~(\text{\color{blue}permutation index});\quad \tilde{\permutation}_{j,u} \coloneqq (\tilde{\permutationindex}_{j,u}\ \cdots\ u).
\end{equation}
We also set $\tilde{\permutationindex}_{n,u} \coloneqq \tilde{\permutationindex}_{0,u}$ and $\tilde{\permutation}_{n,u} \coloneqq \tilde{\permutation}_{0,u}$.
\textbf{Observe} that
\begin{equation}
\rank_{\Delta,i,j}^{[u]} = \rank_{\Delta,\tilde{\permutation}_{j,u}^{-1}(i),j}^{[u-1]}.
\end{equation}
For $0\leq j\leq n-1$, define the composition
\begin{equation}
\tilde{\permutation}_j := \tilde{\permutation}_{j,d-1}\circ\cdots\circ\tilde{\permutation}_{j,1}.
\end{equation}

\item
\emph{Updated $c$-matrices.}  
For $0\leq u\leq d-1$, $0\leq i\leq d-1$, and $0\leq j\leq n-1$, define
\begin{equation}
c_{i,j}^{[u]}:= c_{\tilde{\permutation}_{j,1}^{-1}\circ\cdots\circ\tilde{\permutation}_{j,u}^{-1}(i),j} - (\tilde{\permutation}_{j,1}^{-1}\circ\cdots\circ\tilde{\permutation}_{j,u}^{-1}(i) - i)m.
\end{equation}
In particular, $c_{i,j}^{[u]} = c_{\tilde{\permutation}_{j,u}^{-1}(i),j}^{[u-1]} - (\tilde{\permutation}_{j,u}^{-1}(i) - i)m$.
Then,
\begin{equation}
\rank_{\Delta,i,j}^{[u]} = \rank_{i,j}((c_{\bullet,\bullet}^{[u]})) = m(in+j) - nc_{i,j}^{[u]}.
\end{equation}
Finally, \textbf{define}
\begin{equation}
\Phi_d((c_{i,j})) := (c_{i,j}^{[d-1]}).
\end{equation}

\item
\emph{Enhanced ranks.}  
For each box $(x,y)$ (top-left vertex) with $0\leq x\leq dn-1$,
write $x = in+j$, $0\leq i\leq d-1$, $0\leq j\leq n-1$. For $0\leq u\leq d-1$, define
\begin{equation}
\hat{\rank}_{\Delta}^{[u]}(x,y) := d\cdot \rank(x,y) + \tilde{\permutation}_{j,1}^{-1}\circ\cdots\circ \tilde{\permutation}_{j,u}^{-1}(i).
\end{equation}
In particular, 
\begin{equation}
\hat{\rank}_{\Delta}(x,y) := \hat{\rank}_{\Delta}^{[0]}(x,y) = d\cdot \rank(x,y)  + i.
\end{equation}
Clearly, $\hat{\rank}_{\Delta}^{[u]}(x,y+1) = \hat{\rank}_{\Delta}^{[u]}(x,y) - dn$.
\end{enumerate}

\begin{proposition}\label{prop:from_admissible_invariant_subsets_to_Dyck_paths}
For every $\Delta \in \Adm(dn,dm)$:
\begin{enumerate}[wide,labelwidth=!,labelindent=0pt,itemindent=!,label=(\alph*)]
\item The move indices $\tilde{\moveindex}_u$ are well-defined and satisfy
\[
0 < \tilde{\moveindex}_1 < \tilde{\moveindex}_2 < \cdots < \tilde{\moveindex}_{d-1} \leq (d-1)n.
\]
\item The map
\[
\Phi_d: \Adm(dn,dm) \longrightarrow \Young(dn,dm), \qquad 
(c_{i,j}) \mapsto (c_{i,j}^{[d-1]}),
\]
is well-defined.
\end{enumerate}
\end{proposition}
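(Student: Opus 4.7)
The plan is to prove parts (a) and (b) jointly by induction on $u\in\{0,1,\dots,d-1\}$, maintaining two invariants on the level-$u$ rank vector $(\rank_{\Delta,x}^{[u]})_{0\leq x<dn}$:
(I1) the partial sequence $(\rank_{\Delta,x}^{[u]})_{0\leq x<un}$ satisfies the Young-path conditions of Corollary~\ref{cor:Dyck_path_via_rank}, namely $\rank_{\Delta,x+1}^{[u]}\leq \rank_{\Delta,x}^{[u]}+m$ for $0\leq x<un-1$, $0\leq \rank_{\Delta,x}^{[u]}\leq mx$, and $\rank_{\Delta,x}^{[u]}\equiv mx\pmod n$;
(I2) for every $u<v\leq d-1$, there exists $(i,j)$ with $0\leq i\leq v-1$, $0\leq j\leq n-1$, such that $\rank_{\Delta,v,\underline{j+1}}^{[u]}\leq \rank_{\Delta,i,j}^{[u]}+m$.
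The base case $u=0$ follows immediately from Corollary~\ref{cor:admissible_invariant_subset_via_rank} together with parts~(1)--(3) of Lemma~\ref{lem:semimodule_vs_c_ij}. Once the induction reaches $u=d-1$, (I1) holds on the full range $0\leq x<dn$, and the identity $c_{i,j}^{[d-1]}=(m(in+j)-\rank_{\Delta,i,j}^{[d-1]})/n$ together with Corollary~\ref{cor:Dyck_path_via_rank} forces $\Phi_d((c_{i,j}))\in\Young(dn,dm)$, yielding part~(b).

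In the inductive step at level $u\geq 1$, invariant (I2) at the previous level applied with $v=u$ supplies a nonempty candidate set for $\tilde{\moveindex}_u$, proving well-definedness; the upper bound $\tilde{\moveindex}_u\leq un$ is automatic from the definition. To verify (I1) at level $u$, I partition the range $0\leq x<un-1$ into four regions relative to $\tilde{\moveindex}_u$: (i) on the untouched prefix $0\leq x<\tilde{\moveindex}_u-1$ monotonicity is inherited from the inductive hypothesis; (ii) the boundary step at $x=\tilde{\moveindex}_u-1$ is precisely the defining inequality realizing $\tilde{\moveindex}_u$; (iii) inside the moved block $\tilde{\moveindex}_u\leq x<\tilde{\moveindex}_u+n-1$, consecutive ranks come from row $u$ of $c^{[u-1]}$, which coincides with row $u$ of the original $c$ since earlier moves only touched positions $<un$, so the inequality reduces to row-monotonicity of $(c_{u,j})_j$ via Lemma~\ref{lem:semimodule_vs_c_ij}.(1); (iv) at the exit step $x=\tilde{\moveindex}_u+n-1$ the comparison is between a cyclic shift of row $u$'s last entry and the rank $\rank_{\Delta,k,\ell}^{[u-1]}$ (where $\tilde{\moveindex}_u=kn+\ell$), and the required inequality is extracted from the \emph{maximality} of $\tilde{\moveindex}_u$ by rearranging the negation of the next-candidate inequality.

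The main technical obstacle will be propagating (I2) from level $u-1$ to level $u$, together with the strict bound $\tilde{\moveindex}_{u-1}<\tilde{\moveindex}_u$ for $u\geq 2$. For (I2), the step-$u$ move leaves positions $\geq(u+1)n$ untouched, so $\rank_{\Delta,v,j}^{[u]}=\rank_{\Delta,v,j}$ for all $v\geq u+1$, and the task reduces to locating, for each such $v$, a pair $(i,j)$ with $0\leq i\leq v-1$ satisfying $\rank_{\Delta,v,\underline{j+1}}\leq \rank_{\Delta,i,j}^{[u]}+m$. The plan is to start from the original admissibility witness at row $v-1$ given by Lemma~\ref{lem:semimodule_vs_c_ij}.(3), then track this witness through the cyclic shift of the block $[\tilde{\moveindex}_u,\tilde{\moveindex}_u+n)$: if the witness lies in the untouched part of the vector it is preserved, while if it lies in the moved block, the cyclic shift either keeps the inequality valid or supplies a valid successor witness via the column-monotonicity from $(dmn+1)$-invariance in Lemma~\ref{lem:semimodule_vs_c_ij}.(2). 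The strict bound $\tilde{\moveindex}_{u-1}<\tilde{\moveindex}_u$ emerges from a parallel observation: row $u$'s $c$-values are componentwise at least as large as row $u-1$'s, which (after the $m$-shift inherent in $\rank$) forces the insertion point of row $u$ to lie strictly past that of row $u-1$. Once (I2) and the strict bound are established, the induction closes, completing part~(a), and part~(b) follows as indicated above.
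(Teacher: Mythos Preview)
Your overall plan—induction on $u$ with running invariants (I1), (I2)—is close in spirit to the paper's proof, which instead inducts on $d$ by applying the inductive hypothesis to the truncated matrix $(c_{i,j})_{0\leq i\leq d-2,\,0\leq j\leq n-1}\in\Adm((d-1)n,(d-1)m)$. Both build up the Young-path property one row at a time; the paper's reduction is a bit cleaner because admissibility of the truncation is automatic, so no auxiliary invariant like (I2) is needed. Two minor slips: your (I1) should range over $0\leq x<(u+1)n$, not $un$, so that at $u=d-1$ you actually reach $dn$; and your four-region partition omits the shifted tail $\tilde{\moveindex}_u+n\leq x<(u+1)n-1$, handled in the paper as case~(2) via the previous-level Young inequality. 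Also, (I2) propagates more cheaply than you describe: since the step-$u$ move permutes rows within each fixed column $j$ (cf.\ $\rank_{\Delta,i,j}^{[u]}=\rank_{\Delta,\tilde{\permutation}_{j,u}^{-1}(i),j}^{[u-1]}$), a level-$(u-1)$ witness $(i,j)$ simply becomes $(\tilde{\permutation}_{j,u}(i),j)$ at level $u$, with no appeal to Lemma~\ref{lem:semimodule_vs_c_ij}(2) needed.

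The genuine gap is your argument for the strict chain $\tilde{\moveindex}_{u-1}<\tilde{\moveindex}_u$. Column-monotonicity $c_{u,j}\geq c_{u-1,j}$ only yields $\rank_{\Delta,u,j}\leq\rank_{\Delta,u-1,j}+mn$, which says nothing useful about where row $u$'s insertion point lands relative to $\tilde{\moveindex}_{u-1}$. What is actually required is \emph{admissibility} at index $u-1$ (Lemma~\ref{lem:semimodule_vs_c_ij}(3)): it supplies $j_0$ with $\rank_{\Delta,u,\underline{j_0+1}}\leq\rank_{\Delta,u-1,j_0}+m$. The key observation is then that after the $(u-1)$-th move, row $u-1$'s entries occupy precisely the block $[\tilde{\moveindex}_{u-1},\tilde{\moveindex}_{u-1}+n)$ in the level-$(u-1)$ sequence, so the value $\rank_{\Delta,u-1,j_0}$ sits at position $\tilde{\permutation}_{j_0,u-1}(u-1)\cdot n+j_0=\tilde{\permutationindex}_{j_0,u-1}n+j_0\geq\tilde{\moveindex}_{u-1}$; this is a candidate for $\tilde{\moveindex}_u$, whence $\tilde{\moveindex}_u\geq\tilde{\permutationindex}_{j_0,u-1}n+j_0+1>\tilde{\moveindex}_{u-1}$. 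This is exactly the paper's argument. With this correction your plan goes through—and in fact (I2) becomes redundant, since the admissibility witness at $i=u-1$ is always available and delivers both well-definedness of $\tilde{\moveindex}_u$ and the strict bound in one stroke.
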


\begin{proof}
Proof proceeds by induction on $d$.
The case $d = 1$ is trivial. Suppose that the result holds for `$<d$', $d\geq 2$. For the induction, consider the case `$d$'.
Fix any $\Delta \isomorphic (c_{i,j})_{0\leq i\leq d-1,0\leq j\leq n-1} \in \Adm(dn,dm)$. By inductive hypothesis, we already have: 
$\tilde{\moveindex}_u$, $1\leq u < d-1$, are well-defined, and $0 < \tilde{\moveindex}_1 < \cdots < \tilde{\moveindex}_{d-2}\leq (d-2)n$; $(\rank_{\Delta,i,j}^{[u]})$ and
$(c_{i,j}^{[u]})$, $0\leq u < d-1$, are well-defined; $(c_{i,j}^{[d-2]})_{0\leq i\leq d-2,0\leq j\leq n-1} \in \Young((d-1)n,(d-1)m)$.

Observe that by definition, we have: $\rank_{\Delta,i,j}^{[u]} = \rank_{\Delta,i,j}$, $c_{i,j}^{[u]} = c_{i,j}$, $\forall 0\leq u < d-2$, $i\geq u+1$. Moreover, 
$\rank_{\Delta,i,j}^{[d-2]} = \rank_{\Delta,\tilde{\permutation}_{j,d-2}^{-1}(i),j}^{[d-3]}$, and $c_{i,j}^{[d-2]} = c_{\tilde{\permutation}_{j,d-2}^{-1}(i),j}^{[d-3]} - (\tilde{\permutation}_{j,d-2}^{-1}(i) - i)m$. 
Now, as $(c_{i,j})_{0\leq i\leq d-1,0\leq j\leq n-1} \in \Adm(dn,dm)$, by Lemma \ref{lem:semimodule_vs_c_ij}, $\exists 0\leq j_0\leq n-1$ such that 
\[
\rank_{\Delta,d-1,j_0+1}^{[d-2]} = \rank_{\Delta,d-1,j_0+1} \leq \rank_{\Delta,d-2,j_0} + m = \rank_{\Delta,d-2,j_0}^{[d-3]} + m = \rank_{\Delta,\tilde{\permutation}_{j_0,d-2}(d-2),j_0}^{[d-2]} + m.
\]
Thus, by definition, $\tilde{\moveindex}_{d-1}$ is well-defined, and 
\[
(d-1)n \geq \tilde{\moveindex}_{d-1} \geq \tilde{\permutation}_{j_0,d-2}(d-2)\cdot n + j_0 + 1 = \tilde{\permutationindex}_{j_0,d-2}\cdot n + j_0 + 1 \geq \tilde{\moveindex}_{d-2} + 1.
\]

It remains to show that $(c_{i,j}^{[d-1]})_{0\leq i\leq d-1,0\leq j\leq n-1} \in \Young(dn,dm)$. By Corollary \ref{cor:Dyck_path_via_rank}, this is equivalent to: $(a)$. $\rank_{\Delta,x+1}^{[d-1]} \leq \rank_{\Delta,x}^{[d-1]} + m$, $\forall 0\leq x < dn - 1$; $(b)$. $0 \leq \rank_{\Delta,x}^{[d-1]} \leq mx$, $\forall 0\leq x\leq dn-1$.

Indeed, by inductive hypothesis, we already have $(c_{i,j}^{[d-2]})_{0\leq i\leq d-2,0\leq j\leq n-1} \in \Young((d-1)n,(d-1)m)$. So, $\rank_{\Delta,x+1}^{[d-2]}\leq \rank_{\Delta,x}^{[d-2]} + m$, $\forall 0\leq x < (d-1)n-1$, and $0 \leq \rank_{\Delta,x}^{[d-2]} \leq mx$, $\forall 0\leq x\leq (d-1)n-1$.

Firstly, we prove $(a)$. Say, $\tilde{\moveindex}_{d-1} = kn + \ell$, $0\leq k\leq d-1$, $0\leq \ell \leq n-1$.
\begin{enumerate}[wide,labelwidth=!,labelindent=0pt,itemindent=!]
\item
If $0\leq x < \tilde{\moveindex}_{d-1} - 1$, then by definition, we have
\[
\rank_{\Delta,x+1}^{[d-1]} = \rank_{\Delta,x+1}^{[d-2]} \leq \rank_{\Delta,x}^{[d-2]} + m = \rank_{\Delta,x}^{[d-1]} + m.
\]

\item
If $\tilde{\moveindex}_{d-1} + n \leq x < dn - 1$, then by definition, we have
\[
\rank_{\Delta,x+1}^{[d-1]} = \rank_{\Delta,x+1-n}^{[d-2]} \leq \rank_{\Delta,x-n}^{[d-2]} + m = \rank_{\Delta,x}^{[d-1]} + m.
\]

\item
If $\tilde{\moveindex}_{d-1} \leq x = kn + j < kn + n-1$ or $(k+1)n \leq x = (k+1)n + j < \tilde{\moveindex}_{d-1} + n - 1$, then by definition,
\[
\rank_{\Delta,x+1}^{[d-1]} = \rank_{\Delta,(d-1)n+j+1}^{[d-2]} \leq \rank_{\Delta,(d-1)n+j}^{[d-2]} + m = \rank_{\Delta,x}^{[d-1]} + m.
\]

\item
If $x = \tilde{\moveindex}_{d-1} - 1$, say, $x = in+j$, $0\leq i\leq d-2$, $0\leq j\leq n-1$, then by definition of $\tilde{\moveindex}_{d-1} = in + j + 1$,
\[
\rank_{\Delta,x+1}^{[d-1]} = \rank_{\Delta,\tilde{\moveindex}_{d-1}}^{[d-1]} = \rank_{\Delta,d-1,j+1}^{[d-2]} \leq \rank_{i,j}^{[d-2]} + m = \rank_{\Delta,x}^{[d-1]} + m.
\]

\item
If $x = kn + n-1$, then by definition,
\[
\rank_{\Delta,x+1}^{[d-1]} = \rank_{\Delta,k+1,0}^{[d-1]} = \rank_{\Delta,d-1,0} \leq \rank_{\Delta,d-1,n-1} + m = \rank_{\Delta,k,n-1}^{[d-1]} + m = \rank_{\Delta,x}^{[d-1]} + m.
\]
Here, the inequality follows from Lemma \ref{lem:semimodule_vs_c_ij}.$(3)$.

\item
It remains to consider the case that $x = (k+1)n + \ell - 1$. If $\ell = 0$, then $x = kn + n-1$, which is just $(5)$. 
Thus, it suffices to consider the case that $\ell \geq 1$. Then by the definition of $\tilde{\moveindex}_{d-1}$, we have 
\[
\rank_{\Delta,x+1}^{[d-1]} = \rank_{\Delta,k,\ell}^{[d-2]} < \rank_{\Delta,d-1,\ell+1}^{[d-2]} - m = \rank_{\Delta,d-1,\ell+1} - m \leq \rank_{\Delta,d-1,\ell}
\leq \rank_{\Delta,d-1,\ell-1} + m = \rank_{\Delta,x}^{[d-1]} + m.
\]
Here, the last two inequalities follow from Lemma \ref{lem:semimodule_vs_c_ij}.$(1),(3)$. This completes the proof of $(a)$.
\end{enumerate}

Finally, it remains to prove $(b)$. Indeed, by definition, we have $\rank_{\Delta,i,j}^{[d-1]} = \rank_{\Delta,\tilde{\permutation}_j^{-1}(i),j} \geq 0$. On the other hand, for any $0\leq x\leq dn-1$, by $(a)$, we have $\rank_{\Delta,x}^{[d-1]} \leq \rank_{\Delta,x-1}^{[d-1]} + m \leq \cdots \leq \rank_{\Delta,0}^{[d-1]} + mx$. 
It suffices to show that $\rank_{\Delta,0}^{[d-1]} = 0$. 
In fact, as $0 < \tilde{\moveindex}_u$, $\forall 1\leq u\leq d-1$, it follows by definition that, $\tilde{\permutation}_{0,u}(0) = 0$. Then, $\tilde{\permutation}_0(0) = 0$, and hence $\rank_{\Delta,0}^{[d-1]} = \rank_{\Delta,\tilde{\permutation}_0^{-1}(0),0} = \rank_{\Delta,0,0} = 0$. Done.
\end{proof}

\subsection{From $(dn,dm)$-Dyck paths to admissible $(dn,dm)$-invariant subsets}

Fix any 
\[
D = (y_{in+j} = y_{i,j})_{0 \leq i \leq d-1,\,0 \leq j \leq n-1} \in \Young(dn,dm).
\]
Recall the notation
\[
\rank_{in+j} = \rank_{i,j} = \rank_{D,in+j} = \rank_{D,i,j} := \rank(in+j,y_{i,j}) = m(in+j) - ny_{i,j}.
\]
In Section \ref{subsubsec:Dyck_paths_vs_admissible_invariant_subsets}, we defined: 
\[
\moveindex_u,\ \vec{\rank}^{[d-u]} = (\rank_x^{[d-u]})_{0 \leq x \leq dn-1},\ 
\permutationindex_{j,u},\ \permutation_{j,u},\ \permutation_j,
\]
for all $d-1 \geq u \geq 1$, $0 \leq j \leq d-1$, together with
\[
(y_{in+j}^{[u]} = y_{i,j}^{[u]})_{0 \leq i \leq d-1,\,0 \leq j \leq n-1},\quad 
\hat{\rank}_D^{[u]}, \quad \forall\, 0 \leq u \leq d-1.
\]

We now make the following \textbf{construction}, extending $\vec{\rank}^{[d-u]}$:

\begin{enumerate}[wide,labelwidth=!,labelindent=0pt,itemindent=!,label=(\arabic*')]
\item
Set $\rank_{D,in+j}^{[0]} = \rank_{D,i,j}^{[0]} := \rank_{D,in+j} = \rank_{D,i,j}$. 
Inductively, for each $d-1\leq u\leq 1$, \textbf{define} 
\[
\bigl(\rank_{D,in+j}^{[d-u]} = \rank_{D,i,j}^{[d-u]}\bigr)_{0 \leq i \leq d-1,\,0 \leq j \leq n-1}
\] 
from the previous step as follows: if $\moveindex_u = kn+\ell$ with $0 \leq k \leq u$, $0 \leq \ell \leq n-1$, then
\[
\rank_{D,x}^{[d-u]} :=
\begin{cases}
\rank_{D,x}^{[d-u-1]}, & 0 \leq x < \moveindex_u \ \text{or}\ (u+1)n \leq x \leq dn-1, \\[4pt]
\rank_{D,x+n}^{[d-u-1]}, & \moveindex_u \leq x \leq un-1, \\[4pt]
\rank_{D,(k+1)n+j}^{[d-u-1]}, & un \leq x = un+j < un+\ell, \\[4pt]
\rank_{D,kn+j}^{[d-u-1]}, & un+\ell \leq x = un+j < (u+1)n.
\end{cases}
\]
Equivalently, in vector notation,
\begin{eqnarray*}
(\rank_{D,0}^{[d-u]},\text{\tiny$\cdots$},\rank_{D,dn-1}^{[d-u]})
&:=& (\rank_{D,0}^{[d-u-1]},\text{\tiny$\cdots$},\rank_{D,\moveindex_u-1}^{[d-u-1]},\rank_{D,\moveindex_u+n}^{[d-u-1]},\text{\tiny$\cdots$},\rank_{D,un+n-1}^{[d-u-1]},\rank_{D,(k+1)n}^{[d-u-1]},\text{\tiny$\cdots$},\\
&&\rank_{D,(k+1)n+\ell-1}^{[d-u-1]},\rank_{D,\moveindex_u}^{[d-u-1]},\text{\tiny$\cdots$},\rank_{D,kn+n-1}^{[d-u-1]},\rank_{D,(u+1)n}^{[d-u-1]},\text{\tiny$\cdots$},\rank_{D,dn-1}^{[d-u-1]}).
\end{eqnarray*}
By definition, $\rank_x^{[d-u]} = \rank_{D,x}^{[d-u]}$ for all $0 \leq x \leq un$.

\item
By definition of $\permutation_{j,u}$,
\begin{equation}
\rank_{D,i,j}^{[d-u]} = \rank_{D,\permutation_{j,u}(i),j}^{[d-u-1]}.
\end{equation}

\item
From the definition of $y_{i,j}^{[d-u]}$, we have $y_{i,j}^{[d-u]} = y_{\permutation_{j,u}(i),j}^{[d-u-1]}$, and hence
\[
\rank_{D,i,j}^{[d-u]} = \rank_{i,j}\bigl((y_{\bullet,\bullet})^{[d-u]}\bigr) 
= \rank(in+j, y_{i,j}^{[d-u]}) 
= m(in+j) - ny_{i,j}^{[d-u]}.
\]
\end{enumerate}

\begin{proposition}\label{prop:from_Dyck_paths_to_admissible_invariant_subsets}
We have:
\begin{enumerate}[wide,labelwidth=!,labelindent=0pt,itemindent=!,label=(\alph*)]
\item
For every $D \cong (y_{i,j}) \in \Young(dn,dm)$, the move indices 
$\moveindex_u$, $1 \leq u \leq d-1$, are well-defined, and
\[
0 < \moveindex_1 < \moveindex_2 < \cdots < \moveindex_{d-1} \leq (d-1)n.
\]

\item
The map
\begin{equation*}
\Psi_d: \Young(dn,dm) \rightarrow \Adm(dn,dm): (y_{i,j})\mapsto (c_{i,j}):= \Psi_d((y_{i,j})),
\end{equation*}
is well-defined.

\item
For each $1\leq u\leq d-1$ and $\moveindex_u \leq x < un$, we have
\begin{equation*}
\rank_{D,x}^{[d-u-1]} > \rank_{D,x+n}^{[d-u-1]}.
\end{equation*}
\end{enumerate}
\end{proposition}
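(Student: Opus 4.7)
The strategy is downward induction on $u$ from $u=d-1$ to $u=1$, simultaneously establishing (a), (c), and an auxiliary hypothesis $(\star_u)$: the truncated vector $(\rank_{D,x}^{[d-u-1]})_{0\le x\le (u+1)n-1}$ is the rank vector of a $((u+1)n,(u+1)m)$-Dyck path in the sense of Corollary~\ref{cor:Dyck_path_via_rank}. The base case $u=d-1$ is exactly $(\star_{d-1})$, the hypothesis $D\in\Young(dn,dm)$. Assertion (b) will then be obtained by assembling the outputs at $u=1$.

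\textbf{Inductive step at $u$, part 1.} The index $x=1$ lies in the defining set of $\moveindex_u$, since $y_n^{[d-u-1]}\le a_n=m$ by $(\star_u)$ gives $\rank_1^{[d-u-1]}\le \rank_n^{[d-u-1]}+m$; thus $\moveindex_u\ge 1$, and $\moveindex_u\le un$ by definition. For (c): when $\moveindex_u<x\le un-1$, maximality of $\moveindex_u$ yields $\rank_x^{[d-u-1]}>\rank_{x+n-1}^{[d-u-1]}+m\ge \rank_{x+n}^{[d-u-1]}$, the second inequality from $(\star_u)$-monotonicity; when $x=\moveindex_u$ (assuming $\moveindex_u<un$), maximality applied at $\moveindex_u+1\in(\moveindex_u,un]$ gives $\rank_{\moveindex_u+1}^{[d-u-1]}>\rank_{\moveindex_u+n}^{[d-u-1]}+m$, which combined with $\rank_{\moveindex_u+1}^{[d-u-1]}\le \rank_{\moveindex_u}^{[d-u-1]}+m$ from $(\star_u)$ yields $\rank_{\moveindex_u}^{[d-u-1]}>\rank_{\moveindex_u+n}^{[d-u-1]}$.

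\textbf{Inductive step at $u$, part 2.} To establish $(\star_{u-1})$: the monotonicity, bounds $0\le \rank_x^{[d-u]}\le mx$, and congruence $\rank_x^{[d-u]}\equiv mx\pmod{n}$ are immediate on each of the blocks $\{0\le x<\moveindex_u\}$ and $\{\moveindex_u\le x\le un-1\}$ from $(\star_u)$; the interface $x=\moveindex_u-1$ is handled by (c) at level $u$: $\rank_{\moveindex_u}^{[d-u]}=\rank_{\moveindex_u+n}^{[d-u-1]}<\rank_{\moveindex_u}^{[d-u-1]}\le \rank_{\moveindex_u-1}^{[d-u-1]}+m=\rank_{\moveindex_u-1}^{[d-u]}+m$. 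For the strict increase $\moveindex_{u-1}<\moveindex_u$ (when $u\ge 2$): if $\moveindex_u>(u-1)n$ this is automatic; if $\moveindex_u\le (u-1)n$, then for $\moveindex_u\le x\le (u-1)n$ one has $\rank_x^{[d-u]}=\rank_{x+n}^{[d-u-1]}$ and $\rank_{x+n-1}^{[d-u]}=\rank_{x+2n-1}^{[d-u-1]}$, and maximality of $\moveindex_u$ at $y:=x+n\in(\moveindex_u,un]$ gives $\rank_{x+n}^{[d-u-1]}>\rank_{x+2n-1}^{[d-u-1]}+m$, excluding $x$ from the defining set of $\moveindex_{u-1}$.

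\textbf{Completion of (b) and main obstacle.} At $u=1$, $(\star_0)$ supplies the first row $(c_{0,j}^{[d-1]})$ of $\Psi_d(D)$ as an $(n,m)$-Dyck path, while the remaining rows come from the extended construction in~(1') and~(3'). To verify $\Psi_d(D)\in\Adm(dn,dm)$ we apply Lemma~\ref{lem:semimodule_vs_c_ij}: row-monotonicity, column-monotonicity, and the bounds $0\le c_{i,j}\le a_{i,j}$ follow by unwinding the composition $\permutation_j=\permutation_{j,d-1}\circ\cdots\circ\permutation_{j,1}$ back to properties established at previous stages, while admissibility of each $i\in[0,d-2]$ (Lemma~\ref{lem:semimodule_vs_c_ij}.(3)) is witnessed precisely by $\moveindex_{i+1}$: its defining inequality $\rank_{\moveindex_{i+1}}^{[d-i-2]}\le \rank_{\moveindex_{i+1}+n-1}^{[d-i-2]}+m$ unwinds, after the permutation bookkeeping, into the required condition $\rank_{\Delta,i+1,\underline{j+1}}\le \rank_{\Delta,i,\underline{j}}+m$ for a suitable $j$. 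The structural induction is largely forced by the definitions and by (c); the main obstacle is this last bookkeeping, which is what motivates carrying the strengthened hypothesis $(\star_u)$ throughout rather than a weaker statement only about the active length-$(u+1)n$ portion.
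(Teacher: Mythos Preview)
Your downward induction on $u$ carrying $(\star_u)$ is exactly the paper's induction on $d$, unrolled: the paper processes $\moveindex_{d-1}$, shows $(y_{i,j}^{[1]})_{0\le i\le d-2}\in\Young((d-1)n,(d-1)m)$, and invokes the $(d-1)$-case. Parts (a) and (c) are correctly sketched; your argument for $\moveindex_u\ge 1$ via $x=1$ is in fact cleaner than the paper's.

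Two corrections to your sketch of (b). First, the upper bound $\rank_x^{[d-u]}\le mx$ is \emph{not} immediate on the second block $\moveindex_u\le x\le un-1$: from $(\star_u)$ you only get $\rank_{x+n}^{[d-u-1]}\le m(x+n)$. Use instead the global monotonicity you have just established together with $\rank_0^{[d-u]}=0$ (as the paper does). Also, column-monotonicity need not be checked separately: it follows from admissibility by Lemma~\ref{lem:admissibility_implies_dmn+1-invariant}.

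Second, and more importantly, your claim that the \emph{defining inequality} of $\moveindex_{i+1}$ unwinds into admissibility of $i$ is wrong. Write $\moveindex_{i+1}=kn+\ell$ with $\ell\ge 1$. Then $\permutationindex_{\ell,i+1}=k$ and $\permutationindex_{\ell-1,i+1}=k+1$, so the two sides of that inequality become $\rank_{\Delta,i+1,\ell}$ and $\rank_{\Delta,i+1,\ell-1}+m$: this is a \emph{row}-monotonicity relation on row $i+1$, not the cross-row condition $\rank_{\Delta,i+1,\underline{j+1}}\le\rank_{\Delta,i,\underline{j}}+m$. The paper's mechanism (step (7)) is different: starting from $\rank_{\Delta,i+1,\ell}=\rank_{D,k,\ell}^{[d-i-2]}$, it applies $(\star_{i+1})$-monotonicity at position $kn+\ell-1$ to pick up $\rank_{D,k,\ell-1}^{[d-i-2]}+m$, observes this equals $\rank_{D,k,\ell-1}^{[d-i-1]}+m$ since $kn+\ell-1<\moveindex_{i+1}$, and then invokes (c) at $u=i$ iteratively to bound this above by $\rank_{D,\permutationindex_{\ell-1,i},\ell-1}^{[d-i-1]}+m=\rank_{\Delta,i,\ell-1}+m$. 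So the witness $j=\ell-1$ you implicitly propose is correct, but the bookkeeping runs through (c), not through the defining inequality of $\moveindex_{i+1}$.
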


\begin{proof}
We prove by induction on $d$. The case $d = 1$ is trivial. Suppose that the result holds for `$<d$', $d\geq 2$. For the induction, consider the case `$d$'.
Fix any $D \isomorphic (y_{in+j} = y_{i,j})_{0\leq i\leq d-1,0\leq j\leq n-1} \in \Young(dn,dm)$. By Corollary \ref{cor:Dyck_path_via_rank}, we have: $y_{x+1} \leq y_x + m$, $\forall 0\leq x < dn-1$; $0\leq y_x \leq mx$, $\forall 0 \leq x\leq dn-1$.

\begin{enumerate}[wide,labelwidth=!,labelindent=0pt,itemindent=!]
\item
First, we show that $\moveindex_{d-1}$ is well-defined and $0 < \moveindex_{d-1} \leq (d-1)n$.
Recall that 
\[
\moveindex_{d-1}:= \max\{0\leq u\leq (d-1)n:~\rank_{D,x}^{[0]} \leq \rank_{D,x+n-1}^{[0]} + m~~(\Leftrightarrow~~y_{x+n-1} \leq y_x + m)\}.
\]
If we already have $\rank_{D,(d-1)n}^{[0]} \leq \rank_{D,dn-1}^{[0]} + m$, then $0 < \moveindex_{d-1} = (d-1)n \leq (d-1)n$. 
Else, there exists a minimal $0\leq x_0\leq (d-1)n$ such that $\rank_{D,x}^{[0]} \geq \rank_{D,x+n-1}^{[0]} + m + 1$, $\forall x_0 \leq x\leq (d-1)n$.
Fix any $x_0 \leq x\leq (d-1)n$. Say, $k_x\in\mathbb{N}$ is maximal such that $x + k(n-1) \leq dn - 1$. That is, $k_x = \floor{\frac{dn-1-x}{n-1}}$. 
Then
\[
mx \geq \rank_{D,x}^{[0]} \geq  \rank_{D,x+n-1}^{[0]} + m + 1 \geq \cdots \geq \rank_{D,x+k_x(n-1)}^{[0]} + k_x(m+1) \geq \floor{\frac{dn-1-x}{n-1}}(m+1).
\]
Observe that for $x = d-1$, we have $mx = (d-1)m < \floor{\frac{dn-1-x}{n-1}}(m+1) = d(m+1)$. It follows that $x_0\geq d$. Thus, by definition, $0< d-1 \leq \moveindex_{d-1} = x_0-1 \leq (d-1)n - 1$. Done.

Now, $(\rank_{D,in+j}^{[1]} = \rank_{D,i,j}^{[1]})_{0\leq i\leq d-1,0\leq j\leq n-1}$ and $(y_{in+j}^{[1]} = y_{i,j}^{[1]})_{0\leq i\leq d-1,0\leq j\leq n-1}$ are defined, and 
$\rank_{D,x}^{[1]} = \rank_x((y_{\bullet,\bullet}^{[1]})) = mx - ny_x^{[1]}$. Say, $\moveindex_{d-1} = kn + \ell$, $0\leq k\leq d-1$, $0\leq \ell \leq n-1$.

\item
We show that $y_{dn-1}^{[1]} \leq y_{(d-1)n}^{[1]} + m$ ($\Leftrightarrow$ $\rank_{D,(d-1)n}^{[1]} \leq \rank_{D,dn-1}^{[1]} + m$).
Indeed, if $\ell = 0$, then
\[
\rank_{D,(d-1)n}^{[1]} = \rank_{D,\moveindex_{d-1}}^{[0]} \leq \rank_{D,\moveindex_{d-1}+n-1}^{[0]} + m = \rank_{D,dn-1}^{[1]} + m.
\]
Here, the inequality follows from the definition of $\moveindex_{d-1}$. 
If $\ell\geq 1$, then
\[
\rank_{D,(d-1)n}^{[1]} = \rank_{D,(k+1)n} \leq \rank_{D,kn+n-1} + m = \rank_{D,dn-1}^{[1]} + m.
\]

\item
We show that $\rank_{D,x+1}^{[1]} \leq \rank_{D,x}^{[1]} + m$, $\forall 0\leq x < (d-1)n-1$ or $(d-1)n \leq x < dn-1$.
\begin{enumerate}[wide,labelwidth=!,labelindent=0pt,itemindent=!,label=(3.\arabic*)]
\item
If $0\leq x < \moveindex_{d-1} - 1$, then
\[
\rank_{D,x+1}^{[1]}  = \rank_{D,x+1} \leq \rank_{D,x} + m = \rank_{D,x}^{[1]} + m.
\]

\item
If $\moveindex_{d-1}\leq x < (d-1)n - 1$, then
\[
\rank_{D,x+1}^{[1]}  = \rank_{D,x+1+n} \leq \rank_{D,x+n} + m = \rank_{D,x}^{[1]} + m.
\]

\item
If $(d-1)n \leq x = (d-1)n + j < (d-1)n + \ell - 1$, then
\[
\rank_{D,x+1}^{[1]}  = \rank_{D,(k+1)n+j+1} \leq \rank_{D,(k+1)n+j} + m = \rank_{D,x}^{[1]} + m.
\]

\item
If $x = (d-1)n + \ell - 1\geq (d-1)n$, then
\[
\rank_{D,x+1}^{[1]}  = \rank_{D,kn+\ell = \moveindex_{d-1}} \leq \rank_{D,\moveindex_{d-1}+n-1} + m = \rank_{D,x}^{[1]} + m.
\]
Here, the inequality follows from the definition of $\moveindex_{d-1}$.

\item
If $(d-1)n + \ell \leq x = (d-1)n + j < dn - 1$, then
\[
\rank_{D,x+1}^{[1]}  = \rank_{D,kn+j+1} \leq \rank_{D,kn+j} + m = \rank_{D,x}^{[1]} + m.
\]
\end{enumerate}

\item
We show that $0\leq \rank_{D,x}^{[1]} \leq mx$, $\forall 0 \leq x\leq dn-1$.
For any $0\leq i\leq d-1$, $0\leq j\leq n-1$, by definition, we have $\rank_{D,x=in+j}^{[1]} =\rank_{D,i,j}^{[1]} = \rank_{D,\permutation_{j,d-1}(i),j} \geq 0$. 
It remains to show: $\rank_{D,x}^{[1]} \leq mx$, $\forall 0 \leq x\leq dn-1$.
\begin{enumerate}[wide,labelwidth=!,labelindent=0pt,itemindent=!,label=(4.\arabic*)]
\item
If $0\leq x < (d-1)n$, then by $(3)$, we have
\[
\rank_{D,x}^{[1]} \leq \rank_{D,x-1}^{[1]} + m \leq \cdots \leq \rank_{D,0}^{[1]} + mx = \rank_{D,0} + mx = mx. 
\]

\item
If $(d-1)n \leq x = (d-1)n + j < dn$, then
\[
\rank_{D,x}^{[1]} = \rank_{D,\permutation_{j,d-1}(d-1),j} \leq m(\permutation_{j,d-1}(d-1)\cdot n + j) \leq m((d-1)n + j) = mx.
\]
\end{enumerate}

\item
Observe that, by Corollary \ref{cor:Dyck_path_via_rank}, we have shown that $(y_{i,j}^{[1]})_{0\leq i\leq d-2,0\leq j\leq n-1} \in \Young((d-1)n,(d-1)m)$. 
Besides, for any $\moveindex_{d-1} \leq x \leq (d-2)n$, we have
\[
\rank_{D,x}^{[1]} = \rank_{D,x+n} > \rank_{D,x+n+n-1} + m = \rank_{D,x+n-1}^{[1]} + m.
\]
So by definition of $\moveindex_{d-2}$, we have $\moveindex_{d-2} < \moveindex_{d-1}$.
Now, by the definition of $\moveindex_u$, $d-2\geq u \geq 1$, and the inductive hypothesis applied to $(y_{i,j}^{[1]})_{0\leq i\leq d-2,0\leq j\leq n-1} \in \Young((d-1)n,(d-1)m)$, we conclude that $\moveindex_u$, $d-2\geq u\leq 1$, are well-defined, and $0 < \moveindex_1 < \cdots < \moveindex_{d-2} \leq (d-2)n$.
Altogether, $0 < \moveindex_1 < \cdots < \moveindex_{d-1} \leq (d-1)n$. 
In particular, $(y_{i,j}^{[d-1]})_{0\leq i\leq d-1,0\leq j\leq n-1}$ is defined.
Moreover, again by inductive hypothesis applied to $(y_{i,j}^{[1]})_{0\leq i\leq d-2,0\leq j\leq n-1}$,
for each $1\leq u < d-1$, i.e., $1 \leq d-u-1 \leq d-2$, and $\moveindex_u \leq x < un$, we have
\begin{equation*}
\rank_{D,x}^{[d-u-1]} > \rank_{D,x+n}^{[d-u-1]}.
\end{equation*}

\item
We show that $\rank_{D,x} > \rank_{D,x+n}$, $\forall \moveindex_{d-1} \leq x < (d-1)n$.
Indeed, by definition of $\moveindex_{d-1}$, as $x+1> \moveindex_{d-1}$,
\[
\rank_{D,x+n} < \rank_{D,x+1} - m \leq \rank_{D,x}.
\]

\item
It remains to show that $(y_{i,j}^{[d-1]})_{0\leq i\leq d-1,0\leq j\leq n-1}\in \Adm(dn,dm)$.
Observe that by definition, we have 
\[
y_{d-1,j}^{[d-1]} = y_{d-1,j}^{[1]};\quad (y_{i,j}^{[d-1]})_{0\leq i\leq d-2,0\leq j\leq n-1} = \Psi_{d-1}((y_{i,j}^{[1]})_{0\leq i\leq d-2,0\leq j\leq n-1}).
\]
So by inductive hypothesis, we already have $(y_{i,j}^{[d-1]})_{0\leq i\leq d-2,0\leq j\leq n-1} \in \Adm((d-1)n,(d-1)m)$.
Altogether, by what we have shown above and Corollary \ref{cor:admissible_invariant_subset_via_rank}, it remains to show that $(d-2)$ is admissible: 
$\exists j\in \mathbb{Z}/n \isomorphic \{0,1,\cdots,n-1\}$ such that $\rank_{D,d-1,j+1}^{[d-1]} \leq \rank_{D,d-2,j}^{[d-1]} + m$.

Indeed, recall that for any $0\leq j\leq d-1$, $1\leq u\leq d-1$, we have $\permutationindex_{j,u} :=\min\{0\leq i\leq u:~in+j\geq \moveindex_u\}$, and $\permutation_{j,u}(u) = \permutationindex_{j,u}$. So,
\[
\rank_{D,d-1,j}^{[d-1]} = \rank_{D,d-1,j}^{[1]} = \rank_{D,\permutationindex_{j,d-1},j};\quad \rank_{D,d-2,j}^{[d-1]} = \rank_{D,d-2,j}^{[2]} = \rank_{D,\permutationindex_{j,d-2},j}^{[1]}.
\]

\begin{enumerate}[wide,labelwidth=!,labelindent=0pt,itemindent=!,label=(7.\arabic*)]
\item
If $\ell \geq 1$, 
then $0\leq k\leq d-2$ (as $\moveindex_{d-1} = kn+\ell \leq (d-1)n$), and
$kn + \ell - 1 = \moveindex_{d-1} - 1 \geq \moveindex_{d-2}$. So by definition, $\permutationindex_{\ell-1,d-2} \leq k$.
It follows that
\[
\rank_{D,d-1,\ell}^{[d-1]} = \rank_{D,k,\ell} \leq \rank_{D,k,\ell-1} + m = \rank_{D,k,\ell-1}^{[1]} \leq \rank_{D,\permutationindex_{\ell-1,d-2},\ell-1}^{[1]} + m = \rank_{D,d-2,\ell-1}^{[d-1]} + m.
\]
Here, the second inequality follows from the last inequality in $(5)$.

\item
If $\ell = 0$, then $0 \leq k - 1 \leq d - 2$ (as $0 < \moveindex_{d-1} = kn \leq (d-1)n$), and $(k-1)n + n - 1 = \moveindex_{d-1} - 1 \geq \moveindex_{d-2}$. 
So by definition, $\permutationindex_{n-1,d-2} \leq k - 1$. 
As above, it follows that
\[
\rank_{D,d-1,\ell}^{[d-1]} = \rank_{D,k,0} \leq \rank_{D,k-1,n-1} + m = \rank_{D,k-1,n-1}^{[1]} \leq \rank_{D,\permutationindex_{n-1,d-2},n-1}^{[1]} + m = \rank_{D,d-2,n-1}^{[d-1]} + m.
\]
\end{enumerate}
Altogether, this shows that $d-2$ is admissible.
\end{enumerate}
This completes the proof of Proposition \ref{prop:from_Dyck_paths_to_admissible_invariant_subsets}.
\end{proof}

\begin{remark}\label{rem:intermediate_diagrams_from_Dyck_paths_to_admissible_invariant_subsets}
For every $D \cong (y_{i,j}) \in \Young(dn,dm)$ and $0 \leq u \leq d-1$, the proof of Proposition~\ref{prop:from_Dyck_paths_to_admissible_invariant_subsets} shows that
\[
(y_{i,j}^{[d-u-1]})_{0 \leq i \leq u,\,0 \leq j \leq n-1} \in \Young(un,um),
\quad
y_{i,j}^{[d-u-1]} = y_{i,j}^{[d-1]},\ \forall\, u+1 \leq i \leq d-1,\,0 \leq j \leq n-1.
\]
In particular,
\[
y_{i,0}^{[d-u-1]} \leq y_{i,1}^{[d-u-1]} \leq \cdots \leq y_{i,n-1}^{[d-u-1]} \leq y_{i,0}^{[d-u-1]} + m,
\quad \forall\, u+1 \leq i \leq d-1.
\]
\end{remark}

\subsection{The bijection}

Set $[1,k]:=\{1,\dots,k\}$ for all $k\in\mathbb{Z}_{>0}$. 
So far, we have obtained well-defined maps
\[
(\Phi_d,\tilde{\moveindex}_1,\cdots,\tilde{\moveindex}_{d-1}):\Adm(dn,dm) \rightarrow \Young(dn,dm)\times \prod_{u=1}^{d-1}[1,un],\quad 0 < \tilde{\moveindex}_1 < \cdots < \tilde{\moveindex}_{d-1},
\]
and
\[
(\Psi_d,\moveindex_1,\cdots,\moveindex_{d-1}):\Young(dn,dm) \rightarrow \Adm(dn,dm)\times \prod_{u=1}^{d-1}[1,un],\quad 0 < \moveindex_1 < \cdots < \moveindex_{d-1},
\]

\begin{proposition}\label{prop:bijection_between_admissible_invariant_subsets_and_Dyck_paths}
We have 
\[
\tilde{\moveindex}_u=\moveindex_u\circ\Phi_d,
\quad 
\moveindex_u=\tilde{\moveindex}_u\circ\Psi_d,\qquad 
\forall\,0\leq u\leq d-1.
\]
Moreover, $\Phi_d,\Psi_d$ are inverse to each other. In particular, for any 
$D\in\Young(dn,dm)$ with $\Delta=\Psi_d(D)\in\Adm(dn,dm)$, we have
\[
\permutationindex_{j,u}=\tilde{\permutationindex}_{j,u},\quad 
\permutation_{j,u}=\tilde{\permutation}_{j,u},\quad 
\permutation_j=\tilde{\permutation}_j,
\quad \forall\,0\leq j\leq n-1,\,1\leq u\leq d-1.
\]
\end{proposition}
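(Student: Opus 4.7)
The plan is to induct on $d$, establishing simultaneously that $\Phi_d \circ \Psi_d = \id$, $\Psi_d \circ \Phi_d = \id$, and that the move indices match, i.e.\ $\moveindex_u(D) = \tilde{\moveindex}_u(\Psi_d(D))$ and $\tilde{\moveindex}_u(\Delta) = \moveindex_u(\Phi_d(\Delta))$ for all $1 \leq u \leq d-1$. The base case $d=1$ is vacuous since both maps reduce to the identity and there are no move indices, so everything to be checked is trivial.

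For the inductive step, I would exploit the observation that the first step of $\Psi_d$ (associated to $\moveindex_{d-1}$) and the last step of $\Phi_d$ (associated to $\tilde{\moveindex}_{d-1}$) are the only operations that couple the top row $i=d-1$ with the rest; the remaining steps, governed by the permutations $\permutation_{j,u}, \tilde{\permutation}_{j,u}$ for $u \leq d-2$, are effectively computed by $\Psi_{d-1}$ and $\Phi_{d-1}$ on the truncated $(d-1)\times n$ data. Concretely, following Remark~\ref{rem:intermediate_diagrams_from_Dyck_paths_to_admissible_invariant_subsets}, the first step of $\Psi_d$ produces a $((d-1)n,(d-1)m)$-type Young diagram $(y_{i,j}^{[1]})_{0\leq i\leq d-2,\,0\leq j\leq n-1}$, and by construction $(y_{i,j}^{[d-1]})_{0\leq i \leq d-2,\,0\leq j\leq n-1} = \Psi_{d-1}\bigl((y_{i,j}^{[1]})_{0\leq i\leq d-2,\,0\leq j\leq n-1}\bigr)$, while the last row is unchanged: $y_{d-1,j}^{[d-1]} = y_{d-1,j}^{[1]}$. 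A symmetric picture holds for $\Phi_d$: the first $d-2$ steps compute $\Phi_{d-1}$ on the top $d-1$ rows, and the last step reinserts the row indexed by $d-1$ at position $\tilde{\moveindex}_{d-1}$.

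The heart of the argument is then the identification $\tilde{\moveindex}_{d-1}(\Psi_d(D)) = \moveindex_{d-1}(D)$ (and dually for $\Phi_d$). To prove this, I would analyze the penultimate rank vector $(\rank_{\Delta, i, j}^{[d-2]})$ with $\Delta := \Psi_d(D)$. By the inductive hypothesis applied to the top $d-1$ rows, its first $(d-1)n$ entries equal the components of $\vec{\rank}^{[1]}_D$ (the rank vector after one step of $\Psi_d$), while its last $n$ entries are precisely the deleted block $\rank_{D,\moveindex_{d-1}},\ldots,\rank_{D,\moveindex_{d-1}+n-1}$. Under this dictionary, the defining inequality for $\tilde{\moveindex}_{d-1}$, namely $\rank_{\Delta, d-1, j+1}^{[d-2]} \leq \rank_{\Delta, i, j}^{[d-2]} + m$, translates term by term into the inequality $\rank^{[1]}_{D, x} \leq \rank^{[1]}_{D, x+n-1} + m$ used to define $\moveindex_{d-1}(D)$, and the maximality of both indices then forces them to coincide. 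Consequently, the final block-insertion in $\Phi_d$ exactly undoes the initial block-deletion in $\Psi_d$.

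The main obstacle will be the rigorous bookkeeping needed to verify this rank-vector dictionary, especially to confirm that the permutations $\permutation_{j, d-1}$ and $\tilde{\permutation}_{j, d-1}$ are mutually inverse. Tracing the definitions, both permutation indices $\permutationindex_{j,d-1}$ and $\tilde{\permutationindex}_{j,d-1}$ are determined by the same cutoff condition $in + j \geq \moveindex_{d-1}$, so once the move indices agree they define the same cycle and hence coincide as permutations. Inserting this back into the formulas for $y_{i,j}^{[d-1]}$ and $c_{i,j}^{[d-1]}$ shows that the nested permutation compositions appearing in $\Phi_d$ and $\Psi_d$ are mutually inverse; combined with the inductive hypothesis on the smaller data, this yields $\Phi_d \circ \Psi_d = \id$ and $\Psi_d \circ \Phi_d = \id$. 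The matching of all remaining permutations and permutation indices then follows automatically from the matching of move indices.
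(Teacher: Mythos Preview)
Your inductive strategy and the recursive structure you identify are exactly those of the paper's proof: peel off the step indexed by $u=d-1$ on each side, match the remaining $(d-1)\times n$ data via the inductive hypothesis, and reduce everything to the single claim $\moveindex_{d-1}(D)=\tilde{\moveindex}_{d-1}(\Psi_d(D))$ (and its dual). The rank-vector dictionary $\rank_{\Delta,i,j}^{[d-2]}=\rank_{D,i,j}^{[1]}$ you invoke, and the observation that once the top move indices agree the cycles $\permutation_{j,d-1}$ and $\tilde{\permutation}_{j,d-1}$ coincide, are both correct.

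The gap is in how you propose to verify that single claim. First, a slip: $\moveindex_{d-1}(D)$ is defined from $\vec{\rank}^{[0]}=\vec{\rank}_D$, not from $\vec{\rank}^{[1]}$. More importantly, the two defining conditions do \emph{not} translate term by term under your dictionary. In the direction $\Delta=\Psi_d(D)$, ruling out a candidate $x=in+j+1>\moveindex_{d-1}$ for $\tilde{\moveindex}_{d-1}$ amounts to showing
\[
\rank_{\Delta,d-1,j+1}^{[d-2]}=\rank_{D,\permutationindex_{\underline{j+1},d-1}n+\underline{j+1}}\;\geq\;\rank_{D,x}\;>\;\rank_{D,x+n-1}+m=\rank_{\Delta,i,j}^{[d-2]}+m,
\]
and the first inequality $\rank_{D,\permutationindex n+\underline{j+1}}\geq\rank_{D,x}$ is \emph{not} formal---it is exactly the monotonicity property of Proposition~\ref{prop:from_Dyck_paths_to_admissible_invariant_subsets}(c). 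The dual direction (starting from $\Delta$ and showing $\moveindex_{d-1}(\Phi_d(\Delta))=\tilde{\moveindex}_{d-1}(\Delta)$) likewise needs a short case analysis using the strict inequality $\tilde{\moveindex}_{d-2}<\tilde{\moveindex}_{d-1}$ together with the row-monotonicity of $(c_{i,j})$. So the step you flag as ``rigorous bookkeeping'' actually requires a substantive input, namely Proposition~\ref{prop:from_Dyck_paths_to_admissible_invariant_subsets}(c); without it the argument does not close.
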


\begin{proof}
We proceed by induction on $d$. The case $d = 1$ is trivial. 
Assume the statement holds for all $d'<d$ with $d\geq2$, and consider the case $d$.

\noindent{}\textbf{Step} $1$.
We show that $\tilde{\moveindex}_u = \moveindex_u\circ\Phi_d$ and $\Psi_d\circ\Phi_d = \id$.

Fix 
\[
\Delta \isomorphic (c_{in+j} = c_{i,j})_{0\leq i\leq d-1,0\leq j\leq n-1} \in \Adm(dn,dm). 
\]
Let
\[
D\isomorphic (y_{in+j} = y_{i,j})_{0\leq i\leq d-1,0\leq j\leq n-1} := \Phi_d((c_{\bullet,\bullet}))\in \Young(dn,dm). 
\]
In other words, 
\[
\rank_{D,in+j} = \rank_{D,i,j} := \rank_{\Delta,in+j}^{[d-1]} = \rank_{\Delta,i,j}^{[d-1]}.
\]
For simplicity, denote
\[
\tilde{\moveindex}_u:= \tilde{\moveindex}_u((c_{i,j})),\quad \moveindex_u:= \moveindex_u((y_{i,j})),\qquad \forall 1\leq u\leq d-1.
\]

By definition, 
\[
(c_{i,j}^{[d-2]})_{0\leq i\leq d-2,\,0\leq j\leq n-1}
=\Phi_{d-1}((c_{i,j})_{0\leq i\leq d-2,\,0\leq j\leq n-1})
\in \Young((d-1)n,(d-1)m),
\]
and $c_{d-1,j}^{[d-2]}=c_{d-1,j}$. Thus,
\[
\tilde{\moveindex}_{d-1} = \max\{in+j+1:~0\leq i\leq d-2,0\leq j\leq n-1, \rank_{\Delta,d-1,j+1} \leq \rank_{\Delta,i,j}^{[d-2]} + m\}.
\]
By induction,
\[
\moveindex_{d-2}((c_{i,j}^{[d-2]})_{0\leq i\leq d-2,0\leq j\leq n-1}) = 
\tilde{\moveindex}_{d-2} < \tilde{\moveindex}_{d-1}.
\]

\begin{enumerate}[wide,labelwidth=!,labelindent=0pt,itemindent=!]
\item
We show that $\moveindex_{d-1} = \tilde{\moveindex}_{d-1}$.
\begin{enumerate}[wide,labelwidth=!,labelindent=0pt,itemindent=!,label=(1.\arabic*)]
\item
If $\tilde{\moveindex}_{d-1} < x = in+j+1 \leq \tilde{\moveindex}_{d-1} + n-1$, with $0\leq j\leq n-1$, then by definition, we have
\[
\rank_{D,x} = \rank_{\Delta,x}^{[d-1]} = \rank_{\Delta,d-1,j+1} > \rank_{\Delta,i,j}^{[d-2]} + m = \rank_{\Delta,i+1,j}^{[d-1]} + m = \rank_{D,x+n-1} + m.
\]
Here, the inequality follows from the definition of $\tilde{\moveindex}_{d-1}$.

\item
If $\tilde{\moveindex}_{d-2} + n < \tilde{\moveindex}_{d-1} + n \leq x = in+j \leq (d-1)n$, with $0\leq j\leq n-1$, then $x-n > \tilde{\moveindex}_{d-2} = \moveindex_{d-2}((c_{i,j}^{[d-2]})_{0\leq i\leq d-2,0\leq j\leq n-1})$. It follows that
\[
\rank_{D,x} = \rank_{\Delta,x}^{[d-1]} = \rank_{\Delta,x-n}^{[d-2]} > \rank_{\Delta,x-1}^{[d-2]} + m = \rank_{\Delta,x+n-1}^{[d-1]} + m = \rank_{D,x+n-1} + m.
\]

\item
Notice that by Lemma \ref{lem:semimodule_vs_c_ij}.$(1)$, we have $\rank_{\Delta,d-1,j+1} \leq \rank_{\Delta,d-1,j} + m$, $\forall j \in \mathbb{Z}/n \isomorphic \{0,1,\cdots,n-1\}$.
Now, for $x = \tilde{\moveindex}_{d-1} = kn + \ell$, with $0\leq \ell \leq n-1$, we have
\[
\rank_{D,x} = \rank_{\Delta,x}^{[d-1]} = \rank_{\Delta,d-1,\ell} \leq \rank_{\Delta,d-1,\ell-1} + m = \rank_{\Delta,kn+\ell+n-1}^{[d-1]} + m = \rank_{D,x+n-1} + m.
\]
\end{enumerate}
Altogether, by definition, this shows that $\moveindex_{d-1} = \tilde{\moveindex}_{d-1}$, as desired.

\item
Now, by definition, we have $\tilde{\permutation}_{j,d-1} = \permutation_{j,d-1}$, $\forall 0\leq j\leq n-1$. Then for any $0\leq i\leq d-1$, $0\leq j\leq n-1$, it follows that
\[
\rank_{D,i,j}^{[1]} = \rank_{D,\permutation_{j,d-1}(i),j} = \rank_{\Delta,\tilde{\permutation}_{j,d-1}(i),j}^{[d-1]} = \rank_{\Delta,i,j}^{[d-2]},~~\Rightarrow~~y_{i,j}^{[1]} = c_{i,j}^{[d-2]}.
\]
Then by definition, we have 
\[
y_{d-1,j}^{[d-1]} = y_{d-1,j}^{[1]} = c_{d-1,j}^{[d-2]} = c_{d-1,j},\quad\forall 0\leq j\leq n-1.
\]
Moreover, by definition and inductive hypothesis, we have
\begin{eqnarray*}
(y_{i,j}^{[d-1]})_{0\leq i\leq d-2,0\leq j\leq n-1} &=& \Psi_{d-2}((y_{i,j}^{[1]})_{0\leq i\leq d-2,0\leq j\leq n-1}) = \Psi_{d-2}((c_{i,j}^{[d-2]})_{0\leq i\leq d-2,0\leq j\leq n-1})\\
&=& \Psi_{d-2}\circ\Phi_{d-2}((c_{i,j})_{0\leq i\leq d-2,0\leq j\leq n-1}) = (c_{i,j})_{0\leq i\leq d-2,0\leq j\leq n-1},
\end{eqnarray*}
and for each $1\leq u\leq d-2$.
\[
\moveindex_u = \moveindex_u((y_{i,j}^{[1]})_{0\leq i\leq d-2,0\leq j\leq n-1}) = \moveindex_u\circ\Phi_{d-2}((c_{i,j})_{0\leq i\leq d-2,0\leq j\leq n-1}) = \tilde{\moveindex}_u((c_{i,j})_{0\leq i\leq d-2,0\leq j\leq n-1}) = \tilde{\moveindex}_u.
\]
Altogether, as $\Psi_d((y_{i,j})_{0\leq i\leq d-1,0\leq j\leq n-1}) =  (y_{i,j}^{[d-1]})_{0\leq i\leq d-1,0\leq j\leq n-1}$, this shows that
\[
\Psi_d\circ\Phi_d((c_{i,j})_{0\leq i\leq d-1,0\leq j\leq n-1}) =  (y_{i,j}^{[d-1]})_{0\leq i\leq d-1,0\leq j\leq n-1} =  (c_{i,j})_{0\leq i\leq d-1,0\leq j\leq n-1}.
\]
\end{enumerate}
This completes \textbf{Step} $1$.

\noindent{}\textbf{Step} $2$.
We show that $\moveindex_u = \tilde{\moveindex}_u\circ\Psi_d$ and $\Phi_d\circ\Psi_d = \id$.

Fix 
\[
D \isomorphic (y_{in+j} = y_{i,j})_{0\leq i\leq d-1,0\leq j\leq n-1} \in \Young(dn,dm).
\]
Let
\[
\Delta \isomorphic (c_{in+j} = c_{i,j})_{0\leq i\leq d-1,0\leq j\leq n-1} := \Psi_d((y_{\bullet,\bullet}))\in \Adm(dn,dm).
\] 
So, $\rank_{\Delta,in+j} = \rank_{\Delta,i,j} := \rank_{D,in+j}^{[d-1]} = \rank_{D,i,j}^{[d-1]}$.
For simplicity, denote
\[
\moveindex_u:= \moveindex_u((y_{i,j})),\quad \tilde{\moveindex}_u:= \tilde{\moveindex}_u((c_{i,j})),\qquad \forall 1\leq u\leq d-1.
\]

By definition, $c_{d-1,j} = y_{d-1,j}^{[1]}$,
$(y_{i,j}^{[1]})_{0\leq i < d-1,0\leq j < n}\in\Young((d-1)n,(d-1)m)$, and $(c_{i,j})_{0\leq i < d-1,0\leq j < n} = \Psi_{d-2}((y_{i,j}^{[1]})_{0\leq i < d-1,0\leq j < n})$.
By induction,
\begin{eqnarray*}
(c_{i,j}^{[d-2]})_{0\leq i < d-1,0\leq j < n} &=& \Phi_{d-2}((c_{i,j})_{0\leq i < d-1,0\leq j < n})\\
&=& \Phi_{d-2}\circ\Psi_{d-2}((y_{i,j}^{[1]})_{0\leq i < d-1,0\leq j < n}) = (y_{i,j}^{[1]})_{0\leq i < d-1,0\leq j < n},
\end{eqnarray*}
and for each $1\leq u\leq d-2$,
\[
\tilde{\moveindex}_u = \tilde{\moveindex}_u((c_{i,j})_{0\leq i < d-1,0\leq j < n}) = \tilde{\moveindex}_u\circ\Psi_{d-2}((y_{i,j}^{[1]})_{0\leq i < d-1,0\leq j < n}) = \moveindex_u((y_{i,j}^{[1]})_{0\leq i < d-1,0\leq j < n}) = \moveindex_u.
\]

\begin{enumerate}[wide,labelwidth=!,labelindent=0pt,itemindent=!]
\item
We show that $\tilde{\moveindex}_{d-1} = \moveindex_{d-1}$.
\begin{enumerate}[wide,labelwidth=!,labelindent=0pt,itemindent=!,label=(1.\arabic*)]
\item
Fix any $\moveindex_{d-1} < x = in+j+1 \leq (d-1)n$, with $0\leq i\leq d-2$, $0\leq j\leq n-1$.
Define $0\leq \underline{j+1}\leq n-1$ by $j+1 \equiv \underline{j+1} \mod n$. By definition of $\permutationindex_{\underline{j+1},d-1}$, we have
$x = in + j+1 \geq \permutationindex_{\underline{j+1},d-1}\cdot n + \underline{j+1}$.
So,
\[
\rank_{\Delta,d-1,j+1} = \rank_{D,d-1,j+1}^{[1]}
= \rank_{D,\permutationindex_{\underline{j+1},d-1}+\underline{j+1}} \geq \rank_{D,x} > \rank_{D,x+n-1} + m = \rank_{D,x-1}^{[1]} + m = \rank_{\Delta,i,j}^{[d-2]} + m.
\]
Here, the first inequality follows from Proposition \ref{prop:from_Dyck_paths_to_admissible_invariant_subsets}.$(c)$.

\item
For $x = in+j+1 = \moveindex_{d-1}$ with $0\leq i\leq d-2$, $0\leq j\leq n-1$,
\[
\rank_{\Delta,d-1,j+1} = \rank_{D,d-1,j+1}^{[1]} = \rank_{D,x} \leq \rank_{D,x-1} + m = \rank_{D,i,j}^{[1]} + m = \rank_{\Delta,i,j}^{[d-2]} + m.
\]
\end{enumerate}
Altogether, by definition, this shows that $\tilde{\moveindex}_{d-1} = \moveindex_{d-1}$, as desired.

\item
Now by definition, $\permutation_j = \tilde{\permutation}_j$ for all $0\leq j\leq n-1$. For $0\leq i\leq d-1$, $0\leq j\leq n-1$, it follows that
\[
\rank_{\Delta,i,j}^{[d-1]} = \rank_{\Delta,\tilde{\permutation}_j^{-1}(i),j} = \rank_{D,\permutation_j^{-1}(i),j}^{[d-1]} = \rank_{D,i,j},~~\Rightarrow~~c_{i,j}^{[d-1]} = y_{i,j}.
\]
In other words, we have
\[
\Phi_d\circ\Psi_d((y_{i,j})_{0\leq i\leq d-1,0\leq j\leq n-1}) = (c_{i,j}^{[d-1]})_{0\leq i\leq d-1,0\leq j\leq n-1} = (y_{i,j})_{0\leq i\leq d-1,0\leq j\leq n-1}.
\]
\end{enumerate}
This completes the proof of Proposition \ref{prop:bijection_between_admissible_invariant_subsets_and_Dyck_paths}.
\end{proof}

\begin{corollary}\label{cor:enhanced_rank_is_a_bijection}
Theorem~\ref{thm:admissible_invariant_subsets_vs_Dyck_paths}(3) holds:  
for any $D\cong(y_{i,j})\in\Young(dn,dm)$, the map $\hat{\rank}_D$ defines a bijection
\[
\hat{\rank}_D:\overline{\cR}_{dn,dm}^+\xrightarrow{\;\simeq\;}\mathbb{N},
\]
such that $\lfloor\tfrac{\hat{\rank}_D}{d}\rfloor=\rank$, and 
\[
\hat{\rank}_D(\cR_{dn,dm}^+\setminus D)=\mathbb{N}\setminus\Delta,
\qquad \Delta:=\Psi_d(D)\in\Adm(dn,dm).
\]
\end{corollary}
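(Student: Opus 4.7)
The three assertions decouple, and the plan is to dispatch them in order: (b) is essentially a definitional check, (a) reduces to inverting $\hat{\rank}_D$ by hand, and (c) then follows by matching the decomposition of $\Delta$ into $dn$-residue classes with the image of $\hat{\rank}_D$. For (b), the identity $\lfloor \hat{\rank}_D/d\rfloor=\rank$ is immediate from $\hat{\rank}_D(x,y)=d\cdot\rank(x,y)+\permutation_j^{-1}(i)$ together with $0\leq\permutation_j^{-1}(i)\leq d-1$.

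For (a), I will construct a two-sided inverse explicitly. Given $N\in\mathbb{N}$, write $N=dr+k$ uniquely with $0\leq k\leq d-1$; since $\gcd(m,n)=1$, there is a unique $j\in[0,n-1]$ with $mj\equiv r\pmod{n}$, and then setting $i:=\permutation_j(k)$, $x:=in+j$, and $y:=(mx-r)/n$ produces a preimage in $\overline{\cR}_{dn,dm}^+$ (integrality of $y$ comes from the congruence on $j$, and the defining inequality $y\leq\tfrac{m}{n}x$ comes from $r\geq 0$). Uniqueness is clear: the value of $\rank(x,y)$ recovers $j$ through $r\bmod n$, and $\permutation_j$ is a bijection of $[0,d-1]$, so $i$ is recovered from $\permutation_j^{-1}(i)=k$.

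For (c), the crucial ingredient is the compatibility identity
\[
\rank_{\Delta,i,j}\;=\;\rank_{D,\permutation_j(i),j},\qquad 0\leq i\leq d-1,\ 0\leq j\leq n-1,
\]
which falls out of $c_{i,j}=y_{i,j}^{[d-1]}=y_{\permutation_j(i),j}-(\permutation_j(i)-i)m$ by a one-line computation, using the construction of $\Psi_d$ in Proposition~\ref{prop:from_Dyck_paths_to_admissible_invariant_subsets}. Using this, I would decompose $\Delta=\sqcup_{i,j}(\hat{b}_{i,j}+dn\mathbb{N})$ and observe that $\hat{b}_{i,j}\equiv i\pmod{d}$ pins down $i=k=N\bmod d$ for any $N\in\Delta$; a direct unwinding then shows $N=dr+k\in\Delta$ iff $r\geq \rank_{\Delta,k,j'}$, where $j'\in[0,n-1]$ is uniquely determined by $mj'\equiv r\pmod{n}$. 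Feeding this through the inverse from the previous paragraph converts the condition into $\rank(x,y)\geq\rank_{D,i,j}$, equivalently $y\leq y_x$, which is exactly $(x,y)\in D$. The two inclusions $\hat{\rank}_D(\overline{D})\subseteq\Delta$ and $\hat{\rank}_D(\cR_{dn,dm}^+\setminus D)\subseteq\mathbb{N}\setminus\Delta$ then upgrade to equalities by the bijectivity of (a).

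The only real obstacle is tracking indices through this last argument, where one alternates between two descriptions of $\Delta$ (by $dn$-residue classes versus by minimal $dn$-generators in each class) and translates between $(i,j)$ on the $\Delta$-side and $(\permutation_j(i),j)$ on the $D$-side. It is worth isolating the compatibility $\rank_{\Delta,i,j}=\rank_{D,\permutation_j(i),j}$ as a preparatory observation so that (c) becomes pure bookkeeping rather than a tangle of iterative definitions.
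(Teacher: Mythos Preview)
Your proposal is correct and follows essentially the same route as the paper. The only cosmetic difference is in part (a): you construct the inverse of $\hat{\rank}_D$ explicitly by decoding $N=dr+k$, whereas the paper observes that for each fixed $x=in+j$ the map $y\mapsto\hat{\rank}_D(x,y)$ is a bijection onto the residue class $\hat{a}_{\permutation_j^{-1}(i),j}+dn\mathbb{Z}$, and these classes exhaust $\mathbb{Z}/dn$; but this is just your inverse construction viewed from the other side. For (c), both you and the paper isolate the compatibility $\rank_{\Delta,i,j}=\rank_{D,\permutation_j(i),j}$, rewrite $\hat{b}_{k,\ell}=d\cdot\rank_{D,\permutation_\ell(k),\ell}+k$, and reduce membership in $\Delta$ to the inequality $y\leq y_x$.
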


\begin{proof}
Identify each box with its top-left vertex. Then 
$\rank_{\Delta,i,j}=\rank_{D,\permutation_j(i),j}$.
 Recall
\[
\hat{\rank}_D(in+j,y):=d\cdot\rank(x,y)+\permutation_j^{-1}(i)
=d(m(in+j)-ny)+\permutation_j^{-1}(i),\quad 0\leq i\leq d-1, 0\leq j\leq n-1.
\]
Thus $\lfloor\hat{\rank}_D/d\rfloor=\rank$.

Next, for fixed $i,j$, $\hat{\rank}_D$ maps 
$\{(in+j,y):y\in\mathbb{Z}\}$ bijectively onto an arithmetic progression $a_{\permutation_j^{-1}(i),j}+dn\mathbb{Z}$. 
Since the residues form a complete set mod $dn$, $\hat{\rank}_D$ is a bijection 
$\{(x,y):0\leq x\leq dn-1\}\to\mathbb{Z}$. 
Moreover,
\[
\hat{\rank}_D(in+j,y)\geq0 
\iff y\leq a_{i,j} \iff (in+j,y)\in\overline{\cR}_{dn,dm}^+.
\]
So $\hat{\rank}_D$ restricts to $\overline{\cR}_{dn,dm}^+\to\mathbb{N}$ bijectively.

Finally, denote $(c_{i,j}) := (y_{i,j}^{[d-1]})$. Note that $A(\Delta) = \{\hat{b}_{k,\ell}:~0\leq k \leq d-1,0\leq \ell \leq n-1\}$, with
\[
\hat{b}_{k,\ell} = \hat{a}_{k,\ell} - dnc_{k,\ell} = d(m(kn+\ell) - nc_{k,\ell}) + k = d\cdot\rank_{\Delta,k,\ell} + k = d\cdot\rank_{D,\permutation_{\ell}(k),\ell} + k.
\]
Thus, for $x = in + j$ with $0\leq i\leq d-1$, $0\leq j\leq n-1$,
\begin{eqnarray*}
&&\hat{\rank}_D(x,y) \in \Delta~~\Leftrightarrow~~\hat{\rank}_D(x,y) = d\cdot\rank(x,y) + \permutation_j^{-1}(i) \geq \hat{b}_{\permutation_j^{-1}(i),j} = d\cdot\rank_{D,i,j} + \permutation_j^{-1}(i)\\
&\Leftrightarrow& \rank(x,y) = mx - ny \geq \rank_{D,i,j} = mx - ny_x ~~\Leftrightarrow~~ y \leq y_x.
\end{eqnarray*}
Hence $\hat{\rank}_D(\overline{D})=\Delta$, completing the proof.
\end{proof}

\subsection{The statistics}

\begin{definition}
Fix $\Delta \cong (c_{i,j}) \in \Adm(dn,dm)$. 
\begin{enumerate}[wide,labelwidth=!,labelindent=0pt,itemindent=!]
\item An element $b \notin \Delta$ is an \textbf{$(dm)$-cogenerator} if $b+dm \in \Delta$.
\item Define
\[
\GC((c_{i,j})) = \GC(\Delta) := 
\{(a,b) \mid b>a,\; a\in A(\Delta),\; b \text{ an $(dm)$-cogenerator of }\Delta\}.
\]
For a $(dn)$-generator $a$ of $\Delta$, set
\[
\GC(a;(c_{i,j})) = \GC(a;\Delta) := \{(a,b) \in \GC(\Delta)\}.
\]
\end{enumerate}
\end{definition}

\begin{lemma}[{\cite[Lem.~4.1]{GMO25}}]\label{lem:cell_dimesion_via_generator-cogenerators}
For any $a \in A(\Delta)$,
\[
|\Gaps(a)| - |\Gaps(a+dm)| = |\GC(a;\Delta)|.
\]
In particular,
\[
\dim(\Delta) = |\GC(\Delta)|.
\]
\end{lemma}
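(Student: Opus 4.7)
The plan is to prove the pointwise identity for a single generator $a \in A(\Delta)$ by constructing an explicit injection $\Gaps(a+dm) \hookrightarrow \Gaps(a)$, identify its complement with the set of $dm$-cogenerators that exceed $a$, and then sum over $a \in A(\Delta)$ using the evident disjoint decomposition $\GC(\Delta) = \sqcup_{a\in A(\Delta)} \GC(a;\Delta)$.

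First, I would record two elementary observations. Since $\Delta\in\Adm(dn,dm)$ is $(dm)$-invariant and $a \in A(\Delta)\subset\Delta$, both $a$ and $a+dm$ lie in $\Delta$. Hence
\[
\Gaps(a) = \{b \in \mathbb{N} : b > a,\ b\notin\Delta\},\qquad \Gaps(a+dm) = \{b \in \mathbb{N} : b > a+dm,\ b\notin\Delta\}.
\]
Next, I would define the translation map
\[
\phi:\Gaps(a+dm) \longrightarrow \Gaps(a),\qquad b\mapsto b-dm.
\]
The map is well-defined and injective: $b > a+dm$ gives $b-dm > a$, and if $b-dm\in\Delta$ then $(dm)$-invariance would force $b=(b-dm)+dm\in\Delta$, contradicting $b\in\Gaps(a+dm)$.

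The key step is to identify the complement of the image. A gap $c\in\Gaps(a)$ lies outside the image of $\phi$ exactly when $c+dm\notin\Gaps(a+dm)$; since $c+dm > a+dm$ automatically, this is equivalent to $c+dm\in\Delta$. But an integer $c\notin\Delta$ with $c+dm\in\Delta$ is by definition a $(dm)$-cogenerator. Therefore
\[
\Gaps(a) \setminus \phi(\Gaps(a+dm)) = \{b : b > a,\ b \text{ is a } (dm)\text{-cogenerator}\} = \GC(a;\Delta),
\]
which immediately yields $|\Gaps(a)| - |\Gaps(a+dm)| = |\GC(a;\Delta)|$.

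Finally, summing over $a\in A(\Delta)$ and using $\dim\Delta = \sum_{a}|\Gaps(a)| - \sum_a|\Gaps(a+dm)|$ together with the obvious partition $\GC(\Delta) = \sqcup_{a\in A(\Delta)}\GC(a;\Delta)$ gives $\dim\Delta = |\GC(\Delta)|$. There is no serious obstacle here; the only subtlety is ensuring that $a$ and $a+dm$ are themselves in $\Delta$ so that neither becomes a gap, which is guaranteed by $a\in A(\Delta)\subset\Delta$ and $(dm)$-invariance.
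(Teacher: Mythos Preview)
Your proof is correct. The paper does not give its own proof of this lemma; it is quoted from \cite[Lem.~4.1]{GMO25} without argument, so there is nothing in the present paper to compare against. Your translation map $\phi:b\mapsto b-dm$ and the identification of its coimage with the $(dm)$-cogenerators exceeding $a$ is the natural direct argument, and the only point requiring care---that $b-dm\notin\Delta$ follows from $(dm)$-invariance---you handle correctly. One cosmetic remark: $\GC(a;\Delta)$ is literally a set of pairs $(a,b)$, so your displayed equality identifies it with a set of second coordinates; this is harmless since the bijection $b\leftrightarrow(a,b)$ is obvious, but you may wish to phrase it as ``is in bijection with'' rather than ``$=$''.
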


We now reinterpret the lemma in our terminology.  
Fix $\Delta \cong (c_{i,j}) \in \Adm(dn,dm)$. Recall
\[
\hat{\rank}_{\Delta}(x=in+j,y) := d\cdot \rank(x,y)+i 
= d(mx-ny)+i,\qquad 0\leq i\leq d-1,\; 0\leq j\leq n-1.
\]
Moreover, $\Delta = \bigsqcup_{a\in A(\Delta)} \bigl(a+dn\mathbb{N}\bigr)$, where
\[
A(\Delta) = \{\hat{a}_{i,j} - dn c_{i,j} : 0\leq i\leq d-1,\;0\leq j\leq n-1\}.
\]
That is,
\begin{equation}
\hat{a}_{i,j} - dn c_{i,j}
= d(m(in+j)-nc_{i,j})+i
= d\cdot \rank_{\Delta,i,j}+i
= \hat{\rank}_{\Delta}(in+j,c_{i,j})
=: \hat{\rank}_{\Delta,i,j}.
\end{equation}

Note $c_{i,n} := c_{i,0}+m$, which is compatible with
$\hat{\rank}_{\Delta,i,n}=\hat{\rank}_{\Delta,i,0}$.  
For any $z_{i,j}\in\mathbb{Z}$, set
\begin{equation}
\hat{z}_{i,j}:=\hat{\rank}_{\Delta}(in+j,z_{i,j}) = d(m(in+j) - nz_{i,j}) + i = \hat{a}_{i,j} - dnz_{i,j},\quad 0\leq i\leq d-1, 0\leq j\leq n-1.
\end{equation}

\begin{lemma}\label{lem:characterization_of_cogenerator}
For all $z_{i,j}\in\mathbb{Z}$, $0\leq i\leq d-1$, $0\leq j\leq n-1$,
\begin{align*}
\hat{z}_{i,j} \text{ is an $(dm)$-cogenerator of }\Delta 
&\;\;\Longleftrightarrow\;\;
\hat{\rank}_{\Delta,i,j} > \hat{\rank}_{\Delta}(in+j,z_{i,j}) 
\geq \hat{\rank}_{\Delta,i,j+1}-dm \\
&\;\;\Longleftrightarrow\;\;
\rank_{\Delta,i,j} > \rank(in+j,z_{i,j}) 
\geq \rank_{\Delta,i,j+1}-m \\
&\;\;\Longleftrightarrow\;\;
c_{i,j} < z_{i,j} \leq c_{i,j+1}.
\end{align*}
\end{lemma}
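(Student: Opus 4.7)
The proof reduces cleanly to two residue-class computations and three elementary rewrites, so the plan is to address them in order without grinding through the arithmetic twice.

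First, I would establish the two computational pillars. Since $\hat{z}_{i,j} = \hat{a}_{i,j} - dn z_{i,j}$ lies in the residue class of $\hat{a}_{i,j}$ modulo $dn$, and the unique $dn$-generator of $\Delta$ in that residue class is $\hat{a}_{i,j} - dn c_{i,j}$, membership of $\hat{z}_{i,j}$ in $\Delta$ is equivalent to $\hat{z}_{i,j} \geq \hat{a}_{i,j} - dn c_{i,j}$, i.e., $z_{i,j} \leq c_{i,j}$. Hence $\hat{z}_{i,j} \notin \Delta$ iff $z_{i,j} > c_{i,j}$. Similarly, $\hat{z}_{i,j} + dm = \hat{a}_{i,j+1} - dn z_{i,j}$ lies in the residue class of $\hat{a}_{i,j+1}$ modulo $dn$ (using the identity $\hat{a}_{i,j+1} = \hat{a}_{i,j} + dm$; for $j = n-1$ this is compatible with the convention $\hat{a}_{i,n} \equiv \hat{a}_{i,0} \pmod{dn}$), and so $\hat{z}_{i,j} + dm \in \Delta$ iff $z_{i,j} \leq c_{i,j+1}$, where the convention $c_{i,n} := c_{i,0} + m$ makes the case $j = n-1$ work automatically (one checks $\hat{a}_{i,n} - dn c_{i,n} = \hat{a}_{i,0} - dn c_{i,0}$). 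Combining gives the last equivalence: $\hat{z}_{i,j}$ is a $(dm)$-cogenerator iff $c_{i,j} < z_{i,j} \leq c_{i,j+1}$.

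Next I would translate this into the two rank forms. Substituting $\rank_{\Delta,i,j} = m(in+j) - n c_{i,j}$ and $\rank(in+j, z_{i,j}) = m(in+j) - n z_{i,j}$ immediately converts $z_{i,j} > c_{i,j}$ into $\rank_{\Delta,i,j} > \rank(in+j, z_{i,j})$. For the right-hand inequality, using $\rank_{\Delta,i,j+1} = m(in+j+1) - n c_{i,j+1}$ (and for $j=n-1$ the compatible definition $\rank_{\Delta,i,n} = \rank_{\Delta,i,0}$ via the convention on $c_{i,n}$) gives $\rank_{\Delta,i,j+1} - m = m(in+j) - n c_{i,j+1}$, so $z_{i,j} \leq c_{i,j+1}$ is equivalent to $\rank(in+j, z_{i,j}) \geq \rank_{\Delta,i,j+1} - m$. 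Finally, multiplying by $d$ and adding the residue $i$ (which is the same for all three terms $\hat{\rank}_{\Delta,i,j}$, $\hat{\rank}_{\Delta}(in+j, z_{i,j})$, and $\hat{\rank}_{\Delta,i,j+1}$, the latter because $\hat{\rank}_{\Delta,i,j+1} - \hat{\rank}_{\Delta,i,j} = d(m - n(c_{i,j+1}-c_{i,j}))$) promotes these to the enhanced-rank inequalities $\hat{\rank}_{\Delta,i,j} > \hat{\rank}_{\Delta}(in+j,z_{i,j}) \geq \hat{\rank}_{\Delta,i,j+1} - dm$.

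There is no real obstacle here; the only subtle point is the boundary case $j = n-1$, where one must verify that the convention $c_{i,n} := c_{i,0} + m$ (and the resulting $\hat{\rank}_{\Delta,i,n} = \hat{\rank}_{\Delta,i,0}$) correctly encodes the fact that $\hat{z}_{i,n-1} + dm$ falls into the residue class of $\hat{a}_{i,0}$, not $\hat{a}_{i,n-1}$. Once this is recorded, the three equivalences are a single line of algebra each.
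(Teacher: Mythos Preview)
Your proof is correct and takes essentially the same approach as the paper's: both rely on the observation that $\hat{z}_{i,j}$ and $\hat{z}_{i,j}+dm$ lie in the residue classes of $\hat{\rank}_{\Delta,i,j}$ and $\hat{\rank}_{\Delta,i,j+1}$ modulo $dn$ respectively, reducing the cogenerator condition to the inequalities with the corresponding $dn$-generators. The paper's proof is a two-line sketch of exactly this; you have simply unpacked the algebra and the boundary case $j=n-1$ in full.
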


\begin{proof}
Since $\hat{z}_{i,j} \equiv \hat{\rank}_{\Delta,i,j} (\in A(\Delta)) \pmod{dn}$ and 
$\hat{z}_{i,j}+dm \equiv \hat{\rank}_{\Delta,i,j+1} (\in A(\Delta)) \pmod{dn}$, the claim follows directly from the definition of a $(dm)$-cogenerator.
\end{proof}

\begin{definition}
Fix $\Delta \cong (c_{in+j}) = (c_{i,j})_{0\leq i\leq d-1,0\leq j\leq n-1}\in \Adm(dn,dm)$, with $c_{i,n}:=c_{i,0}+m$.
\begin{enumerate}[wide,labelwidth=!,labelindent=0pt,itemindent=!]
\item 
For $(p,q)\in\mathbb{Z}^2$, $p=kn+\ell$ with $0\leq p\leq dn-1$, $0\leq \ell\leq n-1$, define
\begin{align}
\cG(\Delta;(p,q)) 
:=&\;\#\{(x=in+j,y)\in\mathbb{Z}^2 : 0\leq x\leq dn-1,\;
\hat{\rank}_{\Delta}(x,y) > \hat{\rank}_{\Delta}(p,q), \nonumber\\
&\hspace{5em}\rank_{\Delta,i,j+1}-m \leq \rank(x,y) < \rank_{\Delta,i,j}\}. 
\end{align}
Here, by $x = in+j$, we mean $0\leq j\le n-1$.
Equivalently, by Lemma~\ref{lem:characterization_of_cogenerator},
\begin{align*}
\cG(\Delta;(p,q)) 
=&\;\#\{(x=in+j,y)\in\mathbb{Z}^2 \mid 0\leq x<dn,\;
\rank(x,y) > \rank(p,q),\; c_{i,j}<y\leq c_{i,j+1}, \\
&\;\;\text{or } k<i\leq d-1,\;\rank(x,y)=\rank(p,q),\; c_{i,j}<y\leq c_{i,j+1}\}.
\end{align*}

\item
Define
\[
\cG(\Delta) := (\cG_0(\Delta),\dots,\cG_{dn-1}(\Delta)),
\]
the nondecreasing reordering of $\cG(\Delta;(p,c_p))$ for $0\leq p\leq dn-1$.
\end{enumerate}
\end{definition}
\noindent\textbf{Note.} By Lemmas~\ref{lem:cell_dimesion_via_generator-cogenerators} and \ref{lem:characterization_of_cogenerator}, for any $\Delta\cong(c_{i,j})\in \Adm(dn,dm)$,
\begin{equation}
\dim \Delta = |\GC(\Delta)| = |\cG(\Delta)| := \sum_{x=0}^{dn-1}\cG_x(\Delta).
\end{equation}

\begin{definition}\label{def:sweep_map}
Let $(y_{in+j})=(y_{i,j})_{0\leq i\leq d-1,0\leq j\leq n-1}$ be nonnegative integers with $y_0=0$. Set $y_{dn}=y_{d,0}:=dm$.  
For $(p,q)\in\mathbb{Z}^2$, $0\leq p\leq dn-1$:
\begin{enumerate}[wide,labelwidth=!,labelindent=0pt,itemindent=!]
\item 
For $0\leq x\leq dn-1$, define
\begin{align}\label{eqn:sweep_map-index_functions}
\epsilon((y_{\bullet,\bullet});(p,q);x) :=&\; \#\{0<y\leq y_{x+1} \mid \rank(x,y)>\rank(p,q)\} \\
&- \#\{0<y\leq y_x \mid \rank(x,y)>\rank(p,q)\}, \nonumber\\
\eta((y_{\bullet,\bullet});(p,q);x) :=&\; \delta_{[p,dn)}(x)\Bigl(\#\{0<y\leq y_{x+1} \mid \rank(x,y)=\rank(p,q)\} \\
&- \#\{0<y\leq y_x \mid \rank(x,y)=\rank(p,q)\}\Bigr).\nonumber
\end{align}
Here, $\delta_{[p,dn)}(x)$ is defined as $1$ if $p\leq x < dn$, and $0$ else.

\item
For $0\leq j\leq n-1$, set
\begin{align}
\epsilon_j((y_{\bullet,\bullet});(p,q)) &:= \sum_{i=0}^{d-1} \epsilon((y_{\bullet,\bullet});(p,q);in+j), \quad 
\epsilon((y_{\bullet,\bullet});(p,q)):=\sum_{j=0}^{n-1}\epsilon_j((y_{\bullet,\bullet});(p,q)), \\
\eta_j((y_{\bullet,\bullet});(p,q)) &:= \sum_{i=0}^{d-1} \eta((y_{\bullet,\bullet});(p,q);in+j), \quad
\eta((y_{\bullet,\bullet});(p,q)):=\sum_{j=0}^{n-1}\eta_j((y_{\bullet,\bullet});(p,q)).
\end{align}
\noindent{}\textbf{Note}: if $\rank(p,q)=\rank(p',q')$, then 
$\epsilon_j((y_{\bullet,\bullet});(p,q)) = \epsilon_j((y_{\bullet,\bullet});(p',q'))$.

\item
\textbf{Define}
\begin{equation}\label{eqn:sweep_map}
\zeta((y_{\bullet,\bullet});(p,q)) := \epsilon((y_{\bullet,\bullet});(p,q)) +  \eta((y_{\bullet,\bullet});(p,q)).
\end{equation}

\item
({\color{blue}sweep map})
Finally, define the \emph{sweep map}
\[
\zeta((y_{\bullet,\bullet})) := (\zeta_0((y_{\bullet,\bullet})),\dots,\zeta_{dn-1}((y_{\bullet,\bullet}))),
\]
the nondecreasing reordering of $\zeta((y_{\bullet,\bullet});(p,y_p))$ for $0\leq p\leq dn-1$.
\end{enumerate}
\end{definition}
\noindent\textbf{Note.} If $D\cong(y_{in+j}=y_{i,j})\in \Young(dn,dm)$, then
\begin{align*}
\epsilon(D;(p,q)) &= \#\{(x,y): 0\leq x\leq dn-1,\; y_x<y\leq y_{x+1},\; \rank(x,y)>\rank(p,q)\},\\
\eta(D;(p,q)) &= \#\{(x,y): p\leq x\leq dn-1,\; y_x<y\leq y_{x+1},\; \rank(x,y)=\rank(p,q)\}.
\end{align*}

\begin{proposition}\label{prop:sweep_map_vs_bijection}
We have
\[
\zeta = \cG\circ\Psi_d.
\]
Moreover, for $D\cong(y_{i,j})\in \Young(dn,dm)$ with 
$\Delta:=\Psi_d(D)\cong(c_{i,j})\in\Adm(dn,dm)$, and any $p=kn+\ell$ with $0\leq k\leq d-1$, $0\leq \ell\leq n-1$, we have
\[
\cG((c_{i,j});(p,c_p))
= \zeta\bigl((y_{i,j});(\permutation_\ell(k)n+\ell,\; y_{\permutation_\ell(k),\ell})\bigr).
\]
In particular,
\[
e(\Delta) = |(c_{i,j})| = |D|,\quad \dim\Delta = |\zeta(D)| = \codinv(D).
\]
\end{proposition}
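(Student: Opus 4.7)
The plan is to establish the componentwise identity (the second displayed equation) first; the identity $\zeta = \cG \circ \Psi_d$ then follows by taking nondecreasing reorderings, and the size relations are easy consequences.

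For the componentwise identity, fix $D \cong (y_{i,j}) \in \Young(dn, dm)$, let $\Delta := \Psi_d(D) \cong (c_{i,j}) \in \Adm(dn,dm)$, take $p = kn + \ell$ with $0 \leq k \leq d-1$, $0 \leq \ell \leq n-1$, and set $p' := \permutation_\ell(k)\, n + \ell$, $q' := y_{\permutation_\ell(k), \ell}$. From Proposition~\ref{prop:bijection_between_admissible_invariant_subsets_and_Dyck_paths} we have the key formula $c_{i,j} = y_{\permutation_j(i), j} - (\permutation_j(i) - i)\,m$; in particular, a short computation gives $\rank(p, c_p) = \rank_{\Delta, k, \ell} = \rank_{D, \permutation_\ell(k), \ell} = \rank(p', q')$, so the rank of the basepoint is preserved under the passage $\Delta \leftrightarrow D$. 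The central geometric observation is then that the substitution
$$(i, j, y) \longmapsto (i', j, y'), \qquad i' := \permutation_j(i), \quad y' := y + (i' - i)\,m,$$
is rank-preserving: indeed, $\rank(i'n+j, y') - \rank(in+j, y) = mn(i' - i) - n(i' - i)\,m = 0$. Moreover, the lower bound $c_{i,j} < y$ translates exactly into $y_{i', j} < y'$, via the formula $c_{i,j} = y_{i', j} - (i'-i)m$.

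The delicate point is matching the upper bound $y \leq c_{i, j+1}$ (with $c_{i, n} := c_{i,0} + m$) with the sweep-map condition $y' \leq y_{i', j+1}$ (with $y_{i', n} := y_{i'+1, 0}$ and $y_{d-1, n} := dm$), because in general $\permutation_{j+1}(i) \neq \permutation_j(i)$. My plan is to analyze this column-by-column, using the cyclic structure $\permutation_{j, u} = (\permutationindex_{j, u}\ \cdots\ u)$ together with the inequality
$$\permutationindex_{j, u} + \lfloor (j+1)/n \rfloor \;\geq\; \permutationindex_{j+1, u} \;\geq\; \permutationindex_{j, u} + \lfloor (j+1)/n \rfloor - 1$$
recorded in the text. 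The boxes at which the naive substitution slightly fails at the upper boundary turn out to be exactly those accounted for by the equal-rank term on the sweep side: the second case in the definition of $\cG$ (``$k < i \leq d-1$ and $\rank(x, y) = \rank(p, c_p)$'') matches the $\eta$-term ($x \geq p'$ and $\rank(x, y) = \rank(p', q')$) in the definition of $\zeta$. Tracking this matching is the main technical obstacle, and I expect an induction on $d$ organized around a single step of removing $\moveindex_{d-1}$ (in the recursive definition of $\Psi_d$ from Proposition~\ref{prop:from_Dyck_paths_to_admissible_invariant_subsets}) will handle the bookkeeping cleanly.

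Finally, the size relations follow at once. Since each $\permutation_j$ is a permutation of $\{0, \ldots, d-1\}$, we have $\sum_i(\permutation_j(i) - i) = 0$ and $\sum_i y_{\permutation_j(i), j} = \sum_i y_{i,j}$, hence
$$e(\Delta) \;=\; \sum_{i,j} c_{i,j} \;=\; \sum_{i,j}\bigl[y_{\permutation_j(i), j} - (\permutation_j(i) - i)\,m\bigr] \;=\; \sum_{i,j} y_{i,j} \;=\; |D|.$$
Summing the componentwise identity over $p$ together with Lemma~\ref{lem:cell_dimesion_via_generator-cogenerators} gives $\dim \Delta = |\cG(\Delta)| = |\zeta(D)|$. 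The remaining identity $|\zeta(D)| = \codinv(D)$ is a classical property of the sweep map on $(dn, dm)$-Dyck paths and I will verify it directly by a count over the cells of $D$ using the definition of $\dinv$ in terms of arm and leg lengths.
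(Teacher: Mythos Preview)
Your overall strategy is sound and matches the paper's, but the paper organizes the argument through an explicit decomposition $\zeta = \epsilon + \eta$ and $\cG = \tilde\epsilon + \tilde\eta$ (see (\ref{eqn:sweep_map}) and (\ref{eqn:sweep_map-index_functions_for_admissible_invariant_subsets})), and this separation is not just cosmetic---it removes most of the difficulty you anticipate for the strict-inequality part. Concretely, Lemma~\ref{lem:alternative_description_of_index_functions}(1) shows that $\epsilon_j$ is invariant under \emph{any} column-wise substitution of the form $y_{a,b}\mapsto \tilde y_{s_b(a),b}-(s_b(a)-a)m$ with $s_0(0)=0$, via a one-line telescoping argument; no induction on $d$ and no comparison of $\permutation_j$ with $\permutation_{j+1}$ is needed (Corollary~\ref{cor:invariance_of_epsilon_j}). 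Thus your ``delicate point'' about the mismatch of upper bounds simply disappears for the $\epsilon$-contribution. The genuine work lies entirely in the equal-rank piece $\eta$, and here the paper does exactly what you propose: a step-by-step passage through the levels $[d-u-1]\to[d-u]$ (Lemma~\ref{lem:invariance_of_eta}), using the cyclic form of $\permutation_{\ell,u}$ and the monotonicity from Proposition~\ref{prop:from_Dyck_paths_to_admissible_invariant_subsets}(c). Two short comparison lemmas (Lemmas~\ref{lem:sweep_map-compare_two_epsilon_functions_for_admissible_invariant_subsets} and \ref{lem:compare_two_eta_functions}) then match $\epsilon^{[d-1]},\eta^{[d-1]}$ with $\tilde\epsilon,\tilde\eta$, handling the boundary convention $c_{i,n}=c_{i,0}+m$ versus $y_{dn}=dm$.

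Your treatment of $e(\Delta)=|D|$ and of $|\zeta(D)|=\codinv(D)$ is correct and is exactly how the paper finishes (the latter via the arm/leg bijection of Figure~\ref{fig:box_counting_for_co-dinv} together with \cite[Cor.~3.4]{Maz14}). So nothing in your plan is wrong; the main refinement is to isolate $\epsilon$ from $\eta$ at the outset, which lets you dispose of the strict-rank count by pure permutation invariance and concentrate the induction on $d$ where it is actually needed.
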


\begin{remark}\label{rem:compare_with_the_old_bijection}
In \cite{GMV20}, a bijection $\cD^{-1}$ was constructed in a more complicated way, and shown to satisfy the same property $\zeta=\cG\circ \cD^{-1}$.  
It is known that the sweep map $\zeta\colon \Young(dn,dm)\to\Young(dn,dm)$ is a bijection \cite{TW18}. Hence $\cD^{-1}=\Psi_d$, i.e.\ $\cD=\Phi_d$.
\end{remark}

The proof of Proposition~\ref{prop:sweep_map_vs_bijection} occupies the rest of this subsection.  
In what follows, $y\in\mathbb{Z}$ will be implicit.

\begin{lemma}\label{lem:alternative_description_of_index_functions}
In Definition \ref{def:sweep_map}, denote 
\[
\rank_x := \rank(x,y_x) = mx - ny_x, \qquad r := \rank(p,q).
\]
In particular, $\rank_{dn} = \rank_0 = 0$.
Suppose $p = kn+\ell$ with $0\leq k\leq d-1$, $0\leq \ell \leq n-1$. Fix $0\leq j\leq n-1$.
\begin{enumerate}[wide,labelwidth=!,labelindent=0pt,itemindent=!]
\item 
We have
\[
\epsilon_j((y_{\bullet,\bullet});(p,q)) 
= \sum_{i=0}^{d-1} \Bigl[ \#\{y : r < \rank(in+j,y) < \rank_{in+j}\} 
   - \#\{y : r < \rank(in+j,y) < \rank_{in+j+1} - m\} \Bigr].
\]
Moreover, if $(\tilde{y}_{\bullet,\bullet})$ is another such matrix and there exists a sequence 
of permutations $s_b\acts [0,d-1]$, $0\leq b \leq n-1$, with $s_0(0)=0$ and 
\[
y_{a,b} = \tilde{y}_{s_b(a),b} - (s_b(a)-a)m,
\]
then
\[
\epsilon_j((y_{\bullet,\bullet});(p,q)) 
= \epsilon_j((\tilde{y}_{\bullet,\bullet});(p,q)).
\]

\item
For $j\neq \ell$, we have $\eta_j((y_{\bullet,\bullet});(p,q))=0$. Moreover,
\[
\eta((y_{\bullet,\bullet});(p,q)) 
= \eta_\ell((y_{\bullet,\bullet});(p,q)) 
= \sum_{i=k}^{d-1}\Bigl[ \delta_{(-\infty,\rank_{in+\ell})}(r) 
   - \delta_{(-\infty,\rank_{in+\ell+1}-m)}(r)\Bigr].
\]
Here, for any $-\infty\leq a < b < \infty$, set $\delta_{(a,b)}(c) := 1$ if $a<c<b$ and $0$ otherwise.
\end{enumerate}
\end{lemma}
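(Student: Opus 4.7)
The plan is to deduce part (1) from a per-index identity at each $x = in+j$, then obtain the invariance from a rearrangement argument; part (2) will follow by a direct unfolding of the definition of $\eta$.

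For (1), I would translate the counts in the definition of $\epsilon$ via the bijection $y \leftrightarrow \rho = \rank(x,y) = mx-ny$ between $\mathbb{Z}$ and the arithmetic progression $mx + n\mathbb{Z}$. Under this bijection,
\[
\#\{0<y\leq k : \rank(x,y) > r\} = \#\{\rho \in mx + n\mathbb{Z} : mx - kn \leq \rho < mx,\; \rho > r\},
\]
so setting $F(a):=\#\{\rho \in mx+n\mathbb{Z} : a\leq \rho<mx,\; \rho>r\}$ and $G(a):=\#\{\rho\in mx+n\mathbb{Z} : r<\rho<a\}$ one reads off $\epsilon((y_{\bullet,\bullet});(p,q);x) = F(\rank_{x+1}-m) - F(\rank_x)$. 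The key observation is that $F(a)+G(a)$ equals the constant $\#\{\rho\in mx+n\mathbb{Z} : r<\rho<mx\}$ for every $a\leq mx$; since $y_x, y_{x+1}\geq 0$ force $\rank_x, \rank_{x+1}-m \leq mx$, the identity applies to both values and gives $\epsilon((y_{\bullet,\bullet});(p,q);x) = G(\rank_x) - G(\rank_{x+1}-m)$, which is exactly the $x$-th summand on the RHS of (1). Summing over $x=in+j$ for fixed $j$ yields the claimed formula.

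For the invariance assertion in (1), the essential remark is that $m(in+j) + n\mathbb{Z} = mj + n\mathbb{Z}$ is independent of $i$, so the count $\#\{y\in\mathbb{Z} : r < \rank(in+j,y) < c\}$ depends only on $(c,j)$. The hypothesis $y_{a,b} = \tilde{y}_{s_b(a),b} - (s_b(a)-a)m$ yields $\rank(in+j, y_{i,j}) = \rank(s_j(i)n+j, \tilde{y}_{s_j(i),j})$, so the multiset $\{\rank(in+j, y_{i,j}) : 0 \leq i \leq d-1\}$ coincides with $\{\rank(i'n+j, \tilde{y}_{i',j}) : 0 \leq i' \leq d-1\}$. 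The sum of the first piece of the $x$-th summand is therefore invariant under the relabeling; the analogous statement for the second piece uses $s_{j+1}$. At the boundary $j=n-1$ the ``$y_{i,j+1}$'' of the $i$-th summand actually refers to $y_{i+1,0}$, which requires extending $s_0$ by $s_0(d):=d$; this is consistent since $y_{d,0}=\tilde{y}_{d,0}=dm$ on both sides.

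Part (2) is a direct computation. Since $\rank(x,y)=r$ has an integer solution $y$ iff $r\in mx+n\mathbb{Z}$, and $r=mp-nq$ with $p=kn+\ell$ and $\gcd(m,n)=1$, one must have $x\equiv \ell \pmod n$; hence $\eta((y_{\bullet,\bullet});(p,q);x)=0$ whenever $x\not\equiv \ell\pmod n$, which yields $\eta_j=0$ for $j\neq \ell$. For $x=in+\ell$ in the range $k\leq i\leq d-1$ imposed by $\delta_{[p,dn)}$, the unique solution $y^\ast = (mx-r)/n$ lies in $\{0<y\leq k\}$ iff $r<mx$ and $r\geq mx-nk$. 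Substituting $k=y_{x+1}$ then $k=y_x$ and subtracting gives $\delta_{(-\infty, mx)}(r)\cdot\bigl[\delta_{(-\infty,\rank_x)}(r) - \delta_{(-\infty,\rank_{x+1}-m)}(r)\bigr]$. Since $\rank_x, \rank_{x+1}-m\leq mx$ (because $y_x, y_{x+1}\geq 0$), the factor $\delta_{(-\infty, mx)}(r)$ is implied by either indicator on the right and can be dropped, yielding the stated formula. The only bookkeeping nuisance will be the wrap-around at $j=n-1$ in the invariance argument and the careful handling of the arithmetic-progression ranges; no deeper obstacle appears to be required.
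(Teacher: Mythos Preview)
Your proposal is correct and follows essentially the same approach as the paper. Both parts proceed identically: in (1), translate the interval counts via $y\leftrightarrow \rank(x,y)$ and subtract from the constant $\#\{r<\rank(x,y)<mx\}$, then derive invariance from the equality of rank multisets under the permutations $s_j$ (with the same boundary extension $s_0(d):=d$ at $j=n-1$); in (2), the congruence $r\equiv m\ell\pmod n$ forces $j=\ell$, and the single-solution count reduces to the stated difference of indicators, with the factor $\delta_{(-\infty,mx)}(r)$ absorbed exactly as you argue. The only cosmetic issue is your reuse of the letter $k$ both for the block index in $p=kn+\ell$ and for the generic upper bound in ``$\{0<y\leq k\}$''; renaming the latter would avoid confusion.
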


\begin{proof}
\noindent (1) Fix $0\leq i\leq d-1$ and set $x := in+j$. Observe:
\[
0<y\leq y_x \;\;\Longleftrightarrow\;\; \rank_x \leq \rank(x,y) < mx,
\qquad 
0<y\leq y_{x+1} \;\;\Longleftrightarrow\;\; \rank_{x+1}-m \leq \rank(x,y) < mx.
\]
Thus,
\begin{eqnarray*}
&&\#\{y:~0<y\leq y_x, \rank(x,y) > r\} = \#\{y:~\rank_x \leq \rank(x,y) < mx, \rank(x,y) > r\}\\
&=& \#\{y:~r < \rank(x,y) < mx\} - \#\{y:~r < \rank(x,y) < \rank_x\}. 
\end{eqnarray*}
Similarly,
\[
\#\{y:~0<y\leq y_{x+1}, \rank(x,y) > r\} = \#\{y:~r < \rank(x,y) < mx\} - \#\{y:~r < \rank(x,y) < \rank_{x+1} - m\}. 
\]
The first formula follows.

For the invariance formula: for each $x=in+j$, we have
\[
\rank_x = \rank(x,y_x) = \rank(\tilde{x} = s_j(i)n+j,\tilde{y}_{s_j(i)n+j}) =: \tilde{\rank}_{\tilde{x}};~~ \rank(x,y) = \rank(\tilde{x} = s_j(i)n+j,\tilde{y} = y+(s_j(i) - i)m).
\]
Thus,
\[
r < \rank(x=in+j,y) < \rank_x \iff r < \rank(\tilde{x} = s_j(i)n+j,\tilde{y} = y+(s_j(i) - i)m) < \tilde{\rank}_{\tilde{x}}.
\]
Summing up,
\[
\sum_{i=0}^{d-1}\#\{y:~r < \rank(x=in+j,y) < \rank_x\} = \sum_{\tilde{i}=0}^{d-1}\#\{\tilde{y}:~r < \rank(\tilde{x}=\tilde{i}n+j,\tilde{y}) < \tilde{\rank}_{\tilde{x}}\}. 
\]

Similarly, for $0\leq j\leq n-2$,
\begin{eqnarray*}
&&\rank_{x+1} = \rank(x+1 = in+j+1,y_{i,j+1}) = \rank(\tilde{x}+1 = s_{j+1}(i)n+j+1,\tilde{y}_{s_{j+1}(i),j+1}) = \tilde{\rank}_{\tilde{x}+1},\\
&&\rank(x = in+j,y) = \rank(\tilde{x} = s_{j+1}(i)n+j,y+(s_{j+1}(i)-i)m).
\end{eqnarray*}
For $j = n-1$, denote $s_0(d):= d$, as $s_0(0) = 0$, $s_0$ is a bijection on $\{1,2,\cdots,d\}$. Then,
\begin{eqnarray*}
&&\rank_{x+1} = \rank(x+1=(i+1)n,y_{i+1,0}) = \rank(\tilde{x}+1=s_0(i+1)n,\tilde{y}_{s_0(i+1),0}) = \tilde{\rank}_{\tilde{x}+1},\\
&&\rank(x=(i+1)n-1,y) = \rank(\tilde{x} = s_0(i+1)n-1,\tilde{y} = y + (s_0(i+1)-(i+1))m).
\end{eqnarray*}
In either case, summing up,
\[
\sum_{i=0}^{d-1}\#\{y:~r < \rank(x=in+j,y) < \rank_{x+1} - m\} = \sum_{\tilde{i}=0}^{d-1}\#\{\tilde{y}:~r < \rank(\tilde{x}=\tilde{i}n+j,\tilde{y}) < \tilde{\rank}_{\tilde{x}+1} - m\}. 
\]
Hence, the invariance formula follows.

\noindent{}$(2)$. For any $x = in + j$, $0\leq i\leq d-1$, if $\rank(x,y) = r = \rank(p,q)$, then 
\[
mj \equiv r \equiv m\ell \pmod{dn} \;\;\Longrightarrow\;\; j=\ell.
\]
Thus $\eta_j((y_{\bullet,\bullet});(p,q))=0$ unless $j=\ell$. 


Now for $j=\ell$, we have 
\[
\rank(x=in+\ell,y)=r \iff y=q+(i-k)m.
\]
Moreover
\[
0<q+(i-k)m \leq y_x \iff \rank_x \leq r<mx, 
\qquad 
0<q+(i-k)m \leq y_{x+1} \iff \rank_{x+1}-m \leq r<mx.
\]
Finally $x=in+\ell\geq p=kn+\ell \iff i\geq k$.
Therefore
\begin{align*}
\eta_\ell((y_{\bullet,\bullet});(p,q))
&= \sum_{i=k}^{d-1} \Bigl[\delta_{[\rank_{in+\ell+1}-m,\; m(in+\ell))}(r) 
   - \delta_{[\rank_{in+\ell},\; m(in+\ell))}(r)\Bigr] \\
&= \sum_{i=k}^{d-1} \Bigl[\delta_{(-\infty,\rank_{in+\ell})}(r) 
   - \delta_{(-\infty,\rank_{in+\ell+1}-m)}(r)\Bigr],
\end{align*}
using $\delta_{[a,b)}(r) = \delta_{(-\infty,b)}(r) - \delta_{(-\infty,a)}(r)$ for $-\infty < a \leq b < \infty$.
\end{proof}

Now, fix $D\isomorphic (y_{in+j} = y_{i,j})_{0\leq i\leq d-1,0\leq j\leq n-1} \in \Young(dn,dm)$.
Say, $\Delta \isomorphic (c_{in+j} = c_{i,j})_{0\leq i\leq d-1,0\leq j\leq n-1} \in \Adm(dn,dm)$.
For $0\leq u\leq d-1$, $0\leq i\leq d-1$, and $0\leq j\leq n-1$, recall
\begin{eqnarray*}
&&y_{in+j}^{[u]} = y_{i,j}^{[u]} := y_{\permutation_{j,d-1}\circ\cdots\circ\permutation_{j,d-u}(i),j} - (\permutation_{j,d-1}\circ\cdots\circ\permutation_{j,d-u}(i) - i)m;\\
&&\rank_{D,in+j}^{[u]} = \rank_{D,i,j}^{[u]} := \rank(in+j,y_{i,j}^{[u]}) = \rank_{D,\permutation_{j,d-1}\circ\cdots\permutation_{j,d-u}(i),j}.
\end{eqnarray*}
In particular, $c_{i,j} = y_{i,j}^{[d-1]} = y_{\permutation_j(i),j} - (\permutation_j(i) - i)m$.
Set $y_{dn}^{[u]} = y_{d,0}^{[u]} := dm$.
Via Definition \ref{def:sweep_map}, \textbf{set}
\begin{eqnarray}
&&\epsilon^{[u]}((p,q);x) := \epsilon((y_{\bullet,\bullet}^{[u]});(p,q);x);\quad \eta^{[u]}((p,q);x) := \eta((y_{\bullet,\bullet}^{[u]});(p,q);x);\nonumber\\
&&\epsilon_j^{[u]}(p,q) := \epsilon_j((y_{\bullet,\bullet}^{[u]});(p,q));\quad \eta_j^{[u]}(p,q) := \eta_j((y_{\bullet,\bullet}^{[u]});(p,q));\\
&&\epsilon^{[u]}(p,q) := \epsilon((y_{\bullet,\bullet}^{[u]});(p,q));\quad \eta^{[u]}(p,q) := \eta((y_{\bullet,\bullet}^{[u]});(p,q));
\quad \zeta^{[u]}(p,q) := \zeta((y_{\bullet,\bullet}^{[u]});p,q).\nonumber
\end{eqnarray}
For $u = 0$, we drop the superscript $[0]$ whenever convenient.

\begin{corollary}\label{cor:invariance_of_epsilon_j}
For $0\leq u \leq d-1$ and $0\leq j\leq n-1$, we have
\begin{equation}
\epsilon_j^{[u]}(p,q) = \epsilon_j(p,q).
\end{equation}
In particular, $\epsilon^{[u]}(p,q) = \epsilon(p,q)$.
\end{corollary}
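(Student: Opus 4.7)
The plan is to reduce this directly to the invariance statement already packaged inside Lemma~\ref{lem:alternative_description_of_index_functions}.(1). That lemma asserts that $\epsilon_j((y_{\bullet,\bullet});(p,q))$ is unchanged if one replaces $(y_{\bullet,\bullet})$ by a matrix $(\tilde{y}_{\bullet,\bullet})$ obtained by row-permutations subject to the relation $y_{a,b} = \tilde{y}_{s_b(a),b} - (s_b(a)-a)m$, with the single constraint $s_0(0)=0$. So the whole task is to exhibit $(y^{[u]}_{\bullet,\bullet})$ as such a reshuffling of $(y_{\bullet,\bullet})$.

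First, I would unpack the definition $y_{i,j}^{[u]} = y_{\sigma_j(i),j} - (\sigma_j(i)-i)m$, where $\sigma_j := \permutation_{j,d-1}\circ\cdots\circ\permutation_{j,d-u}$, and set $s_j := \sigma_j^{-1}$. Substituting $i = s_j(a)$ and solving gives the required relation
\[
y_{a,j} = y_{s_j(a),j}^{[u]} - \bigl(s_j(a)-a\bigr)m,\qquad 0\leq a\leq d-1,\;0\leq j\leq n-1.
\]
Next, I would check the normalization condition $s_0(0)=0$: each $\permutation_{0,v}$ is the cycle $(\permutationindex_{0,v}~\cdots~v)$ with $\permutationindex_{0,v}=\min\{0\leq i\leq v:in\geq \moveindex_v\}$, and since $\moveindex_v\geq 1$ by Proposition~\ref{prop:from_Dyck_paths_to_admissible_invariant_subsets}.(a) one has $\permutationindex_{0,v}\geq 1$. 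Hence every $\permutation_{0,v}$ fixes $0$, so $\sigma_0(0)=0$ and thus $s_0(0)=0$.

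With these two ingredients in place, Lemma~\ref{lem:alternative_description_of_index_functions}.(1) applied to $(y_{\bullet,\bullet})$ and $(\tilde y_{\bullet,\bullet}) := (y_{\bullet,\bullet}^{[u]})$ yields
\[
\epsilon_j(p,q) = \epsilon_j\bigl((y_{\bullet,\bullet});(p,q)\bigr) = \epsilon_j\bigl((y_{\bullet,\bullet}^{[u]});(p,q)\bigr) = \epsilon_j^{[u]}(p,q)
\]
for every $0\leq j\leq n-1$, and summation over $j$ gives $\epsilon^{[u]}(p,q)=\epsilon(p,q)$. There is no real obstacle here: the substantive work is hidden inside the invariance formula of Lemma~\ref{lem:alternative_description_of_index_functions}.(1), which in turn just records the fact that both terms in the expression for $\epsilon_j$ only depend on the \emph{multiset} of rank values $\{\rank_x\}$ within each column $j$ (and on the neighbouring column $j+1$ when $j\leq n-2$, or on column $0$ when $j=n-1$), a multiset that is preserved by any column-wise permutation fixing $0$ in the boundary column. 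The only potential pitfall is making sure the boundary convention $y_{dn}^{[u]}=dm$ is respected, which is automatic since $\sigma_0$ fixes $0$.
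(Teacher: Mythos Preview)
Your proposal is correct and is exactly the approach the paper takes: the paper's proof is the single line ``Immediate from Lemma~\ref{lem:alternative_description_of_index_functions}.(1),'' and you have correctly spelled out why that lemma applies, namely by exhibiting the permutations $s_j=\sigma_j^{-1}$ and verifying $s_0(0)=0$ via $\moveindex_v\geq 1$.
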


\begin{proof}
Immediate from Lemma \ref{lem:alternative_description_of_index_functions}.$(1)$.
\end{proof}

\begin{lemma}\label{lem:invariance_of_eta}
Fix $0\leq p=kn+\ell \leq dn-1$, with $0\leq k\leq d-1$, $0\leq \ell \leq n-1$. For $1\leq u \leq d-1$, $0\leq j\leq n-1$, we have
\begin{equation}
\eta^{[d-u-1]}(kn+\ell,y_{k,\ell}^{[d-u-1]}) = \eta^{[d-u]}(\permutation_{\ell,u}^{-1}(k)n+\ell,y_{\permutation_{\ell,u}^{-1}(k),\ell}^{[d-u]}).
\end{equation}
Thus, $\eta(kn+\ell,y_{k,\ell}) = \eta^{[d-1]}(\permutation_{\ell}^{-1}(k)n+\ell,y_{\permutation_{\ell}^{-1}(k),\ell}^{[d-1]})$. $\iff$
$\eta(\permutation_{\ell}(k)n+\ell,y_{\permutation_{\ell}(k),\ell}) = \eta^{[d-1]}(p,y_p^{[d-1]})$.
\end{lemma}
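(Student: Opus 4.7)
The plan is to derive both sides of the claimed identity from Lemma~\ref{lem:alternative_description_of_index_functions}(2) and then to compare them using the block-swap formula $\rank^{[d-u]}_{D,i,j}=\rank^{[d-u-1]}_{D,\permutation_{j,u}(i),j}$ from Construction~(2) of Section~4.2. First I would observe that the two base points have the same rank: setting $k':=\permutation_{\ell,u}^{-1}(k)$, the block-swap formula gives
\[
\rank^{[d-u]}_{D,k',\ell}=\rank^{[d-u-1]}_{D,\permutation_{\ell,u}(k'),\ell}=\rank^{[d-u-1]}_{D,k,\ell}=:r,
\]
so both $\eta$ values in the claim involve the same value $r$.

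Next I would apply Lemma~\ref{lem:alternative_description_of_index_functions}(2) to expand both sides as alternating $\delta$-sums over $i\in[k,d-1]$ (LHS) and $i\in[k',d-1]$ (RHS), and then use Construction~(2) of Section~4.2 to convert the RHS entirely into expressions in $\rank^{[d-u-1]}$, substituting $\rank^{[d-u]}_{D,i,\ell}=\rank^{[d-u-1]}_{D,\permutation_{\ell,u}(i),\ell}$ and, for the second $\delta$-term, $\rank^{[d-u]}_{D,i,\ell+1}=\rank^{[d-u-1]}_{D,\permutation_{\ell+1,u}(i),\ell+1}$ when $\ell<n-1$, or $\rank^{[d-u]}_{D,i+1,0}=\rank^{[d-u-1]}_{D,\permutation_{0,u}(i+1),0}$ when $\ell=n-1$.

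The heart of the argument is the comparison of these two sums after reindexing $j=\permutation_{\ell,u}(i)$ in the first $\delta$-term, with a parallel reindexing in the second. Writing $\moveindex_u=Kn+L$, one has the explicit formula $\permutationindex_{j,u}=K+[j<L]$, and the cycle $\permutation_{j,u}=(\permutationindex_{j,u},\dots,u)$ permutes the interval $[\permutationindex_{j,u},u]$. A direct analysis shows that $\permutation_{\ell,u}([k',d-1])=[k,d-1]$ whenever $k\notin(\permutationindex_{\ell,u},u]$, and otherwise $\permutation_{\ell,u}([k',d-1])$ differs from $[k,d-1]$ by exactly one boundary element; the analogous statement holds for the second $\delta$-term. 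A case analysis on the position of $k$ and on whether $\permutation_{\ell,u}=\permutation_{\ell+1,u}$ (which fails only in the boundary case $\ell=L-1$, with an analogue for $\ell=n-1$ when $L>0$) then reduces the claim to a pairwise cancellation of the discrepancy terms. This cancellation is to be verified using the Dyck-type monotonicity of the first $d-u$ rows of $\vec{\rank}^{[d-u-1]}$ (Remark~\ref{rem:intermediate_diagrams_from_Dyck_paths_to_admissible_invariant_subsets} and Proposition~\ref{prop:from_Dyck_paths_to_admissible_invariant_subsets}(c)), together with the maximality characterization of $\moveindex_u$.

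The main obstacle lies in this final comparison: the natural reindexing uses $\permutation_{\ell,u}$ on the first $\delta$-term but $\permutation_{\ell+1,u}$ (respectively $\permutation_{0,u}\circ(\cdot+1)$) on the second, and these cyclic shifts genuinely differ in the boundary cases. Consequently the extra boundary $\delta$-terms arising on the two sides are indexed by different rows of $\vec{\rank}^{[d-u-1]}$, and matching them up requires invoking the Dyck-path monotonicity and the maximality of $\moveindex_u$ at exactly the right spots. Essentially all of the combinatorial substance of the lemma is concentrated in verifying these cancellations carefully across the several boundary subcases; the iterated form $\eta(\permutation_\ell(k)n+\ell,y_{\permutation_\ell(k),\ell})=\eta^{[d-1]}(p,y^{[d-1]}_p)$ then follows by induction on $u$.
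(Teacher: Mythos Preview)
Your plan aligns closely with the paper's: expand both sides via Lemma~\ref{lem:alternative_description_of_index_functions}(2), use $\rank^{[d-u]}_{D,i,j}=\rank^{[d-u-1]}_{D,\permutation_{j,u}(i),j}$ to rewrite the right-hand side, strip off the trivial range $i>u$ (where $\permutation_{\bullet,u}$ is the identity), and handle the rest with Proposition~\ref{prop:from_Dyck_paths_to_admissible_invariant_subsets}(c) and Remark~\ref{rem:intermediate_diagrams_from_Dyck_paths_to_admissible_invariant_subsets}. The paper likewise splits on $p<\moveindex_u$ (equivalently $k<\permutationindex_{\ell,u}$), where the reindexing by $\permutation_{\ell,u}$ and $\permutation_{\ell+1,u}$ is bijective on $[k,u]$ and the two sums match term by term, versus $p\ge\moveindex_u$.

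There is one concrete slip in your case analysis. You claim $\permutation_{\ell,u}([k',d-1])=[k,d-1]$ whenever $k\notin(\permutationindex_{\ell,u},u]$, but this fails at $k=\permutationindex_{\ell,u}$: then $k'=u$ and $\permutation_{\ell,u}([u,d-1])=\{\permutationindex_{\ell,u}\}\cup[u+1,d-1]$, which misses the whole block $(\permutationindex_{\ell,u},u]$, not a single element. The paper avoids this by organizing the hard range $\permutationindex_{\ell,u}\le k\le u$ differently: instead of reindexing and cancelling, it computes each side separately and shows that both vanish (for $\ell\le n-2$) or both equal the single boundary term $-\delta_{(-\infty,\rank^{[d-u-1]}_{D,(u+1)n}-m)}(r)$ (for $\ell=n-1$), treating the subcases $k=\permutationindex_{\ell,u}$ (so $\tilde k=u$) and $k>\permutationindex_{\ell,u}$ (so $\tilde k=k-1$) on their own. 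Your reindexing route can still be salvaged, since the extra $\delta$-terms indexed by $i\in(\permutationindex_{\ell,u},u]$ do vanish by the same monotonicity, but the ``one-element discrepancy'' claim is wrong as stated and the case $k=\permutationindex_{\ell,u}$ needs its own treatment.
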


\begin{proof}
Recall
\begin{eqnarray*}
&&\moveindex_u:= \max\{0\leq x\leq un:~\rank_{D,x}^{[d-u-1]} \leq \rank_{D,x+n-1}^{[d-u-1]} + m\};\\
&&\permutationindex_{j,u}:= \min\{0\leq i\leq u:~in+j \geq \moveindex_u\},\quad \permutation_{j,u}:= (\permutationindex_{j,u}~\cdots~u-1~u),\quad 0\leq j\leq n-1.
\end{eqnarray*}
In particular, $\permutation_{j,u}$ is identity on $[u+1,d-1]$. 
Besides, $\permutationindex_{n,u}:=\permutationindex_{0,u}$ and $\permutation_{n,u}:=\permutation_{0,u}$, and
\begin{equation*}
\permutationindex_{j,u} + \floor{\frac{j+1}{n}} \geq \permutationindex_{j+1,u} \geq \permutationindex_{j,u} + \floor{\frac{j+1}{n}} - 1.
\end{equation*}
Say, $\moveindex_u = k_un + \ell_u$, $0\leq k_u \leq u$, $0\leq \ell_u \leq n-1$.
Denote $\tilde{k} := \permutation_{\ell}^{-1}(k)$, and
\[
r:= \rank_{D,k,\ell}^{[d-u-1]} = \rank_{D,\tilde{k},\ell}^{[d-u]} \geq 0.
\]

First of all, since $\permutation_{\bullet,u}$ is identity on $[u+1,d-1]$, we have
$\rank_{D,i,j}^{[d-u-1]} = \rank_{D,i,j}^{[d-u]}$, $\forall u+1 \leq i\leq d-1$.
In particular, for any $u+1 \leq v \leq d-1$, we have
\begin{equation}\label{eqn:invariance_of_eta-trivial_part}
\sum_{i=v}^{d-1}[\delta_{(-\infty,\rank_{D,in+\ell}^{[d-u-1]})}(r) - \delta_{(-\infty,\rank_{D,in+\ell+1}^{[d-u-1]}-m)}(r)] = \sum_{i=v}^{d-1}[\delta_{(-\infty,\rank_{D,in+\ell}^{[d-u]})}(r) - \delta_{(-\infty,\rank_{D,in+\ell+1}^{[d-u]}-m)}(r)].
\end{equation}
\textbf{Note} that if $\ell = n-1$, we further have $\delta_{(-\infty,\rank_{D,un+\ell+1}^{[d-u-1]}-m)}(r) = \delta_{(-\infty,\rank_{D,un+\ell+1}^{[d-u]}-m)}(r)$.
Thus, if $k\geq u+1$, then $\permutation_{\ell}^{-1}(k) = k$, and by Lemma \ref{lem:alternative_description_of_index_functions}.$(2)$, this already shows the result.

\textbf{From now on}, assume that $0\leq k\leq u$, hence so is $\tilde{k} = \permutation_{\ell}^{-1}(k)$. 
By Lemma \ref{lem:alternative_description_of_index_functions}.$(2)$, it remains to show:
\begin{equation}\label{eqn:invariance_of_eta-nontrivial_part}
\sum_{i=k}^u[\delta_{(-\infty,\rank_{D,in+\ell}^{[d-u-1]})}(r) - \delta_{(-\infty,\rank_{D,in+\ell+1}^{[d-u-1]}-m)}(r)] = \sum_{i=\tilde{k}}^u[\delta_{(-\infty,\rank_{D,in+\ell}^{[d-u]})}(r) - \delta_{(-\infty,\rank_{D,in+\ell+1}^{[d-u]}-m)}(r)].
\end{equation}
For simplicity, \textbf{denote} the left hand side by $\LHS_{\ell}$ and the right hand side by $\RHS_{\ell}$.

\begin{enumerate}[wide,labelwidth=!,labelindent=0pt,itemindent=!]
\item 
If $p = kn+\ell < \moveindex_u$, then by definition, $\permutationindex_{\ell,u}\geq k+1$, hence $\permutationindex_{\ell+1,u} \geq k$.
It follows that $\permutation_{\ell,u}$ and $\permutation_{\ell+1,u}$ restrict to bijections on $[k,u] := \{k,k+1,\cdots,u\}$, and $\permutation_{\ell,u}(k) = k = \permutation_{\ell,u}^{-1}(k) = \tilde{k}$.
\begin{itemize}[wide,labelwidth=!,labelindent=0pt,itemindent=!]
\item 
If $0 \leq \ell \leq n-2$, we have
\begin{eqnarray*}
\LHS_{\ell} &=& \sum_{i=k}^u\delta_{(-\infty,\rank_{D,\permutation_{\ell,u}^{-1}(i),\ell}^{[d-u]})}(r) - \sum_{i=k}^u \delta_{(-\infty,\rank_{D,\permutation_{\ell+1,u}^{-1}(i),\ell+1}^{[d-u]}-m)}(r)\\
&=& \sum_{\tilde{i}=k}^u \delta_{(-\infty,\rank_{D,\tilde{i},\ell}^{[d-u]})}(r) - \sum_{\tilde{i}=k}^u \delta_{(-\infty,\rank_{D,\tilde{i},\ell+1}^{[d-u]}-m)}(r) = \RHS_{\ell}.
\end{eqnarray*}

\item
If $\ell = n-1$, then $\permutationindex_{\ell+1,u} \geq \permutationindex_{\ell,u} \geq k+1$. So, $\permutation_{0,u}(k) = k = \permutation_{0,u}^{-1}(k)$. Now,
\begin{eqnarray*}
\LHS_{n-1} &=& \sum_{i=k}^u\delta_{(-\infty,\rank_{D,\permutation_{n-1,u}^{-1}(i),n-1}^{[d-u]})}(r) - \sum_{i=k}^u \delta_{(-\infty,\rank_{D,\permutation_{0,u}^{-1}(i+1),0}^{[d-u]}-m)}(r)\\
&=& \sum_{\tilde{i}=k}^u \delta_{(-\infty,\rank_{D,\tilde{i},\ell}^{[d-u]})}(r) - \sum_{\tilde{i}=k}^u \delta_{(-\infty,\rank_{D,\tilde{i}+1,0}^{[d-u]}-m)}(r) = \RHS_{n-1}.
\end{eqnarray*}
\end{itemize}

\item
It remains to consider the case that $\moveindex_u \leq p < (u+1)n$. Then by definition, $\permutationindex_{\ell,u} \leq k$.
\begin{enumerate}[wide,labelwidth=!,labelindent=0pt,itemindent=!,label=(2.\arabic*)]
\item 
Suppose that $0 \leq \ell \leq n-2$.
Then, for each $k \leq i\leq u$, by Proposition \ref{prop:from_Dyck_paths_to_admissible_invariant_subsets}.$(c)$ and Remark \ref{rem:intermediate_diagrams_from_Dyck_paths_to_admissible_invariant_subsets} ($0\leq \ell \leq n-2$), we have
\[
r = \rank_{D,kn+\ell}^{[d-u-1]} > \rank_{D,(k+1)n+\ell}^{[d-u-1]} > \cdots > \rank_{D,in+\ell}^{[d-u-1]} \geq \rank_{D,in+\ell+1}^{[d-u-1]} - m.
\]
So, $\delta_{(-\infty,\rank_{D,in+\ell}^{[d-u-1]})}(r) = \delta_{(-\infty,\rank_{D,in+\ell+1}^{[d-u-1]} - m)}(r) = 0$, and hence $\LHS_{\ell} = 0$.
It remains to show that $\RHS_{\ell} = 0$.

\begin{itemize}[wide,labelwidth=!,labelindent=0pt,itemindent=!]
\item 
If $k = \permutationindex_{\ell,u}$, then $\tilde{k} = \permutation_{\ell,u}^{-1}(k) = u$.
In this case, by Remark \ref{rem:intermediate_diagrams_from_Dyck_paths_to_admissible_invariant_subsets}, we have
$r = \rank_{D,u,\ell}^{[d-u]} \geq \rank_{D,u,\ell+1}^{[d-u]} - m$. So,
\[
\RHS_{\ell} = \delta_{(-\infty,\rank_{D,un+\ell}^{[d-u]})}(r) - \delta_{(-\infty,\rank_{D,un+\ell+1}^{[d-u]}-m)}(r) = 0.
\]

\item
If $\permutationindex_{\ell,u} + 1 \leq k \leq u$, then $ \permutationindex_{\ell,u} \leq \tilde{k} = \permutation_{\ell,u}^{-1}(k) = k - 1 \leq u -1$.
Notice that $\tilde{p} = \tilde{k}n + \ell \geq \permutationindex_{\ell,u}n + \ell \geq \moveindex_u > \moveindex_{u-1}$. Then for each $\tilde{k} \leq i\leq u-1$, again by Proposition \ref{prop:from_Dyck_paths_to_admissible_invariant_subsets}.$(c)$ and Remark \ref{rem:intermediate_diagrams_from_Dyck_paths_to_admissible_invariant_subsets} ($0\leq \ell \leq n-2$),
\[
\sum_{i=\tilde{k}}^{u-1}[\delta_{(-\infty,\rank_{D,in+\ell}^{[d-u]})}(r) - \delta_{(-\infty,\rank_{D,in+\ell+1}^{[d-u]}-m)}(r)] = 0.
\]
It remains to show that $\delta_{(-\infty,\rank_{D,un+\ell}^{[d-u]})}(r) - \delta_{(-\infty,\rank_{D,un+\ell+1}^{[d-u]}-m)}(r) = 0$.
Indeed, by Proposition \ref{prop:from_Dyck_paths_to_admissible_invariant_subsets}.$(c)$,
\[
\rank_{D,un+\ell}^{[d-u]} = \rank_{D,\permutationindex_{\ell,u},\ell}^{[d-1-u]} > \rank_{D,k,\ell}^{[d-1-u]} = r,~~\Rightarrow~~\delta_{(-\infty,\rank_{D,un+\ell}^{[d-u]})}(r) = 1.
\]
On the other hand, we have $\permutationindex_{\ell+1,u} \leq \permutationindex_{\ell,u} \leq k - 1$. In particular, $(k-1)n + \ell + 1 \geq \permutationindex_{\ell,u}n + \ell + 1 \geq \moveindex_u + 1$. So by definition, $\rank_{D,k-1,\ell+1}^{[d-u-1]} > \rank_{D,k,\ell}^{[d-u-1]} +m = r + m$. 
Thus by Proposition \ref{prop:from_Dyck_paths_to_admissible_invariant_subsets}.$(c)$, we have
\[
\rank_{D,un+\ell+1}^{[d-u]} = \rank_{D,\permutationindex_{\ell+1,u},\ell+1}^{[d-u-1]} > \rank_{D,\permutationindex_{\ell+1,u}+1,\ell+1}^{[d-u-1]} > \cdots > 
\rank_{D,k-1,\ell+1}^{[d-u-1]} > r + m,~~\Rightarrow~~\delta_{(-\infty,\rank_{D,un+\ell+1}^{[d-u]}-m)}(r) = 1.
\]
Altogether, we again have $\RHS_{\ell} = 0 = \LHS_{\ell}$.
\end{itemize}

\item
We're left with the case $\ell = n-1$. So, $\permutationindex_{n-1,u} + 1 \geq \permutationindex_{0,u} \geq \permutationindex_{n-1,u}$.
Now, for each $k \leq i\leq u-1$, by Proposition \ref{prop:from_Dyck_paths_to_admissible_invariant_subsets}.$(c)$ and Remark \ref{rem:intermediate_diagrams_from_Dyck_paths_to_admissible_invariant_subsets}, we have
\[
r = \rank_{D,kn+n-1}^{[d-u-1]} > \rank_{D,(k+1)n+n-1}^{[d-u-1]} > \cdots > \rank_{D,in+n-1}^{[d-u-1]} \geq \rank_{D,in+n}^{[d-u-1]} - m.
\]
So, $\delta_{(-\infty,\rank_{D,in+n-1}^{[d-u-1]})}(r) = \delta_{(-\infty,\rank_{D,in+n}^{[d-u-1]} - m)}(r) = 0$.
Moreover, as $\permutationindex_{n-1,u} \leq k \leq u$, by Proposition \ref{prop:from_Dyck_paths_to_admissible_invariant_subsets}.$(c)$, 
\[
\rank_{D,un+n-1}^{[d-u-1]}> \cdots > \rank_{D,k,n-1}^{[d-u-1]} = r,~~\Rightarrow~~\delta_{(-\infty,\rank_{D,un+n-1}^{[d-u-1]})}(r)  = 0.
\]
Altogether,
\begin{equation}
\LHS_{n-1} = - \delta_{(-\infty,\rank_{D,(u+1)n}^{[d-u-1]} - m)}(r).
\end{equation}
Recall that $\delta_{(-\infty,\rank_{D,(u+1)n}^{[d-u-1]} - m)}(r) = \delta_{(-\infty,\rank_{D,(u+1)n}^{[d-u]}-m)}(r)$.


\begin{itemize}[wide,labelwidth=!,labelindent=0pt,itemindent=!]
\item 
If $k = \permutationindex_{n-1,u}$, then $\tilde{k} = \permutation_{n-1,u}^{-1}(k) = u$.
In this case, 
\[
\RHS_{n-1} = \delta_{(-\infty,\rank_{D,un+n-1}^{[d-u]})}(r) - \delta_{(-\infty,\rank_{D,(u+1)n}^{[d-u]}-m)}(r).
\]
By above, it remains to show that $ \delta_{(-\infty,\rank_{D,un+n-1}^{[d-u]})}(r) = 0$.
Indeed,
\[
\rank_{D,un+n-1}^{[d-u]} = \rank_{D,\permutationindex_{n-1,u},n-1}^{[d-u-1]} = r,~~\Rightarrow~~\delta_{(-\infty,\rank_{D,un+n-1}^{[d-u]})}(r) = 0.
\]

\item
If $\permutationindex_{n-1,u} + 1 \leq k \leq u$, then $ \permutationindex_{n-1,u} \leq \tilde{k} = \permutation_{n-1,u}^{-1}(k) = k - 1 \leq u -1$.
Notice that $\tilde{p} = \tilde{k}n + n-1 \geq \permutationindex_{n-1,u}n + n-1 \geq \moveindex_u > \moveindex_{u-1}$. Then for each $\tilde{k} \leq i\leq u-2$, again by Proposition \ref{prop:from_Dyck_paths_to_admissible_invariant_subsets}.$(c)$ and Remark \ref{rem:intermediate_diagrams_from_Dyck_paths_to_admissible_invariant_subsets},
\[
\sum_{i=\tilde{k}}^{u-2}[\delta_{(-\infty,\rank_{D,in+n-1}^{[d-u]})}(r) - \delta_{(-\infty,\rank_{D,(i+1)n}^{[d-u]}-m)}(r)] = 0.
\]
Second, as $u-1\geq \tilde{k}$, we have $r = \rank_{D,\tilde{k},n-1}^{[d-u]} > \rank_{D,\tilde{k}+1,n-1}^{[d-u]} > \cdots > \rank_{D,u-1,n-1}^{[d-u]}$.
So, $\delta_{(-\infty,\rank_{D,(u-1)n+n-1}^{[d-u]})}(r) = 0$. 
Third, $\delta_{(-\infty,\rank_{D,un+n-1}^{[d-u]})}(r) =  \delta_{(-\infty,\rank_{D,\permutationindex_{n-1,u},n-1}^{[d-u-1]})}(r)$. As $\permutationindex_{n-1,u} \leq k-1 \leq u-1$, we have 
\[
\rank_{D,\permutationindex_{n-1,u},n-1}^{[d-u-1]} > \cdots > \rank_{D,k-1,n-1}^{[d-u-1]} > \rank_{D,k,n-1}^{[d-u-1]} = r,
~~\Rightarrow~~\delta_{(-\infty,\rank_{D,un+n-1}^{[d-u]})}(r) = 1.
\]
Finally, we have $\permutationindex_{n-1,u} \leq \permutationindex_{0,u} \leq \permutationindex_{n-1,u} + 1 \leq k$. So, $kn \geq (\permutationindex_{n-1,u} + 1)n > \permutationindex_{n-1,u}n + n - 1 \geq \moveindex_u$. Then by definition of $\moveindex_u$, we have
$\rank_{D,k,0}^{[d-u-1]} > \rank_{D,k,n-1}^{[d-u-1]} + m = r + m$. 
On the other hand, as $\permutationindex_{0,u} \leq k \leq u$, by Proposition \ref{prop:from_Dyck_paths_to_admissible_invariant_subsets}.$(c)$, we now have
\[
\rank_{D,un}^{[d-u]} = \rank_{D,\permutationindex_{0,u},0}^{[d-u-1]} > \cdots > \rank_{D,k,0}^{[d-u-1]}  > r + m,~~\Rightarrow~~\delta_{(-\infty,\rank_{D,un}^{[d-u]}-m)}(r) = 1.
\]
Altogether, we obtain that
\[
\RHS_{n-1} = - \rank_{D,(u+1)n}^{[d-u]} =  -\rank_{D,(u+1)n}^{[d-u-1]} = \LHS_{n-1}.
\]
\end{itemize}
\end{enumerate}
\end{enumerate}
This completes the proof.
\end{proof}

Finally, recall that $c_{in+j} = c_{i,j} = y_{i,j}^{[d-1]}$. 
Moreover, $c_{i,j} \leq c_{i,j+1}$, $\forall 0\leq j\leq n-1$, with $c_{i,n}:= c_{i,0} + m$.
As before, $c_{dn} = c_{d,0} := dm$.
Now, for each $x = in+j$, $0\leq i\leq d-1$, $0\leq j\leq n-1$, \textbf{denote}
\begin{eqnarray}\label{eqn:sweep_map-index_functions_for_admissible_invariant_subsets}
&&\tilde{\epsilon}((p,q);x) := \#\{y\in (c_{i,j},c_{i,j+1}]: \rank(x,y) > \rank(p,q)\};\nonumber\\
&&\tilde{\eta}((p,q);x) := \delta_{[p,dn)}(x)\cdot [\#\{y\in[c_{i,j},c_{i,j+1}): \rank(x,y) = \rank(p,q)\};\nonumber\\
&&\tilde{\epsilon}_j(p,q) := \sum_{i=0}^{d-1} \tilde{\epsilon}((p,q);in+j);\quad \tilde{\epsilon}(p,q) := \sum_{j=0}^{n-1} \tilde{\epsilon}_j(p,q);\\
&&\tilde{\eta}_j(p,q) := \sum_{i=0}^{d-1} \tilde{\eta}((p,q);in+j);\quad \tilde{\eta}(p,q) := \sum_{j=0}^{n-1} \tilde{\eta}_j(p,q).\nonumber
\end{eqnarray}
Then,
\[
\cG((c_{\bullet,\bullet});(p,q)) = \tilde{\epsilon}(p,q) + \tilde{\eta}(p,q).
\]
Moreover, observe that
\begin{equation}\label{eqn:compare_index_functions}
\tilde{\epsilon}((p,q);x) = \epsilon^{[d-1]}((p,q);x),\quad \tilde{\eta}((p,q);x) =  \eta^{[d-1]}((p,q);x),\quad \forall 0\leq j\leq n-2.
\end{equation}

\begin{lemma}\label{lem:sweep_map-compare_two_epsilon_functions_for_admissible_invariant_subsets}
$\forall 0\leq j\leq n-1$, we have
\begin{equation}
\epsilon_j^{[d-1]}(p,q) = \tilde{\epsilon}_j(p,q).
\end{equation}
In particular, $\epsilon^{[d-1]}(p,q) = \tilde{\epsilon}(p,q)$.
\end{lemma}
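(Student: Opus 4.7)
The plan is to split into two sub-cases depending on whether $j \leq n-2$ or $j = n-1$, exploiting the observation that the formulas for $\epsilon^{[d-1]}((p,q);in+j)$ and $\tilde{\epsilon}((p,q);in+j)$ can differ only when $j = n-1$: the sweep map (Definition~\ref{def:sweep_map}) uses $y^{[d-1]}_{in+n} = c_{i+1,0}$ for $i\leq d-2$ and $y^{[d-1]}_{dn}=dm$, whereas $\tilde{\epsilon}$ uses the row-wraparound convention $c_{i,n}=c_{i,0}+m$.

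For $0\leq j\leq n-2$, equation~\eqref{eqn:compare_index_functions} gives $\epsilon^{[d-1]}((p,q);in+j) = \tilde{\epsilon}((p,q);in+j)$ pointwise in $i$, so summing over $0\leq i\leq d-1$ yields $\epsilon_j^{[d-1]}(p,q) = \tilde{\epsilon}_j(p,q)$ immediately.

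For $j=n-1$ the individual summands typically disagree, and the task becomes showing that all discrepancies cancel in the sum. Setting
\[
u_i := \#\{0<y\leq c_{i+1,0} : \rank(in+n-1,y) > r\}, \quad
v_i := \#\{0<y\leq c_{i,0}+m : \rank(in+n-1,y) > r\}
\]
with the convention $c_{d,0}:=dm$, an unravelling of the definitions gives $\epsilon_{n-1}^{[d-1]}(p,q)-\tilde{\epsilon}_{n-1}(p,q) = \sum_{i=0}^{d-1}(u_i-v_i)$. The key move is to apply the change of variables $y=y''+(i+1)m$, which together with the translation identity $\rank(in+n-1,\,y''+(i+1)m) = \rank(-1,y'')$ converts both $u_i$ and $v_i$ into counts against the single rank function $\rank(-1,\cdot)$, namely $\#\{-(i+1)m < y'' \leq \,\cdot\,:\rank(-1,y'')>r\}$. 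Writing $\alpha_i:=c_{i,0}-im$, the upper bounds become $\alpha_{i+1}$ for $u_i$ and $\alpha_i$ for $v_i$, with a common lower bound $-(i+1)m$.

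The main technical step---and the one I expect to require genuine care---is justifying the telescoping from this form. Since $c_{i,0}\geq 0$ gives $\alpha_i \geq -im > -(i+1)m$, the common lower-bound contribution cancels inside each difference $u_i-v_i$: with $G(b):=\#\{y_0<y\leq b:\rank(-1,y)>r\}$ for any fixed $y_0\ll 0$, one has $u_i = G(\alpha_{i+1})-G(-(i+1)m)$ and $v_i = G(\alpha_i)-G(-(i+1)m)$, so
\[
\sum_{i=0}^{d-1}(u_i-v_i) = \sum_{i=0}^{d-1}\bigl[G(\alpha_{i+1}) - G(\alpha_i)\bigr] = G(\alpha_d) - G(\alpha_0).
\]
This vanishes because $\alpha_d = c_{d,0}-dm=0$ by convention and $\alpha_0 = c_{0,0}=0$ by the $0$-normalization of $\Delta\in\Adm(dn,dm)$. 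The coincidence of these two boundary values---one from the sweep map's terminal convention $y^{[d-1]}_{dn}=dm$, the other from the $0$-normalization built into $\Adm(dn,dm)$---is the conceptual heart of the argument and explains why the identity is specific to the admissible setting.
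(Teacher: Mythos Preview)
Your proof is correct and follows essentially the same approach as the paper: split into $j\leq n-2$ (trivial by \eqref{eqn:compare_index_functions}) and $j=n-1$, then use a rank-preserving shift to a fixed column and telescope using $c_{0,0}=0$ and $c_{d,0}=dm$. The only cosmetic differences are that the paper shifts to column $n-1$ via $\tilde{y}=y-im$ while you shift to column $-1$ via $y''=y-(i+1)m$, and the paper computes $\tilde{\epsilon}_{n-1}-\epsilon_{n-1}^{[d-1]}$ rather than your $\epsilon_{n-1}^{[d-1]}-\tilde{\epsilon}_{n-1}$.
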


\begin{proof}
This is trivial if $j\leq n-2$. It remains to consider the case $j = n-1$.
Fix $\constant \geq (d-1)m$, then
\begin{eqnarray*}
&&\tilde{\epsilon}_{n-1}(p,q) -  \epsilon_{n-1}^{[d-1]}(p,q) = \sum_{i=0}^{d-1}(\tilde{\epsilon}((p,q);in+n-1) -  \epsilon^{[d-1]}((p,q);in+n-1))\\
&=&\sum_{i=0}^{d-1}(\#\{y\in (0,c_{i,n}]: \rank(in+n-1,y) > \rank(p,q)\} - \#\{y\in (0,c_{i+1,0}]: \rank(in+n-1,y) > \rank(p,q)\})\\
&=&\sum_{i=0}^{d-1}\#\{\tilde{y}\in (-im,c_{i,0}-(i-1)m]: \rank(n-1,\tilde{y}) > \rank(p,q)\}\\
&& - \sum_{i=0}^{d-1}\#\{\tilde{y}\in (-im,c_{i+1,0}-im]: \rank(n-1,\tilde{y}) > \rank(p,q)\}\\
&=&\sum_{i=0}^{d-1}\#\{\tilde{y}\in (-\constant,c_{i,0}-(i-1)m]: \rank(n-1,\tilde{y}) > \rank(p,q)\}\\
&& - \sum_{i=0}^{d-1}\#\{\tilde{y}\in (-\constant,c_{i+1,0}-im]: \rank(n-1,\tilde{y}) > \rank(p,q)\}\\
&=& 0.\quad (c_{d,0}-(d-1)m = m = c_{0,0} - (0-1)m)
\end{eqnarray*}
\end{proof}

\begin{lemma}\label{lem:compare_two_eta_functions}
$\forall 0\leq j\leq n-1$, we have
\begin{equation}
\eta_j^{[d-1]}(p,c_p) = \tilde{\eta}_j(p,c_p).
\end{equation}
In particular, $\eta^{[d-1]}(p,c_p) = \tilde{\eta}(p,c_p)$.
\end{lemma}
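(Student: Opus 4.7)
The plan is to adapt the telescoping strategy of the proof of Lemma~\ref{lem:sweep_map-compare_two_epsilon_functions_for_admissible_invariant_subsets}, with extra care for the ``discrete'' equal-rank condition.

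\smallskip\noindent
\emph{Step~1 (Vanishing off the diagonal $j = \ell$).}
Write $p = kn + \ell$ and set $r = \rank(p, c_p) = \rank_{\Delta, k, \ell}$. Both $\tilde{\eta}_j((p,c_p); in+j)$ and $\eta_j^{[d-1]}((p,c_p); in+j)$ are supported on integer solutions $y$ to $\rank(in+j, y) = r$. Since $r \equiv m\ell \pmod n$ and $\gcd(m,n) = 1$, such a $y$ exists only when $j \equiv \ell \pmod n$. Hence $\tilde{\eta}_j(p, c_p) = 0 = \eta_j^{[d-1]}(p, c_p)$ for $j \ne \ell$, and the identity is trivial in those cases.

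\smallskip\noindent
\emph{Step~2 (The case $j = \ell \leq n - 2$).}
By~\eqref{eqn:compare_index_functions}, $\tilde{\eta}((p, c_p); in+\ell) = \eta^{[d-1]}((p, c_p); in+\ell)$ at every $x = in + \ell$, so the sums over $i$ coincide.

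\smallskip\noindent
\emph{Step~3 (The case $\ell = n-1$, the only nontrivial one).}
Here the $\tilde{\eta}$-summand at $x = in+n-1$ uses the ``artificial'' upper bound $c_{i, n} = c_{i, 0} + m$, while the $\eta^{[d-1]}$-summand uses $y^{[d-1]}_{in+n} = c_{i+1, 0}$. These agree only modulo boundary corrections, and the heart of the argument is to show these corrections telescope. Proceeding as in Lemma~\ref{lem:sweep_map-compare_two_epsilon_functions_for_admissible_invariant_subsets}, I make the substitution $\tilde{y} := y - im$; since $\rank(in+n-1, y) = \rank(n-1, \tilde{y})$, the equation $\rank(in+n-1, y) = r$ has the single $i$-independent solution $\tilde{y}_0 = c_{k, n-1} - km$ (when it is an integer; otherwise everything is zero). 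After the substitution, each summand becomes an indicator of $\tilde{y}_0$ against intervals with endpoints
\[
B_i := c_{i, 0} - (i-1)m, \qquad \alpha_i := c_{i, n-1} - im, \qquad A_i := c_{i+1, 0} - im = B_{i+1} - m,
\]
and the difference $\tilde{\eta}_{n-1}(p,c_p) - \eta_{n-1}^{[d-1]}(p, c_p)$ collapses to a telescoping sum over $i \in [k, d-1]$ of the form
\[
\sum_{i=k}^{d-1}\bigl[\mathbf{1}(\tilde{y}_0 < B_i) - \mathbf{1}(\tilde{y}_0 \leq B_{i+1})\bigr] + (\text{boundary indicator corrections at } \alpha_i).
\]
Using the admissibility conventions $c_{d,0} = dm$ (so that $B_d = m = c_{0,0} + m$, matching the endpoint identity exploited in Lemma~\ref{lem:sweep_map-compare_two_epsilon_functions_for_admissible_invariant_subsets}) and the bound $c_{k, n-1} \leq a_{k, n-1} < (k+1)m$, the bulk of this sum cancels; what remains is matched termwise by invoking column-monotonicity $c_{i+1, 0} \geq c_{i, n-1}$ and row-monotonicity $c_{i, n-1} \leq c_{i, 0} + m$ from Lemma~\ref{lem:semimodule_vs_c_ij}. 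This forces the total difference to vanish.

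\smallskip\noindent
\emph{Main obstacle.} Unlike the $\epsilon$-version, where the counts in each interval are ``large'' and behave continuously under the telescoping, the $\eta$-version involves single-point indicators $\mathbf{1}(\tilde{y}_0 = \alpha_i)$, $\mathbf{1}(\tilde{y}_0 = B_i)$, which do not automatically cancel. The delicate step is showing that each surviving boundary indicator on the $\tilde{\eta}$ side is exactly matched by a corresponding boundary indicator on the $\eta^{[d-1]}$ side, and that no stray contributions are left over; this is where admissibility (column-monotonicity and its $i = d-1$ boundary version from Lemma~\ref{lem:semimodule_vs_c_ij}(2)) enters essentially.
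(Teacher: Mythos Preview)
Your overall plan---the same telescoping after the shift $\tilde y = y - im$ used in Lemma~\ref{lem:sweep_map-compare_two_epsilon_functions_for_admissible_invariant_subsets}---matches the paper's, and Steps~1--2 are exactly right. In Step~3 you overcomplicate, and there is one arithmetic slip that obscures the simplicity.

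First, the lower bounds $c_{i,n-1}$ (your $\alpha_i$) drop out \emph{before} any telescoping: both $\tilde\eta((p,c_p);in+n-1)$ and $\eta^{[d-1]}((p,c_p);in+n-1)$ count $y$ in intervals with the same lower endpoint $c_{i,n-1}$ (the former over $(c_{i,n-1},c_{i,n}]$, the latter as the difference of $(0,c_{i+1,0}]$ and $(0,c_{i,n-1}]$). So in the difference only the upper endpoints $c_{i,n}$ and $c_{i+1,0}$ survive, and there are no ``boundary indicator corrections at $\alpha_i$'' to track.

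Second, your identity $A_i = B_{i+1}-m$ is off: with $B_i = c_{i,0}-(i-1)m$ and $A_i = c_{i+1,0}-im$ one has $A_i = B_{i+1}$ exactly. Together with $c_{i,n}-im = c_{i,0}-(i-1)m = B_i$, the difference becomes the clean telescope
\[
\sum_{i=k}^{d-1}\bigl[\mathbf{1}(\tilde y_0\le B_i)-\mathbf{1}(\tilde y_0\le B_{i+1})\bigr]
= \mathbf{1}(\tilde y_0\le B_k)-\mathbf{1}(\tilde y_0\le B_d).
\]
Since $B_d = c_{d,0}-(d-1)m = m$ and $\tilde y_0 = c_{k,n-1}-km \le c_{k,0}-(k-1)m = B_k$ by the row bound $c_{k,n-1}\le c_{k,0}+m$ (and also $\tilde y_0 < m$ from $c_{k,n-1}\le a_{k,n-1} < (k+1)m$), both indicators equal $1$ and the difference vanishes. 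Column-monotonicity is not used at all; only row-monotonicity at the single index $k$ enters.
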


\begin{proof}
Say, $p = kn+\ell$, $0\leq k \leq d-1$, $0\leq \ell \leq n-1$.
By a similar argument as in Lemma \ref{lem:invariance_of_eta}, it's clear that $ \tilde{\eta}_j(p,c_p) = 0$ if $j\neq \ell$.
Thus, $\tilde{\eta}(p,c_p) = \tilde{\eta}_{\ell}(p,c_p)$. It remains to show that
$\eta_{\ell}^{[d-1]}(p,c_p) = \tilde{\eta}_{\ell}(p,c_p)$.
In fact, the case $\ell\neq n-1$ follows immediately from the observation (\ref{eqn:compare_index_functions}).
Thus, we're left with the case $\ell = n-1$.

Now, by definition, we have
\begin{eqnarray*}
&&\tilde{\eta}_{n-1}((p,q)) -  \eta^{[d-1]}_{n-1}((p,q)) = \sum_{i=0}^{d-1}(\tilde{\eta}((p,q);in+n-1) -  \eta^{[d-1]}((p,q));in+n-1)\\
&=&\sum_{i=k}^{d-1}(\#\{y\in (0,c_{i,n}]: \rank(in+n-1,y) = \rank(p,q)\} - \#\{y\in (0,c_{i+1,0}]: \rank(in+n-1,y) = \rank(p,q)\}).
\end{eqnarray*}
From now on, take $q = c_p = c_{k,n-1}$. 
Fix $\constant \geq (d-1)m$.
Then, 
\begin{eqnarray*}
&&\tilde{\eta}((p,q)) -  \eta^{[d-1]}((p,q))\\
&=&\sum_{i=k}^{d-1}(\#\{\tilde{y}\in (-\constant,c_{i,n}-im]: \rank(\ell,\tilde{y}) = \rank(p,q)\} - \#\{\tilde{y}\in (-\constant,c_{i+1,0}-im]: \rank(\ell,\tilde{y}) = \rank(p,q)\})\\
&=& -\#\{\tilde{y}\in (c_{k,0}-(k-1)m,m]: \rank(n-1,\tilde{y}) = \rank(p,q)\}.
\end{eqnarray*}
Here, $c_{i,n} - im = c_{i,0} - (i-1)m$, $c_{d,0} - (d-1)m = m$, and $c_{k.0}\leq a_{k,0} = km$.
Moreover, if $\rank(n-1,\tilde{y}) = \rank(p,c_p)$, then $\tilde{y} = c_{k,n-1} - km \leq c_{k,0} - (k-1)m$. 
Thus, $\tilde{\eta}((c_{a,b});(p,c_p)) -  \eta((c_{a,b});(p,c_p)) = 0$, as desired.
\end{proof}

Now, it's clear how to prove Proposition \ref{prop:sweep_map_vs_bijection}.
\begin{proof}[Proof of Proposition \ref{prop:sweep_map_vs_bijection}]
Fix $\forall D \isomorphic (y_{in+j} = y_{i,j})_{0\leq i\leq d-1,0\leq j\leq n-1} \in \Young(dn,dm)$, with $\Delta \isomorphic (c_{\bullet,\bullet}) := \Psi_d((y_{\bullet,\bullet})) \in \Adm(dn,dm)$. 
Recall that $c_{i,j} = y_{i,j}^{[d-1]} = y_{\permutation_j(i),j} - (\permutation_j(i) - i)m$, and $\rank_{\Delta,i,j} = \rank(in+j,c_{i,j}) = \rank_{D,i,j}^{[d-1]} = \rank_{D,\permutation_j(i),j} = \rank(\permutation_j(i)n+j,y_{\permutation_j(i),j})$. In particular, $e(\Delta) = |(c_{i,j})| = |D|$ is clear.

Now, fix any $p = kn + \ell$, $0\leq k\leq d-1$, $0 \leq \ell \leq n-1$.
By definition, we have $\epsilon_j^{[d-1]}(p,c_p) = \epsilon_j^{[d-1]}(\permutation_{\ell}(k)n+\ell,y_{\permutation_{\ell}(k),\ell})$. Then by Corollary \ref{cor:invariance_of_epsilon_j}, we have $\epsilon_j^{[d-1]}(\permutation_{\ell}(k)n+\ell,y_{\permutation_{\ell}(k),\ell}) = \epsilon_j(\permutation_{\ell}(k)n+\ell,y_{\permutation_{\ell}(k),\ell})$. 
Combined with Lemma \ref{lem:sweep_map-compare_two_epsilon_functions_for_admissible_invariant_subsets}, we then obtain:
\begin{equation}
\tilde{\epsilon}_j(p,c_p) = \epsilon_j^{[d-1]}(p,y_p^{[d-1]}) = \epsilon_j(\permutation_{\ell}(k)n+\ell,y_{\permutation_{\ell}(k),\ell}).
\end{equation}
In particular, $\tilde{\epsilon}(p,c_p) = \epsilon^{[d-1]}(p,y_p^{[d-1]}) = \epsilon(\permutation_{\ell}(k)n+\ell,y_{\permutation_{\ell}(k),\ell})$.

On the other hand, by Lemma \ref{lem:invariance_of_eta}, and Lemma \ref{lem:compare_two_eta_functions}, we have 
\[
\tilde{\eta}_j(p,c_p) = \eta_j^{[d-1]}(p,y_p^{[d-1]}) = \eta_j(\permutation_{\ell}(k)n+\ell,y_{\permutation_{\ell}(k),\ell}).
\]
In particular, $\tilde{\eta}(p,c_p) = \eta^{[d-1]}(p,y_p^{[d-1]}) = \eta(\permutation_{\ell}(k)n+\ell,y_{\permutation_{\ell}(k),\ell})$.

Altogether, we obtain that
\begin{eqnarray*}
\cG((c_{\bullet,\bullet});(p,c_p)) &=& \tilde{\epsilon}((p,c_p)) + \tilde{\eta}((p,c_p)) = \epsilon^{[d-1]}((p,y_p^{[d-1]})) + \eta^{[d-1]}((p,y_p^{[d-1]}))\\
&=& \epsilon(\permutation_{\ell}(k)n+\ell,y_{\permutation_{\ell}(k),\ell}) + \eta(\permutation_{\ell}(k)n+\ell,y_{\permutation_{\ell}(k),\ell})
=  \zeta((y_{\bullet,\bullet});(\permutation_{\ell}(k)n+\ell,y_{\permutation_{\ell}(k),\ell})).
\end{eqnarray*}
In particular, $\cG((c_{\bullet,\bullet})) = \zeta((c_{a,b}))$, and hence $\dim\Delta = |\cG(\Delta)| = |\zeta(D)|$.

Finally, $|\zeta(D)| = \codinv(D)$ follows from a standard argument.
For completeness, we include the details.
Observe that for $x = p$, we have $\rank(x,y) \geq \rank(p,y_p)$ $\Leftrightarrow$ $y \leq y_p = y_x$. So by definition, $|\zeta(D)| = \mathrm{I} + \mathrm{II}$, with
\begin{eqnarray*}
\mathrm{I} &:=& \#\{(p,x,y): 0\leq x < p\leq dn-1, y_x < y\leq y_{x+1}, \rank(x,y) > \rank(p,y_p)\};\\
\mathrm{II} &:=& \#\{(p,x,y): 0\leq p < x \leq dn-1, y_x < y\leq y_{x+1}, \rank(x,y) \geq \rank(p,y_p)\}.
\end{eqnarray*}
In addition, $\forall 0\leq p\leq dn-1$, $0\leq x\leq dn-1$, $y_x < y \leq y_{x+1}$, observe that the pair $(p,y_p)$, $(x,y)$ \textbf{determine} and are \textbf{uniquely determined by} a unique box $z = (p,y) \in \cR_{dn,dm}^+$ as in Figure \ref{fig:box_counting_for_co-dinv}. Moreover,
\begin{enumerate}[wide,labelwidth=!,labelindent=0pt,itemindent=!]
\item 
$x < p$ $\Leftrightarrow$ $z \in D$. In this case, $(\armlength,\leglength)(z) = (p-x-1,y_p-y)$. So,
\[
\rank(x,y) > \rank(p,y_p)~~\Leftrightarrow~~ m(\armlength(z)+1) < n\leglength(z) ~~\Leftrightarrow~~ \frac{\leglength(z)}{\armlength(z)+1} > \frac{m}{n}.
\]
It follows that $\mathrm{I} = \#\{z\in D: \frac{\leglength(z)}{\armlength(z)+1} > \frac{m}{n}\}$.

\item
$x > p$ $\Leftrightarrow$ $z \in \cR_{dn,dm}^+\setminus D$. In this case, $(\armlength,\leglength)(z) = (x - p,y - y_p - 1)$. So,
\[
\rank(x,y) \geq \rank(p,y_p)~~\Leftrightarrow~~ m\armlength(z) \geq n(\leglength(z)+1) ~~\Leftrightarrow~~ \frac{\leglength(z)+1}{\armlength(z)} \leq \frac{m}{n}.
\]
It follows that $\mathrm{II} = \#\{z\in \cR_{dn,dm}^+\setminus D: \frac{\leglength(z)+1}{\armlength(z)} \leq \frac{m}{n}\}$.
\end{enumerate}

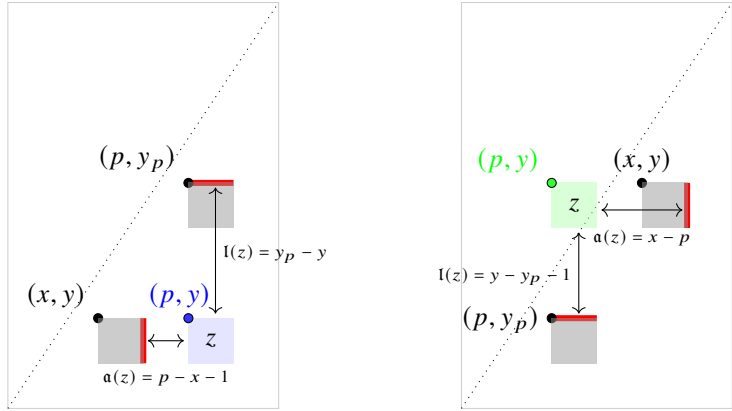
\begin{figure}[!htbp]
\vspace{-0.2in}
\begin{center}
\begin{tikzcd}[column sep=2pc,ampersand replacement=\&]

\begin{tikzpicture}[scale=0.6]

\draw[line width = 0.01mm,opacity=0.2] (0,0) rectangle (6,9);

\draw[thin,dotted] (0,0) -- (6,9);

\draw[red, line width=0.8mm] (3,1) -- (3,2);

\draw[red, line width=0.8mm] (4,5) -- (5,5);

\draw[fill=black] (2,2) circle (0.1);

\fill[black!40, opacity=0.5] (2,1) -- (2,2) -- (3,2) -- (3,1) -- (2,1);

\node[above left] at (2,2) {$(x,y)$};

\draw[fill=black] (4,5) circle (0.1);

\fill[black!40, opacity=0.5] (4,5) -- (5,5) -- (5,4) -- (4,4) -- (4,5);

\node[above left] at (4,5) {$(p,y_p)$};

\fill[blue!20, opacity=0.5] (4,2) -- (5,2) -- (5,1) -- (4,1) -- (4,2);

\node at (4.5,1.4) {$z$};

\draw[fill=blue!80] (4,2) circle (0.1);

\node[above] at (3.8,2) {{\color{blue}$(p,y)$}};

\draw[thin,<->] (3.1,1.5) -- (3.9,1.5);

\node[below] at (3.5,1.1) {\tiny$\armlength(z) = p - x - 1$};

\draw[thin,<->] (4.6,2.1) -- (4.6,4.9);

\node[right] at (4.5,3.5) {\tiny$\leglength(z) = y_p - y$};

\end{tikzpicture}

\&

\begin{tikzpicture}[scale=0.6]

\draw[line width = 0.01mm,opacity=0.2] (0,0) rectangle (6,9);

\draw[thin,dotted] (0,0) -- (6,9);

\draw[red, line width=0.8mm] (2,2) -- (3,2);

\draw[red, line width=0.8mm] (5,4) -- (5,5);

\draw[fill=black] (2,2) circle (0.1);

\fill[black!40, opacity=0.5] (2,1) -- (2,2) -- (3,2) -- (3,1) -- (2,1);

\node[left] at (2,2) {$(p,y_p)$};

\draw[fill=black] (4,5) circle (0.1);

\fill[black!40, opacity=0.5] (4,5) -- (5,5) -- (5,4) -- (4,4) -- (4,5);

\node[above] at (4,5) {$(x,y)$};

\fill[green!30, opacity=0.5] (2,5) -- (3,5) -- (3,4) -- (2,4) -- (2,5);

\node at (2.5,4.4) {$z$};

\draw[fill=green!80] (2,5) circle (0.1);

\node[above left] at (2,5) {{\color{green}$(p,y)$}};

\draw[thin,<->] (2.6,2.1) -- (2.6,3.9);

\node[left] at (2.7,3) {\tiny$\leglength(z) = y - y_p - 1$};

\draw[thin,<->] (3.1,4.4) -- (4.9,4.4);

\node[below] at (4,4.2) {\tiny$\armlength(z) = x - p$};

\end{tikzpicture}

\end{tikzcd}
\end{center}
\vspace{-0.2in}
\caption{The box $z = (p,y) \in \cR_{dn,dm}$ and the pair $(p,y_p)$, $(x,y)$ with $y_x < y \leq y_{x+1}$ determine each other. The red paths correspond to part of the Dyck path. Left: $x<p$, $\Leftrightarrow$ $z\in D$; Right: $x>p$, $\Leftrightarrow$ $z\in \cR_{dn,dm}\setminus D$.}
\label{fig:box_counting_for_co-dinv}
\end{figure}

Finally, by \cite[Cor.3.4]{Maz14}, we have 
\[
\mathrm{II} = |\cR_{dn,dm}^+\setminus D| + \#\{z\in D: \frac{\leglength(z)+1}{\armlength(z)} \leq \frac{m}{n}\} = \delta - |D| +  \#\{z\in D: \frac{\leglength(z)+1}{\armlength(z)} \leq \frac{m}{n}\}.
\]
Altogether, $|\zeta(D)| = \delta - \#\{z\in D: \frac{\leglength(z)}{\armlength(z)+1} \leq \frac{m}{n} < \frac{\leglength(z)+1}{\armlength(z)}\} = \delta - \dinv(D) = \codinv(D)$.
Done.
\end{proof}

\end{document}